\newcommand{\ol}[1]{\overline{#1}}
\newtheorem{theorem}{Theorem}[section]
\numberwithin{equation}{theorem}
\newtheorem{definition}[theorem]{Definition}
\newtheorem{classification-theorem}[subsubsection]{Classification Theorem}
\newtheorem{decomposition-theorem}[subsubsection]{Decomposition Theorem}
\newtheorem{proposition-definition}[theorem]{Proposition-Definition}
\newtheorem{definition-proposition}[theorem]{Definition-Proposition}
\newtheorem{example-definition}[theorem]{Example-Definition}
\newtheorem{periodicity-conjecture}[subsubsection]{Periodicity Conjecture}
\newtheorem{lemma}[theorem]{Lemma}
\newtheorem{proposition}[theorem]{Proposition}
\newtheorem{corollary}[theorem]{Corollary}
\newtheorem{example}[theorem]{Example}
\newtheorem{remark}[theorem]{Remark}
\newtheorem*{definition2}{Definition}
\newtheorem*{theoremA}{Theorem~A}
\newtheorem*{theoremB}{Theorem~B}
\newtheorem{Definition-Proposition}[theorem]{D\'efinition-Proposition}
\newcommand{\reminder}[1]{}
\newcommand{\rep}{\mathrm{rep}}
\newcommand{\Mod}{\mathrm{Mod}\,}
\newcommand{\pd}{\mathrm{pd}}
\newcommand{\proj}{\mathrm{proj}\,}
\newcommand{\per}{\mathrm{per} }
\newcommand{\pvd}{\mathrm{pvd} }
\newcommand{\add}{\mathrm{add} }
\newcommand{\tr}{\mathrm{tr}}
\newcommand{\Cone}{\mathrm{Cone}}
\newcommand{\Com}{\mathrm{Com}}
\newcommand{\dgcat}{\mathrm{dgcat}}
\newcommand{\Hqe}{\mathrm{Hqe}}
\renewcommand{\rep}{\mathrm{rep}}
\newcommand{\pretr}{\mathrm{pretr} }
\newcommand{\Set}{\mathrm{Set}}
\newcommand{\cok}{\mathrm{cok} }
\renewcommand{\ker}{\mathrm{ker} }
\newcommand{\obj}{\mathrm{obj} }
\newcommand{\Q}{\mathbb{Q}}
\newcommand{\iso}{\xrightarrow{_\sim}}
\newcommand{\Cosp}{\mathrm{Cosp}}
\newcommand{\Sp}{\mathrm{Sp}}
\newcommand{\Sq}{\mathrm{Sq}}
\newcommand{\Cat}{\mathrm{Cat}}
\newcommand{\Quiv}{\mathrm{Quiv}}
\newcommand{\Id}{\mathrm{id}}
\newcommand{\Dia}{\mathrm{Dia}}
\newcommand{\Res}{\mathrm{Res}}
\newcommand{\Iso}{\mathrm{Iso}}
\newcommand{\Ex}{\mathrm{Ex}}
\newcommand{\Ab}{\mathrm{Ab}}
\newcommand{\Def}{\mathrm{def}\kern 0.1em}
\renewcommand{\Q}{\mathcal{Q}}
\newcommand{\Str}{\mathrm{Str}}
\newcommand{\D}{\mathcal {D}}
\newcommand{\A}{\mathcal {A}}
\newcommand{\B}{\mathcal {B}}
\newcommand{\C}{\mathcal {C}}
\newcommand{\F}{\mathcal {F}}
\newcommand{\I}{\mathcal {I}}
\newcommand{\M}{\mathcal {M}}
\newcommand{\s}{\mathcal S}
\newcommand{\T}{\mathcal T}
\renewcommand{\P}{\mathcal P}
\newcommand{\Vect}{\mathrm{Vect}}
\newcommand{\Fun}{\mathrm{Fun}}
\newcommand{\Hom}{\mathrm{Hom}}
\newcommand{\RHom}{\mathrm{RHom}}
\newcommand{\End}{\mathrm{End}}
\newcommand{\rad}{\mathrm{rad}}
\newcommand{\Mor}{\mathrm{Mor}}
\renewcommand{\phi}{\varphi}
\newcommand{\hker}{\mathrm{hker}}
\newcommand{\pr}{\mathrm{pr}}
\renewcommand{\tilde}[1]{\widetilde{#1}}
\renewcommand{\H}{\mathcal H}
\newcommand{\Ai}{{A}}
\begin{document}
\title[Exact dg categories I: foundations]{Exact dg categories I: foundations}
\author[Xiaofa Chen]{Xiaofa Chen} 

\address{University of Science and Technology of China, Hefei, P.~R.~China}
\email{cxf2011@mail.ustc.edu.cn}

\subjclass[2020]{18G35, 18G80, 18N40, 16G10,16E45}
\date{\today}

\keywords{extriangulated category, exact dg category, 3-term homotopy complex, homotopy short exact sequence, subcategory stable under extensions, stable dg category}%

\maketitle

\begin{abstract}
We introduce the notion of exact dg category, which provides a differential
graded enhancement of Nakaoka--Palu's notion of extriangulated category. 
We give a definition in complete analogy with Quillen's but 
where the category of kernel-cokernel pairs is replaced with a more sophisticated homotopy category. 
We introduce the notion of stable dg category, and prove that the $H^0$-category of an exact dg category $\A$ is triangulated if and only if $\A$ is stable.
We illustrate our theory with several examples including the homotopy category of two-term complexes and Amiot's fundamental domain for generalized cluster categories.
\end{abstract}

\setcounter{secnumdepth}{3}
\setcounter{tocdepth}{3 }
\tableofcontents

\section{Introduction}  
In this paper, we introduce and study the notion of {\em exact dg category}.
Let us illustrate the position of this class of categories among the other types of 
categories that are relevant in the following diagram
\[
\begin{tikzcd}
&\{\text{pretri. dg cat.}\}\ar[rd,red]\ar[rr,"N_{dg}"{description,near start},hook]\ar[dd,hook]&&\{\text{stable $\infty$-cat.}\}\ar[dd,hook]\ar[ld,red]\\
&&\{\text{tri. cat.}\}\ar[dd,hook]&\\
\{\text{Quillen ex. cat.}\}\ar[r,hook]\ar[rrd,hook]&\{\text{ex. dg cat.}\}\ar[rr,hook,"N_{dg}"{description,near start}]\ar[rd,red]&&\{\text{Barwick ex. $\infty$-cat.}\}\ar[ld,red]\\
&&\{\text{extri. cat.}\}&
\end{tikzcd}
\]
Here, the black arrows denote inclusions of classes and the red arrows send a dg category (respectively an
$\infty$-category)  $\A$ to $H^0(\A)$ (respectively $h\A$). The {dg nerve functor} $N_{dg}$~\cite[Construction 1.3.1.6]{LurieHA} sends a dg category to an $\infty$-category. As we see in the diagram, the notion of exact dg category is
\begin{itemize}
\item[1)] a dg version of Barwick's \cite{Barwick15} notion of exact $\infty$-category;
\item[2)] a simultaneous generalization of the notions of exact category in the sense of Quillen \cite{Quillen73} and of 
pretriangulated dg category in the sense of Bondal--Kapranov \cite{BondalKapranov90}, and
\item[3)] a dg enhancement of the notion of extriangulated category in the sense of Nakaoka--Palu 
\cite{NakaokaPalu19} in analogy with Bondal--Kapranov's dg enhancement of the notion of triangulated category.
\end{itemize}
Let us emphasize that our notion of exact dg category is completely different from Positselski's notion introduced in~\cite{Positselski21}: an exact structure in our sense can be transported along a quasi-equivalence, cf.~Remark~\ref{truncationexactdgstructure} b), which is not the case for an exact structure in Positselski's sense.

The notion of an extriangulated category was introduced by Nakaoka--Palu in~\cite{NakaokaPalu19}. 
It is a simultaneous generalization of Quillen's notion of exact category~\cite{Quillen73} and 
Grothendieck--Verdier's notion of a triangulated category~\cite{Verdier96}. 
Their aim was to give a convenient setup for writing down proofs 
which apply to both exact categories and triangulated categories, and more generally to
extension-closed subcategories of triangulated categories. 
The theory of extriangulated categories has developed a lot since their introduction in 2019 
and many notions and constructions have been generalized to this setting (or the more general 
setting of $n$-exangulated categories \cite{HerschendLiuNakaoka21}). 
The notion of extriangulated category plays an important role in the additive categorification of cluster algebras with coefficients, see for example in~\cite{Wu23a,KellerWu23, WangWeiZhang23,FangGorskyPaluPlamondonPressland23a}.
Our aim in this work is to introduce the notion of exact dg category. It enhances Nakaoka--Palu's notion of extriangulated category, cf.~Theorem~\hyperref[intro:truncationextriangulated]{A}.

Let us start by introducing the notion of {\em homotopy short exact sequence}. It is the key ingredient in the definition of an {exact dg category}. 
Let $\A$ be an {\em additive} dg category, i.e.~the category $H^0(\A)$ is additive.
The category $\mathcal{H}_{3t}(\A)$ of {\em 3-term homotopy complexes over $\A$} is the $H^0$-category of the dg category of strictly unital $\Ai_{\infty}$-functors 
from a certain index category to the dg category $\A$, cf.~Definition~\ref{def:3termhomotopy}. 
Its objects are identified with 3-term h-complexes, i.e.~diagrams in $\A$
\begin{equation}\label{intro:F}
\begin{tikzcd}
&A_0\ar[r,"f"]\ar[rr,bend right = 8ex,"h"swap]&A_1\ar[r,"j"]&A_2,
\end{tikzcd}
\end{equation}
where $|f|=|j|=0$, $|h|=-1$ and $d(f)=0$, $d(j)=0$ and $d(h)=-jf$.
For a morphism $m:A\rightarrow B$ in $\A$, we denote by $m^{\wedge}:A^{\wedge}\rightarrow B^{\wedge}$ its image under the Yoneda dg functor. A 3-term h-complex over $\A$ (\ref{intro:F}) yields a canonical morphism in $\C(\A)$ 
\[
u:A_0^{\wedge}\rightarrow \Sigma^{-1}\Cone(j^{\wedge}:A_1^{\wedge}\rightarrow A_2^{\wedge}),
\]
where $u=\begin{bmatrix}f^{\wedge}\\h^{\wedge}\end{bmatrix}$.
\begin{definition2}[{Lemma~\ref{lem:3termhcomplex}}] \label{intro:homotopyleftexact}
A $3$-term h-complex (\ref{intro:F}) is homotopy left exact if the map $\tau_{\leq 0}(u)$ is a quasi-isomorphism of dg $\tau_{\leq 0}(\A)$-modules. Dually, we define the notion of homotopy right exact sequence and then the notion of homotopy short exact sequence.
\end{definition2}

Using the above terminology, we define an {\em exact structure} on an additive dg category $\A$ 
to be a class $\mathcal S$ of homotopy short exact sequences in $\A$, satisfying certain axioms analogous to those of Quillen. In this case, we call the pair $(\A,\mathcal S)$, or simply $\A$, an {\em exact dg category}.
Roughly speaking, an exact dg category is a homotopical version of Quillen's exact category in the context of dg categories.
We refer to Section~\ref{sec:exactdgcategory} for more details.

Our first result states that exact dg categories do enhance extriangulated categories. More precisely, we have the following theorem.
\begin{theoremA}[Theorem~\ref{truncationextriangulated}]\label{intro:truncationextriangulated}
Let $\A$ be an exact dg category. Then $H^0(\A)$ has a canonical extriangulated structure. 
\end{theoremA}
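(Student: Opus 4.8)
The plan is to exhibit on $H^0(\A)$ the three pieces of data of a Nakaoka--Palu extriangulated category — a biadditive bifunctor $\mathbb{E}$, a realization $\mathfrak{s}$, and a verification of the axioms (ET1)--(ET4) together with (ET3)$^{op}$ and (ET4)$^{op}$ — extracting all of them from the chosen class $\mathcal S$ of homotopy short exact sequences. The underlying additive category is $H^0(\A)$, which is additive by the standing hypothesis on $\A$. For the bifunctor I would use the Yoneda dg functor to place $A^{\wedge}$ and $C^{\wedge}$ in the triangulated category $\D(\A)$ of dg $\A$-modules and set
\[
\mathbb{E}(C,A):=\Hom_{\D(\A)}(C^{\wedge},\Sigma A^{\wedge})\cong H^{1}\A(C,A),
\]
which is manifestly biadditive and functorial in both variables. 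A homotopy short exact sequence (\ref{intro:F}) of $\mathcal S$ produces, via the morphism $u$ and the fact that it becomes a quasi-isomorphism after $\tau_{\leq 0}$, a triangle
\[
A_0^{\wedge}\xrightarrow{f^{\wedge}}A_1^{\wedge}\xrightarrow{j^{\wedge}}A_2^{\wedge}\xrightarrow{\ \delta\ }\Sigma A_0^{\wedge}
\]
in $\D(\A)$, whose connecting morphism is a class $\delta\in\mathbb{E}(A_2,A_0)$. I would then let $\mathbb{E}$ denote the sub-bifunctor of classes arising in this way and define $\mathfrak{s}$ by sending such a $\delta$ to the equivalence class of the conflation in $\mathcal H_{3t}(\A)$.

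The first verification is that $\mathfrak{s}$ is well defined: the class $\delta$ must depend only on the isomorphism class of the h-complex in $\mathcal H_{3t}(\A)$, and, conversely, two sequences of $\mathcal S$ inducing the same $\delta$ must be isomorphic in $\mathcal H_{3t}(\A)$ by an isomorphism fixing $A_0$ and $A_2$. This is where the passage from $\D(\A)$ back to the homotopy category of $3$-term complexes is used in an essential way. Next comes (ET1) and (ET2). The claim that the realized classes form a sub-bifunctor — closure under $a_{*}=\mathbb{E}(C,a)$ and $c^{*}=\mathbb{E}(c,A)$ and under addition — is precisely where the Quillen-type axioms on $\mathcal S$ enter: the axiom producing the pushout of an inflation along an arbitrary morphism (respectively the pullback of a deflation) furnishes, for a morphism of extensions $(a,c)\colon\delta\to\delta'$, a conflation in $\mathcal S$ together with a morphism of conflations realizing it, which is exactly the lifting required by (ET2); closure under the Baer sum then follows by applying these pushout/pullback constructions to the diagonal and codiagonal, using closure of $\mathcal S$ under finite direct sums. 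Additivity of $\mathfrak{s}$ — that it sends $0$ to a split conflation and respects direct sums — follows from the axiom that split sequences lie in $\mathcal S$ and that $\mathcal S$ is closed under direct sums.

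It remains to check (ET3), (ET3)$^{op}$, (ET4) and (ET4)$^{op}$. The axioms (ET3) and its dual are essentially reformulations of the lifting property already extracted above: given a conflation and a commutative square built from an inflation (resp.\ a deflation), one completes it to a morphism of conflations and reads off the induced morphism of extensions. The \emph{main obstacle} I expect is (ET4) and (ET4)$^{op}$, the octahedral-type axiom. Here one is handed two composable inflations and must construct the full octahedral configuration of four compatible conflations; the construction proceeds by forming the homotopy pushout square guaranteed by the axioms of $\mathcal S$ and identifying the relevant cones in $\D(\A)$. Unlike in a triangulated category, however, every commutativity has to be upgraded to the coherent homotopy data recorded by $\mathcal H_{3t}(\A)$, and for each newly produced sequence one must verify that its associated morphism $u$ is again a quasi-isomorphism after $\tau_{\leq 0}$, so that the sequence genuinely belongs to $\mathcal S$. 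Carrying out this compatibility check at the level of $3$-term h-complexes, rather than merely in $H^0(\A)$ or in $\D(\A)$, is the technical heart of the proof; once it is in place, the remaining identities of (ET4) transport the octahedral axiom of $\D(\A)$ back to $H^0(\A)$, and the theorem follows.
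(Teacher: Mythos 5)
Your proposal breaks down at the very first step, the definition of $\mathbb{E}$. A conflation in an exact dg category only provides a morphism $u=[f^{\wedge},h^{\wedge}]^{\intercal}:A_0^{\wedge}\rightarrow \Sigma^{-1}\Cone(j^{\wedge})$ whose \emph{truncation} $\tau_{\leq 0}(u)$ is a quasi-isomorphism; $u$ itself is in general not invertible in $\D(\A)$, so there is no triangle $A_0^{\wedge}\rightarrow A_1^{\wedge}\rightarrow A_2^{\wedge}\rightarrow \Sigma A_0^{\wedge}$ and hence no connecting class $\delta\in\Hom_{\D(\A)}(A_2^{\wedge},\Sigma A_0^{\wedge})$. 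The failure is not a corner case but the central one: exact structures depend only on the connective cover $\tau_{\leq 0}\A$, and for connective $\A$ one has $\Hom_{\D(\A)}(C^{\wedge},\Sigma A^{\wedge})\cong H^1\A(C,A)=0$, so your bifunctor vanishes identically. Concretely, take $\A$ to be a Quillen exact category concentrated in degree $0$ with a non-split conflation $0\rightarrow A\rightarrow B\xrightarrow{j} C\rightarrow 0$: here $\Sigma^{-1}\Cone(j^{\wedge})$ is the complex $[B^{\wedge}\rightarrow C^{\wedge}]$, whose $H^1$ is the cokernel of $j^{\wedge}$ (the defect, in Ogawa's sense), nonzero exactly when the sequence is non-split; thus $u$ is not a quasi-isomorphism, your $\mathbb{E}(C,A)$ is $0$, and $\mathfrak{s}$ cannot be well defined, since the single class $0$ would have to realize both the split and the non-split conflation. (Proposition~\ref{prop:twoterm} gives further examples: conflations of the exact dg category $\A'$ of two-term complexes that are not homotopy short exact in the ambient pretriangulated category, i.e.\ do not come from triangles.) Your approach would only be valid when $\A$ is pretriangulated, which is precisely the situation the theorem is meant to generalize.

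Everything downstream inherits this defect, including your plan to "transport the octahedral axiom of $\D(\A)$ back to $H^0(\A)$" for (ET4). The paper instead defines $\mathbb{E}(C,A)$ \emph{intrinsically} as the set of equivalence classes of conflations with ends $A$ and $C$ (Definition~\ref{equivalencerelation}, equivalences being isomorphisms in $\mathcal H_{3t}(\A)$ restricting to identities on the ends, which are automatically isomorphisms by Corollary~\ref{middleterm}); functoriality $\overline{a}_*$, $\overline{c}^*$ is constructed from Axioms ${\Ex}2$, ${\Ex}2^{op}$ via homotopy pushouts/pullbacks and their universal property (Proposition~\ref{cons}); the group law is the Baer sum $[X]+[X']=\overline{c}^{\,*}\overline{a}_*[X\oplus X']$; the realization $\mathfrak{s}[X]=[A\xrightarrow{\overline{f}}B\xrightarrow{\overline{\jmath}}C]$ is tautological; and (ET3), (ET4) and their duals are verified by diagram lemmas carried out at the level of $\mathcal H_{3t}(\A)$ (Lemma~\ref{fact}, Lemma~\ref{univ}), never by invoking triangles of $\D(\A)$. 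While your sketch of the pushout/pullback mechanics and Baer sums points at the right ingredients, it cannot be repaired without discarding your definition of $\mathbb{E}$ and adopting an intrinsic one of this kind.
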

Extriangulated categories of this form are called {\em algebraic}. An analogous result for exact $\infty$-categories is due to Nakaoka--Palu \cite{NakaokaPalu20}.
Since the notion of exact structure on an additive dg category $\A$ is defined in terms of homotopy short exact sequences, which depends only on the connective cover $\tau_{\leq 0}\A$ of $\A$, the inclusion $\tau_{\leq 0}\A\rightarrow \A$ induces a bijection between the exact structures on both sides.
Consequently, every algebraic extriangulated category possesses a connective dg enhancement. 
It is worth noting that an extension-closed full dg subcategory of an exact dg category inherits a canonical exact structure. As a result, both Yilin Wu's Higgs categories \cite{Wu23a} and Haibo Jin's categories of Cohen--Macaulay dg modules \cite{Jin20} fall into the framework of algebraic extriangulated categories.

 The following new notion is a dg analog of Lurie's notion of stable $\infty$-category~\cite{LurieHA}. 
\begin{definition2}[Definition~\ref{def:stable}]\label{intro:stable}
Let $\A$ be an additive dg category. 
It is {\em  stable} if the following conditions are satisfied:
\begin{itemize}
\item[(a)] the dg category $\A$ admits homotopy kernels and homotopy cokernels;
\item[(b)] a  3-term homotopy complex is homotopy left exact if and only if it is homotopy right exact.
\end{itemize} 
\end{definition2}
We will show in a subsequent paper that a dg category $\A$ is stable if and only if $N_{dg}(\A)$ is stable as an $\infty$-category. A pretriangulated dg category is necessarily stable. But the converse is not true, e.g.~the $\tau_{\leq 0}$-truncation of a pretriangulated dg category is stable, but never pretriangulated. We refer to Section \ref{sec:stable} for more details.
It turns out that a stable dg category has a canonical exact structure, cf.~Proposition~\ref{prop:stablestructure}. 
Based on this, we can characterise when the extriangulated structure in Theorem~\hyperref[intro:truncationextriangulated]{A} is triangulated.
\begin{theoremB}[{Theorem~\ref{thm:stabletriangulated}}]
Let $\A$ be a small exact dg category.
Then the extriangulated structure on $H^0(\A)$ of Theorem~\hyperref[intro:truncationextriangulated]{A} is a triangulated structure if and only if $\A$ is a stable dg category with the canonical exact structure.
\end{theoremB}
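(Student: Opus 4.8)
The plan is to show that conditions~(a) and~(b) in the definition of stability are exactly what force the extension bifunctor $\mathbb{E}$ underlying the extriangulated structure of Theorem~\hyperref[intro:truncationextriangulated]{A} to be representable by a suspension functor; representability of $\mathbb{E}$ is precisely the feature that separates triangulated categories from general extriangulated ones. I would prove the two implications separately, treating ``stable with canonical structure $\Rightarrow$ triangulated'' as the substantial direction.

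For that direction, assume $\A$ is stable with its canonical exact structure, so that by Proposition~\ref{prop:stablestructure} the class $\mathcal{S}$ consists of \emph{all} homotopy short exact sequences, and by~(b) a $3$-term h-complex lies in $\mathcal{S}$ iff it is homotopy left exact iff it is homotopy right exact. First I would record that~(a) and~(b) make every morphism $f\colon A\to B$ of $H^0(\A)$ an inflation: its homotopy cokernel yields a homotopy right exact, hence (by~(b)) homotopy short exact, sequence $A\to B\to \mathrm{hcok}(f)$, so there are enough conflations to extend every morphism to a candidate triangle; dually every morphism is a deflation. Next I would define $\Sigma A:=\mathrm{hcok}(A\to 0)$ and $\Omega A:=\mathrm{hker}(0\to A)$. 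The sequence $A\to 0\to\Sigma A$ is homotopy right exact by construction, hence homotopy left exact by~(b), which reads $A\cong\mathrm{hker}(0\to\Sigma A)=\Omega\Sigma A$; symmetrically $A\cong\Sigma\Omega A$ from $\Omega A\to 0\to A$. Thus $\Sigma$ is an autoequivalence of $H^0(\A)$ with quasi-inverse $\Omega$.

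I would then extract the connecting morphism $C\to\Sigma A$ by comparing a conflation $A\to B\to C$ with the distinguished sequence $A\to 0\to\Sigma A$ along $\Id_A$, and declare a candidate triangle $A\to B\to C\to\Sigma A$ distinguished exactly when $A\to B\to C$ lies in $\mathcal{S}$; this simultaneously produces the natural isomorphism $\mathbb{E}(C,A)\cong H^0(\A)(C,\Sigma A)$. Axioms (TR1) and (TR3) follow from the existence of homotopy cokernels together with the functoriality already built into the extriangulated structure, and the octahedral axiom (TR4) is a direct translation of the axioms (ET4) and (ET4)$^{op}$ satisfied by $H^0(\A)$ in virtue of Theorem~\hyperref[intro:truncationextriangulated]{A}. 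I expect the rotation axiom (TR2) to be the main obstacle: one must verify that the homotopy cokernel of $g\colon B\to C$ is $\Sigma A$ and that $B\to C\to\Sigma A$ is again homotopy short exact. This is exactly the point where the symmetry of~(b) between homotopy left and right exactness is indispensable, and where a homotopy pushout/pullback computation in $\A$ is required.

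For the converse, assume the extriangulated structure on $H^0(\A)$ is a triangulated structure. Since every morphism of a triangulated category sits in a triangle and every triangle restricts to a conflation, hence to a homotopy short exact sequence in $\A$, the cone of $f\colon A\to B$ realizes $\mathrm{hcok}(f)$ and, after rotation, its cocone realizes $\mathrm{hker}(f)$; this gives condition~(a). To obtain~(b), I would take a homotopy right exact $3$-term h-complex $A\to B\to C$, form the triangle on $A\to B$ to get a conflation $A\to B\to C'$ with $C'\cong\mathrm{hcok}(A\to B)\cong C$, and conclude by uniqueness of homotopy cokernels and closure of $\mathcal{S}$ under isomorphism that $A\to B\to C$ is itself homotopy short exact, hence homotopy left exact; the dual argument gives the reverse implication, establishing~(b). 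The same uniqueness-and-isomorphism argument shows that every homotopy short exact sequence already lies in $\mathcal{S}$, so $\mathcal{S}$ is the maximal, i.e.\ canonical, exact structure. Hence $\A$ is a stable dg category with its canonical exact structure, completing the equivalence.
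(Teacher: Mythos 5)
Your converse direction is correct and is essentially the paper's argument in different clothing: you obtain ``every morphism of $H^0(\A)$ is both an inflation and a deflation'' by completing morphisms to triangles, whereas the paper deduces it from the Frobenius property (every $A\to 0$ is an inflation) together with the dual of Claim~1 in the proof of Proposition~\ref{property}; from there both arguments conclude conditions (a), (b) and the maximality of $\mathcal S$ via uniqueness of homotopy (co)kernels and invariance of $\mathcal S$ under isomorphism. The forward direction, however, is where you genuinely diverge from the paper and where your proposal has a real gap. The paper never constructs $\Sigma$ and never touches (TR1)--(TR4): it observes that stability yields, for every object $A$, conflations $A'\to 0\to A$ and $A\to 0\to A''$, so that $(H^0(\A),\mathbb E,\mathfrak s)$ is Frobenius with vanishing projective-injectives, and then invokes \cite[Corollary 7.6]{NakaokaPalu19}, which says precisely that such an extriangulated category is triangulated. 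Your plan amounts to re-proving that corollary by hand, and the two steps you leave open are exactly its mathematical content. First, comparing a conflation $A\to B\to C$ with $A\to 0\to\Sigma A$ along $\Id_{A}$ only gives \emph{surjectivity} of the map $H^0(\A)(C,\Sigma A)\to\mathbb E(C,A)$, $c\mapsto c^*[A\to 0\to\Sigma A]$; injectivity is a separate argument (for instance: if $c^*[A\to 0\to\Sigma A]=0$, its realization is split, so by Proposition~\ref{split} the middle-to-right map is a retraction on which $c$ vanishes, whence $c=0$). Second, and more seriously, you explicitly defer (TR2) to an unspecified ``homotopy pushout/pullback computation''. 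Since (TR2) is, as you say yourself, the main obstacle, the hardest point of the theorem is asserted rather than proved.

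The gap is fillable with tools already in the paper, so your route can be completed if you wish: applying Lemma~\ref{lem:diagramlemmaexact} to the morphism of conflations $(\Id_{A},0,\delta)$ from $A\to B\to C$ to $A\to 0\to\Sigma A$ shows that the square on the last two terms is homotopy bicartesian and that $B\to C\xrightarrow{\delta}\Sigma A$ is again a conflation; one further comparison (with $B\to 0\to\Sigma B$) is then needed to identify the connecting morphism of this new conflation with $-\Sigma\overline{f}$, which completes (TR2). But unless your goal is to reprove Nakaoka--Palu's result, the efficient proof is the paper's: check the two-line Frobenius condition with zero projective-injectives and cite \cite[Corollary 7.6]{NakaokaPalu19}.
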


In a subsequent paper, we will study in detail the relations between exact dg categories and Barwick exact $\infty$-categories, characterize the class of algebraic extriangulated categories, and prove that each connective exact dg category embeds into its bounded dg derived category. 
In particular, we will show that a connective stable dg category is quasi-equivalent to the $\tau_{\leq 0}$-truncation of a pretriangulated dg category.   

The paper is structured as follows. In Section~\ref{subsection:notations}, we collect basic notations and terminology about dg categories, $\Ai_{\infty}$-categories and homotopy diagrams. In Section~\ref{sec:homotopyshortexactsequence}, we introduce the notion of homotopy short exact sequence in a dg category. We then prove several useful diagram lemmas.
In Section~\ref{sec:exactdgcategory}, we introduce the central notion in this paper: that of exact dg category. We show that for a small exact dg category $\A$, there exists a canonical extriangulated structure $(H^0(\A),\mathbb E,\mathfrak s)$ on $H^0(\A)$.
 In Section~\ref{sec:example}, we illustrate our theory by the example of dg categories of two-term complexes of finitely generated projective modules over a ring.
 In Section~\ref{sec:stable}, we study a special class of additive dg categories: the class of stable dg categories. We show that each stable dg category carries a canonical exact structure whose associated extriangulated structure is triangulated.
 
 Throughout the paper, we assume that all the categories considered are small.

	\section{Recollections on dg categories, $\Ai_{\infty}$-categories and homotopy diagrams}\label{subsection:notations}
	In this section, we collect basic notations and terminology needed in this paper. 
		\subsection{Dg categories}
	Throughout we fix a commutative ring $k$. The standard references for dg categories are \cite{Keller94, Keller06d, Drinfeld04,Toen11, BondalKapranov90}.

	We write $\otimes$ for the tensor product over $k$.  
	We denote by $\C_{dg}(k)$ the dg category of complexes of $k$-modules and by $\C(k)$ the category of complexes of $k$-modules. 
	
	Let $\A$ be a dg $k$-category.
	For two objects $A_1$ and $A_2$, the Hom complex is denoted by $\Hom_{\A}(A_1,A_2)$ or 
	$\A(A_1,A_2)$.	An element $f$ of $\A(A_1,A_2)^{p}$ will be called a {\em homogeneous} morphism of degree $p$ with the notation $|f|=p$. 
	A homogeneous morphism $f:A_1\rightarrow A_2$ is {\em closed} if we have $d(f)=0$.
	
                 Let $M$ be complex of $k$-modules. We put
	\[
	\begin{tikzcd}
	\tau_{\leq 0}M=(\cdots\ar[r]&M^{-2}\ar[r]&M^{-1}\ar[r]&Z^0M\ar[r]&0\ar[r]&\cdots).
	\end{tikzcd}
	\] 
                 For a dg category $\A$, denote by $\tau_{\leq 0}\A$ the dg category with the same objects as $\A$ 
                 and whose morphism complexes are given by
                 \[
                 (\tau_{\leq 0} \A)(A_1, A_2) = \tau_{\leq 0}(\A(A_1, A_2)).
                  \]
                 The composition is naturally induced by that of $\A$. 
                
 A dg category $\A$ is {\em connective} if for each pair of objects $A_1, A_2\in \A$, the 
 complex $\Hom_{\A}(A_1, A_2)$  has cohomology concentrated in non-positive degrees.
It is called {\em strictly connective} if the components of the complex $\Hom_{\A}(A_1, A_2)$ vanish
in all positive degrees.
 Following To\"en, the dg category $\tau_{\leq 0}\A$ is called the {\em connective cover of $\A$}.
 
	We denote by $Z^0(\A)$ the category with the same objects as $\A$ and whose morphism
spaces are defined by
	\[
	(Z^0\A)(A_1,A_2)=Z^0(\A(A_1,A_2)),
	\]
 where $Z^0$ is the kernel of $d:\A(A_1,A_2)^0\rightarrow \A(A_1,A_2)^1$.
 Similarly, we denote by $H^0(\A)$ the category with the same objects as $\A$ and whose 
 morphism spaces are given by 
 \[
 (H^0\A)(A_1,A_2)=H^0(\A(A_1,A_2)),
 \]
 where $H^0$ denotes the zeroth cohomology of the complex.
 
	The {\em oppositie dg category} $\A^{op}$ has the same objects as $\A$ and its morphism spaces are defined by
	\[
	\A^{op}(X,Y)=\A(Y,X);
	\] 
                 the composition of $f\in\A^{op}(Y,X)^{p}$ with $g\in \A^{op}(Z,Y)^{q}$ is given by $(-1)^{pq}gf$.
	By a dg $\A$-module $M$, we mean a right dg $\A$-module, i.e.~a dg functor $\A^{op}\rightarrow \C_{dg}(k)$. 
	We denote by $\C_{dg}(\A)$ the dg category of right dg $\A$-modules. The category of dg 
	$\A$-modules is
	\[
	\C(\A)=Z^0(\C_{dg}(\A)).
	\] 
	The category up to homotopy of dg $\A$-modules is 
	\[
	\mathcal H(\A)=H^0(\C_{dg}(\A)).
	\]

An object $A$ in $\A$ is {\em contractible} if it is a zero object in $H^0(\A)$, or equivalently if we have $\Id_{A}=d(h)$ for some morphism $h:A\rightarrow A$ of degree $-1$.
A dg $\A$-module is {\em contractible} if it is contractible in the dg category $\C_{dg}(\A)$.

 For each object $A\in\A$, we have the right dg module {\em represented by} $A$
 \[
 A^{\wedge}=\Hom_{\A}(-,A).
 \]
 A dg $\A$-module $M$ is {\em representable} if it is isomorphic to $A^\wedge$ for some
 $A\in\A$ and {\em quasi-representable} if it is quasi-isomorphic to $A^\wedge$ for some $A \in \A$.
 
For a dg $\A$-module $M$, put $IM=\Cone(\Id_M)$ and $PM=\Cone(\Id_{\Sigma^{-1}M})$. 
Explicitly, we have $IM=\Sigma M\oplus M$ and $PM=M\oplus \Sigma^{-1}M$ as graded $\A$-modules. 
We have the natural inclusion $i=[0,1]^{\intercal}:M\rightarrow IM$ and projection $p=[1,0]:PM\rightarrow M$.
 
 A dg functor $F:\A\rightarrow \B$ is a {\em quasi-equivalence} if
 \begin{itemize}
 \item[a)] it is {\em quasi-fully faithful}, i.e.~for all objects $A, A'\in A$, the morphism
  \[
 F_{A,A'}:\A(A,A')\rightarrow \B(FA,FA').    
 \] 
 is a quasi-isomorphism, and
 \item[b)] the induced functor  $H^0(F):H^0(\A)\rightarrow H^0(\B)$ is an equivalence of categories.
 \end{itemize}
 
 The {\em Yoneda dg functor}
 \[
 \A\rightarrow \C_{dg}(\A),\;\; A\mapsto A^{\wedge}
 \]
 is fully faithful.
 Let $\A'$ be the full dg subcategory of $\C_{dg}(\A)$ consisting of objects $A^{\wedge}$, $\Cone(\Id_{A^{\wedge}})$ and $\Cone(-\Id_{\Sigma^{-1}A^{\wedge}})$ for each $A\in\A$.
 The dg category $\A'$ has contractible pre-covers and contractible pre-envelopes and the inclusion $\A\rightarrow \A'$ is a quasi-equivalence. 
 Thus, up to quasi-equivalence, we may assume $\A$ has contractible pre-covers and contractible pre-envelopes. 
 
  The category $\dgcat$ of small dg categories admits the Dwyer-Kan model structure (cf.~\cite{Tabuada05}), 
 whose weak equivalences are the quasi-equivalences. Its homotopy category is denoted by $\Hqe$. 
 There exists a cofibrant replacement functor $Q$ on $\dgcat$ such that for any $\A\in\dgcat$, the natural dg functor $Q(\A)\rightarrow \A$ is the identity on the set of objects. 
 In particular, the Hom-complexes of $Q(\A)$ are cofibrant over $k$.
 The {\em tensor product} $\A\otimes \B$ of two dg categories $\A$ and $\B$ has the class of objects  $\obj(\A)\times \obj(\B)$ and the morphism spaces
 \[
 (\A\otimes\B)((A,B),(A',B'))=\A(A,A')\otimes \B(B,B')
 \]
 with the natural compositions and units. 
 This defines a symmetric monoidal structure $-\otimes -$ on $\dgcat$ which is closed.
 For $\A, \B\in\dgcat$, put $\A\otimes^{\mathbb L}\B=\A\otimes Q(\B)$. 
 This extends to a bifunctor $-\otimes^{\mathbb L}-:\dgcat\times \dgcat\rightarrow \dgcat$ and then passes through the homotopy categories
 \[
 -\otimes^{\mathbb L}-:\Hqe\times \Hqe\rightarrow \Hqe.
 \]
 
 Let $\Cat$ be the category of small categories. 
 Let $[\Cat]$ be the category with objects small categories and with morphisms isomorphism classes of functors.
 In particular the isomorphism class of an equivalence of categories is an isomorphism in $[\Cat]$.
 So the functor $H^0:\dgcat\rightarrow \Cat$ induces a functor
 \[
 H^0:\Hqe\rightarrow [\Cat].
 \]
 
 For a category $\C$, we denote by $\Iso(\C)$ the class of its isomorphism classes of objects.
 
 For a dg category $\A$, we denote by $\D(\A)$ its {\em derived category}, a triangulated category. 
 By definition, $\D(\A)$ is the localization of $\C(\A)$ at the class of {\em quasi-isomorphisms}, i.e.~morphisms of dg $\A$-modules which induce quasi-isomorphisms of complexes when evaluated at any object in $\A$.
 Let $\pi:\C(\A)\rightarrow \D(\A)$ be the quotient functor. 
 For a morphism $j:M\rightarrow N$ in $\C(\A)$, we denote by $\overline{\jmath}=\pi(j):M\rightarrow N$ the corresponding morphism in $\D(\A)$.
 A dg functor $F:\A\rightarrow \B$ induces a triangle functor $F_*:\D(\A)\rightarrow \D(\B)$ which is an equivalence of triangulated categories if $F$ is a quasi-equivalence.
 The {\em dg derived category} $\D_{dg}(\A)$ of $\A$ is defined to be the full dg subcategory of $\C_{dg}(\A)$ consisting of cofibrant dg $\A$-modules in the {\em projective model structure} of $\C(\A)$ (cf.~\cite[Theorem 3.2]{Keller06d}).
 The canonical functor $H^0(\D_{dg}(\A))\rightarrow \D(\A)$ is an equivalence of triangulated categories.
 
 For dg categories $\A$ and $\B$, we define $\rep(\B,\A)$ to be the full subcategory of $\D(\A\otimes^{\mathbb L} \B^{op})$ whose objects are the dg bimodules $X$ such that $X(-,B)$ is quasi-representable for each object $B$ of $\B$. 
 We define the canonical dg enhancement $\rep_{dg}(\B,\A)$ to be the full dg subcategory of $\D_{dg}(\A\otimes^{\mathbb L}\B^{op})$ whose objects are those of $\rep(\B,\A)$.
 
 A dg category $\A$ is {\em pretriangulated} if the canonical inclusion $H^0(\A)\rightarrow \D(\A)$ is a triangulated subcategory.
Note that $\rep_{dg}(\B,\A)$ is pretriangulated if $\A$ is pretriangulated.
For a dg category $\A$, its {\em pretriangulated hull} $\pretr(\A)$ (\cite{BondalKapranov90, Drinfeld04, BondalLarsenLunts04}) is defined as follows. 
The objects of $\pretr(\A)$ are ``one-sided twisted complexes'', i.e.~formal expressions $(\oplus_{i=1}^{n}A_i[r_i],q)$, where $A_i\in \A$, $r_i\in\mathbb Z$, $n\geq 0$, $q=(q_{ij})$, $q_{ij}\in \Hom_{\A}^{r_i-r_j+1}(A_j,A_i)$, $q_{ij}=0$ for $i\geq j$ and $dq+q^2=0$. 
Here we adopt the convention $(dq)_{ij}=(-1)^{r_i}d_{\A}(q_{ij})$ and $(q^2)_{ij}=\sum_{k}q_{ik}\circ q_{kj}$.
If $A$ and $A'$ are two objects in $\pretr(\A)$ with $A=(\oplus_{i=1}^{n}A_i[r_i],q)$ and $A'=(\oplus_{i'=1}^{n'}A_{i'}'[r_{i'}'],q')$, the complex $\Hom_{\pretr(\A)}(A,A')$ has as the degree $m$ component the space of matrices $f=(f_{ij})$, $f_{ij}\in\Hom_{\A}^{m+r_{i}'-r_j}(A_{j}, A_{i}')$. 
The differential $d$ carries a morphism $f=(f_{ij})$ of degree $m$ to $df=((df)_{ij})$ where 
\[
(df)_{ij}=(-1)^{r_i'}d_{\A}(f_{ij})+\sum_{k}q'_{ik}\circ f_{kj}-(-1)^{m} \sum_{k} f_{ik}\circ q_{kj}.
\]
The composition map 
\[
\Hom_{\pretr(\A)}(A',A'')\otimes \Hom_{\pretr(\A)}(A,A')\rightarrow \Hom_{\pretr(\A)}(A,A'')
\]
 is the matrix multiplication: $f'\otimes f\mapsto f''$ where $f''_{ij}=\sum_{k}f'_{ik}\circ f_{kj}$.
 We denote the triangulated category $H^0(\pretr(\A))$ by $\tr(\A)$.
The Yoneda dg functor $\A\rightarrow \C_{dg}(\A)$ extends to a fully faithful dg functor $\pretr(\A)\rightarrow \D_{dg}(\A)$.
It induces a fully faithful triangle functor $\tr(\A)\rightarrow \D(\A)$ whose essential image is the triangulated subcategory of $\D(\A)$ generated by the representable dg modules.

A {\em quiver} (or {\em directed graph}) $Q$ is a quadruple $(Q_0,Q_1,s,t)$ where $Q_0$ is a set whose elements are called {\em objects} or {\em vertices} of $Q$, $Q_1$ is a set of {\em arrows} $f$ (or {\em edges}), and $s,t$ are maps $Q_1\rightarrow Q_0$ where $s(f)$ is the {\em source} of the arrow $f$ and $t(f)$ is the {\em target} of $f$. 
A morphism between quivers is defined in the obvious way. 
We denote the category of quivers by $\mathrm{Quiv}$.

Each small category $\C$ has an {\em underlying quiver} $F(\mathcal C)$ and this extends to the forgetful functor $F:\Cat\rightarrow\Quiv$. 
This functor admits a left adjoint $P:\Quiv\rightarrow \Cat$ which associates to a quiver $Q$ the {\em path category} $P(Q)$ of $Q$. 
The path category $P(Q)$ has the same objects as $Q$ and its morphisms $x_1\rightarrow x_n$ are finite paths
\[
\begin{tikzcd}
x_1\ar[r,"f_1"]&x_2\ar[r]&\cdots\ar[r,"f_{n-1}"]&x_n
\end{tikzcd}
\]
consisting of $n\geq 1$ objects $x_1,\cdots,x_n$ of $Q$ which are connected by arrows $f_i:x_i\rightarrow x_{i+1}$ of $Q$. 
The composite of two paths is defined by concatenation: $\mathrm{concat}(f,g)=g\circ f$.

Let $Q$ be a quiver and $R$ a function which assigns to each pair of objects $x,y$ of $Q$ a binary relation $R_{x,y}$ on the set of finite paths from $x$ to $y$. 
Then the path category of the quiver $Q$ with relations $R$ is defined to be the quotient category $P(Q)/\overline{R}$, where $\overline{R}$ is the smallest family of equivalence relations $\overline{R}_{x,y}$, $x,y\in Q_0$, containing $R$ and stable under pre- and postcomposition with morphisms. 
Often we only write down nontrivial relations in $R$.

For a small category $\I$, we have the $k$-category $k\I$ whose set of objects is the same as that of $\I$ and for each pair of objects $x,y$ in $\C$, the space $k\I(i,j)$ is the free $k$-module generated by the set $\I(i,j)$. 
In particular, if we view $k\I$ as a dg category concentrated in degree 0, then it is {\em $k$-cofibrant}, i.e.~the Hom complexes are $k$-cofibrant. 
Let $\B$ be a $k$-cofibrant dg category. 
We have a quasi-equivalence $\A\otimes ^{\mathbb L}\B\iso\A\otimes \B$.
A dg functor $F:\B\rightarrow\A$ gives rise to a $\B$-$\A$-bimodule $_{F}\A_{\A}$ defined by
\[
(A,B)\mapsto \Hom_{\A}(A,FB).
\]
The assignment $F\mapsto  {_{F}\A_{\A}}$ passes to the following map
\[
\Hqe(\B,\A)\rightarrow \Iso(\rep(\B,\A))
\]
which is a bijection, cf.~\cite{Toen07}.
It is shown in loc.~cit~that $\rep_{dg}(\B,\A)$ is the internal Hom of the monoidal category $(\Hqe,-\otimes^{\mathbb L}-)$, i.e.~for small dg categories $\A$, $\B$ and $\C$, we have 
\[
\Hqe(\A\otimes^{\mathbb L}\B,\C)\iso\Hqe(\A,\rep_{dg}(\B,\C)).
\]
The following notion will be useful for the study of exact dg categories.
\begin{definition}\label{def:connective quasi-equivalence}
Let $\A$ and $\B$ be small dg categories and $F:\A\rightarrow \B$ a dg functor. 
It is a {\em connective quasi-equivalence} if the induced dg functor $\tau_{\leq 0}F:\tau_{\leq 0}\A\rightarrow\tau_{\leq 0}\B$ is a quasi-equivalence.
 \end{definition}
Let $\Hqe_{\leq 0}$ be the localization of $\dgcat$ at the class of connective quasi-equivalences.
We denote by $\dgcat_{\leq 0}$ the class of small and strictly connective dg categories.
 and by $\Q{e}$ the class of quasi-equivalences between connective dg categories.
A dg functor $F:\A\rightarrow \B$ in $\dgcat_{\leq 0}$ is a {\em fibration} \cite[Definition 3.8]{Tabuada10} if:  
\begin{itemize}
\item[$\mathrm{(F1)_{\leq 0}}$] for all objects $x$, $y$ in $\A$, the morphism of connective complexes
\[
F(x,y): \A(x,y)\rightarrow \B(x,y)
\]
is surjective in negative degrees and,
\item[$\mathrm{(F2)_{\leq 0}}$] given an object $x$ in $\A$ and a homotopy equivalence $v:Fx\rightarrow y$ in $\B$, there exists a homotopy equivalence $x\rightarrow x'$ in $\A$, such that $F(u)=v$.
\end{itemize}
A dg functor $F:\A\rightarrow \B$ in $\dgcat_{\leq 0}$ is a {\em cofibration} \cite[Definition 3.9]{Tabuada10} if it has the left lifting property with respect to the dg functors $\dgcat_{\leq 0}$ which are simultaneous quasi-equivalences and fibrations.
By \cite[Theorem 3.10]{Tabuada10}, the category $\dgcat_{\leq 0}$ endowed with the above defined notions of weak equivalence, fibration, and cofibration, is a Quillen model category. Moreover, this model structure is cofibrantly generated, cf.~\cite[11.1.2]{Hirschhorn03}.
Let $\Hqe_{\leq 0}'$ be the localization of $\dgcat_{\leq 0}$ at the class of quasi-equivalences.
We have the following lemma, whose proof is similar to that of Lemma~\ref{lem:trun}.
\begin{lemma}\label{lem:trun2}
The adjunction
\[
\begin{tikzcd}
\dgcat_{\leq 0}\ar[r,"i", shift left =0.8ex]&\dgcat,\ar[l,"\tau_{\leq 0}", shift left=0.8ex]
\end{tikzcd}
\]
where the left adjoint $i:\dgcat_{\leq 0}\rightarrow \dgcat$ is the inclusion functor and the right adjoint $\tau_{\leq 0}:\dgcat\rightarrow \dgcat_{\leq 0}$ sends a small dg category $\A$ to its connective cover $\tau_{\leq 0}\A$, induces a pair of equivalences of categories which is quasi-inverse to each other
\[
\begin{tikzcd}
\Hqe_{\leq 0}'\ar[r,"i", shift left =0.8ex]&\Hqe_{\leq 0}.\ar[l,"\tau_{\leq 0}", shift left=0.8ex]
\end{tikzcd}
\]
\end{lemma}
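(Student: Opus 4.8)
\emph{Proof proposal.} The plan is to show that the adjunction $i\dashv\tau_{\leq 0}$ already exhibits $\dgcat_{\leq 0}$ as a coreflective full subcategory of $\dgcat$ up to connective quasi-equivalence, and then to transport this structure to the localizations by their universal properties, exactly as in the proof of Lemma~\ref{lem:trun}. First I would record the two structural identities on which everything rests. Since the morphism complexes of a strictly connective dg category are concentrated in non-positive degrees, their degree-$0$ components are automatically cocycles, so $\tau_{\leq 0}(iA)=A$ for every $A\in\dgcat_{\leq 0}$; hence the unit $\eta\colon \mathrm{id}_{\dgcat_{\leq 0}}\Rightarrow \tau_{\leq 0}\circ i$ is the identity and $\tau_{\leq 0}\circ i=\mathrm{id}_{\dgcat_{\leq 0}}$ on the nose. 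Dually, because the good truncation $\tau_{\leq 0}$ is idempotent, the counit $\epsilon_B\colon i\tau_{\leq 0}B\to B$ is simply the inclusion $\tau_{\leq 0}B\hookrightarrow B$ of the connective cover, and applying $\tau_{\leq 0}$ to it returns the identity functor of $\tau_{\leq 0}B$.

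Next I would check that both functors descend to the stated localizations. If $F\colon A\to A'$ is a quasi-equivalence in $\dgcat_{\leq 0}$, then $\tau_{\leq 0}(iF)=F$ by the first identity, so $iF$ is a connective quasi-equivalence; thus $i$ carries the class inverted in $\Hqe_{\leq 0}'$ into the class inverted in $\Hqe_{\leq 0}$ and induces $\bar i\colon \Hqe_{\leq 0}'\to\Hqe_{\leq 0}$. Conversely, a connective quasi-equivalence $F$ in $\dgcat$ is by definition a dg functor for which $\tau_{\leq 0}F$ is a quasi-equivalence, and $\tau_{\leq 0}F$ is a quasi-equivalence between strictly connective dg categories, so $\tau_{\leq 0}$ induces $\overline{\tau_{\leq 0}}\colon \Hqe_{\leq 0}\to\Hqe_{\leq 0}'$. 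Both descents exist by the universal property of localization, whose legitimacy is guaranteed by Tabuada's model structure on $\dgcat_{\leq 0}$ \cite{Tabuada10} and by the analogous cofibrantly generated description of $\Hqe_{\leq 0}$.

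Finally I would identify the two composites. The strict identity $\tau_{\leq 0}\circ i=\mathrm{id}$ descends verbatim to $\overline{\tau_{\leq 0}}\circ\bar i=\mathrm{id}_{\Hqe_{\leq 0}'}$. For the other composite, each component $\epsilon_B$ of the counit satisfies $\tau_{\leq 0}(\epsilon_B)=\mathrm{id}$ and is therefore a connective quasi-equivalence, hence an isomorphism in $\Hqe_{\leq 0}$; since $\epsilon$ is natural in $\dgcat$ and the localization functor $\gamma\colon\dgcat\to\Hqe_{\leq 0}$ preserves this naturality, $\gamma\epsilon$ furnishes a natural isomorphism $\bar i\circ\overline{\tau_{\leq 0}}\cong\mathrm{id}_{\Hqe_{\leq 0}}$. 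Together these show that $\bar i$ and $\overline{\tau_{\leq 0}}$ are mutually quasi-inverse equivalences.

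I expect the only genuine content, and hence the main obstacle, to lie in the bookkeeping at the level of truncations: verifying carefully that $\tau_{\leq 0}$ fixes strictly connective dg categories and is idempotent, so that the counit is \emph{literally} the connective-cover inclusion and becomes a connective quasi-equivalence. Everything else is the formal mechanism of inverting weak equivalences through an adjunction; the model-categorical input is needed only to ensure that the two localizations are honest (locally small) categories, not to produce the equivalence itself.
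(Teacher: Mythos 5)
Your proposal is correct and takes essentially the same approach the paper intends: the paper omits a detailed proof, referring to that of Lemma~\ref{lem:trun}, which likewise descends the adjunction $i\dashv\tau_{\leq 0}$ to the localizations via their universal properties. Your two key observations — that the unit is the identity because $\tau_{\leq 0}\circ i=\mathrm{id}$ on strictly connective dg categories, and that the counit $i\tau_{\leq 0}\B\to\B$ is a connective quasi-equivalence and hence becomes invertible in $\Hqe_{\leq 0}$ — are exactly the points that upgrade the descended adjunction to the asserted pair of quasi-inverse equivalences.
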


\subsection{$\Ai_{\infty}$-categories}
We recall some basic facts and notation for $\Ai_{\infty}$-categories. Our standard references for $\Ai_{\infty}$-categories are \cite{Keller01,Keller02, Keller06c,Lefevre03}. We follow the sign conventions of \cite{GetzlerJones90}. Note that we keep the assumption that $k$ is a commutative ring.
\begin{definition}An $\Ai_{\infty}$-category $\A$ is the datum of 
\begin{itemize}
\item a class of objects $\obj(\A)$,
\item for all $A, B\in \A$, a $\mathbb Z$-graded $k$-module $\Hom_{\A}(A,B)$, often denoted by $(A,B)$,
\item for all $n\geq 1$ and all $A_0$, \ldots, $A_n$, a graded map
\[
m_n:(A_{n-1},A_n)\otimes(A_{n-2},A_{n-1})\otimes \cdots\otimes(A_0,A_1)\rightarrow (A_0,A_n)
\]
of degree $2-n$
\end{itemize}
such that for each $n\geq 1$ and all $A_0, \ldots, A_n\in\A$, we have the identity
\[
\sum (-1)^{r+st}m_u(\boldsymbol{1}^{\otimes r}\otimes m_s\otimes \boldsymbol{1}^{\otimes t})=0
\]
of maps
\[
(A_{n-1},A_n)\otimes(A_{n-2},A_{n-1})\otimes \cdots\otimes(A_0,A_1)\rightarrow (A_0,A_n)
\]
where the sum runs over all decompositions $n=r+s+t$ and we put $u=r+1+t$.
\end{definition}
\begin{remark}The composition $m_2$ induces an associative composition 
\[
\mu: H^0(\A)(B,C)\otimes H^0(\A)(A,B)\rightarrow H^0(\A)(A,C)
\]
for each triple of objects $(A,B,C)$ in $\A$.
\end{remark}
\begin{definition}Let $\A$ be an $\Ai_{\infty}$-category and $A\in\A$. A morphism $e_{A}\in \Hom^{0}_{\A}(A,A)$ is a {\em strict identity} if we have 
\[
m_2(f,e_A)=f \mbox{ and } m_2(e_A,g)=g
\]
whenever these make sense and 
\[
m_n(\ldots,e_A,\ldots)=0
\]
for all $n\neq 2$.
In particular, $e_A$ is a cycle of the complex $(\Hom_{\A}(A,A),m_1)$. 
Clearly, if $e_A$ exists, it is unique. 
The $\Ai_{\infty}$-category $\A$ {\em has strict identities} if there is a strict identity $e_A$ for each object $A\in \A$.
\end{definition}
\begin{definition}Let $\A$ and $\B$ be two $\Ai_{\infty}$-categories. An $\Ai_{\infty}$-functor $F:\A\rightarrow \B$ is the datum of
\begin{itemize}
\item a map $F:\obj (\A)\rightarrow \obj (\B)$,
\item for all $n\geq 1$ and all $A_0$, \ldots, $A_n\in\A$ a graded map
\[
F_n: (A_{n-1},A_n)\otimes (A_{n-2}, A_{n-1})\otimes \cdots \otimes (A_0,A_1)\rightarrow \Hom_{\B}(FA_0,FA_n)
\]
of degree $1-n$
\end{itemize}
such that 
\[
\sum (-1)^{r+st}F_u(\boldsymbol{1}^{\otimes r}\otimes m_s\otimes \boldsymbol1^{\otimes t})=\sum (-1)^sm_r(F_{i_1}\otimes F_{i_2}\otimes \cdots \otimes F_{i_r})
\]
where the first sum runs over all decompositions $n=r+s+t$, we put $u=r+1+t$, and the second sum runs over all $1\leq r\leq n$ and all decompositions $n=i_1+\cdots+i_r$; the sign on the right hand side is given by
\[
s=(r-1)(i_1-1)+(r-2)(i_2-1)+\cdots+2(i_{r-2}-1)+(i_{r-1}-1).
\]
The $A_{\infty}$-functor $F$ is {\em strict} if we have $F_n=0$ for all $n\geq 2$. Clearly the above datum with $F_n=0$ for $n\geq 2$ defines a strict $A_{\infty}$-functor if and only if for each $k\geq 1$, we have
\[
F_1m_k=m_k(F_1\otimes \cdots\otimes F_1).
\]
\end{definition}
\begin{definition}Let $\A$ and $\B$ be two $\Ai_{\infty}$-categories with strict identities. An $\Ai_{\infty}$-functor $F$ is {\em strictly unital} if 
\[
F_1(e_A)=e_{FA}
\]
and
\[
F_i(\cdots\otimes e_A\otimes\cdots)=0
\]
for all $i\geq 2$.
\end{definition}

Let $\B$ be a small $\Ai_{\infty}$-category and $\A$ an $\Ai_{\infty}$-category. 
It is known, cf.~\cite{Kontsevich98}, that there is a natural $\Ai_{\infty}$-category $\Fun_{\Ai_{\infty}}(\B,\A)$ of strictly unital $\Ai_{\infty}$-functors between $\B$ and $\A$. 
It was constructed in \cite[8.1.3]{Lefevre03} using twists of $\Ai_{\infty}$-structures. 
Keller further developed an idea in \cite{Lyubashenko03} and interpreted it in \cite[5.7]{Keller06c} as an internal $\Hom$-object in the tensor category of cocomplete augmented dg cocategories.
 Note that when the target $\A$ is a dg category, the $\Ai_\infty$-category $\Fun_{\infty}(\B,\A)$ is again a dg category. 
 The reason is that we use the $m_j^{\A}$, $j\geq i$, to define the map $m_i$ of the $\Ai_{\infty}$-category 
$\Fun_{\Ai_{\infty}}(\B,\A)$ and when $\A$ is a dg category, then the $m_{j}^{\A}$ vanish for all $j\geq 3$.
\begin{example}\label{Mor(A)}
The dg category $\Fun_{\Ai_{\infty}}(k\Mor,\A)$ is isomorphic to the {\em dg morphism category} $\Mor(\A)$ introduced in \cite[2.9]{Drinfeld04}. More precisely, $\Mor(\A)$ has as objects the closed morphisms $f:A\rightarrow B$ of degree 0 in $\A$. The morphism complex $\Mor(\A)((A,B,f),(A',B',f'))$ has as the degree $m$ component the space of matrices
$\begin{pmatrix}
j&0\\
h&l
\end{pmatrix}$
where the morphisms $j:A\rightarrow A'$, $l:B\rightarrow B'$ are of degree $m$ and the morphism $h:A\rightarrow B'$ is of degree $m-1$. The differential $d$ carries a morphism 
$\begin{pmatrix}
j&0\\
h&l
\end{pmatrix}$
to
$\begin{pmatrix}
-d(j)&0\\
d(h)+f'j-(-1)^{m}lf&d(l)
\end{pmatrix}$.
The composition map is the multiplication of matrices.
\end{example}

\subsection{Homotopy diagrams}	\label{subsec:homotopydiagrams}
	Throughout we fix a dg $k$-category $\A$. 
	Our first aim is to formulate proper definitions of homotopy (co)cartesian squares in the dg category $\A$ (not  $\D(\A)$).

We are mainly concerned with the case when the small category $\I$ is the category with one object and one morphism or one of the following categories:

The {\em square} category $\mathrm{Sq}$ is the path category of the quiver 
\[
\begin{tikzcd}
00\ar[r,"f"]\ar[d,"g"swap] & 01\ar[d,"j"] \\
10\ar[r,"k"swap] & 11
\end{tikzcd}
\]
with the commutativity relation $jf\sim kg$;

the {\em cospan} category $\mathrm{Cosp}$ is the path category of the quiver
\[
\begin{tikzcd}
&01\ar[d]\\
10\ar[r]&11;
\end{tikzcd}
\]

the {\em span} category $\mathrm{Sp}$ is the path category of the quiver
\[
\begin{tikzcd}
00\ar[r]\ar[d]&01\\
10&
\end{tikzcd};
\]

the {\em composition} category $\mathrm{Com}$ is the path category of the quiver
\[
\begin{tikzcd}
0\ar[r]&1\ar[r]&2;
\end{tikzcd}
\]

the {\em morphism} category $\mathrm{Mor}$ is the path category of the quiver
\[
\begin{tikzcd}
0\ar[r]&1.
\end{tikzcd}
\]

We identify objects $X\in\D( \A \otimes k(\Sq^{op}))$ with commutative squares
\begin{equation}
\begin{tikzcd}\label{D(Sq)}
X_{00}\ar[r,"f"]\ar[d,"g"swap]&X_{01}\ar[d,"j"]\\
X_{10}\ar[r,"k"swap]&X_{11}
\end{tikzcd}
\end{equation}
in $\C(\A)$. 

In general we identify an object $X\in \D(\A\otimes k\I^{op} )$ with an $\I$-shaped diagram $X$ in $\C(\A)$, i.e.~a functor 
\[
X:\I\rightarrow \C(\A),\;\; i\mapsto X_i.
\] 
 Let $Q:\C(\A)\rightarrow \D(\A)$ be the canonical quotient functor. 
The map sending $X$ to $Q\circ X$ defines the canonical {\em diagram functor} $\Dia:\D(\A\otimes k\I^{op})\rightarrow\Fun(\I,\D(\A))$ which is {\em conservative}, i.e.~detects isomorphisms. Note that it is not full in general. 

We have the following diagram with the obvious functors $i:\Cosp\rightarrow\Sq$ and $s:\Mor\rightarrow\Cosp$. 
\[
\begin{tikzcd}
0\ar[d,""{name=1}]&\;\ar[d,""{name=4}, white]&01\ar[d,""{name=2}]&00\ar[rd,phantom,"="]\ar[r,"f"]\ar[d,"g"{name=3,swap}] & 01\ar[d]\ar[d,"j"] \\
1&10\ar[r]&11&10\ar[r,"k"swap] & 11\ar[r, from=2, to=3,red,"i"]\ar[r,from=1,to=4,red,"s"]
\end{tikzcd}\;.
\]

Write $L:\D(\A \otimes k\Sq^{op})\rightarrow \D(\A\otimes k\Cosp^{op})$ for the restriction functor along $i$, $U$ for its left adjoint and $R$ for its right adjoint.
Write $L':\D(\A\otimes k\mathrm{Cosp}^{op})\rightarrow \D( \A\otimes k\mathrm{Mor}^{op})$ for the restriction functor along $s$, $U'$ for its left adjoint and $R'$ for its right adjoint. 

We describe these functors at the level of objects. The functor $U'$ sends an object $Y\xrightarrow{j} Z$ to 
\[
\begin{tikzcd}
&Y\ar[d,"j"]\\
0\ar[r]&Z
\end{tikzcd}\;.
\]
The functor $R'$ sends $Y\xrightarrow{j} Z$ to 
\[
\begin{tikzcd}
&Y\ar[d,"j"]\\
Z\ar[r,equal]&Z
\end{tikzcd}\;.
\]
The functor $U$ sends 
\[
\begin{tikzcd}
&Y\ar[d,"j"]\\
W\ar[r,"k"swap]&Z
\end{tikzcd}
\] 
to 
\[
\begin{tikzcd}
0\ar[r]\ar[d]&Y\ar[d,"j"]\\
W\ar[r,"k"swap]&Z
\end{tikzcd}\;. 
\]
Let us describe the functor $R$. 
Let $S$ be the object
\[
\begin{tikzcd}  & B\ar[d,"{j}"] \\ 
                       C\ar[r,"k"swap] & D
                       \end{tikzcd}
\]
in $\D(\A\otimes k\mathrm{Cosp}^{op})$. 
Then the functor $R$ takes $S$ to the dg $\A\otimes k\Sq^{op}$-module
\[ 
\RHom_{k\mathrm{Cosp^{op}}}(M,S),
\]
where $M$ is the dg $k(\Sq)\otimes k\Cosp^{op}$-module $\Hom_{k(\Sq)^{op}}(i(-),-)$.
We will construct a quasi-isomorphism $M'\rightarrow M$ of bimodules such that $M'(-,j)$ is cofibrant for each $j\in \Sq$.
The dg $k\Cosp^{op}$-module $M(-,00)$ is given by the diagram
\[
\begin{tikzcd}
&k\ar[d,"\Id"]\\
k\ar[r,"\Id"swap]&k
\end{tikzcd}\;.
\]
It fits into a short exact sequence in $\C(k\mathrm{Cosp}^{op})$
\[
0\rightarrow 11^{\wedge}\rightarrow 01^{\wedge}\oplus 10^{\wedge}\rightarrow M(-,00)\rightarrow 0.
\]
So it is quasi-isomorphic to the cokernel $N$ of the graded-injective map 
\[
11^{\wedge}\rightarrow \Cone(\Id_{11^{\wedge}})\oplus 10^{\wedge}\oplus 01^{\wedge}.
\]
Let $M'$ be the dg $k(\Sq)\otimes k\Cosp^{op}$-module given by the following diagram in $\C(k\Cosp^{op})$
\[
\begin{tikzcd}
N&10^{\wedge}\ar[l]\\
01^{\wedge}\oplus \Cone(\Id_{11^{\wedge}})\ar[u]&11^{\wedge}\ar[l]\ar[u]\mathrlap{\;.}
\end{tikzcd}
\]
Then we get the desired quasi-isomorphism $M'\rightarrow M$.
So the functor $R$ takes $S$ to $\Hom_{k\Cosp^{op}}(M',S)$ which is given by the following diagram in $\C(\A)$
\[
\begin{tikzcd}
K\ar[r]\ar[d]&B\ar[d,"{j}"]\\
C\oplus \Cone(-\Id_{\Sigma^{-1}D})\ar[r,"(k\ p)"swap]&D
\end{tikzcd}
\]
such that $p:\Cone(-\Id_{\Sigma^{-1}D})\rightarrow D$ is the canonical projection map and that the diagram fits into the following graded split short exact sequence in $\C(\A)$
\[
0\rightarrow K\rightarrow \Cone(-\Id_{\Sigma^{-1}D})\oplus B\oplus C\xrightarrow{[p\;\;j\;\;k]} D\rightarrow 0.
\]

In summary, we have the following diagram of triangle functors 
\begin{equation}
\begin{tikzcd}\label{adjointtriple}
\D(\A\otimes k\mathrm{Mor}^{op})\ar[shift left =2ex, rr,"U'"{description}]\ar[shift right=2ex,rr,"R'"{description}]
&&\D(\A\otimes k\mathrm{Cosp}^{op} )\ar[ll,"L'"{description}]\ar[shift left=2ex, rr,"U"{description}]\ar[shift right=2ex, rr,"R"{description}]
&&\D(\A \otimes k\mathrm{Sq}^{op}).\ar[ll,"L"{description}]
\end{tikzcd}
\end{equation}

\begin{lemma}[\cite{KellerNicolas13,BeligiannisReiten07}] \label{lemma:tstructure}
 Let $\A$ be a dg category.
There is a canonical t-structure 
\[
(\D(\A)^{\leq 0},\D(\A)^{\geq 0})
\]
on $\D(\A)$ such that $\D(\A)^{\geq 0}$ is formed by those dg modules whose cohomology is concentrated in non-negative degrees. 
\end{lemma}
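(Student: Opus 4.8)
The plan is to realise the asserted t-structure as the one \emph{generated} by the representable dg modules, so that existence reduces to a general theorem on aisles generated by a set of objects, and only the description of the coaisle has to be checked by hand.

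First I would recall that $\D(\A)$ has arbitrary coproducts and that, since $\A$ is small, the representables $A^{\wedge}$ ($A \in \A$) form a \emph{set} $\mathcal G$ of compact generators (compactness holds because $\Hom_{\D(\A)}(A^{\wedge},-) \cong H^{0}((-)(A))$ commutes with coproducts). Applying the general existence theorem for aisles generated by a set — available in the cited works of Keller--Nicol\'as and Beligiannis--Reiten — to $\mathcal G$, one obtains the smallest full subcategory $\U \subseteq \D(\A)$ containing $\mathcal G$ and closed under coproducts, positive shifts $\Sigma$ and extensions, together with the assertion that $\U$ is an aisle, i.e.\ that $\U \hookrightarrow \D(\A)$ admits a right adjoint. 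Setting $\D(\A)^{\leq 0} := \U$ then yields a t-structure, so the orthogonality axiom and the truncation triangles are supplied automatically; I regard this as the substantive input.

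It then remains only to identify the coaisle. Since each $A^{\wedge}$ is cofibrant, the dg Yoneda isomorphism $\Hom_{\C_{dg}(\A)}(A^{\wedge},M) \cong M(A)$ gives
\[
\Hom_{\D(\A)}(\Sigma^{i}A^{\wedge},M) \cong H^{-i}(M(A))
\]
for all $A\in\A$, all $M$ and all $i\in\mathbb Z$. As $\U$ is generated by $\mathcal G$ under coproducts, extensions and positive shifts, an object $M$ lies in $\U^{\perp}$ precisely when these groups vanish for every $A$ and every $i\geq 0$, i.e.\ when $H^{j}(M(A))=0$ for all $j\leq 0$ and all $A$. Hence the coaisle $\D(\A)^{\geq 1}=\U^{\perp}$ is the class of dg modules with cohomology in strictly positive degrees, and shifting once identifies $\D(\A)^{\geq 0}=\Sigma\,\U^{\perp}$ with the modules whose cohomology is concentrated in non-negative degrees, as claimed. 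Canonicity is automatic, the t-structure being generated by the canonical set of representables.

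The step I expect to be the main obstacle is the existence assertion, i.e.\ that the pre-aisle generated by $\mathcal G$ is a genuine aisle (equivalently, the construction of the truncation functor $\tau_{\leq 0}$). One cannot do this by termwise good truncation of dg modules: a morphism of positive degree acts on a module by raising internal degree, so $\tau_{\leq 0}(M(A))$ fails to be stable under the $\A$-action, and no naive pointwise formula defines a submodule. This is exactly why the homotopy-theoretic construction of the aisle is needed. I would finally remark that, by contrast with the coaisle, the aisle $\D(\A)^{\leq 0}$ admits a clean cohomological description only when $\A$ is connective (then the generators $A^{\wedge}$ themselves have cohomology in non-positive degrees); this causes no trouble here, since the lemma characterises $\D(\A)^{\geq 0}$ alone.
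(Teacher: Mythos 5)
Your proposal is correct and takes essentially the approach the paper relies on: the paper offers no proof of this lemma beyond the citations to Keller--Nicol\'as and Beligiannis--Reiten, and your argument --- taking the aisle generated by the representables $A^{\wedge}$, invoking the existence theorem for aisles generated by a set of compact objects, and identifying the coaisle via the Yoneda isomorphism $\Hom_{\D(\A)}(\Sigma^{i}A^{\wedge},M)\cong H^{-i}(M(A))$ --- is precisely the argument supplied by those references. Your two closing remarks (that objectwise truncation fails to define a dg submodule when $\A$ has morphisms in positive degrees, and that only $\D(\A)^{\geq 0}$, not the aisle, admits a cohomological description for non-connective $\A$) correctly locate the substantive difficulty and agree with the paper's own remark immediately following the lemma.
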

Note that we have $A^{\wedge}\in\D(\A)^{\leq 0}$ for each $A\in\A$.
Note also that when $\A$ is connective, then $\D(\A)^{\leq 0}$ is formed by those dg modules whose cohomology is concentrated in non-positive degrees.

Let $\B$ be a connective dg $k$-category. 
Put $\T=\D(\A\otimes^{\mathbb L} \B^{op})$. 
Let $(\T^{\leq 0},\T^{\geq 0})$ be the canonical t-structure on $\T$. 
Let $X$ be a dg $\B$-$\A$-bimodule. It is clear that $X\in \T^{\geq 0}$ if and only if $X(-,B)\in \D(\A)^{\geq 0}$ for each $B\in\mathcal B$.
\begin{lemma}\label{cha}
We have $X\in \T^{\leq 0}$ if $X(-,B)\in \D(\A)^{\leq 0}$ for each $B\in\B$.
\end{lemma}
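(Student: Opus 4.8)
The plan is to exhibit $X$ as a homotopy colimit of bimodules that manifestly lie in $\T^{\leq 0}$, by resolving in the $\B$-direction with the two-sided bar construction; the connectivity of $\B$ enters exactly to guarantee that the terms of this resolution stay in the aisle. First I would record the relevant functoriality. For each object $B$ of $\B$, write $\mathrm{ev}_B\colon \T\rightarrow\D(\A)$, $X\mapsto X(-,B)$, for restriction along the dg functor $\A\rightarrow\A\otimes^{\mathbb L}\B^{op}$, $A\mapsto(A,B)$; this is an exact, coproduct-preserving triangle functor, and its left adjoint $L_B$ satisfies $L_B(A^{\wedge})=(A,B)^{\wedge}$, and more generally $L_B(M)(-,B')=M\otimes_k\B(B,B')$. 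Applying the remark after Lemma~\ref{lemma:tstructure} to the dg category $\A\otimes^{\mathbb L}\B^{op}$, the representable bimodules $(A,B)^{\wedge}$ lie in $\T^{\leq 0}$. Since $\D(\A)^{\leq 0}$ is the smallest cocomplete pre-aisle containing the representables $A^{\wedge}$, the cocontinuous triangle functor $L_B$ carries $\D(\A)^{\leq 0}$ into $\T^{\leq 0}$.

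Next I would verify that tensoring over $k$ with a connective complex preserves the aisle: if $C\in\D(k)^{\leq 0}$, then $-\otimes_k C$ is a cocontinuous triangle endofunctor of $\D(\A)$ sending each $A^{\wedge}$ into $\D(\A)^{\leq 0}$ (resolve $C$ by free $k$-modules placed in degrees $\leq 0$, so that $A^{\wedge}\otimes_k C$ is built from non-negative shifts of coproducts of $A^{\wedge}$), hence it preserves $\D(\A)^{\leq 0}$. Because $\B$ is connective, every morphism complex $\B(B,B')$ is such a $C$. Combining the two steps, each bimodule of the form $L_{B}\bigl(X(-,B_0)\otimes_k\B(B_0,B_1)\otimes_k\cdots\otimes_k\B(B_{n-1},B)\bigr)$ lies in $\T^{\leq 0}$, using the hypothesis $X(-,B_0)\in\D(\A)^{\leq 0}$.

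Now I would form the two-sided bar resolution $\mathrm{Bar}_{\bullet}(X,\B,\B)\rightarrow X$ in the $\B$-direction, whose $n$-th term is the coproduct over $B_0,\dots,B_n$ of exactly the bimodules described above; it is a quasi-isomorphism by the usual extra-degeneracy argument, so $X$ is the totalization of the right-bounded complex $\cdots\rightarrow\mathrm{Bar}_1\rightarrow\mathrm{Bar}_0$. Each $\mathrm{Bar}_n\in\T^{\leq 0}$, the complex is concentrated in non-negative homological degrees, and $\T^{\leq 0}$ is closed under coproducts, extensions and non-negative shifts; hence its totalization, being a sequential homotopy colimit of finite extensions of non-negative shifts of the $\mathrm{Bar}_n$, lies in $\T^{\leq 0}$. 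Therefore $X\in\T^{\leq 0}$.

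The main obstacle is precisely the control of the terms of the resolution, and this is where connectivity is indispensable. One is tempted to argue more cheaply by applying the exact functor $\mathrm{ev}_B$ to the truncation triangle $\tau^{\leq 0}X\rightarrow X\rightarrow\tau^{\geq 1}X$ and invoking $\Hom_{\D(\A)}(\D(\A)^{\leq 0},\D(\A)^{\geq 1})=0$ to force $\mathrm{ev}_B(\tau^{\geq 1}X)=0$ for all $B$; but this would require $\mathrm{ev}_B(\tau^{\leq 0}X)\in\D(\A)^{\leq 0}$, that is, that $\mathrm{ev}_B$ preserve the aisle, which fails for non-connective $\B$ (already for $\A=k$ and $\B=k[w]$ with $|w|=2$ the module $k$ has cohomology in degree $0$ yet $k\notin\D(\B)^{\leq 0}$, since $\Hom_{\D(\B)}(k,\B[-1])\neq 0$ with $\B[-1]\in\D(\B)^{\geq 1}$). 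The bar-resolution route sidesteps this by never asking $\mathrm{ev}_B$ to preserve the aisle: it only uses that induction $L_B$ and tensoring with connective complexes keep us inside $\T^{\leq 0}$.
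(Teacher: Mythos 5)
Your proof is correct, but it takes a genuinely different route from the paper's. The paper argues by orthogonality and adjunction: for $Y\in\T^{\geq 1}$ every $Y(-,B')$ lies in $\D(\A)^{\geq 1}$, so the hypothesis gives $\tau_{\leq 0}\RHom_{\A}(X,Y)=0$; the adjunction $\RHom_{\A\otimes\B^{op}}(X,Y)\simeq\RHom_{\B\otimes\B^{op}}(\B,\RHom_{\A}(X,Y))$ then reduces the vanishing of $\Hom_{\T}(X,\Sigma^{-n}Y)$, $n\geq 0$, to mapping out of the diagonal bimodule, and connectivity of $\B$ is used exactly once, to place that bimodule in $\D(\B\otimes\B^{op})^{\leq 0}$ via the remark after Lemma~\ref{lemma:tstructure} (over a connective category the aisle consists precisely of the modules with cohomology in non-positive degrees). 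You instead prove a generation statement: the two-sided bar resolution exhibits $X$ as a homotopy colimit of coproducts of induced modules $L_{B_n}\bigl(X(-,B_0)\otimes\B(B_0,B_1)\otimes\cdots\otimes\B(B_{n-1},B_n)\bigr)$, each of which lies in the aisle because induction and tensoring with connective complexes of $k$-modules preserve aisles. Both routes are sound, and each buys something: the paper's proof is a few lines, whereas yours needs the description of $\D(\A)^{\leq 0}$ as the smallest cocomplete pre-aisle containing the representables (implicit in the construction cited for Lemma~\ref{lemma:tstructure}) and the usual base-ring care (run the bar construction over the cofibrant replacement $Q(\B^{op})$ implicit in $\otimes^{\mathbb L}$, so that the Hom complexes are cofibrant over $k$ and underived tensors compute derived ones); in exchange, yours is constructive, gives the finer statement that $X$ is built by extensions, shifts and homotopy colimits from modules induced from $\D(\A)^{\leq 0}$, and sidesteps a delicate point in the paper's displayed computation, which as written asserts vanishing of the whole complex $\RHom_{\A\otimes\B^{op}}(X,Y)$ when in fact only its cohomology in degrees $\leq 0$ vanishes (and that is all that is needed). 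One caveat on your closing paragraph: the ``cheap'' truncation argument you refute with $\B=k[w]$, $|w|=2$, is not the paper's argument --- the paper never truncates $X$ --- so your (correct) counterexample explains why connectivity of $\B$ cannot be dropped, but it does not bear on the proof the paper actually gives.
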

\begin{proof}
Suppose $X(-,B)\in \D(\A)^{\leq 0}$ for each $B\in\B$. Let $Y\in \T^{\geq 1}$. 
Then we have $\Hom_{\D(\A)}(X(-,B),Y(-,B'))=0$ for $B,B'\in\B$. 
So, we have $\tau_{\leq 0}\RHom_{\A}(X,Y)=0$. Therefore, we have
\[
\begin{aligned}\RHom_{\A \otimes \B^{op}}(X,Y)=&\RHom_{\B\otimes \B^{op}}(\B,\RHom_{\A}(X,Y))\\
=&\RHom_{\B\otimes \B^{op}}(\B,\tau_{\leq 0}\RHom_{\A}(X,Y))\\
=&0.
\end{aligned}
\]
Hence $X\in\T^{\leq 0}$.
\end{proof}
In particular, for a small category $\mathcal I$, the category $\rep(k\I, \A)$ is a full subcategory of $\D(\A\otimes k\I^{op})^{\leq 0}$.
Recall that for a small category $\I$, the canonical diagram functor 
\[
\Dia:\D(\A \otimes k\I^{op})\rightarrow \mathrm{Fun}(\I,\D({\A}))
\]
sends an object $X:\I\rightarrow \C(\A)$ to the corresponding diagram $\Dia(X)=\pi \circ X$ where $\pi:\C(\A)\rightarrow \D(\A)$ is the canonical quotient functor. 
Recall also that, for a category $\C$, we denote by $\Iso(\C)$ the class of isomorphism classes of objects in $\mathcal{C}$. 
The following lemma is well-known. 
\begin{lemma}\label{epi}
Let $\I$ be the path category $P(Q)$ of a quiver $Q$ (without relations).
The functor $\Dia:\D(\A\otimes k\I^{op})\rightarrow \Fun(\I,\D(\A))$ is an epivalence, i.e.~it is full and dense and detects isomorphisms. 
In particular, it induces a bijection 
\[
\Iso(\D(\A\otimes k\I^{op}))\xrightarrow{\sim}\Iso(\mathrm{Fun}(\I,\D(\A))).
\]
\end{lemma}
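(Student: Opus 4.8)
The plan is to verify the three properties packaged into the word ``epivalence'' --- full, dense, detects isomorphisms --- and then to extract the bijection on isomorphism classes by a formal argument. That $\Dia$ detects isomorphisms holds for \emph{arbitrary} $\I$ and was already recorded above: a morphism of $\A\otimes k\I^{op}$-modules is invertible in $\D(\A\otimes k\I^{op})$ precisely when it is an objectwise quasi-isomorphism, and isomorphisms in $\Fun(\I,\D(\A))$ are tested objectwise. So only fullness and density will use the hypothesis that $\I=P(Q)$ is the \emph{free} category on $Q$, and this freeness enters in two complementary ways: it lets us specify functors by their values on vertices and arrows alone, and it makes $k\I$ hereditary.

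For density, start from a functor $F\colon\I\to\D(\A)$. Since $\I=P(Q)$, the datum of $F$ is exactly a choice of object $F(i)\in\D(\A)$ for each vertex $i\in Q_0$ together with a morphism $F(\alpha)\colon F(s(\alpha))\to F(t(\alpha))$ in $\D(\A)$ for each arrow $\alpha\in Q_1$, subject to \emph{no} relations. For each $i$ I would pick a cofibrant model $X_i\in\D_{dg}(\A)$ and an isomorphism $\phi_i\colon X_i\iso F(i)$ in $\D(\A)$; as $X_{s(\alpha)}$ is cofibrant one has $\D(\A)(X_{s(\alpha)},X_{t(\alpha)})=H^0\Hom_{\C_{dg}(\A)}(X_{s(\alpha)},X_{t(\alpha)})$, so the morphism $\phi_{t(\alpha)}^{-1}F(\alpha)\phi_{s(\alpha)}$ lifts to an honest closed degree-$0$ chain map $x_\alpha\colon X_{s(\alpha)}\to X_{t(\alpha)}$. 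Freeness now guarantees that $i\mapsto X_i$, $\alpha\mapsto x_\alpha$ extends uniquely to a functor $X\colon\I\to\C(\A)$, i.e.\ to an object of $\D(\A\otimes k\I^{op})$, and the $\phi_i$ assemble into an isomorphism $\Dia(X)\iso F$. Hence $\Dia$ is dense.

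For fullness I would represent two objects by diagrams $X,Y\colon\I\to\C(\A)$ with each $X_i$ cofibrant (the construction above already produces such representatives, and quasi-isomorphic representatives compute the same morphisms in $\D$). The decisive input is that $Q$ has no relations, so $k\I$ is hereditary and the diagonal bimodule admits a length-one projective resolution; tensoring $X$ with it and using the adjunction between evaluation at a vertex and induction, $\RHom$ collapses to a two-term total complex. Writing $x_\alpha=X(\alpha)$ and $y_\alpha=Y(\alpha)$,
\[
\RHom_{\A\otimes k\I^{op}}(X,Y)\simeq\mathrm{Tot}\Bigl[\textstyle\prod_{i\in Q_0}\Hom^\bullet_{\A}(X_i,Y_i)\xrightarrow{\ \delta\ }\prod_{\alpha\in Q_1}\Hom^\bullet_{\A}(X_{s(\alpha)},Y_{t(\alpha)})\Bigr],\qquad \delta\bigl((f_i)_i\bigr)_\alpha=f_{t(\alpha)}\,x_\alpha-y_\alpha\,f_{s(\alpha)}.
\]
A closed degree-$0$ cocycle is then a family of chain maps $f_i\colon X_i\to Y_i$ together with homotopies $g_\alpha$ witnessing $f_{t(\alpha)}x_\alpha\simeq y_\alpha f_{s(\alpha)}$, and $\Dia$ sends its class to $([f_i])_i$, which is automatically a natural transformation $\Dia(X)\Rightarrow\Dia(Y)$. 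Conversely, any natural transformation $\eta$ lifts each $\eta_i$ to a chain map $f_i$ (cofibrancy of $X_i$), and its naturality in $\D(\A)$ says precisely that $f_{t(\alpha)}x_\alpha-y_\alpha f_{s(\alpha)}$ is null-homotopic (cofibrancy of $X_{s(\alpha)}$), producing the $g_\alpha$ and hence a preimage of $\eta$. Thus $\Dia$ is full.

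Finally, an epivalence induces a bijection on isomorphism classes: density yields surjectivity, while if $\Dia(X)\cong\Dia(Y)$ then fullness lifts the isomorphism to some $\phi\colon X\to Y$, which is an isomorphism because $\Dia$ detects isomorphisms, giving injectivity; this also yields the asserted bijection $\Iso(\D(\A\otimes k\I^{op}))\xrightarrow{\sim}\Iso(\Fun(\I,\D(\A)))$. I expect the fullness step to be the main obstacle, and more precisely the reduction to the two-term complex: it is exactly the absence of relations in $Q$ that forces the bimodule resolution, and hence the mapping complex, to have length one, so that componentwise maps together with naturality homotopies on the arrows already assemble into a morphism in $\D(\A\otimes k\I^{op})$ with no higher coherence obstructions. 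Were $\I$ to carry relations, the resolution would acquire further terms indexed by the relations and their syzygies, higher coherences would be required, and $\Dia$ would in general cease to be full.
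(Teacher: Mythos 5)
The paper itself offers no proof of this lemma: it is stated as ``well-known'' right after the definition of $\Dia$, and the only ingredient the paper records is conservativity (which, as you correctly say, holds for arbitrary $\I$ because isomorphisms on both sides are detected objectwise). So there is no proof of record to compare against, and your argument has to stand on its own --- which it essentially does. Your route is the standard one: density by lifting a functor on the free category $P(Q)$ to a strict diagram of cofibrant representatives, using that no relations need to be checked and that $\D(\A)(X,Y)=H^0\Hom_{\C_{dg}(\A)}(X,Y)$ for $X$ cofibrant; fullness via the length-one resolution of the diagonal $k\I$-bimodule, which collapses $\RHom_{\A\otimes k\I^{op}}(X,Y)$ to the two-term total complex whose degree-zero cocycles are componentwise chain maps $f_i$ together with homotopies $g_\alpha$ indexed by the arrows; and the bijection on isomorphism classes by the formal density/fullness/conservativity argument. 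Your closing remark about why relations would destroy fullness (extra terms in the resolution, higher coherences) is also correct.

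One justification is circular as written: to get representatives of $X$ and $Y$ with cofibrant components you appeal to ``the construction above,'' i.e.\ density. But applying density to $\Dia(X)$ only produces an $X'$ with $\Dia(X')\cong\Dia(X)$; concluding $X'\cong X$ in $\D(\A\otimes k\I^{op})$ would already require fullness and conservativity --- the very things being proved. The repair is standard and cheap: take a cofibrant (e.g.\ semi-free) replacement of $X$ as a dg $\A\otimes k\I^{op}$-module and observe that its restriction to each vertex is again semi-free over $\A$, since the representable $(A,i)^\wedge$ restricts at a vertex $j$ to a direct sum of copies of $A^\wedge$ indexed by a basis of the free $k$-module $k\I(i,j)$. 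Relatedly, in the fullness step you should say explicitly that tensoring the bimodule resolution with $X$ yields an exact presentation $0\to\bigoplus_{\alpha}X_{s(\alpha)}\otimes k\I(t(\alpha),-)\to\bigoplus_{i}X_{i}\otimes k\I(i,-)\to X\to 0$ whose two left terms are cofibrant bimodules precisely because the $X_i$ are cofibrant over $\A$; this is what licenses computing $\RHom_{\A\otimes k\I^{op}}(X,Y)$ as honest $\Hom$ out of the associated cone, i.e.\ your displayed two-term complex. With these routine repairs the proof is complete and correct.
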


\section{Homotopy pullbacks and homotopy pushouts}\label{sec:homotopyshortexactsequence}
In this section, we introduce the notions of homotopy (co)cartesian square over a dg $k$-category $\A$ using the category $\rep(\B,\A)$ of representations up to homotopy, where $\B$ is a certain dg category. We then introduce the category $\H_{3t}(\A)$ of $3$-term homotopy complexes over $\A$, using (strictly unital) $\Ai_{\infty}$-functors.
Building upon a result of Canonaco--Ornaghi--Stellari and Faonte concerning the dg category $\Fun_{\Ai_{\infty}}(\B,\A)$ of $\Ai_{\infty}$-functors and $\rep_{dg}(\B,\A)$, we interpret the notions of homotopy (co)cartesian square in $\A$ into the notions of homotopy left (right) exact sequence in $\A$. 
Additionally, we establish several diagram lemmas that will prove useful in the subsequent sections.
\subsection{Homotopy pullbacks and homotopy pushouts}
\begin{definition}\label{maindef}
An object $X\in \rep(k\mathrm{Sq},\A)$ is a \text{\em{{homotopy cartesian square}}} with respect to $\A$ if the canonical map $X_{00}\rightarrow \Sigma^{-1}\mathrm{Cone}((-j,k))$ induces an isomorphism   
\[
\tau_{\leq 0}\RHom(A^{\wedge},X_{00})\rightarrow \tau_{\leq 0}\RHom(A^{\wedge}, \Sigma^{-1}\mathrm{Cone}((-j,k)))
\]
in $\D(k)$ for each $A$ in $\A$.
\end{definition}

\begin{remark}\label{subcategory}
Let $\A'$ be a full dg subcategory of $\A$. 
Let  $F:\rep(k\Sq,\A')\rightarrow \rep(k\Sq,\A)$ be the inclusion functor induced by the inclusion dg functor $\A'\rightarrow \A$.
Let $X$ be an object in $\rep(k\Sq,\A')$. 
If $F(X)$ is homotopy cartesian with respect to $\A$, then $X$ is homotopy cartesian with respect to $\A'$. 
\end{remark}
In the rest of the chapter, we will only use this relative version of homotopy (co)cartesian squares.

For an object $X\in \rep(k\mathrm{Sq},\A)$, by Lemma \ref{cha}, the unit morphism $X\rightarrow RL X$ induces a canonical morphism $X\rightarrow \tau_{\leq 0} RLX$ in $\D(\A\otimes k\mathrm{Sq}^{op})$.

\begin{lemma} \label{adj}Let $X$ be an object in $\rep(k\mathrm{Sq},\A)$. If the canonical map $X\rightarrow \tau_{\leq 0} RLX$ is an isomorphism, then $X$ is homotopy cartesian. If $\A$ is a connective dg category, then the converse also holds.
\end{lemma}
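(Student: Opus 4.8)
The plan is to unwind both conditions in terms of the t-structure on $\T = \D(\A\otimes k\Sq^{op})$ and compare them. First I would recall that by Lemma~\ref{adj}'s setup, the adjunction unit $X\to RLX$ factors through $\tau_{\leq 0}RLX$ precisely because $RLX$ lies in $\T^{\leq 0}$ — indeed $L$ restricts $X$ to the cospan, $R$ then produces the explicit diagram computed above whose entries are (up to the contractible summands $\Cone(-\Id_{\Sigma^{-1}D})$) built from $B$, $C$, $D$ and a homotopy kernel $K$, each of which is quasi-representable hence in $\D(\A)^{\leq 0}$; by Lemma~\ref{cha} this forces $RLX\in\T^{\leq 0}$. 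The morphism $X\to\tau_{\leq 0}RLX$ being an isomorphism in $\T$ is then, by the conservativity of $\Dia$ together with the fact that an isomorphism in $\T$ is detected componentwise, equivalent to saying that for each object $A\in\A$ the induced map on $\tau_{\leq 0}\RHom(A^\wedge,-)$ is a quasi-isomorphism. This is the bridge to Definition~\ref{maindef}.

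Next I would identify the relevant component explicitly. From the description of $R$ above, the $00$-entry of $RLX$ is the homotopy kernel $K$ fitting into the graded split short exact sequence
\[
0\rightarrow K\rightarrow \Cone(-\Id_{\Sigma^{-1}X_{11}})\oplus X_{01}\oplus X_{10}\xrightarrow{[p\;\;j\;\;k]} X_{11}\rightarrow 0,
\]
so that $K\simeq \Sigma^{-1}\Cone((-j,k))$ in $\C(\A)$ after discarding the contractible summand. Thus the canonical comparison map $X\to\tau_{\leq 0}RLX$ restricted to the $00$-vertex is exactly the map $X_{00}\to\Sigma^{-1}\Cone((-j,k))$ appearing in Definition~\ref{maindef}, while at the vertices $01$, $10$, $11$ the functor $R$ returns the original objects and the unit is the identity. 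Therefore $X\to\tau_{\leq 0}RLX$ is an isomorphism in $\T$ if and only if its $00$-component induces, for every $A\in\A$, an isomorphism on $\tau_{\leq 0}\RHom(A^\wedge,-)$ in $\D(k)$ — which is precisely the defining condition for $X$ to be homotopy cartesian.

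For the forward implication this comparison already gives everything: if the unit-induced map is an isomorphism, applying $\tau_{\leq 0}\RHom(A^\wedge,-)$ yields the isomorphism required by Definition~\ref{maindef}. For the converse I would use the connectivity hypothesis on $\A$. When $\A$ is connective, every representable $A^\wedge$ lies in $\D(\A)^{\leq 0}$ and the objects $X_{ij}$ are quasi-representable, so both $X$ and $\tau_{\leq 0}RLX$ lie in $\T^{\leq 0}$; a morphism between objects of the heart-truncated aisle that induces isomorphisms on $\tau_{\leq 0}\RHom(A^\wedge,-)$ for all $A$ must itself be an isomorphism, since the representables $A^\wedge$ generate and, by connectivity, detecting isomorphisms in non-positive degrees suffices. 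The main obstacle I expect is precisely this converse step: without connectivity, $\tau_{\leq 0}\RHom(A^\wedge,-)$ only sees the truncation, so two objects differing in strictly positive degrees can become indistinguishable, and one genuinely needs the connective hypothesis to conclude that a componentwise-$\tau_{\leq 0}$-quasi-isomorphism is an honest isomorphism in $\T$. I would make this rigorous by checking that under connectivity the functor $\tau_{\leq 0}\RHom(A^\wedge,-)$, ranging over all $A$, is jointly conservative on $\T^{\leq 0}$, so that the isomorphism of Definition~\ref{maindef} at the $00$-vertex (with identities elsewhere) lifts to an isomorphism $X\to\tau_{\leq 0}RLX$.
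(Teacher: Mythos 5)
Your strategy is the same as the paper's: compute $RLX$ via the explicit description of $R$ given before the lemma, identify the $00$-entry with the homotopy kernel $K\simeq\Sigma^{-1}\Cone((-j,k))$ up to a contractible summand, reduce everything to the single map $X_{00}\rightarrow\tau_{\leq 0}K$, and invoke connectivity for the converse. However, two assertions in your write-up are false and need repair.

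First, your justification of the factorization $X\rightarrow\tau_{\leq 0}RLX$ is incorrect. You claim that the entries of $RLX$, including $K$, are quasi-representable, hence that $RLX\in\T^{\leq 0}$ by Lemma~\ref{cha}, where $\T=\D(\A\otimes k\Sq^{op})$. Neither claim holds: the triangle $\Sigma^{-1}X_{11}\rightarrow K\rightarrow X_{01}\oplus X_{10}\rightarrow X_{11}$ only places $K$ in $\D(\A)^{\leq 1}$, and for a cospan with $X_{01}=X_{10}=0$ one gets $K\simeq\Sigma^{-1}X_{11}$, which (already for $\A$ an additive category concentrated in degree $0$) has cohomology concentrated in degree $+1$, hence is neither quasi-representable nor in $\D(\A)^{\leq 0}$; consequently $RLX\notin\T^{\leq 0}$ in general. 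The factorization exists for the reason the paper gives just before the lemma: it is $X$, not $RLX$, whose components are quasi-representable, so $X\in\T^{\leq 0}$ by Lemma~\ref{cha}, and any map from an object of the aisle to $RLX$ factors uniquely through $\tau_{\leq 0}RLX$.

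Second, the unconditional ``if and only if'' of your second paragraph is false, and it cannot be otherwise: if it were true, the converse part of the lemma would hold with no hypothesis on $\A$, contrary to the asymmetric formulation of the statement itself. The implication that does hold in general is that an isomorphism $X\rightarrow\tau_{\leq 0}RLX$ induces quasi-isomorphisms on all $\tau_{\leq 0}\RHom(A^{\wedge},-)$; combined with the identification $\tau_{\leq 0}\RHom(A^{\wedge},\tau_{\leq 0}K)\iso\tau_{\leq 0}\RHom(A^{\wedge},K)$ (which you elide when you call the $00$-component ``exactly'' the map of Definition~\ref{maindef}), this yields the first assertion of the lemma. The reverse implication fails for non-connective $\A$ because $\A(A',X_{00})$ and $(\tau_{\leq 0}K)(A')$ can then differ in strictly positive cohomological degrees, which the truncated $\RHom$'s do not see --- exactly the phenomenon your own third paragraph describes, so your second and third paragraphs contradict each other. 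The repair is to claim only the forward implication there and to prove the converse as your third paragraph (and the paper) does: for connective $\A$, the evaluations of $X$ and of $\tau_{\leq 0}RLX$ have cohomology concentrated in non-positive degrees, so the truncated $\RHom$'s agree with the full ones, and the functors $\RHom(A^{\wedge},-)$, $A\in\A$, jointly detect isomorphisms. With these two corrections your argument coincides with the paper's proof.
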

\begin{proof}
Let $X$ be the object in $\rep(k\mathrm{Sq},\A)$ corresponding to the following commutative square in $\C(\A)$
\[ \begin{tikzcd}A\ar[r,"f"]\ar[d,"g"swap,""{name=2}]&B\ar[d,"j"]\\
C\ar[r,"k"swap]&D
\end{tikzcd}.
\]
Let $p:\Cone(-\Id_{\Sigma^{-1}D})\rightarrow D$ be the canonical projection. 
Then the canonical map $X\rightarrow RLX$ can be described by the following commutative diagram in $\C(\A)$:
\[
\begin{tikzcd}
A\ar[r,"\begin{bmatrix}f\\g\end{bmatrix}"]\ar[d,"u"swap]&B\oplus C\ar[d,"\begin{bmatrix}0\ 1\ 0\\0\ 0\ 1\end{bmatrix}"]\ar[r,"{[}-j{,}k{]}"]&D\ar[d,equal]\\
K\ar[r, tail]&\Cone(-\Id_{\Sigma^{-1}D})\oplus B\oplus C\ar[r,"{[}p{,}-j{,}k{]}"swap, two heads]&D
\end{tikzcd}
\]
where the second row is a short exact sequence in $\C(\A)$. 
By Lemma \ref{cha}, we find that $(\tau_{\leq 0} RLX)_{00}\iso \tau_{\leq 0}K$.
We also have a canonical commutative diagram in $\D(\A)$
\[
\begin{tikzcd}
A\ar[r]\ar[d,"u"swap]&\Sigma^{-1}\Cone(-j,k)\ar[d,"\sim"]\\
K\ar[r,"\sim"]&\Sigma^{-1}\Cone(p,-j,k)
\end{tikzcd}.
\]
The map $u$ induces a canonical map $A\rightarrow \tau_{\leq 0}K$ in $\D(\A)$. 
It is an isomorphism in $\D(\A)$ if and only if the canonical map $X\rightarrow \tau_{\leq 0}RLX$ is an isomorphism in $\D(\A \otimes k\Sq^{op})$.

The object $X$ is a homotopy cartesian square if and only if for each object $A'$ in $\A$, the canonical map $u:A\rightarrow K$ induces an isomorphism in $\D(k)$ 
\[
\tau_{\leq 0}\RHom(A'^{\wedge},A)\iso \tau_{\leq 0}\RHom(A'^{\wedge}, K). 
\]
Note that we have
\[
\tau_{\leq 0}\RHom(A'^{\wedge},\tau_{\leq 0}K) \iso \tau_{\leq 0}\RHom(A'^{\wedge}, K).
\]
So if the canonical map $A\rightarrow \tau_{\leq 0}K$ is an isomorphism in $\D(\A)$, then $X$ is homotopy cartesian. 

When the dg category $\A$ is connective, we have 
\[
\tau_{\leq 0}\RHom(A'^{\wedge},A)\iso \RHom(A'^{\wedge},A)
\]
and
\[
\tau_{\leq 0}\RHom(A'^{\wedge},\tau_{\leq 0}K)\iso \RHom(A'^{\wedge},\tau_{\leq 0}K).
\]
So the converse also holds.

\end{proof}

\begin{example}
Each graded-split short exact sequence  $0\rightarrow A\xrightarrow{f} B\xrightarrow{j} C\rightarrow 0$ in $\mathcal A$ gives rise to a homotopy cartesian square 
\[
\begin{tikzcd}
A^{\wedge}\arrow[r,"f^{\wedge}"]\arrow[d]&B^{\wedge}\arrow[d,"j^{\wedge}"]\\
0\arrow[r]&C^{\wedge}
\end{tikzcd}.
\]
Indeed, the canonical map $A^{\wedge}\rightarrow \Sigma^{-1}\Cone(j^{\wedge})$ is already a quasi-isomorphism.
\end{example}
\begin{example}\label{ordinary}
Let $\A$ be an additive category which we consider as a dg category concentrated in degree zero. 
Then the square
\[
 \begin{tikzcd}
 A^{\wedge}\arrow[r,"f^{\wedge}"]\arrow[d]&B^{\wedge}\arrow[d,"j^{\wedge}"]\\0\arrow[r]&C^{\wedge}
 \end{tikzcd}
 \]
  with $A,B,C\in \A$ is homotopy cartesian if and only if in the sequence 
  \[
  \begin{tikzcd}0\arrow[r]&A\arrow[r,"f"]&B\arrow[r,"j"]&C\end{tikzcd},
  \]
 $f$ is a kernel of $j$. Indeed, we identify the pretriangulated hull $\pretr(\A)$ of $\A$ with the dg category $\C^{b}_{dg}(\A)$ of bounded complexes over $\A$. 
 Then the object $\Sigma^{-1}\mathrm{Cone}(j^{\wedge})$ is identified with the complex $\ldots\rightarrow0\rightarrow B\rightarrow C\rightarrow 0\rightarrow \ldots$ where $B$ is in degree $0$. 
  For each $A'\in \A$, the space $\tau_{\leq 0}\Hom(A'^{\wedge},\Sigma^{-1}\mathrm{Cone}(j^{\wedge}))$ is then identified with the kernel of the map $\Hom_{\A}(A',B)\rightarrow \Hom_{\A}(A',C)$.
  The conclusion follows at once.
\end{example}

The following lemma was obtained independently by Genovese, Lowen and Van den Bergh, c.f. \cite[Lemma 2.5.5]{GenoveseLowenVandenBergh22}. 
\begin{lemma}\label{lem:trun}
Let $\B$ be a connective dg category. Then the natural functor 
\[
F: \rep(\B,\tau_{\leq 0}\A)\rightarrow\rep(\B,\A)
\]
is an equivalence of categories. 
Let $H:\D(\A\otimes^{\mathbb L}\B^{op})\rightarrow \D(\tau_{\leq 0}\A\otimes^{\mathbb L} \B^{op})$ be the restriction functor. 
Then a quasi-inverse of $F$ is given by $G=\tau_{\leq 0}\circ H$. 
Moreover, for any $X, Y\in \rep(\B,\tau_{\leq 0}\A)$, we have $\tau_{\leq 0}\RHom(X, Y)\xrightarrow{\sim}\tau_{\leq 0}\RHom(FX,FY)$. 
\end{lemma}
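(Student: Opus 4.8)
The plan is to realise both $F$ and $G$ as derived functors induced by the inclusion dg functor $\iota\colon\tau_{\leq 0}\A\to\A$ and to play off the adjunction between induction and restriction against the objectwise description of the canonical t-structures. Write $\iota_!$ for the induction (left adjoint) functor and $\iota^*$ for the restriction functor associated with $\iota\otimes 1_{\B^{op}}$, so that $H=\iota^*$, $G=\tau_{\leq 0}\circ\iota^*$, and $F=\iota_!$ is exactly the functor induced on representations by postcomposition with $\iota$ under the bijection $\Hqe(\B,-)\iso\Iso(\rep(\B,-))$ of \cite{Toen07} (indeed ${_{\iota\phi}\A}=\iota_!({_\phi\tau_{\leq 0}\A})$). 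Since $\iota$ is the identity on objects, I first record the key local computation: for each $A$ one has $\iota_!(A^{\wedge}_{\tau_{\leq 0}\A})=A^{\wedge}_{\A}$, and evaluated at $A'$ the unit $\eta\colon A^{\wedge}_{\tau_{\leq 0}\A}\to\iota^*\iota_!A^{\wedge}_{\tau_{\leq 0}\A}=\iota^*A^{\wedge}_{\A}$ is the structural inclusion $\tau_{\leq 0}(\A(A',A))\hookrightarrow\A(A',A)$, so that $\tau_{\leq 0}(\eta)$ is an isomorphism. Because $\tau_{\leq 0}\A$ and $\B$ are connective, Lemma~\ref{cha} and its dual show the t-structure on $\D(\tau_{\leq 0}\A\otimes^{\mathbb L}\B^{op})$ is detected objectwise, hence its truncation commutes with evaluation at each $B\in\B$; this makes $F$ and $G$ well defined, since $\iota_!$ preserves quasi-representability objectwise and $\tau_{\leq 0}\iota^*(A^\wedge_\A)\iso A^\wedge_{\tau_{\leq 0}\A}$ is again quasi-representable.

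Next I would verify $G\circ F\cong\Id$ and $F\circ G\cong\Id$, using throughout that a morphism of $\A\otimes^{\mathbb L}\B^{op}$-modules is invertible in $\D$ as soon as it is so after evaluation at every object, which is immediate from the definition of quasi-isomorphism. For $X\in\rep(\B,\tau_{\leq 0}\A)$ one has $X\in\D(\tau_{\leq 0}\A\otimes^{\mathbb L}\B^{op})^{\leq 0}$, so $\tau_{\leq 0}X=X$, and $\tau_{\leq 0}(\eta_X)\colon X\to\tau_{\leq 0}\iota^*\iota_!X=GFX$ is an isomorphism because it is one after evaluation at every $B$ by the local computation; hence $GF\cong\Id$. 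For the converse, given $Y\in\rep(\B,\A)$ consider the natural map $\xi_Y\colon FGY=\iota_!\tau_{\leq 0}\iota^*Y\to\iota_!\iota^*Y\xrightarrow{\varepsilon_Y}Y$, where the first arrow is $\iota_!$ applied to the truncation map and $\varepsilon$ is the counit. Evaluating at $B$ reduces to the representable case $Y(-,B)\iso A^\wedge_\A$ with $P:=A^\wedge_{\tau_{\leq 0}\A}$, where under the isomorphism $\tau_{\leq 0}(\eta_P)\colon P\iso\tau_{\leq 0}\iota^*A^\wedge_\A$ the truncation map corresponds to the unit $\eta_P$; the triangle identity $\varepsilon_{\iota_!P}\circ\iota_!(\eta_P)=\Id$ then forces $\xi_Y$ to be the inverse of the isomorphism $\iota_!(\tau_{\leq 0}\eta_P)$, hence an isomorphism. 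By conservativity of evaluation $FG\cong\Id$, so $F$ is an equivalence with quasi-inverse $G$.

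Finally, the Hom-formula follows from the same adjunction. For $X,Y\in\rep(\B,\tau_{\leq 0}\A)$ the adjunction $\iota_!\dashv\iota^*$ yields $\RHom(FX,FY)=\RHom(\iota_!X,\iota_!Y)\iso\RHom(X,\iota^*\iota_!Y)$, the natural comparison map being induced by $\eta_Y\colon Y\to\iota^*\iota_!Y$. By $GF\cong\Id$ the cone $C$ of $\eta_Y$ lies in $\D(\tau_{\leq 0}\A\otimes^{\mathbb L}\B^{op})^{\geq 1}$; since $X\in\D^{\leq 0}$, t-structure orthogonality gives $\Hom(X,\Sigma^nC)=0$ for all $n\leq 0$, so $\RHom(X,C)\in\D(k)^{\geq 1}$ and $\tau_{\leq 0}\RHom(X,C)=0$. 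Applying $\tau_{\leq 0}\RHom(X,-)$ to the triangle $Y\to\iota^*\iota_!Y\to C$ then produces $\tau_{\leq 0}\RHom(X,Y)\iso\tau_{\leq 0}\RHom(FX,FY)$, and passing to $H^0$ recovers the full faithfulness of $F$. I expect the main obstacle to be the bookkeeping for the two t-structures: since $\A$ itself need not be connective, one must consistently keep the truncation in $G$ on the connective side $\D(\tau_{\leq 0}\A\otimes^{\mathbb L}\B^{op})$, where Lemma~\ref{cha} and its dual supply the objectwise control on which all the evaluations above depend.
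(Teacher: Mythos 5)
Your proof is correct, but it follows a genuinely different route from the paper's. The paper argues entirely at the level of homotopy categories of dg categories: it sets up the adjunction $i\dashv\tau_{\leq 0}$ between $\dgcat_{\leq 0}$ and $\dgcat$, passes to the localizations $\Hqe_{\leq 0}'$ and $\Hqe$, and then proves by a Yoneda-type computation with To\"en's internal Hom (testing against an arbitrary strictly connective $\C$, and using that the tensor product of strictly connective dg categories is strictly connective) that the canonical morphism $\tau_{\leq 0}\rep_{dg}(\B,\tau_{\leq 0}\A)\rightarrow\tau_{\leq 0}\rep_{dg}(\B,\A)$ is an isomorphism in $\Hqe_{\leq 0}$; the three assertions of the lemma are then read off from this connective quasi-equivalence of the dg enhancements. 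You instead work at the module level, identifying $F$ with the derived induction $\iota_!$ along $\iota\colon\tau_{\leq 0}\A\to\A$ and checking that the truncated unit $\tau_{\leq 0}(\eta)$ and the comparison $\xi_Y=\varepsilon_Y\circ\iota_!(\text{truncation map})$ are objectwise quasi-isomorphisms, reducing by naturality to representables, where everything is explicit; the $\tau_{\leq 0}\RHom$ statement then falls out of the derived adjunction $\RHom(\iota_!X,\iota_!Y)\iso\RHom(X,\iota^*\iota_!Y)$ plus a cone-and-orthogonality argument in the canonical t-structure. Each approach buys something: the paper's argument is shorter and immediately upgrades the statement to an isomorphism of dg categories in $\Hqe_{\leq 0}$, but as written it never explicitly verifies that $G=\tau_{\leq 0}\circ H$ is the quasi-inverse, which your unit/counit computation does establish directly; your argument also avoids the model structure on $\dgcat_{\leq 0}$ entirely, using To\"en's bijection only to identify $F$ with $\iota_!$. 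One small imprecision: for the objectwise control of truncation on the connective side you invoke Lemma~\ref{cha} and its dual, but since $\tau_{\leq 0}\A\otimes^{\mathbb L}\B^{op}$ can be taken strictly connective, both the aisle and the coaisle of its canonical t-structure are characterized by objectwise cohomology vanishing, so the commutation of $\tau_{\leq 0}$ with evaluation is immediate there (Lemma~\ref{cha} is really needed only when the $\A$-side is non-connective, as in your use of $X,Y\in\D^{\leq 0}$ on the connective side, which is again the easy case); this is a matter of attribution, not a gap.
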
 
\begin{proof}
Let $\dgcat_{\leq 0}$ be the category of small strictly connective dg categories. 
Then we have the following adjunction
\[
\begin{tikzcd}
\dgcat_{\leq 0}\ar[r,"i", shift left =0.8ex]&\dgcat\ar[l,"\tau_{\leq 0}", shift left=0.8ex]
\end{tikzcd}
\]
where the left adjoint $i:\dgcat_{\leq 0}\rightarrow \dgcat$ is the inclusion functor and the right adjoint $\tau_{\leq 0}:\dgcat\rightarrow \dgcat_{\leq 0}$ sends a small dg category $\A$ to $\tau_{\leq 0}\A$.

Let $\Hqe_{\leq 0}'$ be the localization of $\dgcat_{\leq 0}$ at the class of quasi-equivalences of strictly connective dg categories.
The functors $i$ and $\tau_{\leq 0}$ clearly preserve quasi-equivalences. 
Hence by the universal properties of localizations of categories, the above adjunction induces the following adjunction
\[
\begin{tikzcd}
\Hqe_{\leq 0}'\ar[r,"i", shift left =0.8ex]&\Hqe\ar[l,"\tau_{\leq 0}", shift left=0.8ex]
\end{tikzcd}
\]
where $i$ is fully faithful.

We claim that the canonical morphism in $\Hqe_{\leq 0}$
\[
\tau_{\leq 0}\rep_{dg}(\B,\tau_{\leq 0}\A)\rightarrow \tau_{\leq 0}\rep_{dg}(\B,\A)
\]
is an isomorphism.

Let $\C$ be an arbitrary small strictly connective dg category.
Recall that the tensor product of strictly connective dg categories remains strictly connective, and 
that each connective dg category admits a cofibrant replacement which is strictly connective.
Recall also that $\rep_{dg}(\B,\A)$ is the internal Hom of the monoidal category $(\Hqe,-\otimes^{\mathbb L}-)$.
Then we have
\[
\begin{aligned}
\Hom_{\Hqe_{\leq 0}'}(\C,\tau_{\leq 0}\rep_{dg}(i(\B),i(\tau_{\leq 0}\A)))
&=\Hom_{\Hqe}(i(\C),\rep_{dg}(i(\B),i(\tau_{\leq 0}\A)))\\
&=\Hom_{\Hqe}(i(\C)\otimes^{\mathbb L}i(\B),i(\tau_{\leq 0}\A))\\
&=\Hom_{\Hqe_{\leq 0}'}(\C\otimes^{\mathbb L}\B,\tau_{\leq 0}\A)\\
&=\Hom_{\Hqe}(i(\C)\otimes^{\mathbb L}i(\B),\A)\\
&=\Hom_{\Hqe}(i(\C),\rep_{dg}(i(\B),\A))\\
&=\Hom_{\Hqe_{\leq 0}'}(\C,\tau_{\leq 0}\rep_{dg}(i(\B),\A)).
\end{aligned}
\]

\end{proof}
The following lemma allows us to reduce, in many cases, to the connective case when considering homotopy (co)cartesian squares.
\begin{lemma}\label{truncationhomotopycartesian}
Consider the equivalence of categories $F:\rep(k\Sq,\tau_{\leq 0}\A)\iso \rep(k\Sq,\A)$ as in Lemma \ref{lem:trun}.
An object $X$ in $\rep(k\mathrm{Sq},\tau_{\leq 0}\A)$ is homotopy cartesian
 if and only if its image under $F$ in $\rep(k\mathrm{Sq},\A)$ is homotopy cartesian.
\end{lemma}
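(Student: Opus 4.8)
The plan is to show that the two homotopy-cartesian conditions — one over $\tau_{\leq 0}\A$, the other over $\A$ — are detected by the \emph{same} $\D(k)$-valued functor, so that neither $F$ nor its quasi-inverse $G=\tau_{\leq 0}\circ H$ can change whether the condition holds. Write $Y=F(X)$, so that $X\iso G(Y)=\tau_{\leq 0}(HY)$, where $H\colon \D(\A\otimes k\Sq^{op})\to\D(\tau_{\leq 0}\A\otimes k\Sq^{op})$ is the restriction functor of Lemma~\ref{lem:trun}, computed corner-wise by restriction of modules along the inclusion $\iota\colon\tau_{\leq 0}\A\to\A$; thus each corner satisfies $X_s=\tau_{\leq 0}(\iota^{*}Y_s)$. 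By Definition~\ref{maindef}, ``$Y$ is homotopy cartesian over $\A$'' says that the canonical map $u_Y\colon Y_{00}\to\Sigma^{-1}\Cone((-j_Y,k_Y))$ induces an isomorphism on $\tau_{\leq 0}\RHom_{\A}(A^{\wedge},-)$ for every $A$, and symmetrically for $X$ over $\tau_{\leq 0}\A$ via $u_X$.

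First I would record the elementary identity that makes $\tau_{\leq 0}\RHom(A^{\wedge},-)$ insensitive to $\iota$. Since a representable module is cofibrant, the dg Yoneda lemma gives $\RHom_{\A}(A^{\wedge},M)\iso M(A)$ and $\RHom_{\tau_{\leq 0}\A}(A^{\wedge},N)\iso N(A)$, while $(\iota^{*}M)(A)=M(A)$ as complexes; hence for every dg $\A$-module $M$ there is a natural isomorphism $\tau_{\leq 0}\RHom_{\A}(A^{\wedge},M)\iso\tau_{\leq 0}\RHom_{\tau_{\leq 0}\A}(A^{\wedge},\iota^{*}M)$, and both sides depend only on $\tau_{\leq 0}(M(A))$. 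Because $\iota^{*}$ is a triangle functor it commutes with $\Sigma^{-1}\Cone$, so $\iota^{*}u_Y$ is exactly the canonical comparison map of the restricted cospan, $\iota^{*}u_Y\colon \iota^{*}Y_{00}\to\Sigma^{-1}\Cone((-\iota^{*}j_Y,\iota^{*}k_Y))$. Applying the displayed identity to the source and the target of $u_Y$ then shows that ``$Y$ homotopy cartesian over $\A$'' is equivalent to ``$\iota^{*}u_Y$ induces an isomorphism on $\tau_{\leq 0}\RHom_{\tau_{\leq 0}\A}(A^{\wedge},-)$ for all $A$''. (Alternatively, the source comparison $\tau_{\leq 0}\RHom(A^{\wedge},Y_{00})\iso\tau_{\leq 0}\RHom(A^{\wedge},X_{00})$ can be quoted directly from the ``moreover'' part of Lemma~\ref{lem:trun} with $\B=k$, since $A^{\wedge}$ and $X_{00}$ are quasi-representable, leaving only the cone term to be treated by hand.)

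It then remains to compare $\iota^{*}Y$ with $X=\tau_{\leq 0}(\iota^{*}Y)$. The good-truncation maps $X_s=\tau_{\leq 0}(\iota^{*}Y_s)\to\iota^{*}Y_s$ assemble into a morphism of cospans that is an isomorphism on $H^{\leq 0}$ of each corner, whence by naturality of the canonical comparison map the square relating $u_X$ to $\iota^{*}u_Y$ commutes. The one point requiring genuine verification — and the step I expect to be the main obstacle — is that the \emph{homotopy pullbacks} agree after $\tau_{\leq 0}$, i.e.\ $\tau_{\leq 0}\Sigma^{-1}\Cone(\phi)\iso\tau_{\leq 0}\Sigma^{-1}\Cone(\tau_{\leq 0}\phi)$ for $\phi=(-j,k)$ evaluated at $A$. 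This follows from the long exact cohomology sequence of the fibre triangle $\Sigma^{-1}\Cone(\phi)\to\mathrm{dom}(\phi)\to\mathrm{cod}(\phi)$: for $n\leq 0$ the group $H^{n}(\Sigma^{-1}\Cone(\phi))$ is determined by $H^{n-1}$ and $H^{n}$ of domain and codomain, all in degrees $\leq 0$, where $\phi$ and $\tau_{\leq 0}\phi$ coincide. Combining this with the commuting truncation square, both vertical legs become isomorphisms after applying $\tau_{\leq 0}\RHom_{\tau_{\leq 0}\A}(A^{\wedge},-)=\tau_{\leq 0}(\mathrm{ev}_A)$, so $\tau_{\leq 0}\RHom_{\tau_{\leq 0}\A}(A^{\wedge},u_X)$ is an isomorphism for all $A$ if and only if $\tau_{\leq 0}\RHom_{\tau_{\leq 0}\A}(A^{\wedge},\iota^{*}u_Y)$ is, which by the previous paragraph is precisely the homotopy-cartesianness of $Y=F(X)$. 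This establishes the desired equivalence.
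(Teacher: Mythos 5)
Your proof is correct, and it takes a genuinely different route from the paper's, though both share the same engine: a Five-Lemma comparison of the long exact sequences of the fibre triangle $\Sigma^{-1}\Cone(\phi)\to X_{10}\oplus X_{01}\xrightarrow{\phi} X_{11}$. The difference lies in which comparison map feeds the Five Lemma. The paper transports the homotopy-cartesian condition along the induction functor $F\colon\D(\tau_{\leq 0}\A)\to\D(\A)$, and the corner input is the isomorphism $\Hom(A^{\wedge},\Sigma^{-n}X_{ij})\iso\Hom(FA^{\wedge},\Sigma^{-n}FX_{ij})$ for $n\geq 0$, quoted from the ``moreover'' part of Lemma~\ref{lem:trun}. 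You instead factor the comparison through the quasi-inverse $G=\tau_{\leq 0}\circ H$ and split it into two steps: the restriction step costs nothing, since by the dg Yoneda lemma both homotopy-cartesian conditions are statements about $\tau_{\leq 0}$ of the evaluation complexes at each object $A$, and restriction leaves these literally unchanged; the Five Lemma is then needed only in the truncation step, to check that $H^{n}$ of the fibre is insensitive to replacing the cospan by its good truncation when $n\leq 0$. What each approach buys: the paper's proof is shorter once Lemma~\ref{lem:trun} (including its $\RHom$ statement) is available, whereas yours is more elementary and self-contained --- its only nontrivial inputs are Yoneda and the long exact sequence --- and it makes visible that the statement lives entirely at the level of the underlying complexes $M(A)$, with connectivity entering only through cohomology in degrees $\leq 0$. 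One point worth making explicit in your write-up: the corner-wise identification $X_s=\tau_{\leq 0}(\iota^{*}Y_s)$ uses that $\tau_{\leq 0}\A\otimes k\Sq^{op}$ is strictly connective, so the naive object-wise truncation of a dg bimodule is again a dg bimodule and computes the t-structure truncation appearing in $G$; similarly, replacing $X$ by this specific model uses that homotopy-cartesianness is invariant under isomorphism in $\rep(k\Sq,\tau_{\leq 0}\A)$, which is implicit but should be said.
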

\begin{proof} Let $X$ be the commutative square of quasi-representable dg $\tau_{\leq 0}\A$-modules given by the diagram (\ref{D(Sq)}).
Consider the following diagram 
\[
\begin{tikzcd}
X_{00}\ar[d,"{[}f{,}g{,}0{]}^{\intercal}"swap]\ar[rd,"{[}f{,}g{]}^{\intercal}"]&&&\\
\Sigma^{-1}\mathrm{Cone}(\psi)\ar[r]&X_{10}\oplus X_{01}\ar[r,"\psi={[}-j{,}k{]}"]& X_{11}\ar[r]&\mathrm{Cone}(\psi)
\end{tikzcd}\;.
\] 
By applying $\Hom_{\D(\tau_{\leq 0}\A)}(A^{\wedge},-)$ to the above triangle for each $A\in \tau_{\leq 0}\A$, we get the associated long exact sequence 
\[
\begin{tikzcd}
&&\Hom(A^{\wedge},X_{00})\ar[d]&&\\
\dots\ar[r]&\Hom(A^{\wedge},\Sigma^{-1}X_{11})\ar[r]&\Hom(A^{\wedge},\Sigma^{-1}\Cone(\psi)) \ar[r]&\Hom(A^{\wedge},X_{10}\oplus X_{01})\ar[r]&\dots
\end{tikzcd}
\]
We have $\Hom(A^{\wedge}, \Sigma^{-n}X_{ij})\iso\Hom(FA^{\wedge},\Sigma^{-n}FX_{ij})$ for $n\geq 0$, where $F:\D(\tau_{\leq 0}\A)\rightarrow \D(\A)$ is the induction functor.
By the Five-Lemma, it is then clear that an object $X$ in $\rep(k\mathrm{Sq},\tau_{\leq 0}\A)$ is homotopy cartesian if and only if its image under $F$ in $\rep(k\mathrm{Sq},\A)$ is homotopy cartesian.
\end{proof}
\begin{definition}\label{homotopypullbackdef}
An object $S$ in $\rep(k\mathrm{Cosp},\A)$ is said to admit a {\em homotopy pullback} if there exists a homotopy cartesian square $X$ in $\rep(k\mathrm{Sq},\A)$ with an isomorphism $\phi:LX\rightarrow S$ in $\D(\A\otimes k\mathrm{Cosp}^{op})$. 
In this case, the pair $(X,\phi)$ is called a homotopy pullback of $S$. 
We will also abuse the notation and say that $X$ is a {\em homotopy pullback} of $S$.
\end{definition}

\begin{remark}\label{rmk:isomorphism}
The isomorphism $\varphi:LX\rightarrow S$ in the definition of a homotopy pullback is in general not the identity. 
For example, let $\A$ be the dg category of two-term complexes of finitely generated projective modules over a finite dimensional algebra $A$. 
The cospan
\[
\begin{tikzcd}
&A\ar[d]\\
0\ar[r]&\Cone(\Id_A)
\end{tikzcd}
\]
where $A$ denotes the corresponding stalk complex, does not admit a homotopy pullback $(X,\phi)$ with $\phi$ being the identity. 
It admits a homotopy pullback with $X$ given by
\[
\begin{tikzcd}
A\ar[r,equal]\ar[d]&A\ar[d]\\
0\ar[r]&0
\end{tikzcd}
\]
and $\phi^{-1}$ given by the obvious morphism 
\[
\begin{tikzcd}
         &A\ar[d]                                  &\ar[d, white,""{name=4}]&  &\ar[d,white,""{name=3}]&           &A\ar[d]\ar[r,blue,from=4,to=3]\\
0\ar[r]&\Cone({\Id_A})       &\                                     &  &\                                    &0\ar[r] &0
\end{tikzcd}.
\]  
\end{remark}
Obviously, the restriction dg functor $L:\D_{dg}(\A\otimes k\mathrm{Sq}^{op})\rightarrow \D_{dg}(\A\otimes k\mathrm{Cosp}^{op})$ restricts to a dgfunctor $L:\rep_{dg}(k\Sq,\A)\rightarrow \rep_{dg}(k\Cosp,\A)$.
\begin{proposition}\label{pullbackunique}
The dg functor $\tau_{\leq 0}L:\tau_{\leq 0}\rep_{dg}(k\Sq,\A)\rightarrow \tau_{\leq 0}\rep_{dg}(k\Cosp,\A)$ restricted to
 the full subcategory consisting of homotopy cartesian squares is quasi-fully faithful. 
In particular, the homotopy pullback of an object $S$ in $\rep(k\mathrm{Cosp},\A)$, 
if it exists, is unique up to a unique isomorphism. 
\end{proposition}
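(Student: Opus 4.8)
The plan is to reduce to the connective case and then to exploit the adjunction $L\dashv R$ together with the orthogonality of the canonical t-structure of Lemma~\ref{lemma:tstructure}. First I would reduce to the case where $\A$ is connective. By Lemma~\ref{lem:trun} the functor $F\colon\rep(k\Sq,\tau_{\leq 0}\A)\iso\rep(k\Sq,\A)$ is an equivalence satisfying $\tau_{\leq 0}\RHom(X,Y)\iso\tau_{\leq 0}\RHom(FX,FY)$, and the same holds with $k\Sq$ replaced by $k\Cosp$; by Lemma~\ref{truncationhomotopycartesian} this equivalence matches up the homotopy cartesian squares on both sides, and it visibly commutes with $L$. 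Hence it suffices to treat $\tau_{\leq 0}\A$, so from now on I assume $\A$ connective. Since the Hom-complexes of $\rep_{dg}$ compute the relevant $\RHom$'s, quasi-full-faithfulness of $\tau_{\leq 0}L$ means precisely that for homotopy cartesian squares $X,X'$ the map induced by $L$
\[
\tau_{\leq 0}\RHom_{\A\otimes k\Sq^{op}}(X,X')\rightarrow \tau_{\leq 0}\RHom_{\A\otimes k\Cosp^{op}}(LX,LX')
\]
is a quasi-isomorphism.

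Next I would transport the target along the adjunction. Writing $\eta\colon X'\rightarrow RLX'$ for the unit, the derived adjunction $L\dashv R$ of diagram~(\ref{adjointtriple}) gives $\RHom(LX,LX')\iso\RHom(X,RLX')$, under which the $L$-induced map above becomes postcomposition with the unit, namely $\RHom(X,\eta)\colon \RHom(X,X')\rightarrow \RHom(X,RLX')$. Now I invoke Lemma~\ref{adj}: because $\A$ is connective and $X'$ is homotopy cartesian, the canonical map $X'\rightarrow \tau_{\leq 0}RLX'$ is an isomorphism. Since $X'$ lies in $\D(\A\otimes k\Sq^{op})^{\leq 0}$, the unit $\eta$ factors through $\tau_{\leq 0}RLX'\hookrightarrow RLX'$ via exactly this canonical map, so the cone $C$ of $\eta$ is identified with $\tau_{\geq 1}RLX'$ and therefore lies in $\D(\A\otimes k\Sq^{op})^{\geq 1}$.

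The key point is then orthogonality. Because $X\in\rep(k\Sq,\A)\subseteq \D(\A\otimes k\Sq^{op})^{\leq 0}$ (Lemma~\ref{cha}) while $C\in\D(\A\otimes k\Sq^{op})^{\geq 1}$, the vanishing $\Hom(\D(\A\otimes k\Sq^{op})^{\leq 0},\D(\A\otimes k\Sq^{op})^{\geq 1})=0$ forces $\RHom(X,C)$ to be concentrated in strictly positive degrees, so $\tau_{\leq 0}\RHom(X,C)=0$. Applying $\RHom(X,-)$ to the triangle $X'\rightarrow RLX'\rightarrow C\rightarrow$ and reading off the long exact cohomology sequence, the map $\RHom(X,\eta)$ is an isomorphism in all degrees $\leq 0$, hence $\tau_{\leq 0}\RHom(X,\eta)$ is a quasi-isomorphism. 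This is the desired quasi-full-faithfulness. I expect the main obstacle to be bookkeeping rather than conceptual: one must check carefully that the adjunction isomorphism really carries the map induced by $L$ to postcomposition with $\eta$, and that the reduction to the connective case is compatible with $L$, so that the converse half of Lemma~\ref{adj} (which needs connectivity) is legitimately available.

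Finally, the uniqueness of homotopy pullbacks follows formally. Passing to $H^0$, quasi-full-faithfulness shows that $H^0(\tau_{\leq 0}L)$ is fully faithful on the subcategory of homotopy cartesian squares, and a fully faithful functor reflects isomorphisms. If $(X,\phi)$ and $(X',\phi')$ are two homotopy pullbacks of $S$, then $\phi'^{-1}\phi\colon LX\rightarrow LX'$ is an isomorphism in $\rep(k\Cosp,\A)$, and full faithfulness produces a unique morphism $\psi\colon X\rightarrow X'$ with $L\psi=\phi'^{-1}\phi$; this $\psi$ is automatically an isomorphism and is the unique isomorphism identifying the two homotopy pullbacks, that is, with $\phi'\circ L\psi=\phi$.
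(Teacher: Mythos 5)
Your proposal is correct and follows essentially the same route as the paper's proof: reduce to the connective case via Lemmas~\ref{lem:trun} and~\ref{truncationhomotopycartesian}, then combine the adjunction $L\dashv R$ with the converse half of Lemma~\ref{adj} (valid since $\A$ is now connective) and the orthogonality of the canonical t-structure. The only difference is presentational: the paper checks fullness and faithfulness degreewise by hand, factoring maps $\Sigma^{i}X\rightarrow RLX'$ through $\tau_{\leq 0}RLX'\xleftarrow{\sim}X'$ and using naturality of the unit, whereas you package both at once through the cone of the unit $\eta$ and the long exact sequence — the same ingredients with slightly slicker bookkeeping.
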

\begin{proof}
By Lemmas~\ref{lem:trun} and~\ref{truncationhomotopycartesian}, we may assume that the dg category $\A$ is connective. 
Let $X$ and $X'$ be two homotopy cartesian squares. 

Let $f:\Sigma^{i} LX\rightarrow LX'$ be a morphism in $\D(\A\otimes k\Cosp^{op})$, where $i\geq 0$.
From the adjunction 
\[
\begin{tikzcd}
\D(\A\otimes k{\mathrm{Sq}}^{op})\arrow[shift left=1ex, r, "L"]&\D(\A\otimes k\mathrm{Cosp}^{op})\arrow[shift left=1ex, l,"R"] 
\end{tikzcd},
\]
we have a morphism $g:\Sigma^{i}X\rightarrow RLX'$. 
Since the object $X'$ is homotopy cartesian, by Lemma \ref{adj}, 
the map $\Sigma^{i}X\rightarrow RLX'$ induces a morphism $h:\Sigma^{i}X\rightarrow \tau_{\leq 0}RLX'\xleftarrow{\sim} X'$.
It is easy to see that $L(h)=f$.

Let $h:\Sigma^{i}X\rightarrow X'$ be a morphism in $\D(\A\otimes k\Sq^{op})$, where $i\geq 0$.
We have the following diagram in $\D(\Sq)$:
\[
\begin{tikzcd}
\Sigma^{i}X\ar[r]\ar[d,"h"swap]&\tau_{\leq 0}(\Sigma ^{i}RLX)\ar[d]\ar[r]&\Sigma^{i} RLX\ar[d,"RL(h)"]\\
X'\ar[r,"\sim"]&\tau_{\leq 0}RLX'\ar[r]&RLX'.\\
\end{tikzcd}
\] 
Therefore, if the morphism $L(h)$ is zero, then $h$ is also zero.

\end{proof}
\begin{remark}[Universal property of homotopy pullbacks]
\label{pullbackuniversal}
Similarly, one can show that given a homotopy cartesian square $X'$ and an object $X$ in $\rep(k\Sq,\A)$, if we have a map $\varphi: LX\rightarrow LX'$ in $\rep(k\mathrm{Cosp},\A)$, then we have a canonical map $\theta:X\rightarrow X'$ in $\rep(k\mathrm{Sq},\A)$ such that $L(\theta)=\varphi$. 
\end{remark}

Recall that we have the pair of duality functors 
\[
\begin{tikzcd}
\rep(k\I,\A)\arrow[shift left=1ex,rr,"\mathrm{RHom}(- \text{,} \mathcal{A})"]&&\rep(k\I^{op},\A^{op})^{op}\arrow[shift left=1ex, ll,"\RHom(-\text{,}\A)"]
\end{tikzcd}
\]
Then we can form the dual definitions of a homotopy cocartesian square with respect to $\A$ and of a homotopy pushout of an object in $\D(\A\otimes k\mathrm{Sp}^{op})$.
\begin{definition}
An object $X\in \rep(k\mathrm{Sq},\A)$ given by the diagram (\ref{D(Sq)}) is a \text{\em{{homotopy}}} \text{\em{{cocartesian square with respect to $\A$}}} if the canonical map $\mathrm{Cone}((f,g)^{\intercal})\rightarrow X_{11}$ induces an isomorphism in $\D(k) $  
\[
\tau_{\leq 0}\RHom(X_{11},A^{\wedge})\rightarrow \tau_{\leq 0}\RHom(\mathrm{Cone}((f,g)^{\intercal}),A^{\wedge})
\]
for each $A$ in $\A$.

\end{definition}
Consider the adjunction \begin{tikzcd}[column sep=huge]
 \D(\A\otimes k\mathrm{Sp}^{op})
  \arrow[shift left=1ex,"L"]{r}[name=D]{} &
 \D(\A\otimes k\mathrm{Sq}^{op})
 \arrow[shift left=1ex,"R"]{l}[name=U]{}
 \end{tikzcd}
induced by the inclusion $\mathrm{Sp}\rightarrow\mathrm{Sq}$. 
\begin{definition}
An object $S$ in $\rep(k\mathrm{Sp},\A)$ is said to admit a {\em homotopy pushout} if there is a homotopy cartesian square $X$ in $\rep(k\mathrm{Sq},\A)$ with an isomorphism $\phi:S\rightarrow RX$ in $\D(\A\otimes k\mathrm{Sp}^{op})$. In this case, the pair $(X,\phi)$ is called a {\em homotopy pushout of} $S$. We will also abuse the notation and say that $X$ is a {\em homotopy pushout} of $S$.
\end{definition}

\subsection{An $A_{\infty}$-theoretic description} 
Our main aim in this subsection is to provide a description of the {\em homotopy
category $\mathcal H_{3t}(\A)$ of homotopy $3$-term complexes} using the $\Ai_{\infty}$-formalism, cf.~Definition~\ref{def:3termhomotopy}. 
Building upon Theorem~\ref{thm:Ainfinityrep}, we establish the existence of a canonical fully faithful functor $\mathcal H_{3t}(\A)\hookrightarrow\rep(k\Sq,\A)$, as elaborated in Lemma~\ref{equivalences}.
Additionally, for later use, we introduce the category $\mathcal H_{\Cosp}(\A)$ of homotopy cospans and the category $\mathcal H_{\Sq}(\A)$ of homotopy squares in $\A$, detailed in Definitions~\ref{def:homotopyspans} and \ref{def:homotopysquares}.
They are equivalent to the categories $\rep(k\Cosp,\A)$ and $\rep(k\Sq,\A)$, respectively.
In the remaining part of this subsection, we interpret the definitions from the previous subsection using these notions.

Let $\A$ be a dg category and $\B$  the dg $k$-path category of the following graded quiver with relations
\begin{equation}\label{3term}
\begin{tikzcd}
0\ar[r,"a"]&1\ar[r,"b"]&2
\end{tikzcd}
\end{equation}
where $|a|=|b|=0$, $d(a)=0$, $d(b)=0$ and $ba=0$.

Our aim is to describe the dg category $\Fun_{\Ai_{\infty}}(\B,\A)$ following the construction in \cite{Lefevre03}. It has strictly unital $\Ai_{\infty}$-functors as its objects and strictly unital morphisms as its morphisms.
We identify a strictly unital $\Ai_{\infty}$-functor $F:\B\rightarrow\A$ with the corresponding diagram in $\A$
\begin{equation}\label{F}
\begin{tikzcd}
&A_0\ar[r,"f"]\ar[rr,bend right = 8ex,"h"swap]&A_1\ar[r,"j"]&A_2
\end{tikzcd}
\end{equation}
where $|f|=|j|=0$, $|h|=-1$ and $d(f)=0$, $d(j)=0$ and $d(h)=-jf$.
Let $F$ and $F'$ be two strictly unital $\Ai_{\infty}$-functors from $\B$ to $\A$. 
We identify them with their corresponding diagrams. 
A strictly unital morphism $H:F\rightarrow F'$ of degree $n$ is identified with the following diagram
\begin{equation}\label{mordegreen}
\begin{tikzcd}
&A_0\ar[r,"f"]\ar[d,"r_0"swap]\ar[rd,"s_1"red,swap ,red]\ar[rrd,"t"blue,blue]\ar[rr,bend left = 8ex,"h"]&A_1\ar[d,"r_1"swap]\ar[rd,"s_2"red,red]\ar[r,"j"]&A_2\ar[d,"r_2"]\\
&A_0'\ar[r,"f'"swap]\ar[rr,bend right = 8ex,"h'"swap]&A_1'\ar[r,"j'"swap]&A_2'
\end{tikzcd}
\end{equation}
where $|r_i|=n$ for $i=0,1,2$, $|s_1|=|s_2|=n-1$, $|t|=n-2$.
Let $H\in \Hom^n(F,F')$. Put $H'=d(H)\in \Hom^{n+1}(F,F')$. 
Through the identification with diagrams, we observe that the corresponding diagram for $d(H)$ is as follows:
\[
 \begin{tikzcd}
&A_0\ar[r,"f"]\ar[d,"r_0'"swap]\ar[rd,"s_1'"red,swap ,red]\ar[rrd,"t'"blue,blue]\ar[rr,bend left = 8ex,"h"]&A_1\ar[d,"r_1'"swap,near end]\ar[rd,"s_2'"red,red]\ar[r,"j"]&A_2\ar[d,"r_2'"]\\
&A_0'\ar[r,"f'"swap]\ar[rr,bend right = 8ex,"h'"swap]&A_1'\ar[r,"j'"swap]&A_2'
\end{tikzcd}
 \]
where 
\[
r_i'=d(r_i)
\]
 for $i=0,1,2$, and
 \[
 s_1'=d(s_1)+(-1)^{n+1}(f'\circ r_0)+(-1)^{n}r_1\circ f,
 \]
 \[
 s_2'=d(s_2)+(-1)^{n+1}(j'\circ r_1)+(-1)^{n}r_2\circ j,
 \]
 and 
 \[
 t'=d(t)+(-1)^{n}(j'\circ s_1)+h'\circ r_0+(-1)^{n}(s_2\circ f)+(-1)^{n+1}(r_2\circ h).\]
 Clearly we have $d^2(H)=0$.
 
 We now describe the compositions. 
 Let $F$, $F'$ and $F''$ be strictly unital $\Ai_{\infty}$-functors from $\B$ to $\A$. Let $H'\in \Hom^r(F,F')$ and $H''\in \Hom^s(F',F'')$ be two morphisms in the dg category $\Fun_{\Ai_{\infty}}(\B,\A)$. 
Assume the corresponding diagram is given by
\[
\begin{tikzcd}
&A_0\ar[r,"f"]\ar[d,"r_0'"swap]\ar[rd,"s_1'"red,swap ,red]\ar[rrd,"t'"blue,blue]\ar[rr,bend left = 8ex,"h"]&A_1\ar[d,"r_1'"swap,near end]\ar[rd,"s_2'"red,red]\ar[r,"j"]&A_2\ar[d,"r_2'"]\\
&A_0'\ar[r,"f'"{description}]\ar[d,"r_0''"swap]\ar[rrd,"t''"blue,blue]\ar[rd,"s_1''"red,swap,red] &A_1'\ar[rd,"s_2''"red, red]\ar[d,"r_1''"swap, near end]\ar[r,"j'"{description}]&A_2'\ar[d,"r_2''"]\\
&A_0''\ar[r,"f''"swap]\ar[rr,bend right = 8ex,"h''"swap]&A_1''\ar[r,"j''"swap]&A_2''
\end{tikzcd},
\]
where, in order to keep the diagram readable, we omit the arrow $h':A_0'\rightarrow A_2'$.
We denote by $\tilde{H}\in \Hom^{r+s}(F,F'')$ the composition of $H$ and $H'$. 
 Then the corresponding diagram for $\tilde{H}$ is given as follows:
  \[
\begin{tikzcd}
&A_0\ar[r,"f"]\ar[d,"\tilde{r_0}"swap]\ar[rd,"\tilde{s_1}"red,swap ,red]\ar[rrd,"\tilde{t}"blue,blue]\ar[rr,bend left = 8ex,"h"]&A_1\ar[d,"\tilde{r_1}"swap,near end]\ar[rd,"\tilde{s_2}"red,red]\ar[r,"j"]&A_2\ar[d,"\tilde{r_2}"]\\
&A_0''\ar[r,"f''"swap]\ar[rr,bend right = 8ex,"h''"swap]&A_1''\ar[r,"j''"swap]&A_2''
\end{tikzcd},
\]
 where 
 \[
 \tilde{r_i}=r_i''r_i'
 \]
 for $i=1,2,3$, and
 \[
 \tilde{s_1}=r_1''s_1'+(-1)^{r}s_1''r_0', 
 \]
 \[
 \tilde{s_2}=r_2''s_2'+(-1)^{r}s_2''r_1',
 \]
 and 
 \[
 \tilde{t}=r_2''t'+t''r_0'+(-1)^{r+1}s_2''s_1'.
 \]
 Note that in the expression of $\tilde{t}$, we only have length $2$ paths with suitable signs. 
 This is due to the vanishing of $m_j^{\A}$ for $j\geq 3$.

\begin{definition}\label{def:3termhomotopy}
Let $\A$ be a dg category. 
The {\em homotopy category $\mathcal H_{3t}(\A)$ of 3-term homotopy complexes (= 3-term $h$-complexes, or 3-term complexes up to homotopy) over $\A$} is defined to be $H^0\Fun_{\Ai_{\infty}}(\B,\A)$ where $\B$ is the dg category (\ref{3term}).
More precisely, its objects are given by diagrams $X$ in $\A$ of the form
\[
\begin{tikzcd}
&A_0\ar[r,"f"]\ar[rr,bend right = 8ex,"h"swap]&A_1\ar[r,"j"]&A_2
\end{tikzcd}
\]
where $|f|=|j|=0$, $|h|=-1$ and $d(f)=0$, $d(j)=0$ and $d(h)=-jf$.

Suppose $X'$ is given by the diagram with objects and morphisms with superscript a prime symbol. 

A morphism from $X$ to $X'$ is given by a homotopy equivalence class of 6-tuples $(r_i,s_j,t)$, $i=0,1,2$, $j=1,2$ where 
\[
r_i:A_i\rightarrow A_i', |r_i|=0, d(r_i)=0,
\]
\[
s_j:A_{j-1}\rightarrow A_{j}', |s_j|=-1, d(s_1)=f'r_0-r_1f,
\]
\begin{equation}
 d(s_2)=j'r_1-r_2j, \label{s2} 
\end{equation} 
and 
\begin{equation}
t:A_0\rightarrow A_2', |t|=-2, d(t)=r_2\circ h-h'\circ r_0-s_2\circ f-j'\circ s_1.\label{tt} 
\end{equation}
Two 6-tuples $(r_i,s_j,t)$ and $(r_i',s_j',t')$ are {\em homotopy equivalent} if there exists a 6-tuple $(r''_i,s_j'',t'')$ such that 
\[
|r_i''|=-1, r_i-r_i'=d(r_i''),
\]
\[
|s_j''|=-2, s_1-s_1'=d(s_1'')+f'\circ r_0''-r_1''\circ f,
\]
\[
s_2-s_2'=d(s_2'')+j'\circ r_1''-r_2''\circ j,
\]
and 
\[
|t''|=-3, t-t'=d(t'')-j'\circ s_1''+h'\circ r_0''-s_2''\circ f+r_2''\circ h.
\]
The composition of two morphisms given by homotopy equivalence classes of 6-tuples $(r_i',s_j',t'):F\rightarrow F'$ and $(r_i'',s_j'',t''):F'\rightarrow F'$, is the morphism given by the 6-tuple $(\tilde{r}_i,\tilde{s}_j,\tilde{t}):F\rightarrow F''$ where 
\begin{align*}
\tilde{r}_i  &=r_i''r_i' ,\\
\tilde{s}_1 &=r_1''s_1'+s_1''r_0', \\
\tilde{s}_2 &=r_2''s_2'+s_2''r_1' \\ 
\tilde{t}      &=r_2''t'+t''r_0'-s_2''s_1'.
\end{align*}
The identity morphism for $X$ is given by the homotopy equivalence class of the 6-tuple $(r_i,s_j,t)$ where $r_i=\Id$, $s_j=0$ and $t=0$ for $i=0,1,2$ and $j=1,2$. 
\end{definition}
From the definition, we see that $\mathcal H_{3t}(\A)$ is isomorphic to $\mathcal H_{3t}(\tau_{\leq 0}(\A))$. 
Thus we may assume $\A$ is strictly connective when needed.

Clearly if a 6-tuple $(r_i,s_j,t)$ gives rise to an isomorphism in $\mathcal H_{3t}(\A)$, then $r_i$ is a homotopy equivalence for each $i=0,1,2$. 
The converse is also true by the following proposition.
\begin{proposition}[\cite{Lefevre03}, {Proposition 8.2.2.3}] \label{termwiseequivalence}
Let $\theta:F\rightarrow F'$ be a morphism in $\mathcal H_{3t}(\A)$ given by the homotopy equivalence class of a 6-tuple $(r_i,s_j,t)$ as above. 
If $r_i$ is a homotopy equivalence in $\A$ for each $i=0,1,2$, then $\theta$ is an isomorphism. 
\end{proposition}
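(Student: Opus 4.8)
The plan is to avoid constructing an inverse 6-tuple by hand and instead detect invertibility representably: in any dg category a closed degree-$0$ morphism is an isomorphism in the $H^0$-category as soon as it induces bijections on all morphism spaces of that $H^0$-category. Since $\mathcal H_{3t}(\A)=H^0\Fun_{\Ai_{\infty}}(\B,\A)$, it therefore suffices to prove that for every $3$-term h-complex $G$ the post-composition map
\[
\theta_*\colon \Hom_{\Fun_{\Ai_{\infty}}(\B,\A)}(G,F)\longrightarrow \Hom_{\Fun_{\Ai_{\infty}}(\B,\A)}(G,F')
\]
is a quasi-isomorphism of complexes. Passing to $H^0$ and invoking the ordinary Yoneda lemma in $\mathcal H_{3t}(\A)$ then yields the inverse: feeding in $G=F'$ produces $\psi$ with $\theta\psi\sim\Id_{F'}$, and feeding in $G=F$, together with injectivity of $H^0(\theta_*)$ on $[F,F]$, upgrades this to $\psi\theta\sim\Id_F$.

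To analyze $\theta_*$, I would filter each complex $\Hom_{\Fun_{\Ai_{\infty}}(\B,\A)}(G,-)$ by the minimal path-length of $\B$ carried by a $6$-tuple of Definition~\ref{def:3termhomotopy}: let $W_0$ be the whole complex, $W_1$ the subcomplex of tuples with vanishing $r_i$-components, and $W_2$ the subcomplex of tuples supported only on the length-two slot (the $t$-component). Reading off the differential of Definition~\ref{def:3termhomotopy}, the ``extra'' summands appearing in $d(s_j)$ and $d(t)$ involve only strictly lower-length components, so $d$ preserves $W_\bullet$ and the induced differential on the associated graded is the internal differential $d_\A$; thus $\mathrm{gr}_0=\bigoplus_i\A(G_i,F_i)$, $\mathrm{gr}_1=\A(G_0,F_1)[1]\oplus\A(G_1,F_2)[1]$ and $\mathrm{gr}_2=\A(G_0,F_2)[2]$, each with differential $d_\A$. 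Inspecting the composition law of Definition~\ref{def:3termhomotopy}, $\theta_*$ also preserves $W_\bullet$ and acts on the associated graded simply as post-composition with the closed degree-$0$ maps $r_i$: by $(r_0,r_1,r_2)_*$ on $\mathrm{gr}_0$, by $(r_1,r_2)_*$ on $\mathrm{gr}_1$, and by $(r_2)_*$ on $\mathrm{gr}_2$.

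Since each $r_i$ is a homotopy equivalence in $\A$, choosing a homotopy inverse $\rho_i$ shows that $(r_i)_*$ is a homotopy equivalence of Hom-complexes of $\A$, in particular a quasi-isomorphism. Hence $\theta_*$ induces an isomorphism on the $E_1$-page of the spectral sequences associated to $W_\bullet$ on source and target. As the filtration is bounded (three steps), both spectral sequences converge, and the comparison theorem forces $\theta_*$ to be a quasi-isomorphism for every $G$, which is exactly what the first paragraph requires.

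The main obstacle is purely the bookkeeping underlying the second paragraph: one must verify, with the sign conventions of Definition~\ref{def:3termhomotopy}, that the off-diagonal terms of both $d$ and the composition law genuinely strictly raise the path-length filtration, so that $E_0$ really is the stated direct sum of shifted $\A$-Hom-complexes with differential $d_\A$ and $\theta_*$ is block-diagonal there. I would flag as an instructive alternative the direct obstruction-theoretic construction of $\psi=(\rho_i,\sigma_j,\tau)$: there the $\rho_i$ are homotopy inverses, the obstructions to $\sigma_1,\sigma_2$ vanish in $H^0$ by two-out-of-three applied to the $r_i$, but the obstruction to the top component $\tau$ lives in $H^{-1}$ and must be killed using the freedom in the $\sigma_j$ and the given forward data $(s_j,t)$; this $H^{-1}$-obstruction is precisely the subtlety that the filtration argument circumvents.
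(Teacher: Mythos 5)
Your proof is correct, but the comparison here is asymmetric: the paper does not prove this proposition at all — it quotes it from \cite{Lefevre03}, Proposition 8.2.2.3 — so what you have written is a self-contained replacement for a citation rather than a variant of an argument in the text. Your two steps do check out against the formulas of Definition~\ref{def:3termhomotopy}: the differential and the composition law both preserve the path-length filtration $W_0\supseteq W_1\supseteq W_2$ (the filtration-mixing terms of $d(s_j)$ and $d(t)$ involve only components that vanish on $W_1$, resp.\ on $W_2$), the induced maps on the associated graded pieces are post-composition by the closed degree-zero morphisms $r_0,r_1,r_2$, and these are homotopy equivalences of Hom-complexes of $\A$ because each $r_i$ has a homotopy inverse; in particular no flatness hypothesis on the ground ring $k$ is needed, which matters since the paper works over an arbitrary commutative $k$. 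Since the filtration has only three steps you can avoid spectral sequences altogether: two applications of the five lemma to the long exact sequences of $0\to W_{p+1}\to W_p\to W_p/W_{p+1}\to 0$ show that $\theta_*$ is a quasi-isomorphism, and your Yoneda argument in $H^0$ then finishes. Compared with the cited general $A_\infty$-statement, your argument is more elementary — for a general source $A_\infty$-category the analogous length filtration is infinite and convergence needs care, whereas here $\B$ has no nonzero compositions of length greater than two — and it proves slightly more than the proposition asserts, namely that $\theta$ induces quasi-isomorphisms on all morphism complexes, not merely bijections on $H^0$. A still shorter route using the paper's own machinery would be to transport $\theta$ along the equivalence $\mathcal H_{3t}(\A)\iso\rep(\tilde{\Com},\A)$ of Theorem~\ref{thm:Ainfinityrep}: a morphism in $\D(\A\otimes\tilde{\Com}^{op})$ is invertible as soon as its restriction to each object of $\tilde{\Com}$ is invertible in $\D(\A)$, and the restrictions of the image of $\theta$ are induced by the homotopy equivalences $r_i$. (One cosmetic slip: with the paper's conventions the graded pieces are downward shifts, $\Sigma^{-1}\A(G_0,F_1)\oplus\Sigma^{-1}\A(G_1,F_2)$ and $\Sigma^{-2}\A(G_0,F_2)$, rather than $[1]$ and $[2]$; this has no effect on the argument.)
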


The following theorem is claimed by Kontsevich, modulo some results by To\"en.

\begin{theorem}[\cite{CanonacoOrnaghiStellari18} cf.~also \cite{Faonte17a}]
\label{thm:Ainfinityrep}
We have a canonical isomorphism 
\[
\Fun_{\Ai_{\infty}}(-,-)\rightarrow \rep_{dg}(-,-)
\]
of bifunctors $\Hqe_{k-\mathrm{cf}}^{op}\times \Hqe\rightarrow \Hqe$, where $\Hqe_{k-\mathrm{cf}}$ is the
full subcategory of $\Hqe$ whose objects are the dg categories whose morphism
complexes are cofibrant dg $k$-modules.
\end{theorem}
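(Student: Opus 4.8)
The plan is to produce an explicit comparison morphism of bifunctors and to verify that it is a quasi-equivalence for every pair $(\B,\A)$ with $\B$ having $k$-cofibrant morphism complexes; naturality in both variables will then promote these pointwise quasi-equivalences to the desired isomorphism in $\Hqe$. The starting point is To\"en's theorem, recalled above, that $\rep_{dg}(\B,\A)$ is the internal Hom of the monoidal category $(\Hqe,-\otimes^{\mathbb L}-)$. It therefore suffices to build a natural quasi-equivalence $\Fun_{\Ai_\infty}(\B,\A)\rightarrow\rep_{dg}(\B,\A)$; equivalently, to show that $\Fun_{\Ai_\infty}(-,-)$ represents the same internal Hom. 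I would pursue the first, more concrete route and indicate the second as a streamlined alternative at the end.

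\medskip

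First I would construct the comparison dg functor $\Phi$. Composing a strictly unital $\Ai_\infty$-functor $F\colon\B\rightarrow\A$ with the strict Yoneda dg functor $\A\rightarrow\C_{dg}(\A)$ produces an $\Ai_\infty$-functor $\B\rightarrow\C_{dg}(\A)$, that is, an $\Ai_\infty$-bimodule whose underlying graded $\B$-$\A$-bimodule is $(A,B)\mapsto\A(A,FB)$ with higher structure maps assembled from the $F_n$. Since $\B$ is $k$-cofibrant, this $\Ai_\infty$-bimodule rectifies functorially to a genuine dg bimodule $X_F\in\D(\A\otimes^{\mathbb L}\B^{op})$, and $X_F(-,B)$ is quasi-isomorphic to $(FB)^\wedge$, hence quasi-representable, so that $X_F\in\rep(\B,\A)$. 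On morphism complexes, $\Phi$ sends an $\Ai_\infty$-prenatural transformation to the induced morphism of rectified bimodules. One then checks compatibility with restriction along dg functors $\B'\rightarrow\B$ and with post-composition along dg functors $\A\rightarrow\A'$, so that $\Phi$ is a morphism of bifunctors.

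\medskip

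Next I would show that $\Phi$ is a quasi-equivalence. Essential surjectivity of $H^0(\Phi)$ is a rectification statement: given $X\in\rep(\B,\A)$, each $X(-,B)$ is quasi-representable, so a choice of objects $GB\in\A$ with $(GB)^\wedge\iso X(-,B)$ together with the bimodule structure and its coherence homotopies organises --- here $k$-cofibrancy of $\B$ guarantees the choices can be made coherently --- into a strictly unital $\Ai_\infty$-functor $G$ with $\Phi(G)\cong X$. Quasi-full-faithfulness is the crux: for $\Ai_\infty$-functors $F,G$ one must show that the Hom complex $\Fun_{\Ai_\infty}(\B,\A)(F,G)$ of prenatural transformations computes the derived bimodule Hom $\RHom_{\A\otimes^{\mathbb L}\B^{op}}(X_F,X_G)$. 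The point is that this complex is the totalisation of the bar complex of $\B$ with coefficients in the two bimodules, which is a standard model for the derived Hom precisely because $\B$ is $k$-cofibrant; the canonical map out of the bar resolution then provides the required quasi-isomorphism.

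\medskip

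Finally, naturality of $\Phi$ in both arguments promotes the pointwise quasi-equivalences to an isomorphism of bifunctors $\Fun_{\Ai_\infty}(-,-)\xrightarrow{\sim}\rep_{dg}(-,-)$ in $\Hqe$, and one verifies it is canonical, \ie independent up to coherent homotopy of the auxiliary rectification choices. I expect the quasi-full-faithfulness step to be the main obstacle: identifying the $\Ai_\infty$ morphism complex with the derived Hom of rectified bimodules forces one to control the bar resolution and to use $k$-cofibrancy of $\B$ to ensure the relevant totalisation computes the derived functor, which is exactly where To\"en's results on $\rep_{dg}$ and the closed structure on $\Ai_\infty$-categories of Lyubashenko and Keller enter. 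A cleaner packaging avoids $\Phi$ altogether: one shows that the tensor--Hom adjunction of the closed monoidal structure on $\Ai_\infty$-categories descends to a natural bijection $\Hqe(\C\otimes^{\mathbb L}\B,\A)\cong\Hqe(\C,\Fun_{\Ai_\infty}(\B,\A))$, so that $\Fun_{\Ai_\infty}(\B,\A)$ and $\rep_{dg}(\B,\A)$ are both internal Homs and hence canonically isomorphic by uniqueness of adjoints, with bifunctoriality automatic.
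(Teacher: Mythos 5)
You should first be aware that the paper does not prove this statement at all: Theorem~\ref{thm:Ainfinityrep} is imported as a black box from Canonaco--Ornaghi--Stellari (cf.~also Faonte), with the remark that it was claimed by Kontsevich modulo results of To\"en. So there is no in-paper argument to compare against; the only meaningful comparison is with the cited proofs, whose overall architecture your sketch does reproduce (build the bimodule $X_F$ from an $\Ai_{\infty}$-functor via Yoneda, prove quasi-fully-faithfulness by a bar-resolution computation, prove essential surjectivity by rectification, or alternatively exhibit $\Fun_{\Ai_{\infty}}(\B,-)$ as an internal Hom and invoke uniqueness of adjoints).

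As a standalone proof, however, your proposal has genuine gaps at exactly the points where those papers invest their effort, and the gaps are aggravated by the fact that $k$ is an arbitrary commutative ring here, not a field. First, essential surjectivity: the claim that choices of objects $GB$ with $(GB)^{\wedge}$ quasi-isomorphic to $X(-,B)$ ``organise coherently'' into a strictly unital $\Ai_{\infty}$-functor is a rectification/lifting statement along the Yoneda quasi-equivalence from $\A$ onto the h-projective quasi-representable modules; over a field one could invoke homotopy transfer or the existence of $\Ai_{\infty}$-quasi-inverses, but over a general $k$ these tools require cofibrancy of the Hom complexes of $\A$, which is \emph{not} assumed --- only $\B$ is $k$-cofibrant, and $k$-cofibrancy of $\B$ does not repair this. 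This is precisely the difficulty that forces Canonaco--Ornaghi--Stellari to develop their technique of extending dg functors. Second, quasi-fully-faithfulness: identifying the complex of prenatural transformations with $\RHom_{\A\otimes^{\mathbb L}\B^{op}}(X_F,X_G)$ requires showing that the bar resolution is h-projective as a bimodule and that the product totalisation genuinely computes the derived Hom; calling it ``a standard model'' is an assertion, not an argument, and it is where the hypothesis on $\B$ must actually be used. Third, your ``cleaner packaging'' is circular as stated: the assertion that the $\Ai_{\infty}$ tensor--Hom adjunction descends to a natural bijection $\Hqe(\C\otimes^{\mathbb L}\B,\A)\cong\Hqe(\C,\Fun_{\Ai_{\infty}}(\B,\A))$ is essentially equivalent to the theorem being proved, so uniqueness of internal Homs cannot be invoked before that bijection is established --- which is the hard content, not a formality.
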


Denote by $\tilde{\mathrm{Com}}$ the dg path $k$-category of the quiver 
\begin{equation}\label{3term'}
\begin{tikzcd}
0\ar[rr,bend left=8ex,"c"]\ar[r,"a"swap]&1\ar[r,"b"swap]&2 
\end{tikzcd}
\end{equation}
where ${|}a{|}=0$, ${|}b{|}=0$, ${|}c{|}=-1$, and $d(c)=ba$.
We identify objects in $\D(\A\otimes \tilde{\Com}^{op})$ with sequences
\[
\begin{tikzcd}
X\ar[r,"f"]\ar[rr,"h"swap, bend right=8ex]&Y\ar[r,"j"]&Z
\end{tikzcd}
\]
in $\C_{dg}(\A)$ where $f$ and $j$ are closed morphisms of degree 0 and $h$ is a morphism of degree $-1$ such that $d(h)=jf$. 

Denote by $\tilde{\mathrm{Sq}}$ the dg quotient of $k\Sq$ by its full dg subcategory consisting of the object 10, i.e.~the dg path $k$-category of the quiver
\begin{equation}
\begin{tikzcd}
00\ar[rd,phantom,"="]\ar[r,"f"]\ar[d,"g"swap]&01\ar[d,"j"]\\
10\ar[loop left,"h"]\ar[r,"l"swap]&11
\end{tikzcd}
\end{equation}
where ${|}f{|}=0={|}j{|}={|}g{|}={|}l{|}$, ${|}h{|}=-1$, $d(h)=\Id_{10}$, and $jf=lg$.
We identify objects in $\D(\A\otimes\tilde{\Sq}^{op})$ with diagrams
\[
\begin{tikzcd}
X\ar[rd,phantom,"="]\ar[r,"f"]\ar[d,"g"swap]&Y\ar[d,"j"]\\
N\ar[loop left,"h"]\ar[r,"l"swap]&Z
\end{tikzcd}
\]
in $\C_{dg}(\A)$ where $f,g,j,l$ are closed morphisms of degree 0 such that $jf=lg$ and $h$ is a morphism of degree $-1$ such that $d(h)=\Id_{N}$.

Let $F:\tilde{\mathrm{Com}}\rightarrow \tilde{\mathrm{Sq}}$ be the dg functor such that $F(0)=00$, $F(1)=01$, $F(2)=11$ and $F(a)=f$, $F(b)=j$ and $F(c)=lhg$.
Let $G:k\mathrm{Sq}\rightarrow \tilde{\mathrm{Sq}}$ be the obvious dg quotient functor.
We have the following cospan of dg functors
\[
\begin{tikzcd}
&\tilde{\mathrm{Com}}\ar[d,"F"]\\
k\mathrm{Sq}\ar[r,"G"swap]&\tilde{\mathrm{Sq}}
\end{tikzcd}\;.
\]
By \cite[Proposition 1.6.3, Proposition 4.6]{Drinfeld04}, the dg functor $\Id\otimes G^{op}:\A\otimes k\Sq^{op}\rightarrow \A\otimes\tilde{\Sq}^{op}$ is a dg quotient of $\A\otimes k\Sq^{op}$ modulo $\A\otimes \{01\}$ and hence a localization functor, i.e.~the restriction functor along $\Id\otimes G^{op}$ is fully faithful.
Let $\mathcal S$ be the right orthogonal of the objects $(A, 01)^{\wedge}$ in $\D(\A\otimes k\Sq^{op})$, where $A$ runs through objects in $\A$. 
More precisely, $\mathcal S$ is the full triangulated subcategory of $\D(\A\otimes k\mathrm{Sq}^{op})$ consisting of squares 
\[
\begin{tikzcd}
X\ar[r]\ar[d]&Y\ar[d]\\
N\ar[r]&Z
\end{tikzcd}
\] 
where $N$ is an acyclic dg $\A$-module.
\begin{lemma}\label{equivalences}
$F$ is a quasi-equivalence and $G$ induces an equivalence $\mathcal S\xrightarrow{\sim}\D(\A\otimes\tilde{\Sq}^{op})$. 
\end{lemma}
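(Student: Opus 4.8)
The plan is to prove the two assertions in turn. For the claim that $F\colon\tilde{\Com}\to\tilde{\Sq}$ is a quasi-equivalence, I would first check quasi-full-faithfulness by computing the morphism complexes on both sides. Since $\tilde{\Com}$ and $\tilde{\Sq}$ are dg path categories of directed quivers, all morphism complexes are concentrated in non-positive degrees, and away from the pairs $(0,2)$ and $(00,11)$ they are concentrated in a single degree, where $F$ acts by $\Id\mapsto\Id$, $a\mapsto f$, $b\mapsto j$ and is plainly an isomorphism. The complex $\tilde{\Com}(0,2)$ is $k\cdot c\to k\cdot ba$ with $d(c)=ba$, hence acyclic; and $\tilde{\Sq}(00,11)$ is spanned by $jf=lg$ in degree $0$ together with $lh^{n}g$ in degree $-n$ for $n\ge 1$, with $d(lh^{n}g)=lh^{n-1}g$ for odd $n$ and $0$ for even $n$, hence also acyclic. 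Since $F$ sends $ba\mapsto jf$ and $c\mapsto lhg$, it carries one acyclic complex to the other, so the induced map is a quasi-isomorphism and $F$ is quasi-fully faithful.

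It then remains to deal with essential surjectivity of $H^0(F)$. The functor reaches $00$, $01$ and $11$, while the only remaining object $10$ satisfies $d(h)=\Id_{10}$, so it is contractible, i.e. a zero object of $H^0(\tilde{\Sq})$. Thus $H^0(F)$ identifies $H^0(\tilde{\Com})$ with the full subcategory of $H^0(\tilde{\Sq})$ spanned by $\{00,01,11\}$, which is all of $H^0(\tilde{\Sq})$ up to the zero object $10$, and $F$ is a quasi-equivalence. What is actually used downstream is the resulting equivalence $\D(\A\otimes\tilde{\Sq}^{op})\xrightarrow{\sim}\D(\A\otimes\tilde{\Com}^{op})$ obtained by restriction along $F$, for which the contractibility of $10$, equivalently the acyclicity of $(A,10)^{\wedge}$, is what matters.

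For the second assertion, recall that it is noted just before the lemma (via \cite{Drinfeld04}) that $\Id\otimes G^{op}\colon\A\otimes k\Sq^{op}\to\A\otimes\tilde{\Sq}^{op}$ is a dg quotient, and hence that restriction along it is fully faithful. So it suffices to identify the essential image of this restriction functor with $\mathcal S$. For a dg quotient, the image consists of those $X\in\D(\A\otimes k\Sq^{op})$ that are right orthogonal to the representables $(A,10)^{\wedge}$ attached to the object $10$ being killed; by the Yoneda isomorphism $\RHom((A,10)^{\wedge},X)\simeq X_{10}(A)$, this orthogonality holds for every $A$ exactly when the component $N=X_{10}$ is an acyclic dg $\A$-module, which is the defining condition of $\mathcal S$. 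Hence restriction gives an equivalence $\D(\A\otimes\tilde{\Sq}^{op})\xrightarrow{\sim}\mathcal S$, and $G$ induces the asserted quasi-inverse equivalence $\mathcal S\xrightarrow{\sim}\D(\A\otimes\tilde{\Sq}^{op})$.

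The computational heart is the acyclicity of $\tilde{\Sq}(00,11)$ and its comparison with $\tilde{\Com}(0,2)$ under $F$. The two points I expect to require the most care are the zero-object bookkeeping for $F$ — the one omitted object $10$ being contractible rather than literally hit — and the exact matching of the essential image of the localization with $\mathcal S$, that is, checking that right orthogonality to the killed representables coincides with acyclicity of the bottom-left component of the square.
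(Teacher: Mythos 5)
Your proposal is correct and takes essentially the same route as the paper: the same computation showing that both $\Hom_{\tilde{\Com}}(0,2)$ and $\Hom_{\tilde{\Sq}}(00,11)$ are acyclic (with the identical formula $d(lh^ng)=lh^{n-1}g$ for odd $n$, $0$ for even $n$), and the same reliance on Drinfeld's dg-quotient theorem for the second assertion, which the paper simply cites as \cite[Proposition 4.6 (ii)]{Drinfeld04} while you unwind it into the fully-faithful-restriction plus right-orthogonal/Yoneda argument identifying the essential image with $\mathcal S$. The only differences are expository -- you make explicit the degree-zero Hom complexes, the contractibility bookkeeping for the object $10$ (which the paper's proof passes over silently), and the correction of the paper's typo $(A,01)^{\wedge}$ to $(A,10)^{\wedge}$ -- so no substantive divergence.
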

\begin{proof}
It is enough to show that $F$ induces a quasi-isomorphism of complexes
\[
\Hom_{\tilde{\mathrm{Com}}}(0,2)\rightarrow \Hom_{\tilde{\mathrm{Sq}}}(00,11).
\]
On the left hand side, the complex is given by
\[
\cdots\rightarrow 0\rightarrow k\cdot c\xrightarrow{\sim} k\cdot(ba)\rightarrow 0\rightarrow \cdots,
\]
which is contractible. On the right hand side, the complex is given by
\[
\cdots\rightarrow k\cdot(lh^3g)\xrightarrow{\sim} k\cdot(lh^2g)\xrightarrow{0} k\cdot(lhg)\xrightarrow{\sim} k\cdot(lg)\rightarrow 0\rightarrow\cdots,
\]
which is also contractible.
Thus $F$ is a quasi-equivalence. 
The rest follows from~\cite[Proposition 4.6 (ii)]{Drinfeld04}.
\end{proof}
As a consequence of Lemma \ref{equivalences}, we have a fully faithful functor
\[
\begin{tikzcd}
\D(\A\otimes \tilde{\mathrm{Com}}^{op})\ar[r] &\D(\A\otimes k\mathrm{Sq}^{op}),
\end{tikzcd}
\]
which sends an object
\begin{equation}
\label{sequence}\tag{$\bigstar\bigstar$}
\begin{tikzcd}
&X\ar[r,"f"]\ar[rr,bend right = 8ex,"h"swap]&Y\ar[r,"j"]&Z
\end{tikzcd}
\end{equation}
to
\begin{equation}\label{square}\tag{$\bigstar\bigstar\bigstar$}
\begin{tikzcd}
&X\ar[r,"f"]\ar[d,"i"swap]&Y\ar[d,"j"]\\
&IX\ar[r,"{[}h{,}d(h){]}"swap]&Z
\end{tikzcd}\;.
\end{equation}
Recall that the homotopy category $\mathcal H_{3t}(\A)$ of 3-term h-complexes over $\A$ is defined to be $H^0(\Fun_{\Ai_{\infty}}(\B,\A))$, where $\B$ is the dg category defined by (\ref{3term}). 
We have an obvious quasi-equivalence $\tilde{\Com}\iso \B$.
Hence, by Theorem~\ref{thm:Ainfinityrep}, we have a fully faithful functor
\[
F: \mathcal H_{3t}(\A)\iso \rep(\B,\A)\iso \rep(\tilde{\Com},\A)\rightarrow \rep(k\Sq,\A),
\]
sending a 3-term h-complex over $\A$
\begin{equation}\label{3t}
\begin{tikzcd}
A_0\ar[r,"f"]\ar[rr,bend right = 8ex,"h"swap]&A_1\ar[r,"j"]&A_2,
\end{tikzcd}
\end{equation}
where $|f|=|j|=0$, $|h|=-1$, and $d(f)=0$, $d(j)=0$, and $d(h)=-jf$, to the following object in $\rep(k\Sq,\A)$
\begin{equation}\label{3tsquare}
\begin{tikzcd}
A_0^{\wedge}\ar[r,"f^{\wedge}"]\ar[d,"i"swap]                                              &A_1^{\wedge}\ar[d,"j^{\wedge}"]\\
IA_0^{\wedge}\;\;\ar[r,"{[}-h^{\wedge}{,}-d(h^{\wedge}){]}"swap]                  &\;\;A_2^{\wedge}\mathrlap{.}
\end{tikzcd}
\end{equation}

\begin{definition}\label{homotopyses}
A 3-term h-complex over $\A$, as given by (\ref{3t}), is a {\em homotopy (left, right) short exact sequence} if the corresponding object in $\rep(k\Sq,\A)$, as given by (\ref{3tsquare}), is homotopy bicartesian (cartesian, cocartesian).
\end{definition}
\begin{remark}
The properties of being homotopy left exact, homotopy right exact, or homotopy short exact are invariant under isomorphisms in $\mathcal H_{3t}(\A)$.
\end{remark}

Let $j:V\rightarrow W$ be a morphism of complexes of $k$-modules. Its {\em homotopy kernel} is the universal 3-term h-complex in $\C_{dg}(k)$ of the form
\[
\begin{tikzcd}
\hker(j)\ar[r,"f"]\ar[rr,bend right = 8ex,"h"swap]&V\ar[r,"j"]&W.
\end{tikzcd}
\]
 Explicitly, we have $\hker(j)=\Sigma^{-1}\Cone(j)$, $f=[1,0]$, and $h=[1,0]$.
 
 Consider a 3-term h-complex in $\A$ as given by (\ref{3t}).
 For each object $A$ in $\A$, we apply the dg functor $\A(A,-)$ to the diagram (\ref{3t}) and obtain a 3-term h-complex in $\C_{dg}(k)$
\[
 \begin{tikzcd}
 \A(A,A_0)\ar[r,"\A{(}A{,}f{)}"]\ar[rr,bend right = 8ex,"\A{(}A{,}h{)}"swap]&\A(A,A_1)\ar[r,"\A{(}A{,}j{)}"]&\A(A,A_2).
 \end{tikzcd}
 \]
 By the universal property of homotopy kernel, we obtain a canonical morphism 
 \[
 \alpha:\A(A,A_0)\rightarrow \hker(\A(A,j)).
 \]
 Dually, we obtain a canonical morphism 
 \[
 \beta:\A(A_2,A)\rightarrow \hker(\A(f,A)).
 \]

\begin{lemma} \label{lem:3termhcomplex}
A 3-term h-complex, as given by (\ref{3t}), is a homotopy left (resp.~right) exact sequence if and only if for each object $A$ in $\A$, the map $\tau_{\leq 0}(\alpha)$ (resp.~$\tau_{\leq 0}(\beta)$)is a quasi-isomorphism of complexes of $k$-modules.
It is a homotopy short exact sequence if and only if it is both homotopy left exact and homotopy right exact.
\end{lemma}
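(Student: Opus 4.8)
The strategy is to test the square (\ref{3tsquare}) against representables and thereby reduce the homotopy cartesian/cocartesian conditions to the stated criteria on $\alpha$ and $\beta$. The basic observation I would use throughout is that, by the Yoneda lemma, $\RHom(A^{\wedge},M)\simeq M(A)$, so that $\RHom(A^{\wedge},-)$ is nothing but evaluation at $A$; in particular it is exact and commutes with $\Cone$ and with $\Sigma^{\pm 1}$. By the discussion preceding Theorem~\ref{thm:Ainfinityrep} together with Definition~\ref{homotopyses}, the h-complex (\ref{3t}) is homotopy left exact iff the square (\ref{3tsquare}) is homotopy cartesian, i.e.\ (Definition~\ref{maindef}) iff for every $A\in\A$ the canonical map induces an isomorphism
\[
\tau_{\leq 0}\RHom(A^{\wedge},A_0^{\wedge})\rightarrow\tau_{\leq 0}\RHom(A^{\wedge},\Sigma^{-1}\Cone((-j^{\wedge},k)))
\]
in $\D(k)$, where $k=[-h^{\wedge},-d(h^{\wedge})]\colon IA_0^{\wedge}\rightarrow A_2^{\wedge}$.

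First I would evaluate at $A$. The source becomes $\tau_{\leq 0}\A(A,A_0)$, while exactness of evaluation turns the target into $\tau_{\leq 0}\Sigma^{-1}\Cone\bigl(\A(A,A_1)\oplus I\A(A,A_0)\xrightarrow{(-\A(A,j),\,k(A))}\A(A,A_2)\bigr)$. Here $I\A(A,A_0)=\Cone(\Id_{\A(A,A_0)})$ is contractible, so projecting it away is a quasi-isomorphism that identifies this complex with $\tau_{\leq 0}\hker(\A(A,j))$. Under these identifications the map in question becomes a canonical comparison $\A(A,A_0)\rightarrow\hker(\A(A,j))$.

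The key step is to check that this comparison is exactly the universal map $\alpha$. Evaluating the canonical map $[f^{\wedge},i]^{\intercal}$ at $A$ gives $[\A(A,f),\A(A,i)]^{\intercal}$: its first component yields the $f$-entry of $\alpha$, while the $i$-component, transported across the contracting homotopy of $I\A(A,A_0)$ and composed with the $-h^{\wedge}$-entry of $k$, produces precisely the homotopy entry $\A(A,h)$ of $\alpha$. A short sign-chase using $d(h)=-jf$ and the explicit form of $k$ confirms the identification. Granting it, homotopy cartesianness of (\ref{3tsquare}) is equivalent to $\tau_{\leq 0}(\alpha)$ being a quasi-isomorphism for every $A$, as claimed. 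This bookkeeping of the signs and of the homotopy entry is the only delicate point; everything else is formal.

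The homotopy right exact case is obtained by the dual argument: applying the duality functor $\RHom(-,\A)$ interchanges $\Cosp$ with $\Sp$, homotopy cartesian squares with homotopy cocartesian ones, and the map $\alpha$ for $\A$ with the map $\beta$ for $\A^{op}$, so the left exact result applied to $\A^{op}$ yields the statement for $\beta$. Finally, a $3$-term h-complex is homotopy short exact iff (\ref{3tsquare}) is homotopy bicartesian, which by Definition~\ref{homotopyses} means simultaneously homotopy cartesian and homotopy cocartesian; by the two cases just treated, this holds iff (\ref{3t}) is both homotopy left exact and homotopy right exact.
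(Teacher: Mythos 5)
The paper states Lemma~\ref{lem:3termhcomplex} without any proof: it is presented as a direct unwinding of Definition~\ref{maindef}, Definition~\ref{homotopyses} and the Yoneda lemma, and your proposal supplies exactly that unwinding. Your treatment of the left exact case is correct: (i) since representables are cofibrant, $\RHom(A^{\wedge},-)$ is evaluation at $A$, hence exact and compatible with cones and shifts; (ii) the evaluated square has the contractible summand $I\A(A,A_0)$, and discarding it identifies the target with $\hker(\A(A,j))$ up to quasi-isomorphism (strictly speaking the naive projection is not a chain map; the correct statements are that the inclusion of the subcomplex obtained by omitting the contractible summand is a quasi-isomorphism, or that the retraction built from the contracting homotopy of $I$ is a homotopy inverse to it --- which is what your ``transported across the contracting homotopy'' step uses); (iii) under this identification the canonical comparison map agrees with $\alpha$ up to sign and homotopy, and since homotopic maps induce equal maps on cohomology, this suffices to conclude that homotopy cartesianness of (\ref{3tsquare}) is equivalent to $\tau_{\leq 0}(\alpha)$ being a quasi-isomorphism for all $A$.

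The one step that needs patching is the right exact case. The duality functor $\RHom(-,\A)$ applied to the square (\ref{3tsquare}) does \emph{not} yield the square (\ref{3tsquare}) attached to the opposite $3$-term h-complex over $\A^{op}$: its contractible corner is $\Hom(IA_0^{\wedge},\A)\cong P(\A(A_0,-))$, whereas the square attached to $A_2\rightarrow A_1\rightarrow A_0$ over $\A^{op}$ has corner $I(\A(A_2,-))$. So ``the left exact result applied to $\A^{op}$'' does not apply verbatim to the dualized square. The gap is fillable: both squares have contractible $10$-corner, correspond to isomorphic $3$-term h-complexes under the equivalence of Lemma~\ref{equivalences}, hence are isomorphic in $\rep(k\Sq,\A^{op})$, and homotopy cartesianness only depends on the isomorphism class (Lemmas~\ref{adj} and~\ref{truncationhomotopycartesian}). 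Alternatively, and closer to the paper's setup, which defines homotopy cocartesian squares by the explicit condition on $\Cone((f,g)^{\intercal})\rightarrow X_{11}$, one can simply rerun your three steps contravariantly with $\Hom(-,A^{\wedge})$, using cofibrancy of all modules involved; this identifies the comparison map with $\beta$ and avoids the issue entirely. The final assertion is, as you say, immediate from Definition~\ref{homotopyses}.
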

We can restate the lemma in a diagram chasing style as follows: the 3-term h-complex (\ref{3t}) is homotopy left exact if and only if for each $n\leq 0$ and each pair of morphisms $(v,w)\in Z^{n}\A(A,A_1)\times \A^{n-1}(A,A_2)$ such that $d(w)=-j\circ v$, there exists a morphism $u\in Z^n \A(A,A_0)$, unique up to a coboundary, such that there exists a pair of morphisms $(v',w')\in \A^{n-1}(A,A_1)\times \A^{n-2}(A,A_2)$ satisfying $v-f\circ u=d(v')$, $w-h\circ u=-d(w')-j\circ v'$. 

Next, we reformulate the notions of homotopy (co)cartesian square into the notions of homotopy (pushout) pullback square in $\A$. 
Recall that for a dg category $\A$, we denote by $\Mor(\A)$ the dg morphism category, cf.~Example~\ref{Mor(A)}. Objects in $\Mor(\Mor(\A))$ correspond to squares in $\A$
\begin{equation}
\begin{tikzcd} \label{square:homotopypullback}
{B'}\ar[r,"{p'}"]\ar[d,"{b}"swap]\ar[rd,"s"blue,blue]&{C'}\ar[d,"{c}"]\\
{B}\ar[r,"{p}"swap]&{C}
\end{tikzcd}
\end{equation}
where $B$, $C$, $B'$, $C'$ are objects in $\A$, and $p$, $c$, $p'$ and $b$ are closed morphisms of degree $0$, and $s$ is a morphism of degree $-1$ such that $d(s)=cp'-pb$. 
\begin{definition}\label{def:homotopysquares}
The category $\mathcal H_{\Sq}(\A)$ of {\em homotopy squares} in $\A$ is defined as 
\[
H^0(\Mor(\Mor(\A))).
\]
\end{definition}

\begin{lemma}\label{squareepivalence}
We have a quasi-equivalence
\[
\rep_{dg}(k\Sq,\A)\iso\Mor(\Mor(\A)).
\]
In particular, we have an epivalence
\[
\rep(k\Sq,\A)\rightarrow\Fun(\Mor,H^0(\Mor(\A))).
\]
\end{lemma}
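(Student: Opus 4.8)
The plan is to present $k\Sq$ as a tensor square of $k\Mor$ and then apply the identification $\Fun_{\Ai_{\infty}}(k\Mor,-)\cong\Mor(-)$ of Example~\ref{Mor(A)} twice, using the exponential law for the internal Hom $\rep_{dg}$. First I would observe that the product category $\Mor\times\Mor$ is exactly $\Sq$: it is a poset, so the commutativity relation $jf\sim kg$ is automatic, and $k$-linearisation sends products of hom-sets to tensor products, whence $k\Sq\cong k\Mor\otimes k\Mor$ as dg categories concentrated in degree $0$. Since the hom-complexes of $k\Mor$ are free over $k$, hence cofibrant, the canonical map $k\Mor\otimes^{\mathbb L}k\Mor\iso k\Mor\otimes k\Mor$ is a quasi-equivalence, so $k\Sq$ represents $k\Mor\otimes^{\mathbb L}k\Mor$ in $\Hqe$.

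Next I would invoke the exponential law $\rep_{dg}(\B\otimes^{\mathbb L}\B',\A)\cong\rep_{dg}(\B,\rep_{dg}(\B',\A))$, valid because $\rep_{dg}$ is the internal Hom of the closed monoidal category $(\Hqe,\otimes^{\mathbb L})$, to obtain in $\Hqe$
\[
\rep_{dg}(k\Sq,\A)\cong\rep_{dg}(k\Mor\otimes^{\mathbb L}k\Mor,\A)\cong\rep_{dg}(k\Mor,\rep_{dg}(k\Mor,\A)).
\]
Because $k\Mor$ lies in $\Hqe_{k-\mathrm{cf}}$, Theorem~\ref{thm:Ainfinityrep} identifies $\rep_{dg}(k\Mor,\A)$ with $\Fun_{\Ai_{\infty}}(k\Mor,\A)$, which is $\Mor(\A)$ by Example~\ref{Mor(A)}. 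Since $\rep_{dg}(k\Mor,-)$ is a functor on $\Hqe$ it preserves quasi-equivalences, so feeding this identification into the outer argument and applying Theorem~\ref{thm:Ainfinityrep} and Example~\ref{Mor(A)} once more yields
\[
\rep_{dg}(k\Mor,\rep_{dg}(k\Mor,\A))\cong\rep_{dg}(k\Mor,\Mor(\A))\cong\Mor(\Mor(\A)),
\]
which is the asserted quasi-equivalence.

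For the second statement I would pass to $H^0$, obtaining an equivalence $\rep(k\Sq,\A)\cong\rep(k\Mor,\Mor(\A))$, and then compose with the diagram functor. Since $\Mor$ is the path category of a quiver without relations, Lemma~\ref{epi}, applied to the dg category $\Mor(\A)$, shows that $\Dia\colon\D(\Mor(\A)\otimes k\Mor^{op})\to\Fun(\Mor,\D(\Mor(\A)))$ is an epivalence. Restricting to the full subcategory $\rep(k\Mor,\Mor(\A))$ of objects that are quasi-representable in each argument, and identifying the quasi-representable modules with $H^0(\Mor(\A))$ through the fully faithful Yoneda functor, one checks that fullness, density and conservativity are inherited; thus $\Dia$ restricts to an epivalence $\rep(k\Mor,\Mor(\A))\to\Fun(\Mor,H^0(\Mor(\A)))$, and precomposing with the equivalence above gives the desired epivalence out of $\rep(k\Sq,\A)$.

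The formal skeleton above is essentially forced, so the real care lies not in a single hard step but in the bookkeeping: checking at each use of Theorem~\ref{thm:Ainfinityrep} that the source is $k$-cofibrant, keeping all identifications inside $\Hqe$ rather than on the nose, and verifying that the restriction of $\Dia$ to quasi-representables remains an epivalence. The only genuinely non-formal ingredient is Example~\ref{Mor(A)}, whose explicit description of $\Mor(\A)$ is what makes the abstract object $\Mor(\Mor(\A))$ coincide with the concrete squares~(\ref{square:homotopypullback}), complete with the degree $-1$ homotopy $s$ satisfying $d(s)=cp'-pb$.
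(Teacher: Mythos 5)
Your proof is correct and follows essentially the same route as the paper: the identification $k\Sq\cong k\Mor\otimes k\Mor$, the internal-Hom adjunction for $(\Hqe,-\otimes^{\mathbb L}-)$ (which the paper writes out as a Yoneda-style chain of bijections $\Hqe(\B,-)$ rather than quoting the exponential law), the identification $\rep_{dg}(k\Mor,-)\iso\Mor(-)$ via Theorem~\ref{thm:Ainfinityrep} and Example~\ref{Mor(A)}, and Lemma~\ref{epi} for the epivalence. Your extra bookkeeping ($k$-cofibrancy of $k\Mor$, restriction of $\Dia$ to quasi-representables) only makes explicit what the paper leaves implicit.
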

\begin{proof}
Recall that we have an isomorphism of dg categories $k\Sq\iso k\Mor\otimes k\Mor$ and a quasi-equivalence $\Mor(\A)\iso \rep_{dg}(k\Mor,\A)$.
Then for any small dg category $\B$, we have canonical bijections
\begin{align*}
\Hqe(\B,\rep_{dg}(k\Sq,\A))&\iso\Hqe(k\Sq\otimes \B,\A)\\
&\iso\Hqe(k\Mor\otimes k\Mor\otimes \B,\A)\\
&\iso\Hqe(\B,\rep_{dg}(k\Mor,\Mor(\A)).
\end{align*}
Therefore we have a quasi-equivalence  
\[
\rep_{dg}(k\Sq,\A)\iso \rep_{dg}(k\Mor,\Mor(\A)).
\]
Thus, by Lemma~\ref{epi}, we have an epivalence
\[
\rep(k\Sq,\A)\iso \rep(k\Mor,\Mor(\A))\xrightarrow{\Dia} \Fun(\Mor,H^0(\Mor(\A))).
\]
\end{proof}

By abuse of terminology, we have the following definition.
\begin{definition} 
A homotopy square in $\A$, as given by (\ref{square:homotopypullback}), is a {\em homotopy pullback square} if the corresponding object in $\rep(k\Sq,\A)$, under the quasi-equivalence established in Lemma~\ref{squareepivalence}, is a homotopy cartesian square, cf.~Definition~\ref{maindef}. Dually, we define the notion of {\em homotopy pushout square}.
\end{definition}
We also refer to a homotopy pullback (resp.~pushout) square as a {\em homotopy cartesian} (resp.~{\em cocartesian}) {\em square}. 
A homotopy square that is both homotopy cartesian and homotopy cocartesian will be called {\em homotopy bicartesian}.
If $Z^0(\A)$ is an additive category, then the object (\ref{square:homotopypullback}) is a homotopy pullback square if and only if the following 3-term homotopy complex in $\A$
\[
\begin{tikzcd}
B'\ar[r,"{[}b{,}p'{]}^{\intercal}"]\ar[rr,bend right=8ex,"s"swap]&B\oplus C'\ar[r,"{[}p{,}-c{]}"]&C
\end{tikzcd}
\]
is homotopy left exact. 

Let us describle {\em the dg category $\Cosp(\A)$ of homotopy cospans} in $\A$. It is defined as the dg category $\Fun_{\Ai_{\infty}}(k\Cosp,\A)$ of strictly unital $\Ai_{\infty}$-functors from $k\Cosp$ to $\A$, cf.~Subsection~\ref{subsec:homotopydiagrams}.
We identify a strictly unital $\Ai_{\infty}$-functor $F:k\Cosp\rightarrow\A$ with the corresponding diagram in $Z^0(\A)$
\begin{equation}\label{dia:cosp}
\begin{tikzcd}
A_2\ar[r,"g"]&A_3&A_1\ar[l,"k"swap]
\end{tikzcd}.
\end{equation}
Let $F$ and $F'$ be two strictly unital $\Ai_{\infty}$-functors from $k\Cosp$ to $\A$. 
We identify them with their corresponding diagrams. 
A strictly unital morphism $H:F\rightarrow F'$ of degree $n$ is represented by the following diagram
\begin{equation}\label{mor:spandegreen}
\begin{tikzcd}
A_2\ar[d,"h_2"swap]\ar[r,"g"]\ar[rd,"s_1"red,swap ,red]&A_3\ar[d,"h_3"swap]&A_1\ar[l,"k"swap]\ar[d,"h_1"]\ar[ld,"s_2"red,red]\\
A_2'\ar[r,"g'"swap]&A_3'&A_1'\ar[l,"k'"]
\end{tikzcd},
\end{equation}
where $|h_i|=n$ for $i=1$,$2$, $3$, and $|s_1|=|s_2|=n-1$, which can be visualized as a matrix 
\[
\begin{bmatrix}h_1&&\\
&h_2&\\
s_2&s_1&h_3
\end{bmatrix}.
\]
Put $H'=d(H)\in \Hom^{n+1}(F,F')$. Then $H'$ corresponds to the matrix:
\[
\begin{bmatrix}-d(h_1)&&\\
&-d(h_2)&\\
d(s_2)+j'h_1-(-1)^nh_3j&d(s_1)+f'h_2-(-1)^nh_3f&d(h_3)
\end{bmatrix}.
\]
The composition map is defined by matrix multiplication.
Note that we have a canonical restriction dg functor
\[
\Str: \Mor(\Mor(\A))\rightarrow \Cosp(\A),\;\;
\begin{tikzcd}
{B'}\ar[r,"{p'}"]\ar[d,"{b}"swap]\ar[rd,"s"blue,blue]&{C'}\ar[d,"{c}"{name=1},] &\;\ar[d,""{swap,name=2,white},white]&{C'}\ar[d,"c"]\\
{B}\ar[r,"{p}"swap]&{C} &{B}\ar[r,"p"swap]&C\ar[r,from=1,to=2,mapsto,red]
\end{tikzcd}.
\]
Dually, we have the dg category $\Sp(\A)$ of {\em homotopy spans} in $\A$ and a canonical restriction dg functor $\Str': \Mor(\Mor(\A))\rightarrow \Sp(\A)$.
\begin{definition}\label{def:homotopyspans}
The category $\mathcal H_{\Cosp}(\A)$ of {\em homotopy cospans} in $\A$ is defined as 
\[
H^0(\Cosp(\A)). 
\]
Dually, the category $\mathcal H_{\Sp}(\A)$ of {\em homotopy spans} in $\A$ is defined as 
\[
H^0(\Sp(\A)).
\]
\end{definition}
By Theorem~\ref{thm:Ainfinityrep}, we have equivalences of categories 
\[
\mathcal H_{\Sp}(\A)\iso \rep(k\Sp,\A)
\]
 and 
 \[
 \mathcal H_{\Cosp}(\A)\iso \rep(k\Cosp,\A).
 \]
\begin{definition}\label{def:Ainfinitypullback}
An object $S$  in $\mathcal H_{\Cosp}(\A)$, as given by (\ref{dia:cosp}), is said to admit a {\em homotopy pullback} if there is an object $X$ in $\mathcal H_{\Sq}(\A)$
\[
\begin{tikzcd}
A_0\ar[r,"f"]\ar[d,"j"swap]\ar[rd,blue,"s"{blue}]&A_1\ar[d,"k"]\\
A_2\ar[r,"g"swap]&A_3
\end{tikzcd}
\]
which is a homotopy pullback square.
In this case, the object $X$ is called the homotopy pullback of $S$. 
\end{definition}

Now, we define the notions of homotopy kernel (cokernel) of an object in $\Mor(\A)$.
Recall the morphism category $\Mor$ defined in Subsection \ref{subsec:homotopydiagrams}
\[
\begin{tikzcd}
0\ar[r]&1,
\end{tikzcd}
\]
and the dg category $\B$ defined in (\ref{3term})
\[
\begin{tikzcd}
0\ar[r,"f"] &1\ar[r,"j"]&2.
\end{tikzcd}
\]
where $|f|=|j|=0$, and $d(f)=d(j)=0$, $jf=0$.
 Let $i:k\Mor\rightarrow \B$ be the inclusion dg functor which sends object $0$ to $0$ and object $1$ to $1$.
 Let $i':k\Mor\rightarrow \B$ be the inclusion dg functor which sends object $0$ to $1$ and object $1$ to $2$.
 Let $\Res:\Fun_{\Ai_{\infty}}(\B,\A)\rightarrow\Fun_{\Ai_{\infty}}(k\Mor,\A)\iso\Mor(\A)$ (resp.~$\Res'$) be the restriction functor along $i$ (resp.~$i'$).  
\begin{definition}\label{def:homotopykernel} 
By {\em homotopy kernel} of an object $j:B\rightarrow C$ in $\Mor(\A)$, we mean a homotopy left exact squence $X$ 
\[
\begin{tikzcd}
A\ar[rr,bend right=8ex,"h"swap]\ar[r,"f"]&B\ar[r,"j"]&C.
\end{tikzcd}
\]
Sometimes, we say $f:A\rightarrow B$ or $A$ is the homotopy kernel of $j$. 
 
Dually, we define the {\em homotopy cokernel} of an object in $\Mor(\A)$. 
\end{definition}

 In Definition~\ref{def:Ainfinitypullback}, $\Str(X)$ is stated to be equal to $S$, and in Definition~\ref{def:homotopykernel}, $\Res'(X)$ is defined to be equal to $j$. Meanwhile, in Definition~\ref{homotopypullbackdef}, $LX$ is asserted to be isomorphic to $S$, cf.~Remark~\ref{rmk:isomorphism}.
The above definitions are compatible with each other, due to the following observation:
\begin{lemma}\label{Mor(A)and3term}
Let $j:B\rightarrow C$ be an object in $\Mor(\A)$. 
Consider a morphism $\theta$ from $j':B'\rightarrow C'$ to $j$ in $Z^0(\Mor(\A))$ as follows
\[
\begin{tikzcd}
B'\ar[r,"j'"]\ar[d,"h_1"swap]\ar[rd,"s_2"{red,swap},red]&C'\ar[d,"h_2"]\\
B\ar[r,"j"swap]&C\mathrlap{.}
\end{tikzcd}
\]
Suppose we are given an object
\[
\begin{tikzcd}
A'\ar[r,"f'"]\ar[rr,bend right=8ex,"h'"swap]&B'\ar[r,"j'"]&C'
\end{tikzcd}
\]
in $\mathcal H_{3t}(\A)$.
Then we have the following morphism $\mu$ in $Z^0\Fun_{\Ai_{\infty}}(\B,\A)$ which restricts to the morphism $\theta$
\[
\begin{tikzcd}
A'\ar[r,"f'"]\ar[rd,"0"{red,swap},red]\ar[rrd,"0"blue,blue]\ar[d,equal]\ar[rr,"h'",bend left=8ex]&B'\ar[r,"j'"]\ar[d,"h_1"swap]\ar[rd,"s_2"red,red]&C'\ar[d,"h_2"]\\
A'\ar[r,"h_1f'"swap]\ar[rr,"h_2h'-s_2f'"swap,bend right=8ex]&B\ar[r,"j"swap]&C\mathrlap{.}
\end{tikzcd}
\]
\end{lemma}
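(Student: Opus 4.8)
The plan is a direct verification using the explicit description of $\Fun_{\Ai_\infty}(\B,\A)$ recorded in Definition~\ref{def:3termhomotopy}. Reading off the displayed diagram, $\mu$ is the $6$-tuple $(r_0,r_1,r_2,s_1,s_2,t)=(\Id_{A'},h_1,h_2,0,s_2,0)$, with source the given $3$-term h-complex $A'\xrightarrow{f'}B'\xrightarrow{j'}C'$ and prospective target $A'\xrightarrow{h_1f'}B\xrightarrow{j}C$ equipped with the degree $-1$ map $h''=h_2h'-s_2f'$. There are then three things to establish: that the target diagram is a legitimate object of $\mathcal H_{3t}(\A)$, that the tuple satisfies the closedness conditions making $\mu$ a morphism in $Z^0\Fun_{\Ai_\infty}(\B,\A)$, and that $\Res'(\mu)=\theta$.

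For the first point I would check $d(h'')=-j(h_1f')$ by the Leibniz rule. Since $h_1$, $h_2$, $f'$ are closed, one has $d(h_2h')=h_2\,d(h')=-h_2j'f'$ and $d(s_2f')=d(s_2)\,f'$; feeding in the closedness relation for the diagonal $s_2$ (the only nontrivial equation expressing $\theta\in Z^0(\Mor(\A))$) collapses the sum to $-jh_1f'$, as required. For the second point I would run through the structure equations of Definition~\ref{def:3termhomotopy}, replacing the source data $f,j,h$ by $f',j',h'$ and the target data $f',j',h'$ by $h_1f',j,h''$. The equations $d(r_i)=0$ are immediate; the $s_1$-equation becomes $(h_1f')\Id_{A'}-h_1f'=0$, matching $s_1=0$; the $s_2$-equation is exactly the closedness of $\theta$; and the $t$-equation~(\ref{tt}), with $t=0$, $r_0=\Id$ and $s_1=0$, reduces to $h''=h_2h'-s_2f'$, which is how $h''$ was defined. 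Hence $\mu$ is closed of degree $0$.

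Finally, $\Res'$ is restriction along $i':k\Mor\to\B$, $0\mapsto 1$, $1\mapsto 2$, so it retains precisely the data attached to the right-hand square, namely $r_1=h_1$, $r_2=h_2$ and the diagonal $s_2$; this is $\theta$ by inspection. The one point requiring genuine care is the sign bookkeeping: both the identity $d(h'')=-j(h_1f')$ and the $t$-equation hinge on reading the closedness of $\theta$ with the sign convention of Definition~\ref{def:3termhomotopy} (under the identification $\Mor(\A)\cong\Fun_{\Ai_\infty}(k\Mor,\A)$ of Example~\ref{Mor(A)}), and on applying the Koszul rule consistently to the degree $-1$ factors $h'$ and $s_2$. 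Once the conventions are pinned down these are mechanical substitutions, so this is the main — and essentially the only — obstacle.
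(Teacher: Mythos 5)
Your proposal is correct and is exactly the verification the paper has in mind: the paper states this lemma as an ``observation'' with no written proof, and your three checks (the target diagram satisfies $d(h'')=-j\,(h_1f')$, the $6$-tuple $(\Id_{A'},h_1,h_2,0,s_2,0)$ satisfies the closedness equations of Definition~\ref{def:3termhomotopy}, and $\Res'(\mu)=\theta$) are precisely the routine computation being left implicit. You also correctly isolate the one delicate point, namely that the closedness of $\theta$ must be read with the sign convention $d(s_2)=jh_1-h_2j'$ coming from the $\Ai_\infty$-identification of $\Mor(\A)$ rather than from the literal matrix differential of Example~\ref{Mor(A)}, without which the Leibniz computation for $h''$ would fail by a sign.
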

\begin{remark}
In Lemma~\ref{Mor(A)and3term}, the homotopy class of the  morphism $\mu$ is an isomorphism in $H^0(\Mor(\A))$ if and only if the homotopy class of the morphism $\theta$ is an isomorphism in $\mathcal H_{3t}(\A)$, cf.~Proposition \ref{termwiseequivalence}.
\end{remark}
\subsection{Some diagram lemmas}
In the rest of this section, we prove some basic diagram lemmas that will used in the sequel.

Let us recall the universal property of homotopy left exact sequences. 
Suppose we have 3-term h-complexes $X_1$ and $X_2$ with $X_2$ being homotopy left exact.
Then each morphism $\theta:\Res'(X_1)\rightarrow \Res'(X_2)$ in $H^0(\Mor(\A))$ extends uniquely to a morphism $\mu:X_1\rightarrow X_2$ in $\mathcal H_{3t}(\A)$ such that $\theta=\Res'(\mu)$. 
Indeed, we have the following
\begin{lemma}\label{strictmorphismlift}
Suppose we have 3-term h-complexes $X_i$ in $\A$, $i=1$, $2$, of the form
\[
\begin{tikzcd}
A_i\ar[rr,bend right=8ex,"h_i"swap]\ar[r,"f_i"]&B_i\ar[r,"j_i"]&C_i
\end{tikzcd}
\]
where $X_2$ is homotopy left exact. Then any morphism in $Z^0(\Mor(\A))$
\[
\begin{tikzcd}
B_1\ar[r,"j_1"]\ar[d,"b"swap]\ar[rd,"s_2"red,red]&C_1\ar[d,"h_3"]\\
B_2\ar[r,"j_2"swap]&C_2
\end{tikzcd}
\]
extends to a morphism in $Z^0(\Fun_{\infty}(\B,\A))$ 
\[
\begin{tikzcd}
A_1\ar[rd,"s_1"{red,swap},red]\ar[rrd,"t"{blue},blue]\ar[d,"a"swap]\ar[rr,bend left=8ex,"h_1"]\ar[r,"f_1"]&B_1\ar[r,"j_1"]\ar[d,"b"swap]\ar[rd,"s_2"{red},red]&C_1\ar[d,"h_3"]\\
A_2\ar[rr,bend right=8ex,"h_2"swap]\ar[r,"f_2"swap]&B_2\ar[r,"j_2"swap]&C_2\mathrlap{.}
\end{tikzcd}
\]
\end{lemma}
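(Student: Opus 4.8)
The plan is to produce the three missing components $a\colon A_1\to A_2$ (closed of degree $0$), $s_1\colon A_1\to B_2$ (degree $-1$) and $t\colon A_1\to C_2$ (degree $-2$) completing the given triple $(b,h_3,s_2)$ to a closed degree-$0$ morphism of $3$-term $h$-complexes, i.e.\ an element of $Z^0(\Fun_{\infty}(\B,\A))$ restricting along $i'$ to $\theta$. By Definition~\ref{def:3termhomotopy} this amounts to arranging
\[
d(a)=0,\qquad d(s_1)=f_2 a-b f_1,\qquad d(t)=h_3 h_1-h_2 a-s_2 f_1-j_2 s_1,
\]
the first two being the $r_0$- and $s_1$-relations and the last being (\ref{tt}); the hypothesis that $\theta$ is closed of degree $0$ gives $d(b)=0$, $d(h_3)=0$ and $d(s_2)=j_2 b-h_3 j_1$ (equation~(\ref{s2})). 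The one structural input not yet used is that $X_2$ is homotopy left exact, and this is exactly what will manufacture $a$, $s_1$ and $t$: I will feed a suitable pair into the diagram-chasing characterization of homotopy left exactness from Lemma~\ref{lem:3termhcomplex}.

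Concretely, I would apply that characterization to $X_2$ at the object $A=A_1$ in cohomological degree $n=0$, with the pair
\[
v=b f_1\in \A(A_1,B_2),\qquad w=h_3 h_1-s_2 f_1\in\A^{-1}(A_1,C_2).
\]
Here $v$ is closed because $b$ and $f_1$ are, and a short computation using $d(h_1)=-j_1 f_1$, $d(s_2)=j_2 b-h_3 j_1$ and the closedness of $h_3$ and $f_1$ gives
\[
-j_2 v=-j_2 b f_1=-\bigl(h_3 j_1+d(s_2)\bigr)f_1=d(h_3 h_1)-d(s_2 f_1)=d(w),
\]
so $(v,w)$ is an admissible pair for $X_2$. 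Homotopy left exactness then yields a cocycle $u\in Z^0\A(A_1,A_2)$ together with $v'\in\A^{-1}(A_1,B_2)$ and $w'\in\A^{-2}(A_1,C_2)$ satisfying $v-f_2 u=d(v')$ and $w-h_2 u=-d(w')-j_2 v'$.

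It then remains to set $a=u$, $s_1=-v'$ and $t=-w'$ and to read off the desired relations. Immediately $d(a)=0$; from $v-f_2 u=d(v')$ one gets $d(s_1)=-d(v')=f_2 a-b f_1$; and substituting $v'=-s_1$, $w'=-t$ into the second output equation produces $d(t)=h_3 h_1-h_2 a-s_2 f_1-j_2 s_1$, which is exactly (\ref{tt}). Hence $(a,b,h_3,s_1,s_2,t)$ is a closed degree-$0$ morphism of $\Fun_{\infty}(\B,\A)$ whose restriction along $i'$ is $\theta$, as required. The only real subtlety is bookkeeping: one must (i) spot the correct pair $(v,w)$ to insert into Lemma~\ref{lem:3termhcomplex}, and (ii) invoke the strict, cochain-level form of homotopy left exactness, which supplies an honest cocycle $u$ rather than merely a class, instead of the homotopy-level universal property recalled just before the statement; once these are in place, the remaining verifications are precisely the sign-tracking displayed above.
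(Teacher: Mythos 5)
Your proposal is correct and follows essentially the same route as the paper's own proof: both form the pair $\bigl(bf_1,\;h_3h_1-s_2f_1\bigr)$, verify the compatibility $d(w)=-j_2v$ using $d(s_2)=j_2b-h_3j_1$ and $d(h_1)=-j_1f_1$, and then invoke the cochain-level characterization of homotopy left exactness of $X_2$ (Lemma~\ref{lem:3termhcomplex}) at $A_1$ in degree $0$ to produce $a$, $s_1=-v'$, $t=-w'$ satisfying exactly the relations of Definition~\ref{def:3termhomotopy}. Your write-up in fact silently corrects a notational slip in the paper's proof, where the composite $bf_1$ is misprinted as $h_2f_1$.
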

\begin{proof}
Put $u=h_2f_1$ and $v=h_3h_1-s_2f_1$. Then the pair $(u,v)$ satisfies $d(v)=-h_3j_1f_1-(j_2h_2-h_3j_1)f_1=-j_2h_2f_1=-j_2u$.

Since $X_2$ is homotopy left exact, we have a closed morphism $a:A_1\rightarrow A_2$ of degree $0$ such that there exists a pair $(-s_1,-t)$ where $s_1$ is a morphism from $A_1$ to $B_2$ of degree $-1$, $t$ is a morphism from $A_1$ to $C_2$ of degree $-2$ satisfying $u-f_2a=d(-s_1)$ and $v-h_2a=-d(-t)-j(-s_1)$. So the 6-tuple $(a,b,h_3,s_1,s_2,t)$ is a morphism from $X_1$ to $X_2$ in $Z^0(\Fun_{\infty}(\B,\A))$.
\end{proof}
Let $f:X\rightarrow Y$ be an object in $\Mor(\A)$ and 
\[
\begin{tikzcd}
&X^{\wedge}\ar[d,"f^{\wedge}"]\\
0\ar[r]&Y^{\wedge}
\end{tikzcd}
\]
the associated object $S$ in the category $\rep(k\Cosp,\A)$.
Recall that we have a fully faithful functor $\mathcal H_{3t}(\A)\hookrightarrow \rep(k\Sq,\A)$.
Then a homotopy kernel of the object $f$, which is in the category $\mathcal H_{3t}(\A)$, can be identified with a homotopy pullback of the cospan $S$, which is in the category $\rep(k\Sq,\A)$.
Hence by Proposition \ref{pullbackunique}, the homotopy kernel of an object in $\Mor(\A)$ is unique up to a unique isomorphism if it exists in $\mathcal H_{3t}(\A)$.
Similarly, the homotopy pullback of an object in $\mathcal H_{\Cosp}(\A)$ is unique up to a unique isomorphism if it exists in $\mathcal H_{\Sq}(\A)$.

\subsection{Restriction of morphisms in $\mathcal H_{3t}(\A)$} \label{res}
Let $f:A\rightarrow B$ and $f':A'\rightarrow B'$ be two objects in $\Mor(\A)$. 
Then a morphism $\eta:f\rightarrow f'$ in $H^0(\Mor(\A))$ leads to a class of objects in $\mathcal H_{\Sq}(\A)$ given by the following diagrams in $\A$
\[
\begin{tikzcd}
A\ar[r,"f"]\ar[d,"g"swap]\ar[rd,"h"]&B\ar[d,"j"]\\
A'\ar[r,"f'"swap]&B'
\end{tikzcd}.
\] 
By Lemma~\ref{squareepivalence}, they are isomorphic to each other: two homotopic morphisms from $f$ to $f'$ give rise to isomorphic objects in $\mathcal H_{\Sq}(\A)$.

Recall the restriction functors $\Res,\Res':\Fun_{\Ai_{\infty}}(\B,\A)\rightarrow \Mor(\A)$ defined before Definition \ref{def:homotopykernel}.
Suppose we have a morphism $\alpha:X\rightarrow X'$ in $\mathcal H_{3t}(\A)$ of the form
\begin{equation}\label{morphism3term}
\begin{tikzcd}
&A_0\ar[r,"f"]\ar[d,"h_0"swap]\ar[rd,"s_1"red,swap ,red]\ar[rrd,"t"blue,blue]\ar[rr,bend left = 8ex,"h"]&A_1\ar[d,"h_1"swap]\ar[rd,"s_2"red,red]\ar[r,"j"]&A_2\ar[d,"h_2"]\\
&A_0'\ar[r,"f'"swap]\ar[rr,bend right = 8ex,"h'"swap]&A_1'\ar[r,"j'"swap]&A_2'
\end{tikzcd}.
\end{equation}
Following the preceding discussion, the two morphisms $\Res(\alpha)$, $\Res'(\alpha)$ yield two objects in $\mathcal H_{\Sq}(\A)$, denoted by the same notations for clarity.

\begin{definition}As above, let $\alpha:X\rightarrow X'$ be a morphism in $\mathcal H_{3t}(\A)$. 
We call $\Res(\alpha)\in \mathcal H_{\Sq}(\A)$ the {\em restriction} of $\alpha:X\rightarrow X'$ to $f:A\rightarrow B$. 
Similarly, we call $\Res'(\alpha)$ the {\em restriction} of $\alpha:X\rightarrow X'$ to $j:B\rightarrow C$.
\end{definition}

\begin{proposition}\label{push}
Suppose we have objects $X$ and $X'$ in $\mathcal H_{3t}(\A)$ of the following form
\[
\begin{tikzcd}
A_0\ar[rr,bend right=8ex,"h"swap]\ar[r,"f"]&A_1\ar[r,"j"]&A_2,
\end{tikzcd}
\]
where, for $X'$, we add a prime symbol superscript to each term. 
Let $\alpha:X\rightarrow X'$ be a morphism in $\mathcal H_{3t}(\A)$ as in (\ref{morphism3term}).
\begin{itemize}
\item[1)] Suppose $h_2:A_2\rightarrow A_2'$ is a homotopy equivalence.
 Let $X''\in \mathcal H_{\Sq}(\A)$ be the restriction of $\alpha$ to $f:A_0\rightarrow A_1$. 
 Then the following statements hold:
\begin{itemize}
\item[(a)] If $X'$ is homotopy left exact, then $X$ is a homotopy left exact if and only if $X''$ is a homotopy pullback square.
\item[(b)] If $X''$ is a homotopy pushout square, then $X$ is homotopy right exact if and only if so is $X'$.
\end{itemize}
\item[2)]Suppose $h_0:A_0\rightarrow A_0'$ is a homotopy equivalence. 
Let $X'''$ be the restriction of $\alpha$ to $j:A_1\rightarrow A_2$. Then the following statements hold:
\begin{itemize}
\item[(a)] If $X$ is homotopy right exact, then $X'$ is homotopy right exact if and only if $X'''$ is a homotopy pushout square.
\item[(b)] If $X'''$ is a homotopy pullback square, then $X$ is homotopy left exact if and only if so is $X'$.
\end{itemize}
\end{itemize}
\end{proposition}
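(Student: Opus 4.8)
\emph{Reductions and reformulation.} By Lemmas~\ref{lem:trun} and~\ref{truncationhomotopycartesian} together with the isomorphism $\mathcal{H}_{3t}(\A)\cong \mathcal{H}_{3t}(\tau_{\leq 0}\A)$, I may assume $\A$ is connective. Write $a_i=A_i^{\wedge}$ and $a_i'=(A_i')^{\wedge}$. Call a morphism $\phi$ in $\D(\A)$ a \emph{left $\tau_{\leq 0}$-equivalence} if $\tau_{\leq 0}\RHom(A^{\wedge},\phi)$ is a quasi-isomorphism for every $A\in\A$ (equivalently, $\Hom_{\D(\A)}(A^{\wedge},\Sigma^{-m}\phi)$ is bijective for all $m\geq 0$), and define a \emph{right $\tau_{\leq 0}$-equivalence} dually via $\RHom(-,A^{\wedge})$. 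By Definition~\ref{maindef}, Lemma~\ref{adj} and Lemma~\ref{lem:3termhcomplex}, $X$ is homotopy left exact iff the canonical map $\kappa_X\colon a_0\to \Sigma^{-1}\Cone(j^{\wedge})$ is a left $\tau_{\leq 0}$-equivalence, and homotopy right exact iff the canonical map $c_X\colon \Cone(f^{\wedge})\to a_2$ is a right $\tau_{\leq 0}$-equivalence; the analogous statements identify homotopy pullback (resp.\ pushout) squares via the reformulation recorded after Lemma~\ref{squareepivalence}. Both classes of equivalences satisfy two-out-of-three and a Five Lemma in degrees $\leq 0$, exactly as in the proof of Lemma~\ref{truncationhomotopycartesian}. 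Finally, part~2) is the image of part~1) under the duality $\RHom(-,\A)$, which swaps homotopy left and right exactness, homotopy pullbacks and pushouts, and the roles of $h_0$ and $h_2$; so it suffices to treat part~1).

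\emph{Proof of 1(b).} The components $(h_0,h_1,s_1)$ of $\alpha$ form a morphism of the horizontal homotopy cofibre triangles and yield a commutative square in $\D(\A)$
\[
\begin{tikzcd}
\Cone(f^{\wedge})\ar[r,"c_X"]\ar[d]&a_2\ar[d,"h_2^{\wedge}"]\\
\Cone(f'^{\wedge})\ar[r,"c_{X'}"]&a_2'
\end{tikzcd}
\]
whose left vertical is the comparison map $\Cone(f^{\wedge})\to\Cone(f'^{\wedge})$ on horizontal cofibres. Since $X''$ is a homotopy pushout this left vertical is a right $\tau_{\leq 0}$-equivalence, and since $h_2$ is a homotopy equivalence $h_2^{\wedge}$ is invertible in $\D(\A)$. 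Comparing the two factorizations of $\Cone(f^{\wedge})\to a_2'$ and using two-out-of-three, $c_X$ is a right $\tau_{\leq 0}$-equivalence iff $c_{X'}$ is; that is, $X$ is homotopy right exact iff $X'$ is.

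\emph{Proof of 1(a).} The components $(h_1,h_2,s_2)$ of $\alpha$ give a morphism of the homotopy fibre triangles $\Sigma^{-1}\Cone(j^{\wedge})\to a_1\xrightarrow{j^{\wedge}}a_2$ and its primed analogue, with vertical maps $\psi,h_1^{\wedge},h_2^{\wedge}$. As $h_2^{\wedge}$ is invertible, the octahedral axiom identifies the fibre of $\psi$ with $\Sigma^{-1}\Cone(h_1^{\wedge})$. On the other hand, the homotopy pullback object $P$ of the cospan $a_0'\xrightarrow{f'^{\wedge}}a_1'\xleftarrow{h_1^{\wedge}}a_1$ sits in a triangle in which the fibre of $P\to a_0'$ is again $\Sigma^{-1}\Cone(h_1^{\wedge})$. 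Using the hypothesis that $X'$ is homotopy left exact, i.e.\ that $\kappa_{X'}\colon a_0'\to\Sigma^{-1}\Cone(j'^{\wedge})$ is a left $\tau_{\leq 0}$-equivalence, I assemble a morphism of triangles which is the identity on $\Sigma^{-1}\Cone(h_1^{\wedge})$ and $\kappa_{X'}$ on the third terms; the Five Lemma in degrees $\leq 0$ then shows the induced map $P\to\Sigma^{-1}\Cone(j^{\wedge})$ is a left $\tau_{\leq 0}$-equivalence. The remaining components $s_1,t$ of $\alpha$ exhibit a factorization $\kappa_X\colon a_0\to P\to\Sigma^{-1}\Cone(j^{\wedge})$ through the canonical map associated with $X''$. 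Two-out-of-three now gives that $\kappa_X$ is a left $\tau_{\leq 0}$-equivalence (i.e.\ $X$ is homotopy left exact) iff $a_0\to P$ is one (i.e.\ $X''$ is a homotopy pullback).

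\emph{Main obstacle.} The triangulated bookkeeping above is routine; the delicate point is to verify that the single six-tuple $(h_0,h_1,h_2,s_1,s_2,t)$ simultaneously induces all the comparison maps used --- the morphisms of triangles, the map $P\to\Sigma^{-1}\Cone(j^{\wedge})$, and especially the compatible factorization of $\kappa_X$ through $P$ --- as morphisms coming from one and the same map in $\rep(k\Sq,\A)$. Establishing this requires unwinding the $\Ai_\infty$-data of $\alpha$ through the identifications of Lemmas~\ref{squareepivalence} and~\ref{Mor(A)and3term} and the universal property behind Proposition~\ref{pullbackunique} and Lemma~\ref{strictmorphismlift}. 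Once this coherence is in place, the two-out-of-three and Five Lemma arguments close all four cases, and part~2) follows by duality.
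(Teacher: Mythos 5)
Your skeleton coincides with the paper's own proof, object for object: after the same reduction to the connective case, your pullback object $P$ is the paper's $V_3=\Sigma^{-1}\Cone(h_1,f')$, your comparison map $P\to\Sigma^{-1}\Cone(j^{\wedge})$ is the paper's map $v$, your treatment of 1(b) via the comparison of horizontal cofibres is the paper's argument with $U_3=\Cone((f,h_0)^{\intercal})$ and the square relating $U\to A_2$ and $U'\to A_2'$, and part 2) is obtained by duality in both. Your Five Lemma in 1(a) is also correctly positioned: the degree-$1$ terms that could obstruct a ``Five Lemma in degrees $\leq 0$'' are $H^1\RHom(A^{\wedge},\Cone(h_1^{\wedge}))$ on both rows, compared by the identity, so that application is legitimate.

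The proof is nevertheless incomplete, and what you defer in your final paragraph is not routine bookkeeping to be checked later --- it is essentially the entire content of the paper's proof. Concretely: (i) the map $v:P\to\Sigma^{-1}\Cone(j^{\wedge})$ and the morphism of triangles containing it are not obtained from any universal property; the paper writes it down at the chain level, $v=\begin{bmatrix}1&0&0\\ s_2&-h'&j'\end{bmatrix}$, and verifies commutativity of the relevant squares up to explicit homotopies; (ii) the factorization $\kappa_X\simeq v\circ[f,h_0,s_1]^{\intercal}$ is a homotopy statement whose homotopy is built from the component $t$ of the six-tuple; (iii) in 1(b) your square involving $c_X$, $c_{X'}$, $h_2^{\wedge}$ does \emph{not} commute by virtue of $(h_0,h_1,s_1)$ alone --- it commutes in $\D(\A)$ only via a homotopy assembled from $t$ and $s_2$ (this is where equations (\ref{s2}) and (\ref{tt}) enter); and (iv) your one-line assertion in 1(b) that the homotopy-pushout hypothesis on $X''$ makes the cofibre comparison $\Cone(f^{\wedge})\to\Cone(f'^{\wedge})$ a right $\tau_{\leq 0}$-equivalence conceals a real step: these equivalences are not detected by their cones (besides vanishing of $H^{\leq 0}\RHom(T,A^{\wedge})$ for the total cofibre $T$, the characterization includes an injectivity condition in degree $1$), so one must use that the map $A_1'^{\wedge}\to T$ factors through $\Cone(f'^{\wedge})\to T$ to transfer the boundary condition; the implication you use is the true direction, but the converse is not automatic, so the two conditions cannot be treated as interchangeable. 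Finally, your proposed route for filling the gap via the universal property behind Proposition~\ref{pullbackunique} and Lemma~\ref{strictmorphismlift} does not apply as stated: those results concern morphisms between homotopy cartesian squares in $\rep(k\Sq,\A)$, whereas the maps you need, such as $v$, live in $\D(\A)$ with non-quasi-representable source and target; the paper produces them by explicit chain-level formulas, and without them the two-out-of-three and Five Lemma arguments have nothing to act on.
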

\begin{proof}
We prove 1) and then 2) follows by duality. 
For simplicity, we omit the symbol $^{\wedge}$ of the representable dg modules $A^{\wedge}$. 
Since $h_2$ is a homotopy equivalence, we may assume $h_2$ is the identity of ${A_2}$.
The morphism $\alpha$ is given by the following diagram in $\C(\A)$:
\[
\begin{tikzcd}
A_0\ar[rddd,"\begin{bmatrix}s_1\\t\end{bmatrix}"{swap,red},red]\ar[rd,"u=\begin{bmatrix}f\\h\end{bmatrix}"]\ar[dd,"h_0"swap]&&&\\
&V=\Sigma^{-1}\Cone(j)\ar[r,"{[}-1{,}0{]}"]\ar[dd,"\begin{bmatrix}h_1\;\;0\\-s_2\;\; 1\end{bmatrix}"]&A_1\ar[r,"j"]     \ar[dd,"h_1"] \ar[rdd,"s_2"red,red] &A_2\ar[dd,equal]\\ 
A_0'\ar[rd,"u'=\begin{bmatrix}f'\\h'\end{bmatrix}"swap]&&&\\
&V'=\Sigma^{-1}\Cone(j')\ar[r,"{[}-1{,}0{]}"swap]&A_1'\ar[r,"j'"swap]&A_2,
\end{tikzcd}
\]
where diagonal maps are the homotopies making the diagram commutative in $\mathcal H(\A)$.
Then $X''$ is given by
\[\begin{tikzcd}
A_0\ar[r,"f"]\ar[d,"h_0"swap]\ar[rd,"s_1"]&A_1\ar[d,"h_1"]\\
 A_0'\ar[r,"f'"swap]&A_1'
\end{tikzcd}\;.
\] 

Put $U=\Cone(f)$, $U'=\Cone(f')$ and $V_3=\Sigma^{-1}\Cone(h_1,f')$. 
We have a canonical morphism $r=(h,j):U\rightarrow A_2$ and similarly a morphism $r':U'\rightarrow A_2'$.
From the object $X$, we have the following diagram in $\D(\A)$ 
\[
\begin{tikzcd}
A_0\ar[r,"f"]\ar[d,"u"swap]&A_1\ar[r,""]\ar[d,equal]&U\ar[r]\ar[d,"r"]&\Sigma A_0\ar[d]\\ 
V\ar[r]&A_1\ar[r,"j"swap]&A_2\ar[r]\ar[d]&\Sigma V\ar[d]\\
 & &C(r)\ar[r,equal]&C(r)
\end{tikzcd}.
\]

Put $Y=\Sigma^{-1}\Cone(h_1)$ and $W=\Cone(u')$. 
Similarly, we have the following diagrams in $\D(\A)$
\[
\begin{tikzcd}
V\ar[r,"{[}-1{,}0{]}"]\ar[d,"\begin{bmatrix}h_1\;\;0\\-s_2\;\;1\end{bmatrix}"swap]&A_1\ar[r,"j"]\ar[d,"h_1"]&A_2\ar[d,equal]\\ 
V'\ar[r,"{[}-1{,}0{]}"]\ar[d,"\begin{bmatrix}0\;\;0\\-1\;\;0\end{bmatrix}"swap]&A_1'\ar[d,"\begin{bmatrix}0\\1\end{bmatrix}"]\ar[r,"j'"]&A_2\\ 
\Sigma Y \ar[r,equal]\ar[d,"-\begin{bmatrix}1\;\;0\\s_2\;\;j'\end{bmatrix}"swap]& \Sigma Y\ar[d,"{[}1{,}0{]}"]&\\
\Sigma V\ar[r,"{[}-1{,}0{]}"swap]&\Sigma A_1&
\end{tikzcd},
\begin{tikzcd}
Y\ar[r,"{[}-1{,}0{]}"]\ar[d,"\begin{bmatrix}1\;\;0\\0\;\;0\\0\;\;1\end{bmatrix}"swap]&A_1\ar[r,"h_1"]\ar[d,"\begin{bmatrix}1\\0\end{bmatrix}"]&A_1'\ar[d,equal]\\
 V_3\ar[r,"\begin{bmatrix}-1\;\;0\;\;0\\0\;\;-1\;\;0\end{bmatrix}"]\ar[d,"{[}0{,}-1{,}0{]}"swap]&A_1\oplus A_0'\ar[d,"{[}0{,}1{]}"]\ar[r,"{[}h_1{,}f'{]}"]&A_1'\\ 
 A_0' \ar[d,"\begin{bmatrix}0\\-f'\end{bmatrix}"swap]\ar[r,equal]& A_0'\ar[d,"0"]&\\
 \Sigma Y\ar[r,"{[}-1{,}0{]}"swap]&\Sigma A_1&
 \end{tikzcd},
 \]
 \[
\begin{tikzcd}
Y\ar[r,"\begin{bmatrix}1\;\;0\\0\;\;0\\0\;\;1\end{bmatrix}"]\ar[d,equal]&V_3\ar[rr,"{[}0{,}-1{,}0{]}"]\ar[d,dashed,blue,"v=\begin{bmatrix}1\;\;\;\;0\;\;\;\;0\\s_2\;\;-h'\;\;j'\end{bmatrix}"]&&A_0'\ar[d,"u'=\begin{bmatrix}f'\\h'\end{bmatrix}"]\\
 Y\ar[r,"\begin{bmatrix}1\;\;0\\s_2\;\;j'\end{bmatrix}"swap]&V\ar[rr,"\begin{bmatrix}h_1\;\;0\\-s_2\;\;1\end{bmatrix}"swap]\ar[d,dashed,blue]&&V'\ar[d,"\begin{bmatrix}0\;\;0\\1\;\;0\\0\;\;1\end{bmatrix}"]\\ 
 & W\ar[rr,equal]&&W
 \end{tikzcd}.
 \]
We have a morphism from the sequence in blue to the mapping triangle of $v$ as follows:
\[
\begin{tikzcd}
V_3\ar[r,"v"]\ar[d,equal]&V\ar[r,"\begin{bmatrix}0\;\;0\\h_1\;\;0\\-s_2\;\;1\end{bmatrix}"]\ar[d,equal]&W\ar[r,"\begin{bmatrix}0\;\;0\;\;0\\1\;\;0\;\;0\\0\;\;1\;\;0\end{bmatrix}"]\ar[d,"\theta"]&\Sigma V_3\ar[d,equal]\\
V_3\ar[r,"v"swap]&V\ar[r,"{[}0{,}1{]^{\intercal}}"swap]&\Cone(v)\ar[r,"{[}1{,}0{]}"swap]&\Sigma V_3\mathrlap{.}
\end{tikzcd}
\]
By direct inspection, we see that $\Cone(v)$ is isomorphic to the mapping cone of 
\[
\Cone(\Id_{\Sigma^{-1}A_1})\rightarrow W.
\]
The map $\theta$ is the canonical inclusion from $W$ to this cone.
It is straightforward to check that the rightmost square commutes and the middle square commutes up to homotopy.
Now we see that the composition of the canonical maps $A_0\xrightarrow{[f,h_0,s_1]^{\intercal}} V_3$ and $v:V_3\rightarrow V$ is homotopic to the canonical map $u: A_0\rightarrow V$. 

(a) Since $X'$ is homotopy left exact, the map $u':A_0'\rightarrow V'$ induces a quasi-isomorphism 
\[
\tau_{\leq 0}\RHom(A,\Sigma^{-i}A_0')\rightarrow\tau_{\leq 0 }\RHom(A,\Sigma^{-i}V')
\]
for each $A'\in \A$.
Thus the induced map
\[ 
\tau_{\leq 0}\Hom(A',V_3)\rightarrow \Hom(A', V)
\]
is a quasi-isomorphism for each $A'\in \A$. 
Therefore $X$ is homotopy left exact if and only if $X''$ is homotopy cartesian.

(b) Put $U_3=\Cone((f,h_0)^{\intercal})$.
Consider the following diagrams
\[
\begin{tikzcd}
&A_1\ar[r,equal]\ar[d]&A_1\ar[d]\\
A_0\ar[r]\ar[d,equal]&A_0'\oplus A_1\ar[r]\ar[d]&U_3\ar[d]\\ 
A_0\ar[r]&A_0'\ar[r]&U 
\end{tikzcd},
\begin{tikzcd}
A_1\ar[r]\ar[d,equal]&U_3\ar[r]\ar[d]&U\ar[d]\\ 
A_1\ar[r]&A_1'\ar[d]\ar[r]&U'\ar[d]\\
 & Z\ar[r,equal]& Z
 \end{tikzcd},
\begin{tikzcd}
U\ar[r]\ar[d]&A_2\ar[d,equal]\\
U'\ar[r]&A_2\end{tikzcd}.
\]

Since $X''$ is homotopy cocartesian, the canonical map $U_3\rightarrow A_1'$ induces a quasi-isomorphism 
\[
\tau_{\leq 0}\RHom(A_1',A')\rightarrow \tau_{\leq 0}\RHom(U_3, A')
\]
for each $A'\in\A$. This implies that the canonical map $U\rightarrow U'$ induces a quasi-isomorphism 
\[
\tau_{\leq 0}\RHom(U', A')\rightarrow \tau_{\leq 0}\RHom(U, A')
\]
for each $A'\in\A$. From the rightmost square, we infer that $X$ is homotopy right exact if and only if so is $X'$.
\end{proof}

\begin{corollary}\label{Comp}
Suppose we have the following diagram in $\A$ with lower square being homotopy pullback and the upper square an object in $\Mor(\Mor(\A))$. 
Then the outer square (i.e.~the square on the right hand side) is homotopy pullback if and only if so is the upper square. 
\[
\begin{tikzcd}
A_1\arrow[r,"f"]\arrow[d,"g"swap,""{name=1} ]\ar[rd,"h"red,red]&A_2\arrow[d,"j",""{name=2}]\\
A_3\ar[d,"g'"swap,""{name=3}]\ar[r,"f'"]\ar[rd,"h'"red,red]&A_4\ar[d,"j'",""{name=4}]\\
A_5\ar[r,"k'"swap]&A_6
\arrow[phantom,from=1,to=2,""]\arrow[phantom,from=3,to=4,""]
\end{tikzcd}\;\;,\;\;
\begin{tikzcd}
A_1\ar[r,"f"]\ar[d,"g'g"swap]\ar[rd,"h'g+j'h"{description},blue]&A_2\ar[d,"j'j"]\\
A_5\ar[r,"k'"swap]&A_6
\end{tikzcd}
\]
\end{corollary}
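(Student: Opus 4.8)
The plan is to recognise the pasting as a direct instance of Proposition~\ref{push}, part~2)(b). Recall from the discussion following Lemma~\ref{squareepivalence} that a homotopy square is a homotopy pullback (equivalently, homotopy cartesian) exactly when its associated $3$-term h-complex is homotopy left exact. First I would translate the upper and the outer squares into $3$-term h-complexes over $\A$, namely
\[
U:\quad A_1\xrightarrow{\,[g,f]^{\intercal}\,}A_3\oplus A_2\xrightarrow{\,[f',-j]\,}A_4\qquad\text{(homotopy }h\text{)}
\]
and
\[
O:\quad A_1\xrightarrow{\,[g'g,f]^{\intercal}\,}A_5\oplus A_2\xrightarrow{\,[k',-j'j]\,}A_6\qquad\text{(homotopy }h'g+j'h\text{)}.
\]
Thus the upper (resp.\ outer) square is a homotopy pullback if and only if $U$ (resp.\ $O$) is homotopy left exact, and it suffices to compare $U$ and $O$.

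Next I would build a morphism $\alpha\colon U\to O$ in $\mathcal H_{3t}(\A)$ whose component at the common apex $A_1$ is the identity. Concretely, take $r_0=\Id_{A_1}$, $r_1=\mathrm{diag}(g',1)\colon A_3\oplus A_2\to A_5\oplus A_2$, $r_2=j'\colon A_4\to A_6$, together with the higher data $s_1=0$, $s_2=[-h',0]$ and $t=0$. Using only that the lower square commutes up to its homotopy $h'$, i.e.\ $d(h')=j'f'-k'g'$, a direct verification shows that the equations \eqref{s2} and \eqref{tt} hold, so that $\alpha$ is a genuine morphism of $3$-term h-complexes; note that $r_0=\Id$ is a homotopy equivalence, as required by Proposition~\ref{push}. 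By construction the restriction $X'''=\Res'(\alpha)$ is the homotopy square
\[
\begin{tikzcd}
A_3\oplus A_2\ar[r,"{[}f'{,}-j{]}"]\ar[d,"\mathrm{diag}(g'{,}1)"swap]&A_4\ar[d,"j'"]\\
A_5\oplus A_2\ar[r,"{[}k'{,}-j'j{]}"swap]&A_6
\end{tikzcd}
\]
with homotopy $[-h',0]$.

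The heart of the argument is to show that $X'''$ is a homotopy pullback square, and this is where the hypothesis on the lower square enters. By Definition~\ref{maindef}, $X'''$ is homotopy cartesian iff the canonical map $A_3\oplus A_2\to\Sigma^{-1}\Cone(M)$ becomes invertible after applying $\tau_{\leq 0}\RHom(A'^{\wedge},-)$ for every $A'\in\A$, where $M=[-j',k',-j'j]\colon A_4\oplus A_5\oplus A_2\to A_6$. I would apply the shearing automorphism $\Phi$ of $A_4\oplus A_5\oplus A_2$ sending $(a_4,a_5,a_2)$ to $(a_4-j\,a_2,a_5,a_2)$, which transforms $M$ into $[-j',k',0]$; hence
\[
\Sigma^{-1}\Cone(M)\;\cong\;\Sigma^{-1}\Cone\bigl([-j',k']\colon A_4\oplus A_5\to A_6\bigr)\oplus A_2,
\]
and under this identification the canonical map is the direct sum of the comparison map $A_3\to\Sigma^{-1}\Cone([-j',k'])$ for the lower square with $\Id_{A_2}$. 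Therefore $X'''$ is homotopy cartesian if and only if the lower square is, which holds by assumption.

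Finally, with $X'''$ a homotopy pullback and $r_0=\Id_{A_1}$ a homotopy equivalence, Proposition~\ref{push}, part~2)(b), applied to $\alpha\colon U\to O$ yields that $U$ is homotopy left exact if and only if $O$ is; translating back, the upper square is a homotopy pullback if and only if the outer square is, which is the assertion. I expect the main obstacle to be purely computational: checking the cocycle conditions \eqref{s2} and \eqref{tt} for $\alpha$ with the correct signs, and verifying the cone splitting after the shearing $\Phi$ (in particular that the $A_2$-summand decouples, since $s_2$ has vanishing $A_2$-component). The structural content is entirely supplied by Proposition~\ref{push} together with the translation of homotopy squares into $3$-term h-complexes from Lemma~\ref{squareepivalence}.
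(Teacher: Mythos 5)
Your proposal is correct and follows essentially the same route as the paper: the paper's proof consists of applying Proposition~\ref{push} to one of two explicit diagrams, the second of which is precisely your morphism $\alpha$ from the upper-square complex $U$ to the outer-square complex $O$ over $\Id_{A_1}$, with the restriction square identified (up to the shearing isomorphism you spell out) with the direct sum of the lower square and a trivially cartesian square. The only cosmetic points are that the paper first reduces to the case where $\A$ admits a zero object and finite direct sums (which your use of $A_3\oplus A_2$ tacitly requires, via Remark~\ref{truncationexactdgstructure}~c)), and that with the paper's sign conventions (compare Definition~\ref{def:3termhomotopy} with Example~\ref{Mor(A)}) the homotopy of $\Res'(\alpha)$ is $[h',0]$ rather than $[-h',0]$ --- both harmless.
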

\begin{proof}
We may assume that the dg category $\A$ admits a zero object and direct sums.
Note that direct sums of homotopy cartesian squares remain homotopy cartesian.
Also, note that squares of the form
\[
\begin{tikzcd}
0\ar[r,"0"]\ar[d,"0"swap]\ar[rd,"0"red,red]&A\ar[d,equal]\\
0\ar[r,"0"swap]&A
\end{tikzcd}
\]
are trivially homotopy cartesian.

The statement follows by applying Proposition \ref{push} to one of the following two diagrams
\[
\begin{tikzcd}A_1\ar[rd,red,"s"]\ar[rr,bend left=8ex,blue,"h'g+j'h"blue]\ar[r,"u"]\ar[d,"g"swap]&A_2\oplus A_5\ar[r,"v"]\ar[d,"r"]&A_6\ar[d,equal] &&A_1\ar[r]\ar[d,equal]&A_2\oplus A_3\ar[d]\ar[r] &A_4\ar[d]\\
A_3\ar[rr,bend right=8ex,"h'"{blue,swap},blue]\ar[r,"w"swap]&A_4\oplus A_5 \ar[r,"z"swap] &A_6&&A_1\ar[r]&A_2\oplus A_5\ar[r]&A_6
\end{tikzcd}
\]
where $r=\begin{bmatrix}j\;\;0\\0\;\;\Id\end{bmatrix}$, $s=\begin{bmatrix}h\\0\end{bmatrix}$, $u=\begin{bmatrix}f\\g'g\end{bmatrix}$, $v={[}j'j{,}-k'{]}$, $w=\begin{bmatrix}f'\\g'\end{bmatrix}$ and $z={[}j'{,}-k'{]}$.
\end{proof}
Suppose we have a homotopy pullback square $X_2$
\[
\begin{tikzcd}
B\ar[r,"b"]\ar[d,"e"swap]\ar[rd,red,"s_2"]&C\ar[d,"f"]\\
E\ar[r,"h"swap]&F
\end{tikzcd}
\]
and an object $X_3$ in $\mathcal H_{\Sq}(\A)$
\[
\begin{tikzcd}
A\ar[r,"c"]\ar[d,"d"swap]\ar[rd,red,"s_3"]&C\ar[d,"f"]\\
D\ar[r,"hg"swap]&F
\end{tikzcd},
\]
where $g$ is a morphism from $D$ to $E$.
Then the triple $(c,gd,s_3)$ satisfies $d(s_3)=hgd-fc=-[-h,\;f]\begin{bmatrix}gd\\c\end{bmatrix}$, and thus, by Lemma~\ref{lem:3termhcomplex}, we have a morphism $a:A\rightarrow B$ such that there exists a triple $(s,s_1,t)$ such that 
\[
\begin{bmatrix}gd\\c\end{bmatrix}-\begin{bmatrix}e\\b\end{bmatrix}a=\begin{bmatrix}d(s_1)\\d(s)\end{bmatrix}\]
 and 
 \[
 s_3-s_2a=-d(t)-[-h,\;f]\begin{bmatrix}s_1\\s\end{bmatrix}.
 \]
 So we have the following diagram
\[
\begin{tikzcd}
A\ar[d,"d"swap]\ar[rr,bend left=12ex,"s",dashed]\ar[r,dashed,"a"]\ar[rr,bend left=6ex,"c"]\ar[rd,"s_1"{red,swap},red,dashed]\ar[rrd,"t"{blue},blue,dashed,bend left=2ex]&B\ar[r,"b"]\ar[d,"e"swap]\ar[rd,"s_2"red,red,]&C\ar[d,"f"]\\
D\ar[r,"g"swap]&E\ar[r,"h"swap]&F
\end{tikzcd}
\]
where the left hand square $X_1$ is an object in $\mathcal H_{\Sq}(\A)$ and $(-s,0,-t)$ serves as a homotopy between $X_3$ and the composition of $X_1$ and $X_2$, all regarded as morphisms in $Z^0(\Mor(\A))$.
So we have
\begin{corollary}\label{pastinglaw:second}
Keep the notations as above. 
If we have homotopy pullback squares $X_2$ and $X_3$, then there exists a pair $(a,s_1)$ such that $X_1$ is also homotopy pullback and that the composition of $X_1$ and $X_2$ is homotopic to $X_3$, all regarded as morphisms in $Z^0(\Mor(\A))$.
\end{corollary}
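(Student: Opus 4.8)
The plan is to notice that the construction preceding the statement already supplies everything except the assertion that $X_1$ is a homotopy pullback, and to deduce that assertion from the pasting law of Corollary~\ref{Comp}.

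First I would record what has already been produced. Because $X_2$ is a homotopy pullback square, the $3$-term complex $B\to E\oplus C\to F$ attached to it is homotopy left exact, so the diagram-chasing characterization of Lemma~\ref{lem:3termhcomplex} applies to the column $(gd,c)^{\intercal}\colon A\to E\oplus C$ together with the null-homotopy $s_3$ of $[-h,f](gd,c)^{\intercal}$. This yields the closed degree-zero map $a\colon A\to B$ and the fillers $s_1,s,t$ of the displayed equations, so that $X_1$ becomes a well-defined object of $\mathcal H_{\Sq}(\A)$ and $(-s,0,-t)$ is a homotopy, in $Z^0(\Mor(\A))$, between $X_3$ and the composite of $X_1$ and $X_2$. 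In particular $X_3$ is isomorphic in $\mathcal H_{\Sq}(\A)$ to that composite.

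Next I would transport the homotopy pullback property across this isomorphism and then invoke the pasting law. Being a homotopy cartesian square is invariant under isomorphism in $\rep(k\Sq,\A)$, hence under isomorphism in $\mathcal H_{\Sq}(\A)$; since $X_3$ is a homotopy pullback by hypothesis, the composite of $X_1$ and $X_2$ is a homotopy pullback as well. Regarding the horizontal three-term picture as two squares stacked along the shared edge $e\colon B\to E$, the square $X_1$ is the ``upper'' square, $X_2$ the ``lower'' one, and their composite the outer rectangle; transposing into the vertical shape of Corollary~\ref{Comp} leaves the homotopy cartesian condition unchanged, because that condition is symmetric in the two legs of $E\oplus C$. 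Since the lower square $X_2$ is a homotopy pullback, Corollary~\ref{Comp} asserts that the outer rectangle is a homotopy pullback if and only if the upper square is; as the outer rectangle is a homotopy pullback, $X_1$ is one too, which is the desired conclusion.

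I do not expect a serious obstacle here, because the analytic heart of the argument—extracting $a$ and the chain homotopies from the universal property of the homotopy pullback $X_2$—is precisely the content of the discussion preceding the statement. The only points requiring care are checking that the literal composite of $X_1$ and $X_2$ agrees up to homotopy with $X_3$, so that the hypothesis on $X_3$ genuinely passes to that composite, and verifying that the reorientation used to apply Corollary~\ref{Comp} preserves the homotopy cartesian property; both reduce to the symmetry of homotopy left exactness in its two legs.
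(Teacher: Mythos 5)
Your proposal is correct and takes essentially the same route as the paper: the pair $(a,s_1)$ together with $s$, $t$ and the homotopy $(-s,0,-t)$ is exactly what the discussion preceding the statement produces from Lemma~\ref{lem:3termhcomplex} (i.e.\ from the homotopy left exactness of $B\to E\oplus C\to F$), and the homotopy-pullback property of $X_1$ is then deduced, as you do, from invariance of homotopy cartesianness under homotopy/isomorphism in $\mathcal H_{\Sq}(\A)$ combined with the pasting law of Corollary~\ref{Comp}. The two points you flag (transporting the hypothesis on $X_3$ to the literal composite, and the harmless transposition needed to apply Corollary~\ref{Comp}) are indeed the only details to check, and both go through as you indicate.
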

\subsection{Homotopy pushouts/pullbacks of homotopy short exact sequences}
Let $X$ be a homotopy right exact sequence over $\A$
\[
\begin{tikzcd}
A\ar[rr,bend right=8ex,"h"swap]\ar[r,"f"]&B\ar[r,"j"]&C.
\end{tikzcd}
\]
Let $\overline{\alpha}:A\rightarrow A'$ be a morphism in $H^0(\A)$.
Let $\alpha$ be a representative for $\overline{\alpha}$.
Let $S\in \mathcal H_{\Sp}(\A)$ be the following homotopy span in $\A$:
\[
\begin{tikzcd}
A\ar[r,"f"]\ar[d,"\alpha"swap]&B\\
A'& 
\end{tikzcd}.
\]
It is evident that it is determined by $\overline{\alpha}$ up to an isomorphism and this isomorphism restricts to the identity on each component in $H^0(\A)$.
Assume that the homotopy span $S$ admits a homotopy pushout.

\begin{proposition}\label{cons}
We have a homotopy right exact sequence $X'$ and a morphism $\mu:X\rightarrow X'$
\begin{equation}\label{const}
\begin{tikzcd}
&A\ar[r,"f"]\ar[d,"h_0=\alpha"swap]\ar[rd,"s_1"red,swap ,red]\ar[rrd,"t"blue,blue]\ar[rr,bend left = 8ex,"h"]&B\ar[d,"h_1"swap]\ar[rd,"s_2"red,red]\ar[r,"j"]&C\ar[d,equal]\\
&A'\ar[r,"f'"swap]\ar[rr,bend right = 8ex,"h'"swap]&B'\ar[r,"j'"swap]&C
\end{tikzcd}
\end{equation}
such that the restriction of $\mu$ to $f:A\rightarrow B$ is a homotopy pushout of $S$ (cf.~Subsection \ref{res}). 
Furthermore, if both $X$ and $X'$ are homotopy short exact, then the homotopy pushout of $S$ is homotopy bicartesian.
\end{proposition}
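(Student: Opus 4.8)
The plan is to first produce the square $X''$ together with the morphism $\mu$ by a direct mapping-cone computation, and then to read off both the homotopy right exactness of $X'$ and the bicartesian conclusion from Proposition~\ref{push}.

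First I would form the homotopy pushout of the span $S$, which exists by hypothesis; write $X''$ for the resulting homotopy cocartesian square
\[
\begin{tikzcd}
A\ar[r,"f"]\ar[d,"\alpha"swap]\ar[rd,"s_1"]&B\ar[d,"h_1"]\\
A'\ar[r,"f'"swap]&B'
\end{tikzcd}
\]
with $f'\colon A'\to B'$, $h_1\colon B\to B'$ and $d(s_1)=f'\alpha-h_1f$. By construction this square is the intended restriction of $\mu$ to $f\colon A\to B$. To build the remaining data $j',h',s_2,t$ I would invoke the universal property of the homotopy pushout. Concretely, model $B'$ by the mapping cone $\Cone(A\xrightarrow{\binom{-\alpha}{f}}A'\oplus B)$, so that $f'$ and $h_1$ become the structure inclusions and $s_1$ the canonical degree $-1$ datum; the cocone $(0\colon A'\to C,\ j\colon B\to C)$ together with the degree $-1$ morphism $h$ then assembles into a closed degree $0$ morphism $j'\colon B'\to C$, closedness being exactly the relation $d(h)=-jf$. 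In this model $j'f'=0$, $j'h_1=j$ and $j's_1=h$ hold strictly, so $X'=(A'\xrightarrow{f'}B'\xrightarrow{j'}C,\,h'=0)$ is a $3$-term h-complex and $\mu=(\alpha,h_1,\Id_C,s_1,0,0)$ is a morphism $X\to X'$ in $\mathcal H_{3t}(\A)$. Transporting along the essentially unique isomorphism to the given homotopy pushout (Proposition~\ref{pullbackunique} and Remark~\ref{pullbackuniversal}, in their dual forms) yields $\mu\colon X\to X'$ whose restriction to $f$ is $X''$ and whose rightmost component is $\Id_C$.

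The main obstacle is precisely this second step: verifying that the cone-model data descends to an honest morphism of $3$-term h-complexes with the prescribed restriction, i.e.\ checking the defining identities of Definition~\ref{def:3termhomotopy} (namely $d(s_2)=j'h_1-j$, $d(h')=-j'f'$, and the coherence~(\ref{tt}) for $t$) and confirming that the transport keeps the restriction to $f$ equal to $X''$ and the $C$-component equal to $\Id_C$. Here the uniqueness of homotopy pullbacks/pushouts up to a unique isomorphism restricting to the identity on the relevant corners makes all these identifications canonical.

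With $\mu$ in hand the homotopy right exactness of $X'$ is immediate: $h_2=\Id_C$ is a homotopy equivalence and the restriction $X''$ of $\mu$ to $f$ is a homotopy pushout, hence cocartesian, square, so Proposition~\ref{push}~1)(b) gives that $X$ is homotopy right exact if and only if $X'$ is; since $X$ is, so is $X'$. Finally, for the ``furthermore'' part, assume $X$ and $X'$ are homotopy short exact, so both are in particular homotopy left exact. Applying Proposition~\ref{push}~1)(a) to $\mu$ (again using that $h_2=\Id_C$ is a homotopy equivalence): since $X'$ is homotopy left exact, $X$ is homotopy left exact if and only if $X''$ is a homotopy pullback square; as $X$ is homotopy left exact, $X''$ is homotopy cartesian. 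Being already homotopy cocartesian by construction, $X''$ is homotopy bicartesian, which is the desired conclusion.
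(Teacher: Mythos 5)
Your endgame coincides with the paper's: once the morphism $\mu$ is in hand, Proposition~\ref{push}~1)(b) gives the homotopy right exactness of $X'$, and 1)(a) gives the bicartesian conclusion in the ``furthermore'' part; both of these applications are correct and are exactly what the paper does. The genuine gap is in how you produce $\mu$. You propose to ``model $B'$ by the mapping cone $M=\Cone\bigl(A\to A'\oplus B\bigr)$'', read off strict identities $j'f'=0$, $j'h_1=j$, $j's_1=h$ there (so that $h'=0$, $s_2=0$, $t=0$), and then transport this to the actual pushout ``along the essentially unique isomorphism'' of Proposition~\ref{pullbackunique} and Remark~\ref{pullbackuniversal}. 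No such isomorphism exists. The vertex $B'$ of the homotopy pushout is an object of $\A$, whereas $M$ is a dg $\A$-module which in general is not quasi-representable; the square with corner $M$ is therefore not an object of $\rep(k\Sq,\A)$ at all, which is the only setting in which the paper's notion of homotopy cocartesian square, and the uniqueness statements you invoke, are defined. What homotopy cocartesianness of $X''$ actually gives is that precomposition with the canonical comparison map $M\to B'^{\wedge}$ induces quasi-isomorphisms $\tau_{\leq 0}\RHom(B'^{\wedge},A_0^{\wedge})\iso\tau_{\leq 0}\RHom(M,A_0^{\wedge})$ for every $A_0\in\A$; this comparison map is not an isomorphism in $\D(\A)$ (the two objects may differ in positive degrees, which is the whole point of the relative Definition~\ref{maindef} and Remark~\ref{subcategory}), so the strict identities you have on $M$ cannot be carried over to identities on $B'$ --- they survive only up to homotopies, which are in general nonzero.

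Making those homotopies explicit is precisely the paper's proof, and your cone is the correct starting point: apply the truncated corepresentability above with the test object $A_0=C$. Your cocone datum is exactly a closed degree-zero morphism $(h,-j,0)\colon M\to C^{\wedge}$ (closedness being the relation $d(h)=-jf$). Surjectivity on $H^0$ of the displayed quasi-isomorphism then produces $j'\in Z^0\A(B',C)$ together with a graded morphism $(-t,s_2,h')\colon M\to C^{\wedge}$ of degree $-1$ whose differential is the difference between $(h,-j,0)$ and the composite of $j'^{\wedge}$ with the canonical map $M\to B'^{\wedge}$. Expanding this coboundary componentwise yields precisely $d(h')=-j'f'$, $d(s_2)=j'h_1-j$ and the coherence relation~(\ref{tt}) for $t$, i.e.\ the identities you yourself flagged as ``the main obstacle''. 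Thus $X'$ is the $3$-term h-complex $(f',j',h')$ (with $h'$ in general nonzero, unlike in your model) and $\mu=(\alpha,h_1,\Id_C,s_1,s_2,t)$ is a morphism $X\to X'$ in $\mathcal H_{3t}(\A)$ restricting to $X''$ on $f$ and to $\Id_C$ on $C$; with this $\mu$, your two applications of Proposition~\ref{push} go through verbatim.
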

\begin{proof}
Let $X''$ be the pushout of $S$
\[
\begin{tikzcd}
&A\ar[r,"f"]\ar[d,"h_0=\alpha"swap]\ar[rd,"s_1"red,swap ,red]&B\ar[d,"h_1"]\\
&A'\ar[r,"f'"swap]&B'
\end{tikzcd}\;.
\]
 Consider the morphism 
 \[
 (h,-j,0):M=\Cone(A\xrightarrow{[f,\alpha]^{\intercal}} B\oplus A')\rightarrow C.
 \] 
 Since $X''$ is the homotopy pushout of $S$, we have a morphism $j':B'\rightarrow C$ such that there exists a graded morphism $(-t, s_2, h'):M\rightarrow C$ of degree $-1$ such that 
 \[
 d(-t,s_2,h')=(-j's_1+h, -j+j'h_1, -j'f').
 \] 
 So we have 
 \[
 (-d(t)-h'\alpha-s_2f, d(s_2), d(h'))=(-j's_1+h, -j+j'h_1, -j'f').
 \]
 Thus we have a morphism $\mu:X\rightarrow X'$ of the form (\ref{const}), where $X'$ is the 3-term h-complex on the second row. 
By Proposition \ref{push}, we deduce that $X'$ is homotopy right exact and that if both $X$ and $X'$ are homotopy short exact sequences, then $X''$ is homotopy bicartesian. 
\end{proof}

The following is a direct consequence of Lemma \ref{Mor(A)and3term} and Proposition \ref{cons}.
\begin{corollary}\label{cok}
Let $\A$ be an additive dg category. 
For a homotopy cocartesian square $X$ as follows
\[
 \begin{tikzcd}
 A\ar[r,"f"]\ar[d,"g"swap]\ar[rd,"s_1"red,red]&B\ar[d,"h"]\\
 C\ar[r,"k"swap]&D
 \end{tikzcd}\;,
 \]
the object $f$ in $\Mor(\A)$ has a homotopy cokernel if and only if the object $k$ has a homotopy cokernel. 
If this is the case, there exists a morphism in $\mathcal H_{3t}(\A)$ from the homotopy cokernel of $f$ to that of $k$ such that its restriction to the third term is the identity, and its restriction to $f:A\rightarrow B$ is isomorphic to $X$.
\end{corollary}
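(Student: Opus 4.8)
The plan is to prove the two implications separately, using Proposition~\ref{cons} for the forward direction and the dual of Lemma~\ref{Mor(A)and3term} together with Proposition~\ref{push} for the converse; in both cases the comparison morphism in $\mathcal H_{3t}(\A)$ is produced along the way. Write $X$ as in the statement, so that $f$, $g$, $h$, $k$ are closed of degree $0$, the homotopy $s_1$ has degree $-1$ with $d(s_1)=kg-hf$, and recall that $X$ is by definition a homotopy pushout of the span $S\colon C\xleftarrow{g}A\xrightarrow{f}B$.

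First suppose $f$ admits a homotopy cokernel, i.e.\ a homotopy right exact sequence $X_f\colon A\xrightarrow{f}B\xrightarrow{j}E$. I would apply Proposition~\ref{cons} with $\overline{\alpha}=\overline{g}\colon A\to C$. Since the span $S$ admits a homotopy pushout (namely $X$), the proposition produces a homotopy right exact sequence $X'$ together with a morphism $\mu\colon X_f\to X'$ in $\mathcal H_{3t}(\A)$ which is the identity on the third term and whose restriction to $f\colon A\to B$ is a homotopy pushout of $S$. By the uniqueness of homotopy pushouts (the dual of Proposition~\ref{pullbackunique}), this restriction is isomorphic in $\mathcal H_{\Sq}(\A)$ to $X$; in particular the first arrow of $X'$ is identified with $k$, so $X'$ is a homotopy cokernel of $k$ and $\mu$ is the required comparison morphism, being the identity on the third term and restricting to $X$ on the $f$-part.

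For the converse, suppose $k$ admits a homotopy cokernel $X_k\colon C\xrightarrow{k}D\xrightarrow{j'}E$. The square $X$ is exactly a morphism $\theta=(g,h,s_1)\colon f\to k$ in $Z^0(\Mor(\A))$ between the first-arrow objects. Feeding $\theta$ and the target sequence $X_k$ into the dual of Lemma~\ref{Mor(A)and3term}, I would obtain a morphism $\mu\colon X_f\to X_k$ in $Z^0\Fun_{\Ai_{\infty}}(\B,\A)$ that is the identity on the third term $E$, where $X_f$ is the $3$-term h-complex $A\xrightarrow{f}B\xrightarrow{j'h}E$ equipped with the homotopy furnished by the lemma, and whose restriction to $f$ is the given square $X$. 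Since this restriction is homotopy cocartesian and the third-term component $\Id_E$ is a homotopy equivalence, Proposition~\ref{push}~1)(b) shows that $X_f$ is homotopy right exact if and only if $X_k$ is. As $X_k$ is homotopy right exact, $X_f$ is a homotopy cokernel of $f$, and $\mu$ is once more the required comparison morphism.

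The computations hidden inside Lemma~\ref{Mor(A)and3term} and Proposition~\ref{cons} (the signs and the explicit homotopies $s_1$, $s_2$, $t$) are routine. The one point demanding care, and the main obstacle, is to arrange that the single morphism $\mu$ simultaneously restricts to (an object isomorphic to) the prescribed square $X$ on the $f$-part and to the identity on the third term. In the forward direction this is enforced by the uniqueness of homotopy pushouts, which identifies the square produced by Proposition~\ref{cons} with $X$; in the converse it is built into the dual of Lemma~\ref{Mor(A)and3term}, whose associated $f$-square is by construction $\theta=X$. Once this compatibility is secured, Proposition~\ref{push}~1)(b)---the engine behind Proposition~\ref{cons}---transfers homotopy right-exactness across $\mu$ in either direction, which is precisely the mechanism making the corollary a direct consequence of Lemma~\ref{Mor(A)and3term} and Proposition~\ref{cons}.
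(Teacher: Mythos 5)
Your proof is correct and follows essentially the same route as the paper, which simply declares the corollary a direct consequence of Lemma~\ref{Mor(A)and3term} and Proposition~\ref{cons}: you unpack exactly these ingredients, using Proposition~\ref{cons} together with uniqueness of homotopy pushouts for the forward implication, and the dual of Lemma~\ref{Mor(A)and3term} combined with Proposition~\ref{push}~1)(b) (the engine inside Proposition~\ref{cons}) for the converse. The only point to polish is in the forward direction: the sequence $X'$ produced by Proposition~\ref{cons} is headed by the pushout arrow $f'$, which is merely isomorphic to $k$ in $H^0(\Mor(\A))$, so to obtain a homotopy cokernel of $k$ in the literal sense of Definition~\ref{def:homotopykernel} one should transfer $X'$ along this isomorphism via the dual of Lemma~\ref{Mor(A)and3term} and Proposition~\ref{termwiseequivalence} --- the same device you already deploy in the converse.
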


\begin{lemma}\label{univ}
Suppose we are given a homotopy cartesian square $X\in \mathcal H_{\Sq}(\A)$ of the form
\[
\begin{tikzcd}
A\ar[r,"f"]\ar[d,"g"swap,""{name=2}]\ar[rd,"s"red,red]&B\ar[d,"j"]\\
C\ar[r,"k"swap]&D 
\end{tikzcd}, 
\]
and a morphism $\alpha$ from $f':A'\rightarrow B$ to $k:C\rightarrow D$ in $H^0(\Mor(\A))$
\[
\begin{tikzcd}
A'\ar[r,"f'"]\ar[d,"g'"swap]\ar[rd,"s'"red,red]&B\ar[d,"j"]\\
C\ar[r,"k"swap]&D
\end{tikzcd}\;.
\] 
We still denote by $X$ the associated morphism from $f:A\rightarrow B$ to $k:C\rightarrow D$ given by the homotopy cartesian square $X$. Then there exists a morphism $\beta: f'\rightarrow f$ in $H^0(\Mor(\A))$ such that $X\circ\beta=\alpha$ and that $\beta$ restricts to $\Id_{B}$.
\end{lemma}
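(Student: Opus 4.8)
The plan is to pass from the square formalism to that of homotopy left exact $3$-term h-complexes, where the universal property recalled just before Lemma~\ref{strictmorphismlift} is available, and then read the resulting lift back into $\Mor(\A)$. As in the proof of Corollary~\ref{Comp}, I may assume $\A$ admits a zero object and finite direct sums, so that homotopy cartesian squares correspond to homotopy left exact $3$-term h-complexes.

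First I would rewrite the homotopy cartesian square $X$ as the $3$-term h-complex $\tilde X\colon A\xrightarrow{[g,f]^{\intercal}}C\oplus B\xrightarrow{[k,-j]}D$ with homotopy $s$; by the description of homotopy pullback squares after Lemma~\ref{squareepivalence}, $\tilde X$ is homotopy left exact precisely because $X$ is homotopy cartesian. Choosing a representative $(g',j,s')$ of $\alpha$ in $Z^0(\Mor(\A))$, the closedness relation $d(s')=jf'-kg'$ shows that the data of $\alpha$ assemble into a second h-complex $\tilde X'\colon A'\xrightarrow{[g',f']^{\intercal}}C\oplus B\xrightarrow{[k,-j]}D$ with homotopy $s'$, which shares the same final morphism $\Res'(\tilde X')=\Res'(\tilde X)=\bigl([k,-j]\colon C\oplus B\to D\bigr)$.

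Next I would apply the universal property to the identity $\Id\colon\Res'(\tilde X')\to\Res'(\tilde X)$: since $\tilde X$ is homotopy left exact, it lifts uniquely to a morphism $\mu\colon\tilde X'\to\tilde X$ in $\mathcal H_{3t}(\A)$ with $\Res'(\mu)=\Id$. Taking a representative $6$-tuple with $r_1=\Id$, $r_2=\Id$ and $s_2=0$, I write $\gamma=r_0\colon A'\to A$ and $s_1=[\sigma_C,\sigma_B]^{\intercal}$; the defining identities of $\mu$ then read $d(\sigma_C)=g\gamma-g'$, $d(\sigma_B)=f\gamma-f'$ and $d(t)=s'-s\gamma-(k\sigma_C-j\sigma_B)$. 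I set $\beta=\begin{pmatrix}\gamma&0\\-\sigma_B&\Id_B\end{pmatrix}\colon f'\to f$. The relation $d(\sigma_B)=f\gamma-f'$ makes $\beta$ a closed degree $0$ morphism in $\Mor(\A)$, and by construction it restricts to $\Id_B$ on the target, as required.

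Finally I would check $X\circ\beta=\alpha$ in $H^0(\Mor(\A))$. Matrix multiplication gives $X\circ\beta=\begin{pmatrix}g\gamma&0\\s\gamma-j\sigma_B&j\end{pmatrix}$, so the entries of $X\circ\beta-\alpha$ are $g\gamma-g'=d(\sigma_C)$ and, using the identity for $d(t)$, $(s\gamma-j\sigma_B)-s'=-k\sigma_C-d(t)$, while the target entry vanishes. A direct computation with the differential of Example~\ref{Mor(A)} shows this difference equals $d\begin{pmatrix}-\sigma_C&0\\-t&0\end{pmatrix}$, hence is a coboundary, so $X\circ\beta$ and $\alpha$ agree in $H^0(\Mor(\A))$. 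The main obstacle is purely the sign bookkeeping: one must carry the conventions of Example~\ref{Mor(A)} and Definition~\ref{def:3termhomotopy} consistently through the translation between the h-complex formalism and $\Mor(\A)$, and verify that the homotopies $\sigma_C,\sigma_B,t$ produced by the universal property assemble into exactly the coboundary above. Everything else is a formal application of the universal property of homotopy left exact sequences.
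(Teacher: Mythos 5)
Your proof is correct, but it follows a genuinely different route from the paper's. The paper argues directly with dg modules: the data of $\alpha$ assemble into a closed degree-zero morphism $[f',-g',s']^{\intercal}\colon A'^{\wedge}\to M$, where $M=\Sigma^{-1}\Cone(B^{\wedge}\oplus C^{\wedge}\xrightarrow{[j,k]}D^{\wedge})$, and homotopy cartesianness of $X$ (Definition~\ref{maindef}) factors this morphism, up to a homotopy with components $u,v,w$, through the canonical map $A^{\wedge}\to M$; the resulting $r\colon A'\to A$ and $u$ are then the left vertical and diagonal entries of $\beta$. Since the direct sum there is formed in $\C(\A)$, the paper needs no additivity hypothesis and no reduction step. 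You instead stay inside $\A$: after passing to the additive hull (the same reduction as in Corollary~\ref{Comp}, and legitimate because $\tau_{\leq 0}\RHom$ turns finite sums of representables into products, so homotopy cartesianness relative to $\A$ and to its closure under finite sums coincide), you encode both squares as $3$-term h-complexes with common second map $[k,-j]\colon C\oplus B\to D$, and apply Lemma~\ref{strictmorphismlift} to the strict identity; this is the right citation, since that lemma produces a closed $6$-tuple with $r_1=\Id$, $r_2=\Id$, $s_2=0$, which is exactly what your extraction of $(\gamma,\sigma_C,\sigma_B,t)$ requires. At bottom the two arguments rest on the same lifting mechanism --- the proof of Lemma~\ref{strictmorphismlift} is precisely the computation the paper performs inside this proof --- so your route buys reuse of established machinery and a fully explicit endgame: your identities for $d(\sigma_C)$, $d(\sigma_B)$, $d(t)$ are the correct specializations of Definition~\ref{def:3termhomotopy}, your $\beta$ is closed, and $X\circ\beta-\alpha$ is indeed the coboundary of the degree $-1$ matrix with entries $-\sigma_C$ and $-t$ under the differential of Example~\ref{Mor(A)}, where the paper leaves the final homotopy implicit. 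The price is the additive-hull reduction and the translation overhead, both of which the paper's more direct dg-module argument avoids.
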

\begin{proof}
Consider the morphism 
\[
A'\xrightarrow{[f',-g',s']^{\intercal}} M= \Sigma^{-1}\Cone(B\oplus C\xrightarrow{[j,k]} D).
\]
 Since $X$ is homotopy cartesian, we have a morphism $r:A'\rightarrow A$ such that there exists a graded morphism $[u,v,w]^{\intercal}:A'\rightarrow M$ of degree $-1$, such that 
 \[
 d([u,v,w]^{\intercal})=[f,-g,s]^{\intercal}\circ r-[f',-g',s']^{\intercal}.
 \]
 So we have $d(u)=fr-f'$, $d(v)=g'-gr$ and $d(w)=-ju-kv+s'-sr$.
 
 We have the following diagram
\[
\begin{tikzcd}
A'\ar[rrd,bend left=8ex,"f'"]\ar[rdd,bend right=8ex,"g'"swap]\ar[rrdd,dashed,bend left=5ex,red,"s'"]\ar[rd,dashed,"r"swap]&&\\
&A\ar[r,"f"]\ar[d,"g"swap,""{name=2}]\ar[rd,"s"{red,swap},red]&B\ar[d,"j"]\\
&C\ar[r,"k"swap]&D\mathrlap{.} 
\end{tikzcd} 
\]
So we have a morphism $\beta:f'\rightarrow f$ in $H^0(\Mor(\A))$ as follows
\[
\begin{tikzcd}
A'\ar[r,"f'"]\ar[d,"r"swap]\ar[rd,"u"red,red]&B\ar[d,equal]\\
A\ar[r,"f"swap]&B
\end{tikzcd}\;
\] 
such that the composition of $\beta$ with $X$ is homotopic to $\alpha$.
\end{proof}
We also have the following direct consequence of Lemma \ref{Mor(A)and3term} and Proposition \ref{cons}.
\begin{lemma}\label{deflationcomposition}
Let $j:B\rightarrow C$ and $j':C\rightarrow D$ be two objects in $H^0(\Mor(\A))$.
Assume that they both admit homotopy kernels, shown as follows, which are homotopy short exact
\[
\begin{tikzcd}
A\ar[r,"f"]\ar[rr,"h"swap, bend right=8ex]&B\ar[r,"j"]&C
\end{tikzcd}\;\;,\;\;
\begin{tikzcd}
A'\ar[r,"f'"]\ar[rr,"h'"swap, bend right=8ex]&C\ar[r,"j'"]&D.
\end{tikzcd}
\]
If the homotopy cospan $L$
\[
\begin{tikzcd}
&B\ar[d,"j"]\\
A'\ar[r,"f'"swap]&C
\end{tikzcd}
\]
admits a homotopy pullback, then we have the following diagram
\[
\begin{tikzcd}
A\ar[dd,bend right=12ex,"h''"{swap,blue},blue]\ar[r,equal]\ar[d,"u"swap]\ar[rd,"s'"{red},red]\ar[rdd,"t"{red},red]&A\ar[d,"f"swap]\ar[dd,bend left=8ex,"h"blue,blue, near end]&\\
E\ar[r,"w"swap,near start]\ar[d,"v"swap]\ar[rr,bend left=6ex,"h'''"{blue,near end},blue]\ar[rd,"s"{red,swap},red]&B\ar[r,"j'j"swap]\ar[d,"j"swap]&D\ar[d,equal]\\
A'\ar[r,"f'"swap]\ar[rr,"h'"{swap,blue},bend right=8ex,blue]&C\ar[r,"j'"swap]&D
\end{tikzcd}
\]
where
\begin{itemize}
\item[a)] The left bottom square is a homotopy pullback square.
\item[b)] The 3-term h-complex on the leftmost column is a homotopy left exact sequence. 
\item[c)] The 3-term h-complex in the middle row is homotopy left exact. It is homotopy short exact if the homotopy pullback of $L$ is homotopy short exact.
\item[d)] The 6-tuple $(\Id_{A},w,f',s',s,t)$ is a morphism in $\mathcal H_{3t}(\A)$ from the 3-term h-complex in the first column to that in the second column.
\item[e)] We have $h'''=j's+h'v$ and the 6-tuple $(v,j,\Id_{D},s,0,0)$ yields a morphism in $\mathcal H_{3t}(\A)$ from the 3-term h-complex in the second row to that in the bottom row.
\end{itemize}
\end{lemma}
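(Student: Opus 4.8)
The plan is to build the diagram in two stages — first the left column together with the lower-left pullback square, and then the middle row — and to read off the homotopy-exactness statements from the diagram lemmas already available, chiefly Proposition~\ref{push}. I would produce the left column and parts (a), (b), (d) at one stroke by applying the dual of Proposition~\ref{cons}. The first given sequence $A\xrightarrow{f}B\xrightarrow{j}C$ is homotopy short exact, hence homotopy left exact, and $\overline{f'}\colon A'\to C$ is a morphism into its last term; the associated cospan is exactly $L$, which by hypothesis admits a homotopy pullback. The dual of Proposition~\ref{cons} then yields a homotopy left exact sequence $A\xrightarrow{u}E\xrightarrow{v}A'$, which is (b), together with a morphism $\mu_1\colon(A\to E\to A')\to(A\to B\to C)$ in $\mathcal{H}_{3t}(\A)$ restricting to $\Id_A$ on the first term, whose restriction to the last map $v\colon E\to A'$ is the homotopy pullback square of $L$. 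Transposing this square gives the lower-left square of the diagram, establishing (a), and recording $\mu_1$ as the $6$-tuple $(\Id_A,w,f',s',s,t)$ gives (d); here $s$ is the connecting homotopy of the pullback square, with $d(s)=jw-f'v$.

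Next I would construct the middle row and (e) by hand. Set $h''':=j's+h'v$; a short computation using $d(s)=jw-f'v$ and $d(h')=-j'f'$ shows $d(h''')=-(j'j)w$, so that $E\xrightarrow{w}B\xrightarrow{j'j}D$ is a genuine $3$-term h-complex. One then checks directly, from the defining relations of a morphism in $\mathcal{H}_{3t}(\A)$ in Definition~\ref{def:3termhomotopy}, that the $6$-tuple $(v,j,\Id_D,s,0,0)$ is a morphism from this middle row to the bottom row $A'\xrightarrow{f'}C\xrightarrow{j'}D$: the $s_1$-relation is the pullback homotopy identity, the $s_2$-relation reduces to $j'j=j'j$, and the $t$-relation is precisely $h'''=j's+h'v$. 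This gives (e). Alternatively, this morphism can be produced by Lemma~\ref{Mor(A)and3term} applied to the strictly commuting square from $j'j$ to $j'$ with verticals $j$ and $\Id_D$.

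Finally I would deduce (c) from Proposition~\ref{push}, part 1, using the morphism of (e): its third vertical component $r_2=\Id_D$ is a homotopy equivalence, and its restriction to the first map $w\colon E\to B$ is the lower-left square, which is homotopy cartesian by (a). Since the bottom row is homotopy short exact, in particular homotopy left exact, part~1(a) of Proposition~\ref{push} forces the middle row to be homotopy left exact. For the sharper claim, if the homotopy pullback of $L$ is homotopy short exact then it is homotopy bicartesian, hence in particular a homotopy pushout square; part~1(b) of Proposition~\ref{push}, together with the bottom row being homotopy right exact, then yields that the middle row is homotopy right exact as well, and hence homotopy short exact.

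I expect the main obstacle to be bookkeeping rather than anything conceptual: one must make the single homotopy $s$ of the pullback square serve simultaneously — up to the sign conventions of Definition~\ref{def:3termhomotopy} — as the connecting datum $s_2$ of $\mu_1$ in (d) and as the connecting datum $s_1$ of the morphism in (e), and one must keep the homotopies $h$, $h'$, $h''$, $h'''$, $s'$, $t$ mutually compatible so that Proposition~\ref{push} applies verbatim. The one point that genuinely needs care is checking that the square produced abstractly by the dual of Proposition~\ref{cons} agrees, after transposition and under the canonical isomorphism of homotopy pullbacks of Proposition~\ref{pullbackunique}, with the concrete lower-left square carrying the homotopy $s$.
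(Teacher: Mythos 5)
Your overall architecture is the one the paper intends: the paper gives no written proof, stating the lemma as a direct consequence of Lemma~\ref{Mor(A)and3term} and Proposition~\ref{cons}, and your three steps --- the dual of Proposition~\ref{cons} for the left column, the lower-left pullback square and the morphism in (d); an explicit construction of the middle row for (e); and Proposition~\ref{push}, parts 1(a) and 1(b), for (c) --- are exactly how that ``direct consequence'' unwinds (Proposition~\ref{cons} is itself proved via Proposition~\ref{push}, so invoking the latter directly is no detour). In particular your derivation of (c) is correct: the morphism of (e) has third component $\Id_D$, a homotopy equivalence, its restriction to the first maps is the square of (a), so 1(a) gives left exactness of the middle row, and 1(b) upgrades this to short exactness when that square is bicartesian. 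Your closing worry about matching the abstract pullback of Proposition~\ref{cons} with the concrete square is unnecessary: the square produced by the dual of Proposition~\ref{cons} \emph{is} the lower-left square; one simply names its data $E$, $w$, $v$, $s$.

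Two concrete points need repair, both in the middle-row step. First, the sign of $d(s)$. With the conventions of Definition~\ref{def:3termhomotopy}, the tuple in (d) forces $d(s)=jw-f'v$ (there $s$ is the $s_2$-datum of a morphism of columns), while the tuple in (e) forces $d(s)=f'v-jw$ (there $s$ is the $s_1$-datum of a morphism of rows); your computation of $d(h''')$ quotes the first sign but only succeeds with the second: with $d(s)=jw-f'v$ one gets $d(j's+h'v)=j'jw-2j'f'v$, not $-(j'j)w$, whereas $d(s)=f'v-jw$ gives $-(j'j)w$ as required. So the single homotopy $s$ cannot literally serve both roles; one must use $s$ in one of (d), (e) and $-s$ in the other. (This tension is already present in the statement of the lemma itself, which is why your proof should make the choice explicit rather than assert both.) Second, your parenthetical alternative --- Lemma~\ref{Mor(A)and3term} ``applied to the strictly commuting square from $j'j$ to $j'$'' --- cannot work as stated: that lemma takes as input a 3-term h-complex over the \emph{source} of the given morphism in $Z^0(\Mor(\A))$, i.e.\ over $j'j$, which is precisely the middle row you are trying to construct. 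What produces the middle row and (e) in one stroke is the \emph{dual} of Lemma~\ref{Mor(A)and3term}, applied to the lower-left square regarded as a morphism from $w:E\rightarrow B$ to $f':A'\rightarrow C$ together with the bottom row; your direct verification is the honest substitute for this, once the sign of $s$ is fixed.
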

Recall the diagram functor $\Dia: H^0(\Mor(\A)) \to \Mor(H^0(\A))$ which is an epivalence.
 By definition, an object $f: A \to B$ of $\Mor(\A)$ is a homotopy equivalence if $\Dia(f)$ is an isomorphism in $H^0(\A)$.
Therefore, an object $f: A \to B$ of $\Mor(\A)$ is a homotopy equivalence if and only if
this holds for any object $f': A' \to B'$ such that $\Dia(f)$ and $\Dia(f')$ are 
isomorphic in $\Mor(H^0(\A))$.
It is straightforward to verify the following lemma.
\begin{lemma}\label{homotopyequivalencestable} 
Consider an object
 \[
\begin{tikzcd}
A\ar[r,"f'"]\ar[d,"g"swap]\ar[rd,"h"red,red]&B\ar[d,"j"]\\
A'\ar[r,"f'"swap]&B'
\end{tikzcd}\;\;.
\] 
of $\mathcal H_{\Sq}(\A)$ which is homotopy cartesian.
If the morphism $j$ is a homotopy equivalence, then so is the morphism $g$.
\end{lemma}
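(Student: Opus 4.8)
The plan is to test the conclusion against all corepresentable functors and reduce everything to a statement about homotopy fibres of complexes of $k$-modules. Since both the hypothesis and the conclusion are insensitive to enlarging $\A$ by finite direct sums — being homotopy cartesian is detected on the objects of $\A$ (as $\tau_{\leq 0}\RHom(-,-)$ turns finite sums into products and preserves summands, cf.~Remark~\ref{subcategory} and Definition~\ref{maindef}), while $H^0(\A)$ is a full subcategory of $H^0(\add\A)$ and hence reflects isomorphisms between objects of $\A$ — I would first assume without loss of generality that $\A$ is additive. Then the homotopy square $X$ is homotopy cartesian precisely when the $3$-term h-complex
\[
\begin{tikzcd}
A\ar[r,"{[}g{,}f{]}^{\intercal}"]\ar[rr,bend right=8ex,"h"swap]&A'\oplus B\ar[r,"{[}f'{,}-j{]}"]&B'
\end{tikzcd}
\]
is homotopy left exact, in the sense recalled after Lemma~\ref{squareepivalence}.

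Next I would apply the dg functor $\A(P,-)$ for an arbitrary object $P\in\A$. By Lemma~\ref{lem:3termhcomplex}, homotopy left exactness says exactly that the canonical comparison map
\[
\alpha_P:\A(P,A)\longrightarrow \hker\bigl([\A(P,f'),-\A(P,j)]\bigr)
\]
induces an isomorphism on $H^n$ for every $n\leq 0$. Projecting the homotopy kernel onto its $\A(P,A')$-summand yields a chain map $q_P$ with $q_P\circ\alpha_P=\A(P,g)$, so everything reduces to understanding $q_P$.

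The key homological input is that $q_P$ is a quasi-isomorphism. Indeed, $\hker([\A(P,f'),-\A(P,j)])$ is the homotopy fibre product of the cospan $\A(P,A')\xrightarrow{\A(P,f')}\A(P,B')\xleftarrow{\A(P,j)}\A(P,B)$; since $j$ is a homotopy equivalence the map $\A(P,j)$ is a homotopy equivalence of complexes, and the cone of $q_P$ is isomorphic to $\Cone(\A(P,j))$, which is acyclic. Combining the two facts, $\A(P,g)=q_P\circ\alpha_P$ induces an isomorphism on $H^0$ (in fact on all $H^n$ with $n\leq 0$). As this holds for every $P$, the Yoneda lemma in $H^0(\A)$ shows that $g$ is invertible in $H^0(\A)$, i.e.~a homotopy equivalence.

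The only genuinely load-bearing points are the behaviour of the homotopy fibre under a quasi-isomorphism in one leg and the identification $q_P\circ\alpha_P=\A(P,g)$; once these are secured the remainder is bookkeeping with the truncation $\tau_{\leq 0}$, and it is reassuring that only degree-$0$ cohomology is needed to detect a homotopy equivalence. An alternative, computation-free route is worth noting: replacing the leg $j$ by $\Id_{B'}$ exhibits an isomorphism in $\mathcal H_{\Cosp}(\A)$ between the cospan underlying $X$ and the cospan $A'\xrightarrow{f'}B'\xleftarrow{\Id}B'$, whose evident homotopy pullback has left leg $\Id_{A'}$; the uniqueness of homotopy pullbacks (Proposition~\ref{pullbackunique}) then transports this identity to $g$ up to homotopy equivalence, again forcing $g$ to be a homotopy equivalence by Proposition~\ref{termwiseequivalence}.
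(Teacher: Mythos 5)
Your proof is correct. There is nothing in the paper to compare it against line by line: the lemma is stated right after the remark on the diagram functor $\Dia$ and is explicitly left unproved (``It is straightforward to verify the following lemma''), so your write-up supplies precisely the definitional unwinding the author had in mind — test homotopy cartesianness against each corepresentable $\A(P,-)$ via Definition~\ref{maindef} and Lemma~\ref{lem:3termhcomplex}, note that a homotopy equivalence $j$ makes the projection $q_P$ from the homotopy fibre product onto $\A(P,A')$ a quasi-isomorphism, factor $\A(P,g)=q_P\circ\alpha_P$, and finish by Yoneda in $H^0(\A)$.

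Two minor points, neither of which is a gap. First, the reduction to an additive $\A$ is harmless but avoidable: Definition~\ref{maindef} already forms $\Sigma^{-1}\Cone\bigl((-j^{\wedge},f'^{\wedge})\bigr)$ inside $\C(\A)$, where direct sums exist, so the same computation runs without the 3-term h-complex criterion that requires $A'\oplus B\in\A$ (you only invoke additivity to quote the criterion stated after Lemma~\ref{squareepivalence}, which is legitimate). Second, the assertion that $\Cone(q_P)$ \emph{is isomorphic to} $\Cone(\A(P,j))$ is slightly imprecise: what one gets directly is a degreewise split short exact sequence
\[
0\rightarrow \Sigma^{-1}\Cone\bigl(-\A(P,j)\bigr)\rightarrow \hker(u)\xrightarrow{\;q_P\;}\A(P,A')\rightarrow 0,
\]
so $\Cone(q_P)$ agrees with $\Cone(\A(P,j))$ only up to a contractible summand (and a sign); but acyclicity of the kernel is all that is needed, so the conclusion that $q_P$ is a quasi-isomorphism stands. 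Your alternative, computation-free route — replace the leg $j$ by $\Id_{B'}$, use that the resulting cospans are isomorphic, and transport along the uniqueness of homotopy pullbacks (Propositions~\ref{pullbackunique} and~\ref{termwiseequivalence}) — is also valid, and is arguably closer in spirit to how the lemma is used later in the paper.
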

\section{Exact dg categories}\label{sec:exactdgcategory}
In this section, we introduce the notion of exact structure on an additive dg category, 
based on the notion of homotopy short exact sequence, cf.~Definition~\ref{exactdgstructure}. 
We then establish that for an exact dg category $(\A,\mathcal S)$, there exists a canonical extriangulated structure $(H^0(\A),\mathbb E,\mathfrak s)$ on $H^0(\A)$, cf.~Theorem~\ref{thm:extriangulatedstructure}. 
Furthermore, we demonstrate, in Theorem~\ref{bijectionstructures}, the existence of a canonical bijection between the lattice of exact substructures of $(\A,\mathcal S)$ and the lattice of closed subbifunctors of $\mathbb E$.

\subsection{Exact dg structure}
Let $\A$ be a dg category over $k$ which is {\em additive}, i.e.~$H^0{\A}$ is additive. 
For an object $f:A\rightarrow B$ in $H^0(\Mor(\A))$, we denote by $[f]$ its isomorphism class and by $\overline{f}$ the corresponding morphism in $H^0(\A)$, i.e.~the object $\Dia(f)$ in $\Mor(H^0(\A))$. 
Recall that we have an epivalence $\Dia: H^0(\Mor(\A))\rightarrow \Mor(H^0(\A))$.
Therefore, if a property of objects in $H^0(\Mor(\A))$ is stable under isomorphisms, we will also say that the corresponding objects in $\Mor(H^0(\A))$ have the same property.
Note that if the object $f:A\rightarrow B$ is isomorphic to $f':A'\rightarrow B'$ and $g:B\rightarrow C$ is isomorphic to $g':B'\rightarrow C'$ in $H^0(\Mor(\A))$, it is not necessarily true that $gf$
is isomorphic to $g'f'$. 
But this will not cause confusion because in what follows, we consider objects in $H^0(\Mor(\A))$ up to isomorphism. 
For example, when we say ``compositions of deflations are deflations", 
it means that $gf$ is a deflation whenever $g$ and $f$ are deflations and then $g'f'$ is also a deflation, 
although $g'f'$ and $gf$ may not be isomorphic.
\begin{definition}\label{exactdgstructure}
An {\em exact structure} on $\A$ is a class $\mathcal{S}\subseteq \mathcal H_{3t}(\A)$ stable under isomorphisms, consisting of homotopy short exact sequences (called {\em conflations})
\[
\begin{tikzcd}
A\ar[r, tail,"i"]\ar[rr,bend right=8ex,"h"swap]&B\ar[r,two heads, "p"]&C\\
\end{tikzcd} 
\]
where $i$ is called an {\em inflation} and $p$ is called a {\em deflation}, such that the following axioms are satisfied:
\begin{itemize}
\item[Ex0]$\Id_{0}$ is a deflation.
\item[{Ex}1]Compositions of deflations are deflations.
\item[{Ex}2]Given a deflation $p:B\rightarrow C$ and any map $c: C'\rightarrow C$ in $Z^0(\A)$, the cospan 
$
\begin{tikzcd}
B\ar[r,"p", two heads]&C&C'\ar[l,"c"swap]
\end{tikzcd}
$
admits a homotopy pullback 
\[
\begin{tikzcd} 
{B'}\ar[r,"{p'}",two heads]\ar[d,"{b}"swap]\ar[rd,"s"blue,blue]&{C'}\ar[d,"{c}"]\\
{B}\ar[r,"{p}"swap,two heads]&{C}
\end{tikzcd}
\]
and ${p'}$ is also a deflation.
\item[$\Ex2^{op}$]Given an inflation $i: A\rightarrow B$ and any map $a:A\rightarrow A'$ in $Z^0(\A)$, the span
$
\begin{tikzcd}
A'&A\ar[r,tail,"i"]\ar[l,"a"swap]&B
\end{tikzcd}
$
admits a homotopy pushout 
\[
\begin{tikzcd}
{A}\ar[r,"{i}",tail]\ar[d,"{a}"swap]\ar[rd,"s"blue,blue]&{B}\ar[d,"{j}"]\\
{A'}\ar[r,"{i'}"swap,tail]&{B'}
\end{tikzcd}
\]
and ${i'}$ is also an inflation.
\end{itemize}
We call $(\A,\mathcal {S})$, or simply $\A$, an {\em exact dg category}.
\end{definition} 
\begin{remark}
Let us emphasize that our notion of exact dg category is completely different from Positselski's notion
of exact DG-category \cite{Positselski21}. For example, an exact structure in our sense can be
transported along a quasi-equivalence, cf.  Remark~\ref{truncationexactdgstructure} b), which is not at all the case for exact structures in the sense of Positselski. His principal motivation is to axiomatise
situations where we have a {\em strongly pretriangulated} dg category $\A$ whose category $Z^0(\A)$
is moreover endowed with a Quillen exact structure (for example the category of all 
complexes with components in a given Quillen exact category). By contrast, we aim at axiomatising
a class of {\em not necessarily pretriangulated} dg categories endowed with additional
structure.
\end{remark}
\begin{definition}\label{def:exactmorphism}
Let $(\A,\mathcal S)$ and $(\A',\mathcal S')$ be exact dg categories.
A morphism $F:\A\rightarrow \A'$ in $\Hqe$ is {\em exact} if the induced functor $\mathcal H_{3t}(\A)\rightarrow \mathcal H_{3t}(\A')$ sends objects in $\mathcal S$ to objects in $\mathcal S'$.
We denote by $\Hqe_{ex}(\A,\A')$ the subset of $\Hqe(\A,\A')$ consisiting of exact morphisms.
\end{definition}
\begin{remark}
Note that the class $\mathcal S\subseteq \mathcal H_{3t}(\A)$ is stable under isomorphisms. 
Let $\overline{\imath}$ be a morphism in $H^0(\A)$. 
If a representative $i$ of $\overline{\imath}$ is an inflation, then any representative in 
$\overline{\imath}$ is also an inflation.
Thus, being an inflation is a property of the morphism $\overline{\imath}$.
We have an even stronger property: If we view $\ol{\imath}:A_0\rightarrow A_1$ as an 
object in $\Mor(H^0(\A))$, then being an inflation is stable under isomorphisms in $\Mor(H^0(\A))$, cf.~Lemma \ref{Mor(A)and3term}.

In the setting of Axiom ${\Ex}2$, since the homotopy pullback is unique up to a unique isomorphism 
in $\mathcal H_{3t}(\A)$, by Lemma \ref{epi}, the object ${j'}$ is unique up to a canonical isomorphism
in $H^0(\Mor(\A))$.
\end{remark}
\begin{remark}\label{truncationexactdgstructure}Let $\A$ and $\A'$ be additive dg categories.
\begin{itemize}
\item[a)] By Lemma \ref{truncationhomotopycartesian} and its dual, the exact dg structures on $\A$ are in bijection with the exact structures on $\tau_{\leq 0}\A$.
\item[b)] Let $F:\A\rightarrow \A'$ be a quasi-equivalence of dg categories.
Then $F$ induces an equivalence of categories $\mathcal H_{3t}(\A)\iso\mathcal H_{3t}(\A')$ which 
preserves and reflects the property of being homotopy short exact.
Thus, the quasi-equivalence $F$ induces a bijection between the class of exact dg structures 
on $\A$ and that on $\A'$. 

For two objects $C$ and $A$ in $\A$, consider the set of `equivalence classes of extensions'
$\mathbb E(C,A)$ (resp.~$\mathbb E'(FC,FA)$), to be defined in Definition \ref{equivalencerelation}, which is associated with the exact structure on $\A$ (resp.~$\A'$). 
The quasi-equivalence $F$ induces a bijection between $\mathbb E(C,A)$ and $\mathbb E'(C,A)$, cf.~Lemma \ref{quasiequivalencebifunctor}. 
\item[c)] Let $\A$ be an exact dg category and $\A\rightarrow \pretr(\A)$ be the inclusion of $\A$ into its pretriangulated hull.
Let $\A'$ be the additive closure of $\A$ in $\pretr(\A)$.
Since $H^0(\A)$ is an additive category, the inclusion of $\A\rightarrow \A'$ is a quasi-equivalence.
By b), we may replace $\A$ by $\A'$ and assume that the dg category $\A$ is such that $Z^0(\A)$ is an additive category.

\end{itemize}
\end{remark}
Note that an object $f:A\rightarrow B$ in $H^0(\Mor(\A))$ is a deflation (resp. an inflation) if and only if it admits a homotopy kernel (resp. homotopy cokernel) (cf.~Definition \ref{def:homotopykernel}) which is a conflation. Observe that homotopy equivalences are both inflations and deflations by axioms 
Ex0, {Ex}2 and {Ex}$2^{op}$.

\begin{example}
Let $\A$ be an additive category which we consider as a dg category concentrated in degree 0. 

Let $A$, $B$ and $C$ be objects in $\A$ and consider a 3-term h-complex $X$ in $\A$
\[
\begin{tikzcd}
A\ar[rr,"h"swap,bend right=8ex]\ar[r,"f"]&B\ar[r,"j"]&C.
\end{tikzcd} 
\]
Since $\A$ is concentrated in degree zero, the homotopy $h$ is zero. 
By Example \ref{ordinary}, the 3-term h-complex $X$ is a homotopy short exact sequence if and only if the corresponding sequence $0\rightarrow A\xrightarrow {f}B\xrightarrow{j}C\rightarrow 0$ is a kernel-cokernel pair in the additive category $\A$. 
Also, two kernel-cokernel pairs are isomorphic (resp.~equivalent) if and only if the corresponding homotopy short exact sequences are isomorphic (resp.~equivalent). 

By \cite[Appendix A]{Keller90}, the axioms of a Quillen exact structure on $\A$ correspond to the axioms of an exact dg structure on $\A$.
Therefore, to endow an additive category $\A$ with a Quillen exact structure is the same as to endow $\A$ with an exact dg structure.
\end{example}
\begin{example}\label{exm:pretriangulated}
 Let $\A$ be a pretriangulated dg category.
 By Remark \ref{truncationexactdgstructure}, we may assume that the dg category $\A$ is strictly pretriangulated.
 Then each morphism $f:A\rightarrow B$ admits a homotopy cokernel
 \[
 \begin{tikzcd}
 A\ar[r,"f"] \ar[rr, bend right=8ex,"h"swap]&B\ar[r,"j"]&\Cone(f).
 \end{tikzcd}
 \]
 Dually, each morphism admits a homotopy kernel. 
 Also, a $3$-term homotopy complex in $\A$ is homotopy left exact if and only if it is homotopy right exact.
 It is not hard to check that the class of all homotopy short exact sequences in $\A$ defines an exact structure on $\A$.
 If not stated otherwise, we always consider this maximal exact structure on a pretriangulated dg category.
\end{example}
\begin{example-definition}\label{exactdgextensionclosed}
A full dg subcategory $\A'$ of an exact dg category $\A$ is {\em extension-closed} provided that for each conflation in $\A$
\[
\begin{tikzcd}
A\ar[r,"f"]\ar[rr,bend right=8ex,"h"swap]&B\ar[r,"j"]&C,
\end{tikzcd}
\]
if $A$ and $C$ belong to $\A'$, then so does $B$.

Let $\A'$ be an extension-closed subcategory of an exact dg category $(\A,\mathcal S)$.
Let $X$ be a conflation in $\A$ with all terms in $\A'$.
Then $X$ can be seen as a 3-term h-complex in $\A'$.
By Remark \ref{subcategory}, the object $X$ is a homotopy short exact sequence in $\A'$.

We have an inclusion functor $\mathcal H_{3t}(\A')\hookrightarrow \mathcal H_{3t}(\A)$.
Let $\mathcal S'$ be the class of objects in $\mathcal H_{3t}(\A')$ whose image in $\mathcal H_{3t}(\A)$ belongs to $\mathcal S$. 
We show that the class $\mathcal S'$ defines an exact dg structure on $\A'$.
Axiom ${\Ex}0$ is straightforward to check. 
Axiom ${\Ex}1$ follows from Lemma~\ref{deflationcomposition} and the fact that $\A'$ is extension-closed in $\A$.
Consider the span $S$ of a morphism in $\A'$ along an inflation in $\A'$.
It admits a homotopy pushout in $\A$, and by Proposition \ref{cons}, 
the homotopy pushout $X$ is an extension of objects in $\A'$ and hence remains an object in $\A'$. 
Again by Proposition \ref{cons}, the pushout of the inflation in $\A'$ is an inflation in $\A$ and 
the conflation that this inflation belongs to is in $\mathcal S'$.
Thus Axiom $\Ex2^{op}$ is satisfied.
Dually, one checks Axiom $\Ex2$.
\end{example-definition}

\begin{proposition}\label{property}
Suppose $(\A, \mathcal S)$ is an exact dg category. 
The following statements hold:
\begin{itemize}
\item[a)] The diagram
\[
\begin{tikzcd}
A\ar[r,"{[}1\ 0{]}^{\intercal}"]\ar[rr,"0"swap,bend right =8ex]&A\oplus B\ar[r,"{[}0\ 1{]}"]\ar[r]&B\\
\end{tikzcd}
\]
 is a conflation for $A,B\in \A$.
\item[b)]If a morphism $g:B\rightarrow C$ in $Z^0(\A)$ admits a homotopy kernel and for some $h:B'\rightarrow B$ the composition $gh:B'\rightarrow C$ is a deflation, then $g$ is a deflation.
\item[b)$^{\text{op}}$]If a morphism $f:A\rightarrow B$ admits a homotopy cokernel and for some $e:B\rightarrow B'$ the composition $ef:A\rightarrow B'$ is an inflation, then $f$ is an inflation.
\item[c)]\label{direct}The direct sum of two conflations is a conflation.
\item[d)]Axiom ${\Ex1}^{op}$ holds.
\end{itemize}
\end{proposition}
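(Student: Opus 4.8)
The plan is to deduce a)--d) from the axioms Ex0--$\Ex2^{op}$ together with the diagram lemmas of Section~\ref{sec:homotopyshortexactsequence}, treating a) and c) as direct applications, b) as the substantial point (it is Quillen's \emph{obscure axiom}), and d) by dualizing Lemma~\ref{deflationcomposition}. By Remark~\ref{truncationexactdgstructure} c) I may assume $Z^0(\A)$ is additive, so that a zero object and biproducts are available. For a): since homotopy equivalences are deflations, $\Id_B$ is a deflation, and its homotopy kernel $0\to B\xrightarrow{\Id}B$ exhibits $0\to B$ as an inflation; applying $\Ex2^{op}$ to this inflation and to the map $0\to A$ yields a homotopy pushout whose middle term is the biproduct $A\oplus B$, and the resulting conflation is exactly $A\xrightarrow{[1\ 0]^{\intercal}}A\oplus B\xrightarrow{[0\ 1]}B$. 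For c): given conflations $A_i\to B_i\xrightarrow{p_i}C_i$, I observe that $p_1\oplus\Id_{B_2}$ is the homotopy pullback of the deflation $p_1$ along the projection $C_1\oplus B_2\to C_1$, hence a deflation by Ex2, and likewise $\Id_{C_1}\oplus p_2$ is a deflation; their composite $p_1\oplus p_2$ is a deflation by Ex1. Since direct sums of homotopy cartesian squares remain homotopy cartesian, the direct sum of the two kernel sequences is homotopy left exact with third map $p_1\oplus p_2$, so by uniqueness of homotopy kernels it is the homotopy kernel of $p_1\oplus p_2$, which is therefore a conflation.

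Statement b) is the main obstacle. Let $g\colon B\to C$ have homotopy kernel $N\to B\xrightarrow{g}C$ and let $h\colon B'\to B$ be such that $gh$ is a deflation. First I pull back the deflation $gh$ along $g$ (Ex2) to obtain a homotopy cartesian square with $\pi\colon P\to B$ a deflation and a second projection $\pi'\colon P\to B'$; the map induced by $(h,\Id_{B'})$ is, by the universal property of homotopy pullbacks (Remark~\ref{pullbackuniversal}), a section of $\pi'$. As pullback preserves homotopy kernels, the homotopy kernel of $\pi'$ is again $N$, so the homotopy-split sequence $N\to P\to B'$ is isomorphic in $\mathcal H_{3t}(\A)$ to a split sequence and hence, by a), a conflation; thus $\pi'$ is a deflation and $g\pi=gh\pi'$ is a deflation by Ex1. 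It remains to \emph{cancel} $\pi$: writing $M\to P\xrightarrow{g\pi}C$ for the kernel conflation of $g\pi$, the composite $M\to P\xrightarrow{\pi}B$ satisfies $g(\pi\iota)=0$ and so factors through $N=\ker g$ via some $\mu\colon M\to N$. Applying $\Ex2^{op}$ to the inflation $M\to P$ and the map $\mu$ produces a conflation $N\to B''\to C$ together with a comparison morphism $B''\to B$ restricting to $\Id_N$ and $\Id_C$, and a five-lemma argument applied to the long exact sequences obtained from $\tau_{\leq 0}\RHom(A^{\wedge},-)$ is meant to show that this comparison is a homotopy equivalence; consequently $N\to B\xrightarrow{g}C$ is a conflation and $g$ is a deflation.

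Statement b)$^{op}$ is b) read in the opposite exact dg category via the duality functor $\RHom(-,\A)$ of Subsection~\ref{subsec:homotopydiagrams}. For d) I apply the dual of Lemma~\ref{deflationcomposition}: given inflations $i\colon A\to B$ and $i'\colon B\to D$, their homotopy cokernels are homotopy short exact, the relevant span (the cokernel deflation of $i$ together with $i'$) admits a homotopy pushout by $\Ex2^{op}$, and this pushout is homotopy bicartesian by Proposition~\ref{cons}; the dual lemma then produces a homotopy short exact cokernel sequence for the composite $i'i$, so $i'i$ is an inflation. Here one must also track that the sequences built through the pushouts and pullbacks land in $\mathcal S$, which is guaranteed because $\Ex2$ and $\Ex2^{op}$ keep the conflations they produce inside $\mathcal S$.

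The genuinely delicate step is the cancellation inside b): it is precisely the content of the redundancy of Quillen's obscure axiom, and the crux is to verify that the pushout along $\mu$ reconstructs the homotopy kernel sequence of $g$, i.e.\ that the comparison morphism $B''\to B$ is an equivalence. The subtle part of this verification is the degree-zero behaviour of the comparison, which is exactly where the hypotheses manufactured in the first step --- the deflation $\pi$ and the splitting of $\pi'$ --- are needed; away from degree zero the identification is a formal consequence of $N$ being the common homotopy kernel. Everything else reduces to bookkeeping with the pasting lemmas.
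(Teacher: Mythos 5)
Your steps a) and c) are correct (a) is even a pleasant shortcut: pushing out the inflation $0\to B$ along $0\to A$ gives the split conflation directly, whereas the paper deduces a) from b)), and the first half of b) is also sound: the pullback $P$ of $gh$ along $g$ has $\pi\colon P\to B$ a deflation by $\Ex2$, the section of $\pi'$ forces the kernel sequence $N\to P\to B'$ to be isomorphic to a split one, so $\pi'$ is a deflation and $g\pi\simeq gh\pi'$ is a deflation by $\Ex1$. The genuine gap is the ``cancellation'' of $\pi$. The five-lemma argument you invoke, applied to the long exact sequences coming from $\tau_{\leq 0}\RHom(A^{\wedge},-)$, does give that the comparison $\theta\colon B''\to B$ induces isomorphisms $H^n\A(A,B'')\to H^n\A(A,B)$ for $n<0$ and an injection for $n=0$; but surjectivity in degree $0$ would require injectivity of the induced map on the degree-one cohomology of the cones, which is exactly the range that homotopy left exactness does not control. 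Unwinding your construction (using $P\simeq N\oplus B'$), the statement ``$\theta$ is a homotopy equivalence'' is equivalent to saying that the square with vertices $\ker(gh)$, $B'$, $N=\ker(g)$, $B$ (horizontal maps the kernel inclusions, vertical maps $\nu$ and $h$) is homotopy \emph{cocartesian}. By pasting (Corollary~\ref{Comp}) this square is homotopy \emph{cartesian}, but cartesian does not imply cocartesian in an exact dg category --- that implication is essentially the statement b) you are proving, so the argument is circular at its core, and your closing remark that the hypotheses ``are needed'' at degree zero is an acknowledgement of the gap, not a proof. The paper's proof is engineered to avoid precisely this: Claim~1 (from $\Ex2$ and $\Ex2^{op}$) shows $[g,gh]\colon B\oplus B'\to C$ is a deflation, the shearing automorphism $\begin{bmatrix}1&-h\\0&1\end{bmatrix}$ turns it into the deflation $[g,0]$, whose kernel conflation is $K\oplus B'\to B\oplus B'\to C$; one then pushes out along the projection $K\oplus B'\to K$, and this square is a direct sum of two trivially bicartesian squares, so the identification of the pushout with $B$ is free and $\Ex2^{op}$ (via Proposition~\ref{cons}) concludes. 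The point is that the summand being killed maps by the identity onto a direct summand of the middle term; in your construction the killed summand $\ker(gh)$ maps by a non-split inflation, which is why your pushout cannot be computed.

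Statement d) has a second gap of the same nature: the dual of Lemma~\ref{deflationcomposition} only produces a homotopy \emph{short exact} cokernel sequence $A\to D\to E$ for $i'i$, and membership in $\mathcal S$ is not ``guaranteed because $\Ex2$ and $\Ex2^{op}$ keep the conflations they produce inside $\mathcal S$'': those axioms produce the conflation of the pushed-out inflation $C\to E$, not the sequence $A\to D\to E$. One must show that the map $D\to E$ is a deflation, and the paper does this by exhibiting $j'[1,f']$ as (homotopic to) a composition of deflations and then invoking b) --- so d) genuinely depends on b) and cannot be obtained from the pushout lemma alone. Finally, deducing b)$^{op}$ ``by duality'' before d) is established is also circular, since $\A^{op}$ is only known to be an exact dg category once $\Ex1^{op}$ holds; the paper accordingly proves d) first and only then obtains b)$^{op}$ by dualizing the proof of b).
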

\begin{proof}Suppose we are in the situation of Axiom {Ex}2. 

\bigskip\noindent
{\em Claim 1:} $B\oplus C'\xrightarrow{[j,c]} C$ is a deflation.

We have a conflation $X\in\mathcal H_{3t}(\A)$ of the form
\[
\begin{tikzcd}
A\ar[r,"f"]\ar[rr,bend right=8ex,"h"swap]&B\ar[r,"j"]&C\mathrlap{.}\\
\end{tikzcd}
\]
By Axiom {Ex}2, the cospan $B\xrightarrow{j}C\xleftarrow{c}C'$
admits a homotopy pullback $Y$:
\[
\begin{tikzcd} 
 {B'}\ar[r," {j'}"]\ar[d," {b}"swap]\ar[rd,"s_2"red,red]& {C'}\ar[d," {c}"]\\
 {B}\ar[r," {j}"swap]& {C}
\end{tikzcd}\;\;.  
\]
By the dual of Proposition \ref{cons}, we have a morphism $\mu:X'\rightarrow X$ in $\mathcal H_{3t}(\A)$
\begin{equation}
\begin{tikzcd}\label{dia:diagramlemmaexact}
&A\ar[r,"f'"]\ar[d,equal]\ar[rd,"s_1"red,swap ,red]\ar[rrd,"t"blue,blue]\ar[rr,bend left = 8ex,"h'"]&B'\ar[d,"b"swap]\ar[rd,"s_2"red,red]\ar[r,"j'"]&C'\ar[d,"c"]\\
&A\ar[r,"f"swap]\ar[rr,bend right = 8ex,"h"swap]&B\ar[r,"j"swap]&C
\end{tikzcd}
\end{equation}
where $X'$ is the 3-term h-complex in the first row.
By Axiom {Ex}2, the morphism ${j'}$ is also a deflation. 
So $X'$ is a conflation and in particular a homotopy short exact sequence. 
Hence, by the dual of Proposition~\ref{cons}, $Y$ is homotopy bicartesian.

Let $X''$ be the 3-term h-complex
\begin{equation}
\begin{tikzcd}\label{cplx:rightexact}
 {B'}\ar[r,"\begin{bmatrix} {j'}\\ {b}\end{bmatrix}"]\ar[rr,"s_2"swap,bend right=8ex]& {C'}\oplus  {B}\ar[r,"{[} {c}{,}-{j}{]}"]&C\\
\end{tikzcd}\;\;. 
\end{equation}
Then $X''$ is a homotopy short exact sequence. 

\bigskip\noindent
{\em Claim 2:} $X''$ is a conflation.

Consider the conflation $X$ and the object $f':A\rightarrow B'$ in $H^0(\Mor(\A))$.
The span  $S:B' \xleftarrow{f'}A\xrightarrow{f}B$ 
admits a homotopy pushout $Z$
\[
\begin{tikzcd}
A\ar[rd,"s_3"{red},red]\ar[r,"f"]\ar[d,"f'"swap]& B\ar[d,"h_1=\begin{bmatrix}0\\\Id\end{bmatrix}"]\\
B'\ar[r,"f''=\begin{bmatrix}j'\\b\end{bmatrix}"swap]&C'\oplus B
\end{tikzcd}
\] 
where $s_3=\begin{bmatrix}-h'\\s_1\end{bmatrix}$. 
By Axiom {Ex}$2^{op}$, the morphism $B'\xrightarrow{[b,j']^{\intercal}} B\oplus C'$ is an inflation. 
So by Proposition \ref{cons}, there exists a morphism $X\rightarrow \tilde{X''}$ in $\mathcal H_{3t}(\A)$
\[
\begin{tikzcd}
&A\ar[r,"f"]\ar[d,"f'"swap]\ar[rd,"s_3"red,swap ,red]\ar[rrd,"t"blue,blue]\ar[rr,bend left = 8ex,"h"]&B\ar[d,"h_1"swap]\ar[rd,"s_2"red,red]\ar[r,"j"]&C\ar[d,equal]\\
&B'\ar[r,"f''"swap]\ar[rr,bend right = 8ex,"h''"swap]&C'\oplus B\ar[r,"j''"swap]&C
\end{tikzcd}
\]
where $\tilde{X''}$ is the 3-term h-complex in the second row.
By the uniqueness of homotopy cokernels, 
the homotopy right exact sequence $\tilde{X''}$ is isomorphic to $X''$.
By Axiom Ex$2^{op}$, $\tilde{X''}$ is a conflation. 
This proves Claim 2.
Then the morphism $B\oplus C'\xrightarrow{[j,c]} C$ is a deflation.
This proves Claim 1. 
Note that during the proof we only use Axioms {Ex}2 and {Ex}$2^{op}$, 
so the dual of the claims also holds.

Now we are ready to prove $\mathrm{b})$.  
Suppose we have a homotopy left exact sequence 
\[
\begin{tikzcd}
K\ar[r,"f"] \ar[rr,"h"swap,bend right=8ex]&B\ar[r,"g"]&C
\end{tikzcd}
\]
From the cospan 
$
\begin{tikzcd}
B'\ar[r,"gh", two heads]&C&B\ar[l,"g"swap]
\end{tikzcd}
$,
we know that the morphism $B\oplus B'\xrightarrow{[g,gh]} C$ is a deflation by claim 1. 
Then the morphism
\[
{[}g\ 0{]}:B\oplus B'\xrightarrow{\begin{bmatrix}1&-h\\0&1\end{bmatrix}} B\oplus B'\xrightarrow{\begin{bmatrix}g,gh\end{bmatrix}} C
\]
is also a deflation by Axiom {Ex}1. 
The 3-term h-complex
\[
\begin{tikzcd}
K\oplus B'\ar[r,"u"]\ar[rr,"{[}h{,}\;0{]}"swap,bend right=8ex]&B\oplus B'\ar[r,"v"]&C\mathrlap{,}\\
\end{tikzcd}
\]
where $u=\begin{bmatrix}f&0\\0&\Id_{B'}\end{bmatrix}$ and $v={[}g{,}\;0{]}$, is homotopy left exact, as the direct sum of a homotopy left exact sequence and a trivial 3-term h-complex.
Therefore it is a conflation. 
We have the following morphism of homotopy left exact sequences 
\[
\begin{tikzcd}
K\oplus B'\ar[rrd,"0"{blue,near end},blue,bend right=2ex]\ar[rd,"0"{red,swap},red,bend right=2ex]\ar[r,"u"]\ar[d,"{[}1{,} 0{]}"swap]\ar[rr,bend left=8ex,"{[}h{,}\;0{]}"] &B\oplus B'\ar[rd,"0"red,red]\ar[d,"{[}1{,}0{]}"swap]\ar[r,"v"]&C\ar[d,equal]\\
K\ar[r,"f"swap] \ar[rr,"h"swap,bend right=8ex]&B\ar[r,"g"swap]&C
\end{tikzcd} 
\]
where the left square is homotopy bicartesian, we infer that the morphism $g:B\rightarrow C$ is a deflation by {Ex}$2^{op}$. This proves Statement b).

Since the composition 
\[
\Id_B:B\xrightarrow{\begin{bmatrix}0\\1\end{bmatrix}} A\oplus B\xrightarrow{[0,\;1]} B
\] 
is a deflation and the morphism $A\oplus B\xrightarrow{[0,\;1]} B$ admits a homotopy kernel,
it is a deflation by Statement b). 
So Statement a) follows.

Let $X_i$, $i=1$, $2$, be a conflation of the form
\[
\begin{tikzcd}
A_i\ar[r,"f_i"]\ar[rr,bend right=8ex,"h_i"swap]&B_i\ar[r,"j_i"]&C_i\mathrlap{\;.}
\end{tikzcd}
\]
Clearly the direct sum $X_1\oplus X_2$ is homotopy short exact and the morphism
\[
\begin{tikzcd}
B_1\oplus B_2\ar[r,"\begin{bmatrix}j_1\ 0\\0\ j_2\end{bmatrix}"]&C_1\oplus C_2 
\end{tikzcd} 
\]
can be written as a composition 
\[\begin{tikzcd}
B_1\oplus B_2\ar[r,"r=\begin{bmatrix}1\ 0\\0\ j_2\end{bmatrix}"]&B_1\oplus C_2\ar[r,"s=\begin{bmatrix}j_1\ 0\\0\ 1\end{bmatrix}"]&C_1\oplus C_2
\end{tikzcd}.
\] 
Observe that the morphism $B_1\oplus B_2\xrightarrow{r} B_1\oplus C_2$ is a homotopy pullback of the deflation $j_2:B_2\rightarrow C_2$ along $B_1\oplus C_2\rightarrow  C_2 $ and hence is a deflation. 
Similarly, the morphism $B_1\oplus C_2\xrightarrow{s} C_1\oplus C_2$ is also a deflation. 
So the composition $B_1\oplus B_2\rightarrow C_1\oplus C_2$ is a deflation. 
This proves Statement c).

It remains to prove Axiom ${\Ex1}^{op}$ and then one proves b)$^{op}$ dually. 
Let $f:A\rightarrow B$ be an inflation with the homotopy cokernel given by
\[
\begin{tikzcd}
 {A}\ar[r," {f}"]\ar[rr,bend right=8ex,"h"swap]& {B}\ar[r,"j"]&C
\end{tikzcd}
\] 
Let $f':B\rightarrow B'$ be another inflation. 
Our aim is to show that the morphism $f'f$ is also an inflation. 
By Axiom ${\Ex2}^{op}$, the span $C\xleftarrow{j}B\xrightarrow{f'}B'$
admits a homotopy pushout 
\[
\begin{tikzcd}
 {B}\ar[r," {f'}"]\ar[d," {j}"swap]\ar[rd,"s"red,red]& {B'}\ar[d," {j'}"]\\
 {C}\ar[r," {f''}"swap]& {C'}
\end{tikzcd}
\]
which is homotopy bicartesian. 
Thus by the dual of Claim 1, 
the morphism $[ {j'}, {f''}]: {B'}\oplus  {C}\rightarrow  {C'}$ is a deflation. 
By the dual of Corollary \ref{cok}, the morphism ${j'}$ admits a homotopy kernel which is isomorphic to $f'f:A\rightarrow B'$.
We have the following commutative diagram in $\A$
\[
\begin{tikzcd}
 {B'}\oplus  {B}\ar[rd,"{[}0{,}-s{]}"red,red]\ar[r,"\begin{bmatrix}1\ 0\\0\  {j}\end{bmatrix}"]\ar[d,"{[}1{,} {f'}{]}"swap]& {B'}\oplus  {C}\ar[d,"{[} {j'}{,} {f''}{]}"]\\
 {B'}\ar[r," {j'}"swap]& {C'}
\end{tikzcd}.
\]
By Axiom ${\Ex1}$, the morphism $[ {j'}, {f''}]\begin{bmatrix}1&0\\0&{j}\end{bmatrix}$ is a deflation.
By the above diagram, the morphism ${j'}[1, {f'}]$ is homotopic to it and hence is also a deflation. 
So by Statement b), the morphism ${j'}$ is a deflation. 
Hence the morphism $f'f$ is an inflation.
This proves Statement d).
\end{proof}

The following diagram lemma is useful. 
It is a direct consequence of Lemma \ref{univ} and Axiom $\Ex2$ (or $\Ex2^{op}$).
\begin{lemma}\label{fact}
Let $(\A,\mathcal S)$ be an exact dg category. 
Suppose we are given conflations $X$
\[
 \begin{tikzcd}
 A\ar[r,"f"]          \ar[rr, bend right=8ex,"h"swap]                  &B\ar[r,"j"]&C
\end{tikzcd}
\]
and $X'$
\[
 \begin{tikzcd}
 A'\ar[r,"f'"]          \ar[rr, bend right=8ex,"h'"swap]                  &B'\ar[r,"j'"]&C'
\end{tikzcd}.
\]
Let $\alpha:X\rightarrow X'$ be a morphism in $\mathcal H_{3t}(\A)$. 
Denote by $\overline a:A\rightarrow A'$ (resp.~$\overline c:C\rightarrow C'$) the restriction of $\alpha$ to $A$ (resp.~$C$). 
Then $\alpha$ can be written as a composition $X\xrightarrow{\beta} \tilde{X}\xrightarrow{\gamma} X'$, where $\tilde{X}$ is a conflation with ends $A'$ and $C$, and the morphism $\beta$ restricts to $\overline a:A\rightarrow A'$ and $\Id_{C}$, and the morphism $\gamma$ restricts to $\Id_{A'}$ and $\overline{c}:C\rightarrow C'$.
\end{lemma}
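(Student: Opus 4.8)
The plan is to realize the intermediate conflation $\tilde X$ as a homotopy pullback of $X'$ along $\overline c$ and then to obtain $\beta$ by lifting $\alpha$ through the associated homotopy cartesian square, so that the whole statement becomes a combination of Axiom~$\Ex2$, Corollary~\ref{cok} and Lemma~\ref{univ}.

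\emph{Construction of $\tilde X$ and $\gamma$.} First I would pick a representative $c\colon C\to C'$ of $\overline c$. Since $X'$ is a conflation, its deflation $j'\colon B'\to C'$ together with $c$ forms a cospan, which by Axiom~$\Ex2$ admits a homotopy pullback
\[
\begin{tikzcd}
\tilde B\ar[r,"j''"]\ar[d,"g"swap]\ar[rd,red,"\tilde{s}"]&C\ar[d,"c"]\\
B'\ar[r,"j'"swap]&C'
\end{tikzcd}
\]
in which $j''$ is again a deflation. By the dual of Corollary~\ref{cok}, the homotopy kernel of $j''$ is a $3$-term h-complex $\tilde X\colon A'\to\tilde B\xrightarrow{j''}C$ with the same left-hand term $A'$ as $X'$, and there is a morphism $\gamma\colon\tilde X\to X'$ in $\mathcal H_{3t}(\A)$ whose restriction $\Res'(\gamma)$ to the deflation part is the square above; in particular $\gamma$ restricts to $\Id_{A'}$ on $A'$ and to $\overline c$ on $C$. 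As $j''$ is a deflation, its homotopy kernel $\tilde X$ is a conflation, so $\tilde X$ is a conflation with ends $A'$ and $C$, as required.

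\emph{Construction of $\beta$.} Applying $\Res'$ to $\alpha$ gives a morphism $\Res'(\alpha)\colon (j\colon B\to C)\to(j'\colon B'\to C')$ in $H^0(\Mor(\A))$ whose target-component is $\overline c$. This is exactly the input of Lemma~\ref{univ}, applied to the homotopy cartesian square above read as the morphism from $j''$ to $j'$ (so that Lemma~\ref{univ}'s bottom map $k$ is $j'$ and its right-hand map is $\overline c$). It yields a morphism $\theta\colon (j\colon B\to C)\to(j''\colon\tilde B\to C)$ in $H^0(\Mor(\A))$ that restricts to $\Id_C$ and satisfies $\Res'(\gamma)\circ\theta=\Res'(\alpha)$. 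Since $\tilde X$ is homotopy left exact, the universal property of homotopy left exact sequences recalled before Lemma~\ref{strictmorphismlift} extends $\theta$ uniquely to a morphism $\beta\colon X\to\tilde X$ in $\mathcal H_{3t}(\A)$ with $\Res'(\beta)=\theta$; by construction $\beta$ restricts to $\Id_C$ on $C$.

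\emph{Verification.} It remains to check $\gamma\beta=\alpha$. Both sides are morphisms $X\to X'$, and functoriality of $\Res'$ gives $\Res'(\gamma\beta)=\Res'(\gamma)\circ\Res'(\beta)=\Res'(\gamma)\circ\theta=\Res'(\alpha)$. Since the target $X'$ is homotopy left exact, the \emph{uniqueness} clause of the universal property forces $\gamma\beta=\alpha$. Comparing the restrictions to the first term $A$, and using that $\gamma$ restricts to $\Id_{A'}$, we conclude that $\beta$ restricts to $\overline a$ on $A$, completing the factorization. The main obstacle is precisely this last point: one must keep the identifications in Lemma~\ref{univ} consistent with the pullback square, and then upgrade the level-of-$\Res'$ identity $\Res'(\gamma\beta)=\Res'(\alpha)$ to the honest equality $\gamma\beta=\alpha$ in $\mathcal H_{3t}(\A)$ via the uniqueness in the universal property of $X'$; the construction of $\tilde X$ and $\gamma$ is then a routine invocation of Axiom~$\Ex2$ and Corollary~\ref{cok}.
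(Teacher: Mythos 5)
Your proof is correct and is essentially the argument the paper intends: Lemma~\ref{fact} is stated there without proof as a ``direct consequence of Lemma~\ref{univ} and Axiom $\Ex2$ (or $\Ex2^{op}$)'', and your pullback-then-lift construction, combined with the uniqueness clause of the universal property of homotopy left exact sequences, is exactly that sketch written out. One citation should be adjusted, however: the dual of Corollary~\ref{cok} only makes $\Res'(\gamma)$ \emph{isomorphic} to the pullback square, so by itself it gives neither the claim that $\gamma$ restricts to $\overline c$ on $C$ nor the identity $\Res'(\gamma)\circ\theta=\Res'(\alpha)$ that your verification relies on; you should instead invoke the dual of Proposition~\ref{cons}, which produces $\gamma$ restricting \emph{exactly} to $\Id_{A'}$ and $\overline c$ and whose restriction $\Res'(\gamma)$ is a homotopy pullback of the cospan, after which your application of Lemma~\ref{univ}, the extension of $\theta$ to $\beta$, and the uniqueness argument go through verbatim.
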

\begin{remark}If we use the biadditive bifunctor $\mathbb E$ which is to be defined in \ref{bi}, then we have $[X']\in \mathbb E(C,A')$ and the morphism $\beta$ (resp. $\gamma$) shows that $[\tilde{X}]=\overline a_*[X]$ (resp. $[\tilde{X}]=\overline c^{*}[X']$).
\end{remark}
\begin{lemma}\label{lem:diagramlemmaexact}
Let $\A$ be an exact dg category. Consider a morphism of conflations given by diagram (\ref{dia:diagramlemma2})
\begin{equation}
\begin{tikzcd}\label{dia:diagramlemma2}
A\ar[r,tail,"f"]\ar[d,equal]\ar[rr,bend left=8ex,"h"]\ar[rd,red,"s_1"{swap}]\ar[rrd,blue,"t"{near end,description}]&B\ar[r,"j",two heads]\ar[d,"b"{near start,swap}]\ar[rd,red,"s_2"]&C\ar[d,"c"]\\
A\ar[r,tail,"f'"swap]\ar[rr,bend right=8ex,"h'"{swap}]&B'\ar[r,"j'"{swap},two heads]&C'
\end{tikzcd}.
\end{equation}
Then the diagram (\ref{dia:diagramlemma3})
\begin{equation}
\begin{tikzcd}\label{dia:diagramlemma3}
B\ar[r,"j",two heads]\ar[d,"b"{near start,swap}]\ar[rd,red,"s_2"]&C\ar[d,"c"]\\
B'\ar[r,"j'"{swap},two heads]&C'
\end{tikzcd}
\end{equation}
 is a homotopy pullback square, and hence the corresponding 3-term homotopy complex is a conflation, by Claim 1 in the proof of Proposition~\ref{property}.
\end{lemma}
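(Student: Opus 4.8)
The plan is to separate the two assertions. Granting that the square (\ref{dia:diagramlemma3}) is a homotopy pullback, its associated 3-term h-complex $B\to B'\oplus C\to C'$ is homotopy left exact; since $j'$ is a deflation and $c$ an arbitrary morphism, Claim~1 in the proof of Proposition~\ref{property} shows that $[j',c]\colon B'\oplus C\to C'$ is a deflation, so by uniqueness of homotopy kernels this homotopy left exact sequence is the homotopy kernel of a deflation, hence a conflation. Thus everything reduces to proving that (\ref{dia:diagramlemma3}) is a homotopy pullback. By Remark~\ref{truncationexactdgstructure}~a) I may assume $\A$ is connective, and by c) that $Z^0(\A)$ is additive.

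To produce the pullback I would first form, via Axiom $\Ex2$ applied to the deflation $j'$ and the morphism $c$, a genuine homotopy pullback $Y$ of the cospan $B'\xrightarrow{j'}C'\xleftarrow{c}C$, with apex $P$ and deflation $q\colon P\to C$; as in the proof of Proposition~\ref{property} (equivalently, the dual of Proposition~\ref{cons}) the homotopy kernel of $q$ is a conflation $W\colon A\to P\to C$ equipped with a morphism $W\to X'$ over $(\Id_A,c)$. The universal property of homotopy pullbacks (Lemma~\ref{univ}) turns (\ref{dia:diagramlemma3}) into a comparison $r\colon B\to P$ with $qr\simeq j$ and $pr\simeq b$, which extends (universal property of the homotopy left exact $W$) to a morphism $\beta\colon X\to W$ in $\mathcal H_{3t}(\A)$; since the composite $W\to X'$ recovers $\alpha$ and $\alpha$ restricts to $\Id_A$, the morphism $\beta$ restricts to $\Id_A$ and $\Id_C$ on the two ends. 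If $r$ is a homotopy equivalence then $\beta$ is an isomorphism by Proposition~\ref{termwiseequivalence}, whence (\ref{dia:diagramlemma3}) is isomorphic to $Y$ and is therefore a homotopy pullback.

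The crux is to show that this comparison $r$ is a homotopy equivalence, and I expect this short-five-lemma phenomenon to be the main obstacle: $r$ is readily seen to be homotopy monic and homotopy epic, but in the merely additive category $H^0(\A)$ this does not suffice for invertibility. I would argue structurally instead. Applying Proposition~\ref{push} to $\beta$ (whose cokernel-restriction $\Id_C$ is a homotopy equivalence) shows that the inflation-restriction of $\beta$ is homotopy cartesian, so $\hker(r)\simeq 0$ and in particular $r$ admits a homotopy kernel. Next form, by $\Ex2$, the homotopy pullback $N$ of the two deflations $q$ and $j$; both projections $\pi_1,\pi_2$ are then deflations, and $(r,\Id_B)\colon B\to N$ is a section of $\pi_2$, so the conflation $A\to N\xrightarrow{\pi_2}B$ splits and identifies $\pi_1$ with $[f_W,r]=r\circ[f,\Id_B]\colon A\oplus B\to P$, where $f$, $f_W$ are the inflations of $X$, $W$ and $rf\simeq f_W$. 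Since $\pi_1$ is a deflation and $r$ has a homotopy kernel, Proposition~\ref{property}~b) forces $r$ to be a deflation; being a deflation with vanishing homotopy kernel, $r$ is the homotopy cokernel of $0\to B$, i.e. a homotopy equivalence. This closes the argument.
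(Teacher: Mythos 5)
Your proposal is correct in its overall strategy, but it takes a genuinely different route from the paper's. The paper proves the lemma in one line, by transporting Claim~2 from the proof of Proposition~\ref{property}; that argument is entirely pushout-sided. One exhibits the vertex $C\oplus B'$ as an explicit homotopy pushout of the span of inflations $B\xleftarrow{f}A\xrightarrow{f'}B'$ (its associated $3$-term h-complex is, up to the isomorphism coming from the homotopy $s_1$, the direct sum of $X$ with a trivially right exact complex), invokes Axiom $\Ex2^{op}$ to see that the pushed-out morphism $[j,b]^{\intercal}\colon B\to C\oplus B'$ is an inflation, and then identifies the $3$-term h-complex of the square (\ref{dia:diagramlemma3}) with the homotopy cokernel of this inflation by uniqueness of homotopy cokernels; the only input needed is the homotopy right exactness of that complex, which Proposition~\ref{push}~2)(a) supplies (this is exactly what the paper's parenthetical note about (\ref{cplx:rightexact}) is flagging). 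The complex is therefore a conflation, in particular homotopy left exact, so the square is a homotopy pullback -- no comparison map is ever inverted. You instead work pullback-sided: you build the genuine homotopy pullback via Axiom $\Ex2$, produce a comparison $r\colon B\to P$, and invert it by a short-five-lemma mechanism (vanishing homotopy kernel plus deflation implies homotopy equivalence). Your route is longer and uses more axioms ($\Ex2$ twice, the splitting machinery, Proposition~\ref{property}~b)), but it makes the five-lemma phenomenon explicit and mirrors the classical argument for Quillen exact categories; the paper's route is shorter because it recycles Claim~2 verbatim and needs only the cocartesian half of the data together with $\Ex2^{op}$.

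Two steps you gloss over need explicit arguments, though both are fillable with the paper's tools, so I would not call them fatal. First, ``the inflation-restriction of $\beta$ is homotopy cartesian, so $\hker(r)\simeq 0$'' is not immediate: you must paste (Corollary~\ref{Comp}) that cartesian square with the cartesian square exhibiting $0$ as the homotopy pullback of the homotopy equivalence $\beta_0\simeq\Id_A$ along $0\to A$; the same pasting argument (or Lemma~\ref{deflationcomposition}~b)) is also what justifies your unproved identification $\hker(\pi_2)\simeq A$, i.e.\ the existence of ``the conflation $A\to N\xrightarrow{\pi_2}B$''. Second, and more delicately, Proposition~\ref{split} produces a splitting isomorphism $\phi\colon A\oplus B\to N$ built from \emph{some} section $\sigma'$ of $\pi_2$, which need not be your section $\sigma$; under $\phi$ the deflation $\pi_1$ is identified with $[f_W,\pi_1\sigma']$, not with $[f_W,r]$. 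To repair this, note that $\pi_2(\sigma-\sigma')\simeq 0$, so by homotopy left exactness of $A\to N\to B$ (Lemma~\ref{lem:3termhcomplex}) the difference $\sigma-\sigma'$ factors through $A$ up to homotopy; hence $[f_W,r]$ and $[f_W,\pi_1\sigma']$ differ by a unipotent automorphism of $A\oplus B$, and since homotopy equivalences are deflations and deflations are closed under composition, $r\circ[f,\Id_B]\simeq[f_W,r]$ is indeed a deflation, so that Proposition~\ref{property}~b) applies. With these two repairs your argument is complete.
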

\begin{proof}
The same proof for Claim 2 in Proposition~\ref{property} applies here as well.
Note that in that proof, we only use the homotopy right exactness of $3$-term homotopy complex~(\ref{cplx:rightexact}).
\end{proof}
Combining Lemma~\ref{fact}, Lemma~\ref{lem:diagramlemmaexact}, and Lemma~\ref{homotopyequivalencestable}, we have
\begin{corollary}\label{middleterm}
Let $\A$ be an exact dg category.
If a morphism $\alpha:X\rightarrow X'$ of conflations restricts to homotopy equivalences on both ends, it restricts to a homotopy equivalence between the middle terms.
\end{corollary}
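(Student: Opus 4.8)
The plan is to exploit the factorization of $\alpha$ supplied by Lemma~\ref{fact} and then analyze each factor with the homotopy pullback/pushout lemmas. Write $X$ as $A\xrightarrow{f}B\xrightarrow{j}C$ and $X'$ as $A'\xrightarrow{f'}B'\xrightarrow{j'}C'$, and denote by $\overline a\colon A\to A'$ and $\overline c\colon C\to C'$ the restrictions of $\alpha$ to the two ends, which are homotopy equivalences by hypothesis. By Lemma~\ref{fact}, $\alpha$ factors in $\mathcal H_{3t}(\A)$ as $X\xrightarrow{\beta}\tilde X\xrightarrow{\gamma}X'$, where $\tilde X\colon A'\xrightarrow{\tilde f}\tilde B\xrightarrow{\tilde j}C$ is a conflation, $\beta$ restricts to $\overline a$ and $\Id_C$ on the ends, and $\gamma$ restricts to $\Id_{A'}$ and $\overline c$. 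Since these factors are produced by the pullback/pushout constructions of Proposition~\ref{cons}, I may take representatives for which the relevant end restrictions are the strict identities, so that the lemmas below apply verbatim.

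First I would treat $\gamma$. As its restriction to the left end is the identity of $A'$, the morphism of conflations $\gamma$ has exactly the shape of diagram~(\ref{dia:diagramlemma2}) in Lemma~\ref{lem:diagramlemmaexact}; hence the right-hand square with horizontal maps $\tilde j\colon\tilde B\to C$ and $j'\colon B'\to C'$ and vertical maps $b_2\colon\tilde B\to B'$ (the middle restriction of $\gamma$) and $\overline c\colon C\to C'$ is homotopy cartesian. Because $\overline c$ is a homotopy equivalence, Lemma~\ref{homotopyequivalencestable} then forces $b_2\colon\tilde B\to B'$ to be a homotopy equivalence as well.

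Dually, for $\beta$ the restriction to the right end is the identity of $C$, so the dual of Lemma~\ref{lem:diagramlemmaexact} shows that the left-hand square of $\beta$, with horizontal maps $f\colon A\to B$ and $\tilde f\colon A'\to\tilde B$ and vertical maps $\overline a\colon A\to A'$ and $b_1\colon B\to\tilde B$ (the middle restriction of $\beta$), is homotopy cocartesian. Since $\overline a$ is a homotopy equivalence, the dual of Lemma~\ref{homotopyequivalencestable} yields that $b_1\colon B\to\tilde B$ is a homotopy equivalence. Finally, restriction to the middle term is compatible with composition in $\mathcal H_{3t}(\A)$: by the composition formula for $6$-tuples in Definition~\ref{def:3termhomotopy}, the middle component of a composite is the product of the middle components, so the middle restriction $\overline b\colon B\to B'$ of $\alpha$ equals $b_2\circ b_1$ in $H^0(\A)$, a composite of homotopy equivalences, and is therefore itself a homotopy equivalence.

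The only point that requires care is the reduction to \emph{strict} identities on the appropriate ends, so that Lemma~\ref{lem:diagramlemmaexact}, which is stated for a morphism of conflations that is the identity on one end, applies directly to $\beta$ and $\gamma$. I expect this to be the sole piece of genuine bookkeeping, and it is settled either by reading off the explicit morphisms constructed in Lemma~\ref{fact} (via Proposition~\ref{cons}, whose output is the strict identity on one end) or, if needed, by adjusting the representing $6$-tuple through a homotopy so that the relevant component $r_0$ (resp.\ $r_2$) becomes the strict identity.
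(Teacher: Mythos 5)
Your proof is correct and is essentially the paper's own argument made explicit: the paper derives Corollary~\ref{middleterm} precisely by combining the factorization of Lemma~\ref{fact} with Lemma~\ref{lem:diagramlemmaexact} and Lemma~\ref{homotopyequivalencestable} (together with their duals, exactly as you apply them to $\beta$ and $\gamma$). Your closing remark on reducing to strict identities is also the right bookkeeping, and is settled as you say by the explicit output of Proposition~\ref{cons} underlying Lemma~\ref{fact}.
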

\begin{definition}\label{equivalencerelation}
Let $(\A,\mathcal S)$ be a small exact dg category and $A,C$ two objects in $H^0\A$. 
Let $X_i$, $i=1,2$, be two conflations of the form
\begin{equation}\label{twoconflations}
\begin{tikzcd}
A\ar[r,"f_i"]\ar[rr,"h_i"swap,bend right=8ex]&B_i\ar[r,"j_i"]&C
\end{tikzcd}. 
\end{equation}
A morphism $\theta:X_1\rightarrow X_2$ in $\mathcal H_{3t}(\A)$ is called an {\em equivalence} if it restricts to $\Id_{A}$ and $\Id_{C}$. 
Note that by Corollary \ref{middleterm}, an equivalence of conflations is necessarily an isomorphism in $\mathcal H_{3t}(\A)$.
For a conflation, we denote by $[X]$ the equivalence class to which $X$ belongs.

We define $\mathbb E_{\mathcal S}(C,A)$ to be the set of equivalence classes of conflations 
\[
\begin{tikzcd}
A\ar[r,"f"]\ar[rr,"h"swap,bend right=8ex]&B\ar[r,"j"]&C
\end{tikzcd}
\]
with fixed ends $A$ and $C$.
When there is no risk of confusion, we will simply denote it by $\mathbb E(C,A)$.
\end{definition} 

\begin{lemma}\label{quasiequivalencebifunctor}
Let $F:\A\rightarrow \A'$ be a quasi-equivalence of dg categories. 
Suppose $\A$ is an exact dg category.
Equip $\A'$ with the induced exact structure from the quasi-equivalence $F$.
For two objects $C$, $A$ in $\A$, consider the sets $\mathbb E(C,A)$ (resp.~$\mathbb E'(FC,FA)$), which is associated with the exact structure on $\A$ (resp.~$\A'$). 
The quasi-equivalence $F$ induces a bijection 
\[
\mathbb E(C,A)\rightarrow \mathbb E'(FC,FA).
\]
\end{lemma}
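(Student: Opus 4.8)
The plan is to reduce the statement to three structural facts about the induced functor $F_*$ and then verify well-definedness, injectivity and surjectivity of the map $[X]\mapsto[F_*X]$. First I would assemble the inputs. By Remark~\ref{truncationexactdgstructure}~b) (which rests on Theorem~\ref{thm:Ainfinityrep}), post-composition with $F$ gives an equivalence of categories $F_*:\mathcal H_{3t}(\A)\iso\mathcal H_{3t}(\A')$ which preserves and reflects the property of being homotopy short exact; and since $\A'$ is equipped with the exact structure $\mathcal S'$ \emph{induced} by $F$, we have $X\in\mathcal S$ if and only if $F_*X\in\mathcal S'$. Because $F$ is a dg functor, $F_*$ commutes with the end-restriction functors $\Res,\Res'$, and $H^0(F)$ sends $\Id_A,\Id_C$ to $\Id_{FA},\Id_{FC}$; being a quasi-equivalence, $H^0(F)$ is moreover fully faithful and dense. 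In particular $F_*$ carries a conflation with ends $A,C$ to a conflation with ends $FA,FC$. Throughout I will use Corollary~\ref{middleterm}, that an equivalence of conflations is automatically an isomorphism in $\mathcal H_{3t}(\A)$.

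Next I would treat well-definedness and injectivity together. If $\theta:X_1\rightarrow X_2$ is an equivalence, i.e.\ restricts to $\Id_A$ and $\Id_C$, then $F_*\theta$ restricts to $\Id_{FA}$ and $\Id_{FC}$ (since $F_*$ commutes with $\Res,\Res'$ and $H^0(F)$ preserves identities), so $[F_*X_1]=[F_*X_2]$ in $\mathbb E'(FC,FA)$; this shows the assignment descends to a map $\mathbb E(C,A)\rightarrow\mathbb E'(FC,FA)$. Conversely, suppose $[F_*X_1]=[F_*X_2]$ via an equivalence $\eta$. Fullness of $F_*$ yields $\theta:X_1\rightarrow X_2$ with $F_*\theta=\eta$, and from $F(\Res\theta)=\Res\eta=\Id_{FA}=F(\Id_A)$ together with faithfulness of $H^0(F)$ we get $\Res\theta=\Id_A$, and likewise the restriction of $\theta$ to $C$ equals $\Id_C$. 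Hence $\theta$ is an equivalence and $[X_1]=[X_2]$, giving injectivity.

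For surjectivity I would start from a conflation $Y$ in $\A'$ with ends $FA,FC$. Density of $F_*$ provides $X_0\in\mathcal H_{3t}(\A)$ with an isomorphism $F_*X_0\cong Y$; since $F_*$ reflects homotopy short exactness and $\mathcal S'$ is the induced structure, $X_0$ is a conflation, with ends say $A_0,C_0$. Restricting the isomorphism to the ends gives isomorphisms $FA_0\cong FA$ and $FC_0\cong FC$, which by full faithfulness of $H^0(F)$ arise from isomorphisms $a\colon A_0\cong A$ and $c\colon C_0\cong C$ in $H^0(\A)$. Transporting $X_0$ along the homotopy equivalences $a,c$ produces a 3-term h-complex $X$ with ends exactly $A,C$ and an isomorphism $X_0\cong X$ in $\mathcal H_{3t}(\A)$; as homotopy short exactness is invariant under such isomorphisms, $X$ is again a conflation and $[F_*X]=[Y]$.

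I expect the main obstacle to be exactly this last transport step: realizing, \emph{inside} $\mathcal H_{3t}(\A)$, an isomorphic conflation whose ends are the prescribed objects $A,C$ rather than merely objects isomorphic to them. This is where one genuinely uses that $H^0(F)$ is an equivalence (not just quasi-fully faithful) and the invariance of conflations under isomorphism in $\mathcal H_{3t}(\A)$; concretely the transported $X$ can be built by composing $X_0$ with the isomorphisms $\overline a,\overline c$ on its ends and invoking Proposition~\ref{termwiseequivalence} to see the resulting comparison morphism is an isomorphism. Once this is in place, all remaining verifications are the routine compatibilities of $F_*$ with $\Res,\Res'$ recorded above.
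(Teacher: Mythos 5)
Your proof is correct, but it takes a genuinely different route from the paper's, most visibly in the surjectivity step. The paper does not invoke essential surjectivity of the equivalence $F_*\colon \mathcal H_{3t}(\A)\iso\mathcal H_{3t}(\A')$ on whole $3$-term complexes; instead it reconstructs a preimage conflation starting from the middle term: it lifts $B'$ to an object $B$ of $\A$ with $FB\simeq B'$, uses Lemma~\ref{Mor(A)and3term} to reduce to the case $B'=FB$ and $j'=F(j)$ for some $j\in Z^0(\A)$, observes that $j$ is then a deflation in $\A$, takes its homotopy kernel $Y$ in $\A$, and compares $X'$ with $FY$ via the uniqueness of homotopy kernels --- which automatically yields a comparison isomorphism restricting to the identity on $j'$, so that only the first term needs adjusting, again via Lemma~\ref{Mor(A)and3term}. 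You instead pull back the entire conflation using density of $F_*$ and then repair \emph{both} ends by a transport argument. Both are valid: your route is more uniform, leans directly on the global equivalence from Remark~\ref{truncationexactdgstructure}~b), and makes the injectivity argument explicit (the paper dismisses it as straightforward), while the paper's route keeps the bookkeeping lighter because uniqueness of homotopy kernels fixes the deflation component once and for all, leaving a single end to adjust. One point where your write-up stays at sketch level: ``composing $X_0$ with $\overline a,\overline c$ on its ends'' does produce a candidate $X$ (with structure maps $f_0a'$, $cj_0$ and homotopy $ch_0a'$, for chosen closed representatives $a'$ of $\overline a^{-1}$ and $c$ of $\overline c$), but before Proposition~\ref{termwiseequivalence} can be applied you must exhibit a \emph{closed} $6$-tuple extending $(a,\Id_{B_0},c)$; writing $a'a-\Id_{A_0}=d(\sigma)$, one checks that $s_1=f_0\sigma$, $s_2=0$, $t=ch_0\sigma$ work, after which the proposition gives the isomorphism as you claim. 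Equivalently, you could perform the transport in two steps via Lemma~\ref{Mor(A)and3term}, which is precisely the device the paper uses for this purpose.
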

\begin{proof}
It is straightforward to verify that the above map is an injection.
We now demonstrate that it is a surjection.
For an object $X\in\mathcal H_{3t}(\A)$, we denote by $FX$ the object in $\mathcal H_{3t}(\A')$ induced by $F$.

Let $[X']$ be an element in $\mathbb E'(FC,FA)$. 
Suppose $X'$ is of the form
\[
\begin{tikzcd}
FA\ar[r,"f'"]\ar[rr,"h'"swap,bend right=8ex]&B'\ar[r,"j'"]&FC\mathrlap{.}
\end{tikzcd}
\]
Since $F:\A\rightarrow\A'$ is a quasi-equivalence, we have a homotopy equivalence $FB\iso B'$ for some $B\in\A$.
Hence we have an isomorphism in $H^0(\Mor(\A'))$ of the form
\[
\begin{tikzcd}
FB\ar[d]\ar[rd,"s"red,red]\ar[r,"F{(}j{)}"]&FC\ar[d,equal]\\
B'\ar[r,"j'"swap]&FC\mathrlap{.}
\end{tikzcd}
\]
By Lemma \ref{Mor(A)and3term}, we may assume $B'=FB$ and $j'=F(j)$ for some $j:B\rightarrow C$ in $Z^0(\A)$.
The morphism $j:B\rightarrow C$ is a deflation. 
Suppose its homotopy kernel $Y$ is of the form
\[
\begin{tikzcd}
A'\ar[r,"\tilde{f}"]\ar[rr,"\tilde{h}"swap,bend right=8ex]&B\ar[r,"j"]&C\mathrlap{.}
\end{tikzcd}
\]
Then we have an isomorphism $\theta: X'\rightarrow FY$ in $\mathcal H_{3t}(\A)$ which restricts to the identity of $j':FB\rightarrow FC$ in $H^0(\Mor(\A'))$.
Let $\overline{a'}:FA'\rightarrow FA$ be the restriction of $\theta$ to $FA'$.
Then we have a morphism $a:A\rightarrow A'$ such that $F(a)$ is a homotopy inverse of $a'$.
We have an isomorphism in $H^0(\Mor(\A'))$ of the form
\[
\begin{tikzcd}
A\ar[r,"f=\tilde{f}a"]\ar[rd,"0"red,red]\ar[d,"a"swap]&B\ar[d,equal]\\
A'\ar[r,"\tilde{f}"swap]&B
\end{tikzcd}
\]
By Lemma \ref{Mor(A)and3term}, we deduce that $X'$ is equivalent to $FX$ where $X$ is of the form
\[
\begin{tikzcd}
A\ar[r,"f"]\ar[rr,"h"swap,bend right=8ex]&B\ar[r,"j"]&C\mathrlap{.}
\end{tikzcd}
\] 

\end{proof}

\subsection{The biadditive bifunctor $\mathbb E$}\label{bi}
Suppose we are given an element $[X]\in \mathbb E(C,A)$, where $X$ is as follows
\begin{equation}\label{X}
\begin{tikzcd}
A\ar[r,"{f}"]       \ar[rr,bend right=8ex,"{h}"swap]      &B\ar[r,"{j}"]         &C
\end{tikzcd},
\end{equation}
and a map $\overline{a}:A\rightarrow A'$ in $H^0(\A)$. 
Below we will prove that there exists a unique equivalence class of conflations $[X']\in \mathbb E(C,A')$, 
such that for each representative $X$ of $[X]$, 
there exists a representative $X'$ of $[X']$ with a morphism $\theta:X\rightarrow X'$ 
which restricts to the identity on $C$ and the morphism $\overline{a}:A\rightarrow A'$.  

By Axiom ${\Ex2}^{op}$, the existence of $[X']$ is clear from Proposition \ref{cons}. 
Let us clarify its uniqueness. 
Suppose we have morphisms $\theta_1:X\rightarrow X_1$ and $\theta_2:X\rightarrow X_2$ with the required property.
So the objects $X_1  $ and $X_2  $ are both conflations.
By Proposition \ref{cons}, we may assume that the restriction of $\theta_1$ to $f:A\rightarrow B$ is a homotopy cocartesian square. 
Let $S_i\in\mathcal H_{\Sq}(\A)$, $i=1$, $2$, be the restriction of $\theta_i$ to $f:A\rightarrow B$. 
Assume that $X_i  $, $i=1$, $2$, is of the form
 \[
 \begin{tikzcd}
 A'  \ar[r,"f_i"]\ar[rr,bend right=8ex,"h_i"swap]&B_i\ar[r,"j_i"]&C
 \end{tikzcd}\;,
 \]
 and that $S_1$ is a homotopy pushout of the cospan $L: A'\xleftarrow{a}A\xrightarrow{f}B$,
 where $a$ is a morphism in $Z^0(\A)$ representing the morphism $\overline{a}$. The homotopy square $S_2$ restricts to a cospan $L'$ and there exists a (non-unique) isomorphism $L\rightarrow L'$ that restricts to the identity of $f$.
 By the universal property of homotopy pushouts (cf.~Remark \ref{pullbackuniversal}), there is a unique morphism $\mu$ in $\mathcal H_{\Sq}(\A)$ from $S_1$ to $S_2$ that restricts to the isomorphism $L\rightarrow L'$.
 We continue to denote by $S_i$ the morphism in $H^0(\Mor(\A))$ from $f$ to $f_i$ given by the square $S_i$ for $i=1$, $2$.
 According to Lemma~\ref{squareepivalence}, the morphism $\mu:S_1\rightarrow S_2$ yields morphisms $\Id_{f}$, $S_1$, $S_2$, and a morphism $\alpha$ from $f_1:A'\rightarrow B_1$ to $f_2:A'\rightarrow B_2$ that restricts to $\Id_{A'}$ and satisfies $\alpha\circ S_1=S_2\circ \Id_{f}$ in $H^0(\Mor(\A))$. 
 By the universal property of homotopy pushouts, the morphism $\alpha$ induces a morphism $\theta_3:X_1\rightarrow X_2$ in $\mathcal H_{3t}(\A)$ such that $\theta_3\circ \theta_1=\theta_2$. 
 Then, the morphism $\theta_3$ is an equivalence between $X_1$ and $X_2$.
 Since $\theta_3$ is necessarily an isomorphism by Corollary \ref{middleterm}, we conclude that the morphism $\alpha:S_1\rightarrow S_2$ is an isomorphism.
\begin{proposition}The operation $\overline{a}:A\rightarrow A'\mapsto \overline{a}_*:\mathbb E(C,A)\rightarrow \mathbb E(C,A')$, 
where an element $[X]$ is sent to $\overline{a}_*[X]{\coloneqq}[X']$ which is specified by the above mentioned property, 
together with its dual operation $\overline{c}:C'\rightarrow C\mapsto \overline{c}^{*}:\mathbb E(C,A)\rightarrow \mathbb E(C',A)$, 
makes $\mathbb E:H^{0}(\A)^{op}\times H^{0}(\A)\rightarrow  \Set$ into a bifunctor.
\end{proposition}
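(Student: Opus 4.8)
The plan is to verify the three conditions that jointly turn $\mathbb E$ into a functor on the product category $H^0(\A)^{op}\times H^0(\A)$: covariant functoriality of the pushforward $\overline a\mapsto\overline a_*$ in the second variable, contravariant functoriality of the pullback $\overline c\mapsto\overline c^*$ in the first variable, and the commutation relation $\overline a_*\circ\overline c^*=\overline c^*\circ\overline a_*$. Throughout I would work with the characterization already isolated in the discussion preceding the statement: $\overline a_*[X]$ is the \emph{unique} class in $\mathbb E(C,A')$ admitting a morphism $X\to X'$ in $\mathcal H_{3t}(\A)$ restricting to $\overline a$ on the left end and to $\Id_C$ on the right end, and dually $\overline c^*[X]$ (for $\overline c\colon C'\to C$) is the unique class admitting a morphism $X'\to X$ restricting to $\Id_A$ on the left and $\overline c$ on the right. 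The only structural input these arguments need is that taking the two end-restrictions is functorial, which is immediate from the composition rule $\tilde r_i=r_i''r_i'$ of Definition~\ref{def:3termhomotopy}.

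First I would settle the second variable. The identity $\Id_X\colon X\to X$ restricts to $\Id_A$ and $\Id_C$, so $(\Id_A)_*[X]=[X]$ by uniqueness. For composition, given $\overline a_1\colon A\to A'$ and $\overline a_2\colon A'\to A''$, I would choose representing morphisms $\theta_1\colon X\to X_1$ and $\theta_2\colon X_1\to X_2$ with $[X_1]=(\overline a_1)_*[X]$ and $[X_2]=(\overline a_2)_*[X_1]$; the composite $\theta_2\theta_1\colon X\to X_2$ then restricts to $\overline a_2\overline a_1$ and $\Id_C$, so $[X_2]=(\overline a_2\overline a_1)_*[X]$ by uniqueness, giving $(\overline a_2)_*(\overline a_1)_*=(\overline a_2\overline a_1)_*$. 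The contravariant functoriality of $\overline c^*$ in the first variable follows by the dual argument, composing the pullback morphisms $X_2\to X_1\to X$ in the opposite order.

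The heart of the matter is the commutation relation, and here I would use the factorization Lemma~\ref{fact}. Starting from $[X]\in\mathbb E(C,A)$, I would pick $\psi\colon X_c\to X$ realizing $\overline c^*[X]=[X_c]$ (with $X_c$ of ends $A,C'$ and $\psi$ restricting to $\Id_A,\overline c$) and $\phi\colon X\to X_a$ realizing $\overline a_*[X]=[X_a]$ (with $X_a$ of ends $A',C$ and $\phi$ restricting to $\overline a,\Id_C$). Then $\phi\psi\colon X_c\to X_a$ is a morphism of conflations whose left restriction is $\overline a$ and whose right restriction is $\overline c$. Applying Lemma~\ref{fact} to $\phi\psi$ produces a factorization $X_c\xrightarrow{\beta}\tilde X\xrightarrow{\gamma}X_a$ through a conflation $\tilde X$ whose ends are the target's left end $A'$ and the source's right end $C'$, with $\beta$ restricting to $\overline a,\Id_{C'}$ and $\gamma$ restricting to $\Id_{A'},\overline c$. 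Reading $\beta$ through the pushforward characterization gives $[\tilde X]=\overline a_*[X_c]=\overline a_*\overline c^*[X]$, while reading $\gamma$ through the pullback characterization gives $[\tilde X]=\overline c^*[X_a]=\overline c^*\overline a_*[X]$; comparing the two yields the equality.

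I expect the bookkeeping around end-restrictions to be entirely routine, and the genuine content to sit in the commutation step. The main obstacle there will be to apply Lemma~\ref{fact} to the \emph{correct} composite and in the correct direction: the intermediate conflation it produces inherits the left end of the target and the right end of the source, so it is precisely the order $\phi\psi\colon X_c\to X_a$ (pull back first, push forward second) that forces $\tilde X$ to have ends $A'$ and $C'$ and hence to represent a class in $\mathbb E(C',A')$. Once this identification is made, both halves of the factorization carry exactly the end-restrictions demanded by the uniqueness characterizations of $\overline a_*$ and $\overline c^*$, and well-definedness at the level of equivalence classes is guaranteed throughout by Corollary~\ref{middleterm}, which identifies equivalences of conflations with isomorphisms in $\mathcal H_{3t}(\A)$.
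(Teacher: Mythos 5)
Your proof is correct, and its key step is organized differently from the paper's. The paper's own proof addresses only the mixed commutation $\overline{a}_*\overline{c}^{*}=\overline{c}^{*}\overline{a}_*$: writing $[X_1]=\overline{a}_*[X]$, $[X_2]=\overline{c}^{*}[X_1]$, $[X_3]=\overline{c}^{*}[X]$, it constructs a morphism $X_3\to X_2$ by hand --- it composes $X_3\to X\to X_1$, restricts to the deflation parts, lifts this through the homotopy cartesian square defining $X_2$ using Lemma~\ref{univ}, and then extends the resulting morphism of deflations to a morphism of $3$-term h-complexes by the universal property of homotopy pullbacks (Remark~\ref{pullbackuniversal}); since that morphism restricts to $(\overline{a},\Id_{C'})$, uniqueness gives $\overline{a}_*[X_3]=[X_2]$. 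You instead apply the factorization Lemma~\ref{fact} to the very same composite $\phi\psi\colon X_c\to X_a$ (your $X_c$, $X_a$ are the paper's $X_3$, $X_1$) and read off the two identifications $[\tilde X]=\overline{a}_*\overline{c}^{*}[X]$ and $[\tilde X]=\overline{c}^{*}\overline{a}_*[X]$ from the two halves of the factorization --- which is precisely the use anticipated in the remark following Lemma~\ref{fact}. Since Lemma~\ref{fact} is itself deduced from Lemma~\ref{univ} and the pushout/pullback axioms, the two arguments rest on the same machinery; yours is shorter and delegates the diagram chase to an already-proved lemma, at the cost of not exhibiting the morphism $X_3\to X_2$ explicitly. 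You also spell out the unit and composition laws in each variable separately via the uniqueness characterization, which the paper omits as routine; that part of your argument is sound, needing only that end-restriction is compatible with composition in $\mathcal H_{3t}(\A)$, as guaranteed by the composition rule of Definition~\ref{def:3termhomotopy}.
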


\begin{remark} \label{rk:extriangulated-structure}
In Subsection~\ref{canonicalstructure}, we will prove that $H^0(\A)$ carries a canonical extriangulated structure in the
sense of Nakaoka--Palu \cite{NakaokaPalu19} whose extension bifunctor is $\mathbb{E}$.
\end{remark}

\begin{proof}Suppose we are given $[X]\in \mathbb E(C,A)$, $\overline{a}:A\rightarrow A'$ and $\overline{c}:C'\rightarrow C$, where $X$ is given by the diagram (\ref{X}).

Put $\overline a_*[X]=[X_1]$, $\overline c^{*}[X_1]=[X_2]$ and $\overline c^*[X]=[X_3]$. 
Then the conflations $X_1$, $X_2$ and $X_3$ take the following form, respectively
 \[
 \begin{tikzcd}
 A'\ar[r,"f_1"]\ar[rr,bend right=8ex,"h_1"swap]&E_1\ar[r,"j_1"]&C &A'\ar[r,"f_2"]\ar[rr,bend right=8ex,"h_2"swap]&E_2\ar[r,"j_2"]&C'&A\ar[r,"f_3"]\ar[rr,bend right=8ex,"h_3"swap]&E_3\ar[r,"j_3"]&C'
 \end{tikzcd}.
 \]
 
By the definition of $[X_2]$, we have a homotopy cartesian square which gives rise to a morphism $\alpha$ in $H^0(\Mor(\A))$ from $j_2:E_2\rightarrow C'$ to $j_1:E_1\rightarrow C$. 
We also have a morphism $\beta$ from $j_3:E_3\rightarrow C'$ to $j_1:E_1\rightarrow C'$. 
By Lemma \ref{univ}, we have a morphism $\gamma$ from $j_3:E_3\rightarrow C'$ to $j_2:E_2\rightarrow C'$ which restricts to $\Id_{C'}$ and such that $\alpha\circ \gamma=\beta$. 
The associated diagram in $H^0(\A)$ is as follows:
\[
\begin{tikzcd}[cramped,sep=small]
&A\ar[rr,"\overline{f_3}"]\ar[dd,dotted,bend left=6ex]\ar[ld,equal]&
&E_3\ar[rr,"\overline{\jmath_3}"]\ar[dd, dotted,bend left=6ex]\ar[ld]&
&C'\ar[dd,equal]\ar[ld,"\overline{c}"]\\
A\ar[rr,"\overline{f}"{description}]\ar[dd,"\overline{a}"swap,bend left=6ex]&&B\ar[rr,"\overline{\jmath}"{description}]\ar[dd, bend left=6ex]
&&C\ar[dd,equal]
&\\
&A'\ar[rr,"\overline{f_2}"{description}]\ar[ld,equal]&
&E_2\ar[rr,"\overline{\jmath_2}"{description,near start}]\ar[ld]&
&C'\ar[ld,"\overline{c}"]\\
A'\ar[rr,"\overline{f_1}"swap]&&E_1\ar[rr,"\overline{\jmath_1}"swap]
&&
C&
\end{tikzcd}
\]
By universal property of the homotopy pullbacks (cf.~Remark \ref{pullbackuniversal}), the morphism $\gamma$ induces a morphism from $X_3$ to $X_2$. 
It clearly restricts to $\overline a:A\rightarrow A'$. 
Therefore $\overline a_*[X_3]=[X_2]$.
\end{proof}

Our next goal is to demonstrate that $\mathbb E(C,A)$ forms an abelian group for every pair $(C,A)$ of objects in $H^0(\A)$, ensuring that $\mathbb E:H^0(\A)^{op}\times H^0(\A)\rightarrow \Ab$ is a biadditive bifunctor. 

We begin by describing the addition operation on $\mathbb E(C,A)$.
Let $[X]$ and $[X']$ be two elements in $\mathbb E(C,A)$ given by the diagram (\ref{twoconflations}). 
Put $\overline a=[1\ 1]:A\oplus A\rightarrow A$ and $\overline c=[1\ 1]^{\intercal}:C\rightarrow C\oplus C$. Recall from Proposition \ref{direct} that the direct sum $X\oplus X'$ is also a conflation. 
Then the sum $[X]+[X']$ is defined to be $\overline c^{*}\overline a_*[X\oplus X']$. 
\begin{lemma} The above addition operation is well-defined, associative and commutative and that the equivalence class of 
\[
\begin{tikzcd}
A\ar[r,"\begin{bmatrix}1\\0\end{bmatrix}"]\ar[rr,"0"swap,bend right=8ex]&A\oplus C\ar[r,"{[}0{,}1{]}"]&C
\end{tikzcd}
\]
is a zero element. 
\end{lemma}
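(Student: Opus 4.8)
The assertion is the standard fact that the Baer-type sum endows $\mathbb E(C,A)$ with the structure of a commutative monoid, and the plan is to reduce all four claims to three formal properties of the operations $\overline a_*$ and $\overline c^*$. Write $\nabla_A=[1\ 1]\colon A\oplus A\to A$ and $\Delta_C=[1\ 1]^{\intercal}\colon C\to C\oplus C$, so that by definition $[X]+[X']=\Delta_C^{*}(\nabla_A)_{*}[X\oplus X']$ (this uses Proposition~\ref{property}~c), which guarantees that $X\oplus X'$ is again a conflation). The three properties I would isolate are: \textbf{(i)} functoriality and commutation, i.e. $(\overline{a'}\,\overline a)_{*}=\overline{a'}_{*}\,\overline a_{*}$, $(\overline c\,\overline{c'})^{*}=\overline{c'}^{*}\overline c^{*}$ and $\overline a_{*}\overline c^{*}=\overline c^{*}\overline a_{*}$, all of which are contained in the preceding proposition asserting that $\mathbb E$ is a bifunctor; \textbf{(ii)} the exchange relation implicit in the remark following Lemma~\ref{fact}: any morphism of conflations $\theta\colon Y\to Z$ in $\mathcal H_{3t}(\A)$ restricting to $\overline a$ on the left ends and to $\overline c$ on the right ends satisfies $\overline a_{*}[Y]=\overline c^{*}[Z]$; and \textbf{(iii)} compatibility with direct sums, $(\overline a\oplus\overline b)_{*}[Y\oplus Z]=\overline a_{*}[Y]\oplus\overline b_{*}[Z]$ together with its pullback dual. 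Properties (i) and (ii) are already available; only (iii) requires new input, and it enters only in the proof of associativity.

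Establishing (iii) is the main obstacle. I would deduce it from Proposition~\ref{cons}: the homotopy pushout realizing $(\overline a\oplus\overline b)_{*}$ is built from a homotopy pushout of the span obtained as the direct sum of the two individual spans, and the direct sum of the two individual homotopy pushouts provides such a pushout, because a direct sum of homotopy cocartesian squares is again homotopy cocartesian (the dual of the observation used in the proof of Corollary~\ref{Comp}) and homotopy pushouts are unique up to unique isomorphism by Proposition~\ref{pullbackunique}. The pullback version follows by applying the duality functors.

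With (i)--(iii) in hand the first three claims are formal. Well-definedness is immediate, since the direct sum of two equivalences of conflations is again an equivalence (it restricts to $\Id_{A\oplus A}$ and $\Id_{C\oplus C}$), so $[X\oplus X']$ depends only on $[X]$ and $[X']$, and $\Delta_C^{*}(\nabla_A)_{*}$ respects equivalence classes by (i). For commutativity, the braiding yields a morphism of conflations $X\oplus X'\to X'\oplus X$ restricting to the swaps $\sigma_A$ and $\sigma_C$; using (ii) to get $(\sigma_A)_{*}[X\oplus X']=(\sigma_C)^{*}[X'\oplus X]$ and then the relations $\nabla_A\sigma_A=\nabla_A$, $\sigma_C\Delta_C=\Delta_C$ together with (i) gives $[X]+[X']=[X']+[X]$. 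For associativity, I would show that both $([X]+[X'])+[X'']$ and $[X]+([X']+[X''])$ equal $(\Delta_C^{(3)})^{*}(\nabla_A^{(3)})_{*}[X\oplus X'\oplus X'']$, where $\nabla_A^{(3)}=[1\ 1\ 1]$ and $\Delta_C^{(3)}=[1\ 1\ 1]^{\intercal}$. The reduction uses (iii) (with one summand being the identity operation) to express the outer sum as a pushforward-pullback of $X\oplus X'\oplus X''$, then (i) to merge the iterated codiagonals and diagonals via $\nabla_A(\nabla_A\oplus\Id)=\nabla_A^{(3)}=\nabla_A(\Id\oplus\nabla_A)$ and $(\Delta_C\oplus\Id)\Delta_C=\Delta_C^{(3)}=(\Id\oplus\Delta_C)\Delta_C$; this is pure bookkeeping of block matrices and introduces no further homotopical content.

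Finally, for the zero element I would argue directly rather than through an absorption lemma. Writing $\Theta$ for the split conflation $A\xrightarrow{[1\ 0]^{\intercal}}A\oplus C\xrightarrow{[0\ 1]}C$, I construct a morphism of conflations $X\oplus\Theta\to X$ restricting to $\nabla_A=[1\ 1]$ on the left ends and to the projection $v=[1\ 0]\colon C\oplus C\to C$ on the right ends; its existence follows from Lemma~\ref{strictmorphismlift} (as $X$ is homotopy left exact), and on $H^{0}(\A)$ the middle component $[1\ f\ 0]\colon B\oplus A\oplus C\to B$ makes the squares commute because $\overline{jf}=0$. Applying (ii) gives $(\nabla_A)_{*}[X\oplus\Theta]=v^{*}[X]$, whence $[X]+[\Theta]=\Delta_C^{*}v^{*}[X]=(v\Delta_C)^{*}[X]=[X]$, since $v\Delta_C=\Id_C$.
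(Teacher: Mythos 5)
Your proposal is correct and takes essentially the same approach as the paper: the paper also proves the zero-element claim by writing down an explicit morphism of conflations relating $X$ to its direct sum with the split conflation $\Theta$ and then invoking the exchange relation of Lemma~\ref{fact} together with bifunctoriality (the paper maps $X\to\Theta\oplus X$ with end restrictions $[0\,,1]^{\intercal}$ and $\Delta_C$, while you map $X\oplus\Theta\to X$ with end restrictions $\nabla_A$ and $[1\,,0]$ --- dual mirror images of the same construction, and your prescribed components do admit the required homotopies, e.g.\ $s_2=[0\,,-h\,,0]$, $t=0$), and the paper likewise reduces associativity to the coassociativity identity $(\Id\oplus\Delta_A)\Delta_A=(\Delta_A\oplus\Id)\Delta_A$. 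The only substantive difference is expository: the paper dismisses well-definedness and commutativity as straightforward and uses your property (iii) (compatibility of $\overline{a}_*$, $\overline{c}^*$ with direct sums) silently, whereas you isolate it and justify it via uniqueness of homotopy pushouts, which is exactly the argument needed.
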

\begin{proof}
It is straightforward to check the well-definedness and commutativity.
To show that the above conflation is a zero element, observe that we have the following morphism in $\mathcal H_{3t}(\A)$
\[
\begin{tikzcd}
A\ar[rrd,"0"{blue,near start,description},blue]\ar[d,"{[}0{,}1{]^{\intercal}}"swap]\ar[rr,bend left=8ex,"h"]\ar[r,"f"]\ar[rd,"s"{red,swap},red,bend right=2ex]&B\ar[rd,"0"red,red,bend left=2ex]\ar[d,"{[}0{,}\;j{,}\;1{]^{\intercal}}"]\ar[r,"j"]&C\ar[d,"{[}1{,}\;1{]^{\intercal}}"]\\
A\oplus A\ar[rr,bend right=8ex,"h'"swap]\ar[r,"f'"swap]&A\oplus C\oplus B\ar[r,"j'"swap]&C\oplus C
\end{tikzcd}
\]
where $f'=\begin{bmatrix}1&0\\0&0\\0&f\end{bmatrix}$, $j'=\begin{bmatrix}0&1&0\\0&0&j\end{bmatrix}$, $h'=\begin{bmatrix}0&0\\0&h\end{bmatrix}$, $s=[0,\;h,\;0]^{\intercal}$.

To show the associativity, observe that we have the equality
\[
\begin{bmatrix}1&0\\0&1\\0&1\end{bmatrix}\circ\begin{bmatrix}1\\1\end{bmatrix}=\begin{bmatrix}1&0\\1&0\\0&1\end{bmatrix}\circ\begin{bmatrix}1\\1\end{bmatrix}:A\rightarrow A\oplus A\oplus A.
\]
\end{proof}
Conflations in the equivalence class of zero elements are called {\sl splitting}. 

\begin{proposition}\label{split}Let $X$ be a conflation as follows
\[
\begin{tikzcd}
A\ar[r,"f"]\ar[rr,bend right=8ex,"h"swap]&B\ar[r,"j"]&C
\end{tikzcd}.  
\]
The following statements are equivalent:
\begin{itemize}
\item[1)]$X$ is a splitting conflation.
\item[2)]$\overline f$ is a split monomorphism in $H^{0}(\A)$.
\item[2)$^{\text{op}}$]$\overline{\jmath}$ is a split epimorphism in $H^{0}(\A)$.
\end{itemize} 
\end{proposition}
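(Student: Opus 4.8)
The plan is to prove $1)\Leftrightarrow 2)$ directly and then to deduce $1)\Leftrightarrow 2)^{\mathrm{op}}$ by passing to the opposite dg category. Throughout I will write $X_0$ for the trivial conflation $A\xrightarrow{[1\,0]^{\intercal}}A\oplus C\xrightarrow{[0\,1]}C$ representing the zero element of $\mathbb E(C,A)$, and I will use repeatedly that, by Corollary~\ref{middleterm}, every equivalence of conflations is automatically an isomorphism in $\mathcal H_{3t}(\A)$; hence $X$ is splitting exactly when there is an equivalence between $X$ and $X_0$ (the equivalence relation being symmetric, since inverses of equivalences again restrict to identities).

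For $2)\Rightarrow 1)$, I would start from a retraction $\overline{r}\colon B\to A$ with $\overline{r}\,\overline{f}=\Id_A$ in $H^0(\A)$, and first record that $\overline{\jmath}\,\overline{f}=0$ in $H^0(\A)$, because $jf=-d(h)$ is null-homotopic. Consequently the pair $\big([\overline{r},\overline{\jmath}]^{\intercal}\colon B\to A\oplus C,\ \Id_C\big)$ defines a morphism $\theta\colon\Res'(X)\to\Res'(X_0)$ in $\Mor(H^0(\A))$, since $[0\,1]\,[\overline{r},\overline{\jmath}]^{\intercal}=\overline{\jmath}$; as $\Dia$ is an epivalence I may lift it to $H^0(\Mor(\A))$. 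Since $X_0$ is a conflation, it is homotopy left exact, so the universal property of homotopy left exact sequences recalled before Lemma~\ref{strictmorphismlift} extends $\theta$ uniquely to a morphism $\mu\colon X\to X_0$ in $\mathcal H_{3t}(\A)$ with $\Res'(\mu)=\theta$. The left-hand square of $\mu$ then reads $[1\,0]^{\intercal}\,\overline{a}=[\overline{r},\overline{\jmath}]^{\intercal}\,\overline{f}=[\Id_A,0]^{\intercal}$, which forces the left restriction $\overline{a}$ to equal $\Id_A$. Thus $\mu$ restricts to $\Id_A$ and $\Id_C$, i.e.\ it is an equivalence, so $[X]=[X_0]=0$ and $X$ is splitting.

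For $1)\Rightarrow 2)$, if $X$ is splitting there is an equivalence $\mu\colon X\to X_0$, which is an isomorphism restricting to $\Id_A$ and $\Id_C$; its middle component $\overline{b}\colon B\to A\oplus C$ is then invertible in $H^0(\A)$ and the left-hand square gives $\overline{b}\,\overline{f}=[1\,0]^{\intercal}$. Hence $[1\,0]\,\overline{b}$ is a retraction of $\overline{f}$, so $\overline{f}$ is a split monomorphism. Finally $1)\Leftrightarrow 2)^{\mathrm{op}}$ follows by applying the already established equivalence $1)\Leftrightarrow 2)$ to the exact dg category $\A^{op}$ of Remark~\ref{truncationexactdgstructure}: there $X$ becomes a conflation whose inflation is $\overline{\jmath}$, the property of being splitting is self-dual, and $\overline{\jmath}$ is a split epimorphism in $H^0(\A)$ if and only if it is a split monomorphism in $H^0(\A^{op})=H^0(\A)^{op}$.

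The hard part will be the bookkeeping in $2)\Rightarrow 1)$, namely checking that the morphism $\mu$ delivered by the universal property really restricts to the identity on the left-hand term $A$ rather than to some endomorphism $\overline{a}$; this is exactly what guarantees $[X]=[X_0]$ (and not merely $[X]=\overline{a}_*[X_0]$), and it is settled by the one-line computation $[\Id_A,0]^{\intercal}=[\overline{r},\overline{\jmath}]^{\intercal}\,\overline{f}$. Once $\overline{\jmath}\,\overline{f}=0$ is in hand and the universal property of homotopy left exact sequences is applied, the remaining verifications are purely formal.
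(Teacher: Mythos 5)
Your proof is correct, and its core step is genuinely different from the paper's. For the substantive direction $2)\Rightarrow 1)$ the paper works on the \emph{inflation} side: from the split monomorphism $\overline{f}$ it produces an isomorphism in $H^0(\Mor(\A))$ from $[1\,0]^{\intercal}\colon A\to A\oplus C'$ to $f\colon A\to B$ restricting to $\Id_A$ (which implicitly requires realizing $B$ as a direct sum $A\oplus C'$), compares homotopy cokernels to identify $C'$ with $C$, lifts this to an isomorphism $\alpha$ of $3$-term h-complexes whose third component $h_2$ need not be the identity, and finally corrects the endpoint by composing with a morphism $\beta$ built from a homotopy inverse of $h_2$. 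You work on the \emph{deflation} side instead: using the retraction $\overline{r}$ and the fact that $\overline{\jmath}\,\overline{f}=0$ (since $jf=-d(h)$), you feed the square $\bigl([\overline{r},\overline{\jmath}]^{\intercal},\Id_C\bigr)$ through the epivalence $\Dia$ and into the universal property of the homotopy left exact sequence $X_0$ (Lemma~\ref{strictmorphismlift}, legitimate since $X_0$ is a conflation by Proposition~\ref{property}~a)), and then the computation $[1\,0]^{\intercal}\,\overline{a}=[\overline{r}\,\overline{f},\overline{\jmath}\,\overline{f}]^{\intercal}=[\Id_A,0]^{\intercal}$ forces the first component of the resulting morphism $X\to X_0$ to be $\Id_A$, so it is an equivalence outright. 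Your route is shorter: it avoids both the direct-sum decomposition of $B$ and the endpoint-correction step, at no cost beyond ingredients already established. The direction $1)\Rightarrow 2)$ coincides with the paper's (Corollary~\ref{middleterm}), and your handling of $2)^{\text{op}}$ by passing to the opposite exact dg category matches the paper's implicit appeal to duality; the one slip is your citation of Remark~\ref{truncationexactdgstructure} for $\A^{op}$ — that remark concerns truncations and quasi-equivalences, not opposite categories — and the dualization is instead justified by the self-duality of the axioms, which rests on Proposition~\ref{property}~d).
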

\begin{proof}
The implication $1)\Rightarrow 2)$ follows from Corollary \ref{middleterm}. 

We show $2)\Rightarrow 1)$. 

Suppose $\overline f$ is a split monomorphism in $H^{0}(\A)$. 
Then we have an isomorphism from $A\rightarrow A\oplus C'$ to $f:A\rightarrow B$ in $H^0(\Mor(\A))$, which restricts to $\Id_{A}$. 
So their homotopy cokernels are isomorphic and hence $C'$ is homotopy equivalent to $C$. 
So we have an isomorphism in $H^0(\Mor(\A))$ from $f'=(1\ 0)^{\intercal}: A\rightarrow A\oplus C$ to $f:A\rightarrow B$ which restricts to $\Id_{A}$. 
Then it induces an isomorphism $\alpha$ in $\mathcal H_{3t}(\A)$ 
\[
\begin{tikzcd}
A\ar[d,"h_0"swap]\ar[r,"{[}1{,}0{]^{\intercal}}"]\ar[rr,bend left=8ex,"0"]&A\oplus C\ar[r,"{[}0\ 1{]}"]\ar[d,"h_1"]&C\ar[d,"h_2"]\\
A\ar[r,"f"swap]\ar[rr,bend right=8ex,"h"swap]&B\ar[r,"j"swap]&C
\end{tikzcd}
\]
where we have omitted the diagonal morphisms and $h_0$ is homotopic to $\Id_{A}$.
Let $c$ be a homotopy inverse of $h_2$. 
Then the morphism $c$ gives rise to a morphism $\beta$ 
\[
\begin{tikzcd}
A\ar[d,equal]\ar[r,"{[}1{,}0{]^{\intercal}}"]\ar[rr,bend left=8ex,"0"]&A\oplus C\ar[r,"{[}0\ 1{]}"]\ar[d,"\begin{bmatrix}1\;\;0\\0\;\;c
\end{bmatrix}"]&C\ar[d,"c"]\\
A\ar[r,"{[}1{,}0{]^{\intercal}}"swap]\ar[rr,bend right=8ex,"0"]&A\oplus C\ar[r,"{[}0\ 1{]}"swap]&C
\end{tikzcd}\;\;.
\]
So the composition $\alpha\circ \beta $ is an equivalence between $X$ and a splitting conflation and hence $X$ is a splitting conflation.
\end{proof}

\begin{corollary}\label{zero}Let $X$ be a conflation of the form
\[
 \begin{tikzcd}
 A\ar[r,"f"]\ar[rr,"h"swap,bend right=8ex]&B\ar[r,"j"]&C
 \end{tikzcd}.
 \]
\begin{itemize}
\item[1)] The element $\overline{\jmath}^*[X]$ is zero in $\mathbb E(B,A)$. 
\item[2)]For any zero morphism $0:W\rightarrow C$ in $H^0(\A)$, the corresponding element $0^*[X]$ is zero in $\mathbb E(W,A)$.
\end{itemize}
\end{corollary}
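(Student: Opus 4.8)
The plan is to deduce both statements from the splitting criterion of Proposition~\ref{split}: in each case I will exhibit a section, in $H^0(\A)$, of the deflation of the pulled-back conflation, and then invoke the implication $2)^{\mathrm{op}}\Rightarrow 1)$ there.

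For 1), I would first unwind $\overline{\jmath}^*[X]$ via Axiom~$\Ex2$: applying it to the deflation $j\colon B\to C$ and to the representative $j\colon B\to C$ of $\overline{\jmath}$ itself produces a homotopy pullback square
\[
\begin{tikzcd}
B'\ar[r,"j'"]\ar[d,"b"swap]&B\ar[d,"j"]\\
B\ar[r,"j"swap]&C
\end{tikzcd}
\]
in which $j'$ is again a deflation, and $\overline{\jmath}^*[X]$ is represented by its homotopy kernel $A\to B'\xrightarrow{j'}B$. The key point is that the identity square with corners $B,B,B,C$, whose arrows are $\Id_B$ (top and left) and $j$ (right and bottom), is a homotopy square whose underlying cospan coincides with that of the pullback square above. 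Taking $\varphi=\Id$ in the universal property of homotopy pullbacks (Remark~\ref{pullbackuniversal}), I obtain a morphism of squares $\theta$ restricting to the identity on the cospan; its component on the top-left corner yields a map $\sigma\colon B\to B'$ with $\overline{j'}\circ\sigma=\Id_B$ in $H^0(\A)$, after passing along the epivalence of Lemma~\ref{squareepivalence}. Hence $\overline{j'}$ is a split epimorphism and Proposition~\ref{split} shows $\overline{\jmath}^*[X]$ is splitting, i.e.\ zero in $\mathbb E(B,A)$.

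For 2), I would factor the zero morphism as $0\colon W\xrightarrow{0}B\xrightarrow{\overline{\jmath}}C$ and use the bifunctoriality of $\mathbb E$ to write $0^*[X]=(0_{W\to B})^*\,\overline{\jmath}^*[X]$; by 1) this is the pullback along $0_{W\to B}$ of the zero class, and a pullback of a splitting conflation is again splitting (a section of the deflation pulls back to a section of the new one, again by Remark~\ref{pullbackuniversal}), so $0^*[X]=0$ in $\mathbb E(W,A)$. Alternatively, I can repeat the argument of 1) directly, using the homotopy square with legs $\Id_W$ and $0\colon W\to B$, whose cospan is $B\xrightarrow{j}C\xleftarrow{0}W$, to produce a section of the deflation of $0^*[X]$.

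The step I expect to be the main obstacle is not conceptual but bookkeeping: I must check that the chosen identity (resp.\ zero-legged) square genuinely restricts to the same cospan as the defining pullback square, so that $\varphi$ may be taken to be the identity rather than merely an isomorphism (cf.~Remark~\ref{rmk:isomorphism}); and, tracking through the identification $\rep(k\Sq,\A)\simeq\Mor(\Mor(\A))$ of Lemma~\ref{squareepivalence} together with the diagram functor of Lemma~\ref{epi}, I must make sure that the relation $\overline{j'}\circ\sigma=\Id$ descends to an honest identity in $H^0(\A)$ and not merely an equality up to the homotopies recorded by the square.
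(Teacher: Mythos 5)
Your proof is correct, and since the paper states Corollary~\ref{zero} with no written proof (it is presented as an immediate consequence of Proposition~\ref{split}), your argument is essentially the intended one: realize $\overline{\jmath}^*[X]$ (resp.\ $0^*[X]$) by the homotopy kernel of the pulled-back deflation $j'$, produce a section of $\overline{j'}$ in $H^0(\A)$ from the universal property of the homotopy pullback, and conclude by Proposition~\ref{split}. The bookkeeping you flag at the end is genuine but is already packaged in the paper: instead of Remark~\ref{pullbackuniversal} plus Lemma~\ref{squareepivalence}, apply Lemma~\ref{univ} to the cartesian square (viewed as a morphism $j'\to j$ in $H^0(\Mor(\A))$) and to the strictly commuting square with top and left edges $\Id_B$ (resp.\ with edges $\Id_W$ and zero maps) and zero homotopy; it returns a morphism $\Id_B\to j'$ restricting to $\Id_B$, which by the definition of morphisms in $\Mor(\A)$ is exactly a $\sigma$ with $\overline{j'}\,\overline{\sigma}=\Id_B$ honestly in $H^0(\A)$. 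Moreover the worry about $\varphi=\Id$ versus $\varphi=\phi^{-1}$ is harmless: one then gets $\overline{j'}\,\overline{\sigma}=\overline{u}$ for an automorphism $\overline{u}$ of $B$, and $\overline{\sigma}\,\overline{u}^{-1}$ is still a section. Finally, there is an even shorter route avoiding Ex2 and pullbacks altogether: the 6-tuple $(\Id_A,\,[f,\Id_B],\,j,\,0,\,[-h,0],\,0)$ is a morphism in $\mathcal H_{3t}(\A)$ from the splitting conflation $A\to A\oplus B\to B$ to $X$ restricting to $\Id_A$ and $\overline{\jmath}$, and $(\Id_A,\,[f,0],\,0,\,0,\,[-h,0],\,0)$ does the same from $A\to A\oplus W\to W$; the uniqueness clause in the paper's definition of $\overline{c}^*$ then yields both 1) and 2) at once.
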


\begin{proposition}
Let $[X]$ be an element in $\mathbb E(C,A)$ where $X$ is of the form
\[
\begin{tikzcd}
A\ar[r,"f"]\ar[rr,"h"swap,bend right=8ex]&B\ar[r,"j"]&C
\end{tikzcd}.
\]
 Then $[-X]$, where $-X$ is of the form
\[
\begin{tikzcd}
A\ar[r,"-f"]\ar[rr,"-h"swap,bend right=8ex]&B\ar[r,"j"]&C
\end{tikzcd}
\]
is an inverse of $[X]$ under the addition defined above. So this addition makes $\mathbb E(C,A) $ into an abelian group. 
\end{proposition}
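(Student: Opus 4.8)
The plan is to reduce $[X]+[-X]$, through the Baer--sum formula and the bifunctoriality of $\mathbb E$ already established, to the element $(0_A)_*[X]$ obtained by pushing $X$ out along the zero endomorphism of $A$, which vanishes by Corollary~\ref{zero}. Throughout I will only use the functoriality of $\mathbb E$ in each variable and the fact that the operations $\overline a_*$ and $\overline c^*$ commute (both part of the preceding proposition); I will \emph{not} use additivity of $\overline a_*$ in $\overline a$, since that is precisely the statement whose proof relies on the group structure we are trying to establish.

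First I would record two preliminaries. The strict $6$-tuple $(-1_A,1_B,1_C,0,0,0)$ is a morphism $X\to -X$ in $\mathcal H_{3t}(\A)$ whose components are homotopy equivalences, so it is an isomorphism by Proposition~\ref{termwiseequivalence}; as $\mathcal S$ is stable under isomorphism, $-X$ is again a conflation and $[-X]\in\mathbb E(C,A)$. By Proposition~\ref{property} the direct sums $X\oplus X$ and $X\oplus(-X)$ are conflations as well.

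Next comes the computation. Write $\overline a=[1\ 1]\colon A\oplus A\to A$ and $\overline c=[1\ 1]^{\intercal}\colon C\to C\oplus C$ for the morphisms in the definition of the sum, and put $\Delta_A=[1\ 1]^{\intercal}\colon A\to A\oplus A$. The strict $6$-tuple $(\mathrm{diag}(1_A,-1_A),\,\mathrm{id},\,\mathrm{id},0,0,0)$ defines a morphism $X\oplus X\to X\oplus(-X)$ restricting to $\mathrm{diag}(1_A,-1_A)$ on the inflation end and to $\mathrm{id}_{C\oplus C}$ on the deflation end, so the defining property of $\overline a_*$ (Subsection~\ref{bi}) gives $[X\oplus(-X)]=\mathrm{diag}(1_A,-1_A)_*[X\oplus X]$; applying $\overline a_*$, using covariant functoriality and $[1\ 1]\,\mathrm{diag}(1_A,-1_A)=[1\ {-1}]$, yields $\overline a_*[X\oplus(-X)]=[1\ {-1}]_*[X\oplus X]$. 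The key geometric input is the diagonal identity $\overline c^*[X\oplus X]=\Delta_{A*}[X]$: the strict diagonal $X\to X\oplus X$ is a morphism of conflations restricting to $\Delta_A$ and to $\overline c$ on the two ends, so Lemma~\ref{fact} and the remark following it factor it through a conflation $\tilde X$ with $[\tilde X]=\Delta_{A*}[X]$ and $[\tilde X]=\overline c^*[X\oplus X]$. Combining these and commuting $\overline c^*$ past $[1\ {-1}]_*$,
\[
[X]+[-X]=\overline c^*\overline a_*[X\oplus(-X)]=[1\ {-1}]_*\,\overline c^*[X\oplus X]=[1\ {-1}]_*\Delta_{A*}[X]=(0_A)_*[X],
\]
because $[1\ {-1}]\circ\Delta_A=0_A$. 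Since $0_A$ factors through the zero object, $(0_A)_*[X]=0$ (the covariant form of Corollary~\ref{zero}), whence $[X]+[-X]=0$.

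The main obstacle is the diagonal identity $\overline c^*[X\oplus X]=\Delta_{A*}[X]$: the temptation is to deduce it from additivity of the bifunctor, but that would be circular at this stage, so I route it instead through Lemma~\ref{fact}, which supplies exactly the required factorization of the diagonal morphism. Once $[X]+[-X]=0$ is in hand, combining it with the well-definedness, associativity, commutativity and existence of a zero element proved in the previous lemma shows that $(\mathbb E(C,A),+)$ is an abelian group, completing the proof.
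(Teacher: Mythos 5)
Your proof is correct and is essentially the paper's own argument: both hinge on Lemma~\ref{fact}, functoriality of $\overline{a}_*$ and $\overline{c}^*$ in each variable (without biadditivity, exactly as you note), and the vanishing $0_*[X]=0$, the only difference being that you split the sign off into a separate morphism $X\oplus X\to X\oplus(-X)$ and apply Lemma~\ref{fact} to the plain diagonal, whereas the paper applies Lemma~\ref{fact} once to the single morphism $X\to -X\oplus X$ whose $A$-component is $[-1,1]^{\intercal}$ --- which is precisely the composite of your diagonal with your sign-flip. In particular, your use of the covariant form of Corollary~\ref{zero} matches the paper, which invokes $0_*[X]=0$ in the same (dual) way.
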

\begin{proof}
It is straightforward to check that the element $[-X]$ is well-defined. 
Consider the following diagram in $\A$
\[
\begin{tikzcd}
A\ar[rr,bend left=8ex,"h"]\ar[d,swap,"\begin{bmatrix}-1\\1\end{bmatrix}"]\ar[r,"f"]&B\ar[d,"\begin{bmatrix}1\\1\end{bmatrix}"]\ar[r,"j"]&C\ar[d,"\begin{bmatrix}1\\1\end{bmatrix}"]\\
A\oplus A\ar[rr,"h'"swap,bend right=8ex]\ar[r,"f'"swap]&B\oplus B\ar[r,"j'"swap]&C\oplus C
\end{tikzcd},
\]
where the diagonal morphisms are zero and where $f'=\begin{bmatrix}-f&0\\0&f\end{bmatrix}$, $j'=\begin{bmatrix}j&0\\0&j\end{bmatrix}$ and $h'=\begin{bmatrix}-h&0\\0&h\end{bmatrix}$. This diagram gives rise to a morphism $\alpha:X\rightarrow -X\oplus X$ in $\mathcal H_{3t}(\A)$. 
Next, we apply Lemma \ref{fact} to the morphism $\alpha$. 
Put $a=[1, 1]:A\oplus A\rightarrow A$, $a'=[-1, 1]^{\intercal}:A\rightarrow A\oplus A$ and $c=[1,1]^{\intercal}:C\rightarrow C\oplus C$. 
Then we have $\alpha=\gamma\circ\beta$ where $\beta:X\rightarrow\tilde{X}$, $\gamma:\tilde{X}\rightarrow -X\oplus X$, and $[\tilde{X}]=\overline{a'}_{*}[X]=\overline c^*[-X\oplus X]$.
So we have
\[
[X]+[-X]=\overline a_{*}\overline c^*[-X\oplus X]=\overline a_{*}[\tilde{X}]=\overline a_{*}\overline {a'}_{*}[X]=0_*[X]=0.
\] 
This shows that $[-X]$ is an additive inverse of $[X]$.

\end{proof}

\begin{proposition} The bifunctor $\mathbb E$ is a biadditive bifunctor from $H^0(\A)^{op}\times H^0(\A)$ to the category $\Ab$ of abelian groups.
\end{proposition}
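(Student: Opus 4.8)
The plan is to build on everything already proved: earlier results show that $\mathbb E$ is a bifunctor $H^0(\A)^{op}\times H^0(\A)\to\Set$ and that each $\mathbb E(C,A)$ is an abelian group, so only two things remain, namely that each structure map $\ol a_{*}$ and $\ol c^{*}$ is a group homomorphism (so that $\mathbb E$ lands in $\Ab$) and that $\mathbb E$ is additive in each of the two morphism variables. Throughout I write $\nabla_A=[1\ 1]\colon A\oplus A\to A$ and $\Delta_A=[1\ 1]^{\intercal}\colon A\to A\oplus A$, so that by definition the addition reads $[X]+[X']=\Delta_C^{*}\nabla_{A,*}[X\oplus X']$. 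My first step is to collect the three structural tools I will lean on: functoriality of $\ol a_{*},\ol c^{*}$; the commutation rule $\ol a_{*}\ol c^{*}=\ol c^{*}\ol a_{*}$ (established in the proof that $\mathbb E$ is a $\Set$-bifunctor); and the naturality of the external direct sum, i.e.\ $(\ol a\oplus\ol a')_{*}\big([X]\oplus[X']\big)=\ol a_{*}[X]\oplus\ol a'_{*}[X']$ and its pullback analogue. I would prove this last fact by noting that a direct sum of homotopy cocartesian (resp.\ cartesian) squares is again homotopy cocartesian (resp.\ cartesian), exactly as in the proof of Corollary~\ref{Comp}; hence $\ol a_{*}[X]\oplus\ol a'_{*}[X']$ is a homotopy pushout of $X\oplus X'$ along $\ol a\oplus\ol a'$ and therefore coincides with $(\ol a\oplus\ol a')_{*}[X\oplus X']$ by uniqueness of homotopy pushouts (Proposition~\ref{pullbackunique}).

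The two genuinely new inputs are the identities
\[
\nabla_{A,*}[X\oplus X]=\nabla_C^{*}[X]\qquad\text{and}\qquad \Delta_{A,*}[X]=\Delta_C^{*}[X\oplus X].
\]
I would obtain both as instances of Lemma~\ref{fact}. The fold map $X\oplus X\to X$, given on the three terms by $\nabla_A$, $\nabla_B$, $\nabla_C$, is a morphism of conflations whose restrictions to the ends are $\nabla_A$ and $\nabla_C$; applying Lemma~\ref{fact} and the remark following it yields $\nabla_{A,*}[X\oplus X]=\nabla_C^{*}[X]$. Dually, the diagonal $X\to X\oplus X$ restricts to $\Delta_A$ and $\Delta_C$ on the ends, which gives the second identity.

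Granting these, the homomorphism property of $\ol a_{*}$ is the computation
\[
\ol a_{*}\big([X]+[X']\big)=\Delta_C^{*}\,\ol a_{*}\nabla_{A,*}[X\oplus X']=\Delta_C^{*}\nabla_{A',*}(\ol a\oplus\ol a)_{*}[X\oplus X']=\ol a_{*}[X]+\ol a_{*}[X'],
\]
where the first equality uses $\ol a_{*}\Delta_C^{*}=\Delta_C^{*}\ol a_{*}$, the second uses functoriality together with $\ol a\circ\nabla_A=\nabla_{A'}\circ(\ol a\oplus\ol a)$, and the third uses naturality of $\oplus$ and the definition of the addition on $\mathbb E(C,A')$. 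The dual computation shows that $\ol c^{*}$ is a homomorphism, so $\mathbb E$ is a bifunctor into $\Ab$.

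Finally, additivity in the covariant variable is the identity $(\ol a+\ol a')_{*}[X]=\ol a_{*}[X]+\ol a'_{*}[X]$ for $\ol a,\ol a'\colon A\to A'$. Writing $\ol a+\ol a'=\nabla_{A'}\circ(\ol a\oplus\ol a')\circ\Delta_A$ and combining naturality of $\oplus$, the commutation of push and pull, and the identity $\Delta_C^{*}[X\oplus X]=\Delta_{A,*}[X]$, I would transform $\ol a_{*}[X]+\ol a'_{*}[X]=\Delta_C^{*}\nabla_{A',*}(\ol a\oplus\ol a')_{*}[X\oplus X]$ into $\big(\nabla_{A'}(\ol a\oplus\ol a')\Delta_A\big)_{*}[X]=(\ol a+\ol a')_{*}[X]$; the contravariant case $(\ol c+\ol c')^{*}=\ol c^{*}+\ol c'^{*}$ is dual, invoking instead $\nabla_{A,*}[X\oplus X]=\nabla_C^{*}[X]$. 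I expect the main obstacle to be not any single step but the bookkeeping: one must keep the commutation rule $\ol a_{*}\ol c^{*}=\ol c^{*}\ol a_{*}$, functoriality, and naturality of $\oplus$ carefully aligned, the decisive conceptual point being the recognition that the two identities of the second paragraph are precisely Lemma~\ref{fact} applied to the fold and diagonal morphisms of conflations.
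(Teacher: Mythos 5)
Your proof is correct and takes essentially the same route as the paper's: the same reduction to showing that $\ol{a}_*$ and $\ol{c}^*$ are group homomorphisms plus additivity in the morphism variables, the same computations combining the factorization $\ol{a}+\ol{a}'=\nabla_{A'}\circ(\ol{a}\oplus\ol{a}')\circ\Delta_A$ with functoriality, push--pull commutation and naturality of direct sums, and the same decisive use of Lemma~\ref{fact} applied to the diagonal morphism $X\to X\oplus X$ to get $\Delta_{A,*}[X]=\Delta_C^{*}[X\oplus X]$. The only cosmetic differences are that you justify the naturality $(\ol{a}\oplus\ol{a}')_*[X\oplus X']=\ol{a}_*[X]\oplus\ol{a}'_*[X']$ via uniqueness of homotopy pushouts where the paper treats it as immediate from the defining property of $\ol{a}_*$, and that you invoke the fold identity $\nabla_{A,*}[X\oplus X]=\nabla_C^{*}[X]$ explicitly for the contravariant case where the paper simply dualizes.
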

\begin{proof}
Let $X_i$, $i=1$, $2$, be a conflation of the form 
\[
\begin{tikzcd}
A\ar[r,"f_i"]\ar[rr,"h_i"swap,bend right=8ex]&B_i\ar[r,"j_i"]&C
\end{tikzcd}
\]
and $\overline b:A\rightarrow A'$ a morphism in $H^0(\A)$. 
Put $a=[1,1]:A\oplus A\rightarrow A$, $a'=[1,1]:A'\oplus A'\rightarrow A'$, $c=[1,1]^{\intercal}:C\rightarrow C\oplus C$ and $d=\begin{bmatrix}b&0\\0&b\end{bmatrix}:A\oplus A\rightarrow A'\oplus A'$. 
We show that $\overline{b}_{*}:\mathbb E(C,A)\rightarrow\mathbb E(C,A')$ is a morphism of abelian groups.
Put $[\tilde{X_i}]=\overline b_*[X_i]$, $i=1,2$. 
Then, by definition, we have $d_*[X_1\oplus X_2]=[\tilde{X_1}\oplus \tilde{X_2}]$. 
So we have
\[
\begin{aligned}
\overline b_*([X_1]+[X_2])&=\overline b_*\overline a_*\overline c^*[X_1\oplus X_2]=\overline a'_*\overline d_*\overline c^{*}[X_1\oplus X_2]\\
&=\overline a'_*\overline c^{*}\overline d_*[X_1\oplus X_2]=\overline a'_*\overline c^{*}[\tilde{X_1}\oplus \tilde{X_2}]\\
&=\tilde{X_1}+\tilde{X_2}=\overline b_*[X_1]+\overline b_*[X_2].
\end{aligned}
\] 
So the map $\overline{b}_*$ is a morphism of abelian groups.

Let $X$ be a conflation of the form
\[
\begin{tikzcd}
A\ar[r,"f"]\ar[rr,"h"swap,bend right=8ex]&B\ar[r,"j"]&C
\end{tikzcd}.
\]
Suppose we have morphisms $\overline b_i:A\rightarrow A'$ in $H^0(\A)$ for $i=1$, $2$. We show that $(\overline b_1+\overline b_2)_{*}=(\overline b_1)_*+(\overline b_2)_*$.
Put $d'=\begin{bmatrix}b_1&0\\0&b_2
\end{bmatrix}$ and $a''=[1,1]^{\intercal}:A\rightarrow A\oplus A$. 
Put $[Y_i]=(\overline b_i)_*[X]$ for $i=1$, $2$. 
We have a canonical morphism $\alpha:X \rightarrow X\oplus X$ which is componentwise given by the morphism $[1,1]^{\intercal}$. 
By Lemma \ref{fact}, the morphism $\alpha$ factorizes as $\alpha=\gamma\circ \beta$ where we have $\beta: X\rightarrow \tilde{X}$, $\gamma:\tilde{X}\rightarrow X\oplus X$ and $[\tilde{X}]=\overline a''_*[X]=\overline c^*[X\oplus X]$. 
Then we have
\[
\begin{aligned}
(\overline b_1+\overline b_2)_*[X]&=\overline a'_*\overline d'_*\overline a''_*[X]=\overline a'_*\overline d'_*\overline c^*[X\oplus X]\\
&=\overline a'_*\overline c^*\overline d'_{*}[X\oplus X]=\overline a'_*\overline c^*[Y_1\oplus Y_2]\\
&=Y_1+Y_2=(\overline b_1)_*[X]+(\overline b_2)_*[X].
\end{aligned}
\] 
Thus we have
$(\overline b_1+\overline b_2)_*=(\overline b_1)_*+(\overline b_2)_*$. 
This, together with its dual, shows that the bifunctor $\mathbb E$ is biadditive.
\end{proof}
For two elements $\delta_i=[X_i]\in \mathbb E(C_i,A_i)$, $i=1$, $2$, let $\delta_1\oplus \delta_2$ be the element in 
\[
\mathbb E(C_1\oplus C_2, A_1\oplus A_2)\simeq \mathbb E(C_1,A_1)\oplus \mathbb E(C_2, A_2)\oplus \mathbb E(C_1, A_2)\oplus \mathbb E(C_2, A_1)
\]
corresponding to the element $(\delta_1,\delta_2,0,0)$.
\begin{lemma}\label{sum}$\delta_1\oplus \delta_2$ is the equivalence class of $X_1\oplus X_2$.
\end{lemma}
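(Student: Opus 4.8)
The plan is to read off the four components of $[X_1\oplus X_2]$ under the biadditivity isomorphism and to check that they are $\delta_1$, $\delta_2$, $0$, $0$. Throughout I work in $H^0(\A)$, which is additive, so the objects $A_1\oplus A_2$ and $C_1\oplus C_2$ are genuine biproducts with inclusions $\iota_{A_j}\colon A_j\to A_1\oplus A_2$, $\iota_{C_i}\colon C_i\to C_1\oplus C_2$ and projections $\pi_{A_j}$, $\pi_{C_i}$; and $X_1\oplus X_2$ is a conflation by Proposition~\ref{property} c).

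First I would make the isomorphism $\mathbb E(C_1\oplus C_2,A_1\oplus A_2)\simeq\bigoplus_{i,j}\mathbb E(C_i,A_j)$ explicit. Since $\mathbb E$ is a biadditive bifunctor, applying it to the biproduct decompositions $\sum_j\iota_{A_j}\pi_{A_j}=\Id$ and $\sum_i\iota_{C_i}\pi_{C_i}=\Id$ shows that the component of any class in $\mathbb E(C_i,A_j)$ is extracted by applying $\iota_{C_i}^*$ in the (contravariant) first variable and $(\pi_{A_j})_*$ in the (covariant) second variable; these two operations commute because they act on different variables, and the relations $\pi_{C_{i'}}\iota_{C_i}=\delta_{ii'}\Id$, $\pi_{A_j}\iota_{A_{j'}}=\delta_{jj'}\Id$ guarantee that $\iota_{C_i}^*$ and $(\pi_{A_j})_*$ are the projections of the direct-sum decomposition (with inclusions $\pi_{C_i}^*$ and $(\iota_{A_j})_*$). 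Hence it suffices to compute $(\pi_{A_j})_*\,\iota_{C_i}^*[X_1\oplus X_2]$ for the four pairs $(i,j)$.

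The key step is to observe that the split inclusion $\iota_i\colon X_i\to X_1\oplus X_2$, given componentwise by $(\iota_{A_i},\iota_{B_i},\iota_{C_i})$ with vanishing homotopy data $s_1=s_2=t=0$, is a closed morphism in $\mathcal H_{3t}(\A)$; this is immediate, since the biproduct inclusions commute with $f_i$, $j_i$ and $h_i$, so the defining equations of Definition~\ref{def:3termhomotopy} hold with all higher components zero. As both $X_i$ and $X_1\oplus X_2$ are conflations, Lemma~\ref{fact} applies to $\iota_i$, whose restrictions to the two ends are $\iota_{A_i}$ and $\iota_{C_i}$. By the Remark following that lemma it yields
\[
\iota_{C_i}^*[X_1\oplus X_2]=(\iota_{A_i})_*[X_i]\quad\text{in }\mathbb E(C_i,A_1\oplus A_2).
\]
Applying $(\pi_{A_j})_*$ and using functoriality of pushforward gives $(\pi_{A_j})_*\iota_{C_i}^*[X_1\oplus X_2]=(\pi_{A_j}\iota_{A_i})_*[X_i]$. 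By the Kronecker identity $\pi_{A_j}\iota_{A_i}=\delta_{ij}\Id$ and the fact that a zero morphism induces the zero homomorphism on $\mathbb E$ (biadditivity), this equals $[X_i]=\delta_i$ when $i=j$ and $0$ otherwise. Thus the four components of $[X_1\oplus X_2]$ are $\delta_1,\delta_2,0,0$, which is exactly $\delta_1\oplus\delta_2$.

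I expect no serious obstacle, since the argument is entirely an application of Lemma~\ref{fact} together with the bifunctoriality and biadditivity of $\mathbb E$ already established. The only point deserving care is normalizing the biadditivity isomorphism so that its four projections are literally $(\pi_{A_j})_*\iota_{C_i}^*$; once this is fixed, the vanishing of the cross terms is structural, coming from $\pi_{A_j}\iota_{A_i}=0$ for $i\neq j$, and no diagram chase inside $\mathcal H_{3t}(\A)$ is required beyond checking that $\iota_i$ is a morphism of conflations.
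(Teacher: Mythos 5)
Your proof is correct, and while it computes the same four components as the paper --- both use $(a_j)_*c_i^*$, i.e.~pushforward along the projection $A_1\oplus A_2\rightarrow A_j$ and pullback along the inclusion $C_i\rightarrow C_1\oplus C_2$, as the normalization of the biadditivity isomorphism --- the evaluation goes by a genuinely different route. The paper works pushforward-first: it identifies $(a_1)_*[X_1\oplus X_2]=[X_1\oplus Y]$, where $Y$ is the trivial conflation $0\rightarrow C_2\rightarrow C_2$, and then pulls back along the two inclusions, invoking Corollary~\ref{zero} to make the cross term $c_2^*[X_1\oplus Y]$ vanish. You work pullback-first: Lemma~\ref{fact} applied to the split inclusion $\iota_i\colon X_i\rightarrow X_1\oplus X_2$ --- which, as you note, is a strict morphism of $3$-term h-complexes, with all homotopy components zero --- gives $\iota_{C_i}^*[X_1\oplus X_2]=(\iota_{A_i})_*[X_i]$, and the rest is formal: $(\pi_{A_j})_*(\iota_{A_i})_*=(\pi_{A_j}\iota_{A_i})_*$ is the identity for $i=j$ and zero for $i\neq j$ by bifunctoriality and biadditivity. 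Your version thus avoids Corollary~\ref{zero} and any auxiliary conflation, and its comparison morphism is strict, whereas the morphism implicit in the paper's identification $(a_1)_*[X_1\oplus X_2]=[X_1\oplus Y]$ needs nontrivial homotopy data (its middle component $\Id_{B_1}\oplus j_2$ intertwines the inflations only up to the homotopy $h_2$). The paper's version, in exchange, records the intermediate class $(a_1)_*[X_1\oplus X_2]$ explicitly. One small point of care: the biproducts must exist in $Z^0(\A)$, not merely in $H^0(\A)$, for $X_1\oplus X_2$ to be formed as a $3$-term h-complex; this is covered by Remark~\ref{truncationexactdgstructure}~c) and is already implicit in Proposition~\ref{property}~c), which you cite.
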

\begin{proof}Denote by $a_i:A_1\oplus A_2\rightarrow A_i$ the canonical projection for $i=1$, $2$. 
Denote by $c_i:C_i\rightarrow C_1\oplus C_2$ the canonical inclusion for $i=1$, $2$. 
It is enough to show that we have $c_j^*(a_i)_*[X_1\oplus X_2]=0$ for $i\neq j$ and $c_i^*(a_i)_*[X_1\oplus X_2]=[X_i]$.
We show the case when $i=1$. 
We have $(a_1)_*[X_1\oplus X_2]=[X_1\oplus Y]$ where $Y$ is the conflation
\[
\begin{tikzcd}
0\ar[r]\ar[rr,"0"swap,bend right=8ex]&C_2\ar[r,equal]&C_2
\end{tikzcd}.
\]
So we have $c_1^*(a_1)_*[X_1\oplus X_2]=[X_1]$. 
By Corollary~\ref{zero}, we have $c_2^*(a_1)_*[X_1\oplus X_2]=0$.
\end{proof}
\subsection{The canonical extriangulated structure on $H^0(\A)$}\label{canonicalstructure}
In this subsection, we show that $H^0(\A)$ carries a canonical extriangulated structure, as announced
in Remark~\ref{rk:extriangulated-structure}.

We define the realisation $\mathfrak s$ of the bifunctor $\mathbb E$ as follows: for an element $\delta=[X]\in \mathbb E(C,A)$, where $X$ is as follows
\[
\begin{tikzcd}
A\ar[r,"f"]\ar[rr,bend right=8ex,"h"swap]&B\ar[r,"j"]&C
\end{tikzcd},
\]
put $\mathfrak s(\delta)=[A\xrightarrow{\overline{f}} B\xrightarrow{\overline{\jmath}} C]$. 
This is well-defined by Corollary \ref{middleterm}.
\begin{lemma}The map $\mathfrak s$ defined above is an additive realization of $\mathbb E$.
\end{lemma}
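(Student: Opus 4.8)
The plan is to verify the three defining properties of an additive realization in the sense of Nakaoka--Palu: the compatibility axiom relating morphisms of $\mathbb E$-extensions to commutative ladders, together with the two additivity conditions $\mathfrak s(0)=[A\xrightarrow{\binom{1}{0}}A\oplus C\xrightarrow{(0,1)}C]$ and $\mathfrak s(\delta_1\oplus\delta_2)=\mathfrak s(\delta_1)\oplus\mathfrak s(\delta_2)$. Well-definedness of $\mathfrak s$ is already recorded (Corollary~\ref{middleterm}), so throughout I may evaluate $\mathfrak s$ on any convenient representative.

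For the compatibility axiom, let $\delta=[X]\in\mathbb E(C,A)$ and $\delta'=[X']\in\mathbb E(C',A')$ be given by conflations $X\colon A\xrightarrow{f}B\xrightarrow{j}C$ and $X'\colon A'\xrightarrow{f'}B'\xrightarrow{j'}C'$, and let $(\overline a,\overline c)$ be a morphism of extensions, i.e.\ $\overline a\colon A\to A'$, $\overline c\colon C\to C'$ in $H^0(\A)$ with $\overline a_*\delta=\overline c^*\delta'$ in $\mathbb E(C,A')$. I would produce the required middle morphism by first building a morphism $\alpha\colon X\to X'$ in $\mathcal H_{3t}(\A)$ restricting to $\overline a$ and $\overline c$ at the outer terms. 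By the very definition of the pushforward $\overline a_*$ there is a conflation $X_1$ with $[X_1]=\overline a_*\delta$ and a morphism $\theta_1\colon X\to X_1$ restricting to $(\overline a,\Id_C)$; dually, the definition of $\overline c^*$ yields a conflation $X_2$ with $[X_2]=\overline c^*\delta'$ and a morphism $\theta_2\colon X_2\to X'$ restricting to $(\Id_{A'},\overline c)$. The hypothesis $\overline a_*\delta=\overline c^*\delta'$ means $[X_1]=[X_2]$, so there is an equivalence $\epsilon\colon X_1\iso X_2$ restricting to $(\Id_{A'},\Id_C)$. The composite $\alpha=\theta_2\circ\epsilon\circ\theta_1$ then restricts to $(\overline a,\overline c)$, and its restriction $\overline b\colon B\to B'$ to the middle term is the sought morphism: the homotopies $s_1,s_2$ carried by $\alpha$ witness, upon passing to $H^0(\A)$, the equalities $\overline b\,\overline f=\overline{f'}\,\overline a$ and $\overline{j'}\,\overline b=\overline c\,\overline{\jmath}$, which is exactly the commutativity of the required ladder.

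The additivity conditions are then immediate from the structural results already established. For the zero extension, Proposition~\ref{split} identifies the class of splitting conflations with $0\in\mathbb E(C,A)$, and choosing the canonical representative $A\xrightarrow{\binom{1}{0}}A\oplus C\xrightarrow{(0,1)}C$ shows $\mathfrak s(0)=[A\to A\oplus C\to C]$ after applying $H^0$. For direct sums, Lemma~\ref{sum} gives $\delta_1\oplus\delta_2=[X_1\oplus X_2]$, and $X_1\oplus X_2$ is a conflation by Proposition~\ref{property}~c); applying $\mathfrak s$ and comparing with the definition of $\oplus$ on equivalence classes of sequences yields $\mathfrak s(\delta_1\oplus\delta_2)=\mathfrak s(\delta_1)\oplus\mathfrak s(\delta_2)$.

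I expect the only real content to lie in the compatibility axiom, and within it in the construction of $\alpha$: once the pushforward and pullback conflations are recognized as being related by an equivalence, the diagram-theoretic conclusion is formal. The mild subtlety to keep in mind is that commutativity of the ladder holds in $H^0(\A)$ rather than in $Z^0(\A)$, so one must pass through the homotopies recorded by a morphism of $\mathcal H_{3t}(\A)$ rather than expect strict commutativity on the nose.
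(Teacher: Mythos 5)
Your proposal is correct and follows essentially the same route as the paper: the paper likewise obtains the middle morphism $\overline{b}$ from a composite of morphisms of conflations $X\rightarrow X''\rightarrow X'$ (where $[X'']=\overline{a}_*\delta=\overline{c}^*\delta'$) restricting to $(\overline{a},\Id_C)$ and $(\Id_{A'},\overline{c})$, and then settles additivity exactly as you do, via the split representative of $0$ and Lemma~\ref{sum}. Your explicit insertion of the equivalence $\epsilon\colon X_1\iso X_2$ is just the paper's implicit choice of a common representative $X''$ spelled out.
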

\begin{proof}We first show that $\mathfrak s$ is a realization.
Suppose we have $\delta=[X]\in \mathbb E(C,A)$, $\delta'=[X']\in \mathbb E(C',A')$, $\overline{a}:A\rightarrow A'$ and $\overline{c}:C\rightarrow C'$ such that $\overline{a}_*[X]=\overline{c}^*[X']=[X'']\in \mathbb E(C,A')$. Put $\mathfrak s(\delta)=[A\xrightarrow{\overline{f}} B\xrightarrow{\overline{\jmath}} C]$ and $\mathfrak s(\delta')=[A'\xrightarrow{\overline{f'}} B'\xrightarrow{\overline{\jmath'}} C']$.
We have a composition of morphism of conflations $X\rightarrow X''\rightarrow X'$ which restricts to $\overline{a}:A\rightarrow A'$ and $\overline{c}:C\rightarrow C'$. 
This morphism then gives a morphism $\overline{b}:B\rightarrow B'$ which makes the following diagram in $H^0(\A)$ commutative
\[
\begin{tikzcd}
A\ar[r,"\overline{f}"]\ar[d,"\overline{a}"swap]& B\ar[r,"\overline{\jmath}"]\ar[d,"\overline{b}"] &C\ar[d,"\overline{c}"]\\
A'\ar[r,"\overline{f'}"swap]&B'\ar[r,"\overline{\jmath'}"swap]&C'
\end{tikzcd}.
\]

We now show that $\mathfrak s$ is additive.
By definition, for $0\in\mathbb E(C,A)$, we have 
\[
\mathfrak s(0)=[A\xrightarrow{\begin{bmatrix}0\\1\end{bmatrix}}A\oplus C\xrightarrow{[0{,}1]}C].
\]
By Lemma \ref{sum}, we have $\mathfrak s(\delta\oplus \delta')=\mathfrak s(\delta)\oplus \mathfrak s(\delta')$.
\end{proof}
\begin{remark}\label{truncationextriangulated}
By Remark \ref{truncationexactdgstructure}, the exact dg structures on $\A$ are the same as 
the exact dg structures on $\tau_{\leq 0}\A$. 
Note that we have $H^0(\tau_{\leq 0}\A)=H^0(\A)$. 
By definition, the triple $(H^0(\A), \mathbb E,\mathfrak s)$ remains unchanged when we replace $\A$ by $\tau_{\leq 0}\A$. 
\end{remark}
\begin{theorem}\label{thm:extriangulatedstructure}
The triple $(H^0(\A),\mathbb E,\mathfrak s)$ forms an extriangulated category.
\end{theorem}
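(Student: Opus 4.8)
The plan is to verify, one by one, the axioms of an extriangulated category in the sense of Nakaoka--Palu. Recall that these axioms come in three groups: (ET1) $\mathbb E$ is a biadditive bifunctor; (ET2) the realization $\mathfrak s$ is additive; (ET3) and (ET3)$^{\mathrm{op}}$ assert that a morphism of $\mathbb E$-extensions can be realized by a morphism of the corresponding conflations, together with a compatibility; and (ET4) and (ET4)$^{\mathrm{op}}$ are the two ``rotation/octahedron-type'' axioms governing the behaviour of conflations under composition of inflations (resp.\ deflations). Most of the conceptual work has already been done: by the preceding propositions, $\mathbb E\colon H^0(\A)^{op}\times H^0(\A)\to\Ab$ is a biadditive bifunctor, which gives (ET1); and the lemma just proved shows that $\mathfrak s$ is an additive realization, which is (ET2).

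It remains to establish (ET3), (ET3)$^{\mathrm{op}}$, (ET4) and (ET4)$^{\mathrm{op}}$. For (ET3), I would start with a commutative square in $H^0(\A)$ between the realizations of two extensions $\delta=[X]\in\mathbb E(C,A)$ and $\delta'=[X']\in\mathbb E(C',A')$ with prescribed $\overline a\colon A\to A'$ and $\overline b\colon B\to B'$, such that $\overline b\,\overline f=\overline{f'}\,\overline a$. The task is to produce a $\overline c\colon C\to C'$ with $\overline a_*\delta=\overline c^{*}\delta'$ and the evident compatibility. The key input is Lemma~\ref{fact}: any morphism $\alpha\colon X\to X'$ in $\mathcal H_{3t}(\A)$ factors through a conflation $\tilde X$ with ends $A'$ and $C$, where the first factor realizes $\overline a_*$ and the second realizes $\overline c^{*}$; so the essential point is to lift the given commutative square to an honest morphism $\alpha$ in $\mathcal H_{3t}(\A)$ restricting to $\overline a$ and $\overline b$. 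Here I would invoke the epivalence $\Dia\colon H^0(\Mor(\A))\to\Mor(H^0(\A))$ from Lemma~\ref{squareepivalence} to lift the square, and then use the universal property of homotopy left exact sequences (Lemma~\ref{strictmorphismlift}) to extend the lifted square on $j$ to a full morphism of $3$-term $h$-complexes; the induced $\overline c$ is the restriction to the third term. (ET3)$^{\mathrm{op}}$ follows by the duality exchanging homotopy pullbacks and pushouts.

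For the octahedron axioms (ET4) and (ET4)$^{\mathrm{op}}$, which I expect to be the main obstacle, the crucial tool is Lemma~\ref{deflationcomposition} together with Corollary~\ref{cok} and Proposition~\ref{cons}. In (ET4)$^{\mathrm{op}}$ one is given two composable deflations $j\colon B\to C$ and $j'\colon C\to D$ with their homotopy kernels (both conflations), and must produce a conflation structure on the homotopy kernel of the composite $j'j$ fitting into the standard $3\times 3$ commutative diagram, together with the required $\mathbb E$-compatibility of the connecting extensions. This is exactly the content of the diagram produced in Lemma~\ref{deflationcomposition}: item (a) gives the homotopy pullback square, item (c) identifies the middle row as a homotopy short exact sequence (hence a conflation, once one checks the pullback of the cospan is homotopy short exact using Proposition~\ref{property} and the stability of conflations under the operations), and items (d), (e) give the two comparison morphisms of $3$-term $h$-complexes whose restrictions assemble into the required diagram in $H^0(\A)$. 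I would then translate the morphisms (d) and (e) into the language of $\mathbb E$ via the Remark following Lemma~\ref{fact}, which reads off how $\overline a_*$ and $\overline c^{*}$ act, to obtain the compatibility condition on extensions demanded by (ET4)$^{\mathrm{op}}$. The dual argument, using the homotopy cokernels of composable inflations and Proposition~\ref{property}\,d) (Axiom $\mathrm{Ex}1^{op}$), yields (ET4). The main difficulty is bookkeeping: one must carefully match the homotopy-coherent data (the homotopies $s_i,t$ and their compatibilities) produced by these lemmas against the strictly commutative diagrams required by the Nakaoka--Palu axioms, and check that passing to $H^0$ collapses the coherence data correctly; Corollary~\ref{middleterm} and the uniqueness statements in Proposition~\ref{pullbackunique} are what guarantee that the resulting maps in $H^0(\A)$ are well-defined and compatible.
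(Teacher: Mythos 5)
Your proposal is correct and, for (ET1)--(ET3), runs along the same lines as the paper: (ET1) and (ET2) are exactly the preceding propositions and lemma, and for (ET3) the paper likewise lifts the given square through the epivalence $\Dia\colon H^0(\Mor(\A))\to\Mor(H^0(\A))$, extends the lift to a morphism $X\to X'$ in $\mathcal H_{3t}(\A)$, reads off $\overline c$ on the third terms, and applies Lemma~\ref{fact} to conclude $\overline a_*\delta=\overline c^{*}\delta'$. One slip to fix there: in (ET3) the given square sits on the inflations (the data are $\overline a$ and $\overline b$), so the extension step uses the universal property of homotopy \emph{right} exact sequences (the homotopy cokernel property of the source conflation $X$), i.e.\ the \emph{dual} of Lemma~\ref{strictmorphismlift}; the lemma as you cite it (a square on the $j$-parts with homotopy left exact target) is the tool for (ET3)$^{op}$. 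Since the two axioms are exchanged by duality this costs nothing, but your text conflates them --- you say you extend ``the lifted square on $j$'' yet expect the induced map on the \emph{third} term.

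Where you genuinely differ from the paper is (ET4). The paper argues directly: it forms a conflation $X''$ on the composite inflation $\overline{f'f}$ (legitimate by Proposition~\ref{property}), produces a morphism $X\to X''$ whose restriction is a homotopy bicartesian square $Y_1$, produces a second bicartesian square $Y_2$ from a realization of $\overline{\jmath}_*\delta'$, and identifies $Y_1\cong Y_2$ by uniqueness of homotopy pushouts; this identification is what yields the realization $D\to E\to F$ of $\overline{\jmath}_*\delta'$ together with $\overline{\jmath'}=\overline e\,\overline h$. You instead derive (ET4)$^{op}$ from Lemma~\ref{deflationcomposition} and dualize. This is sound and arguably more economical: items d) and e) of that lemma, read through the Remark after Lemma~\ref{fact}, hand you precisely the two compatibilities ``left column $=\overline{f'}{}^{*}$(kernel conflation of $j$)'' and ``$\overline v_{*}$(middle row) $=$ (kernel conflation of $j'$)'', which the paper only obtains through the pushout-identification argument. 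The price of your route is the step you only gesture at: the middle row and left column of that diagram are a priori merely homotopy (left/short) exact, and upgrading them to genuine conflations requires Axiom~$\Ex 1$ (so $j'j$ is a deflation), Axiom~$\Ex 2$ (so the pulled-back map $v$ is a deflation), uniqueness of homotopy kernels, and closedness of $\mathcal S$ under isomorphism; that is the load-bearing step, not mere bookkeeping. Finally, Nakaoka--Palu's third compatibility in (ET4), $f_*\delta''=e^{*}\delta'$, is spelled out neither by you nor, in fairness, by the paper; on your route it can be extracted from the conflation $E\to B\oplus A'\to C$ supplied by Claim 1 in the proof of Proposition~\ref{property} applied to the bicartesian square of Lemma~\ref{deflationcomposition}\,a), by exhibiting morphisms of conflations restricting to $(\Id_E,\overline{\jmath'})$ and to $(\overline u,\Id_C)$ respectively.
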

\begin{proof}

$(\mathrm{ET3})$ Let $\delta=[X]\in \mathbb E(C,A)$, $\delta'=[X']\in \mathbb E(C',A')$. 
Suppose they are realized as $\mathfrak s(\delta)=[A\xrightarrow{\overline{f}} B\xrightarrow{\overline{\jmath}} C]$ and $\mathfrak s(\delta')=[A'\xrightarrow{\overline{f'}} B'\xrightarrow{\overline{\jmath'}} C']$.
For any commutative square in $H^0(\A)$
\[
\begin{tikzcd}
A\ar[r,"\overline{f}"]\ar[d,"a"swap]&B\ar[r,"\overline{\jmath}"]\ar[d,"b"]&C\\
A'\ar[r,"\overline{f'}"swap]&B'\ar[r,"\overline{\jmath'}"swap]&C'
\end{tikzcd},
\]
the left hand square can be lifted (in a non-unique way) to a morphism from $f:A\rightarrow B$ to $f':A'\rightarrow B'$ in $H^0(\Mor(\A))$. 
It then induces a morphism $X\rightarrow X'$ in $\mathcal H_{3t}(\A)$. 
The restriction gives a morphism $c:C\rightarrow C'$ in $H^0(\A)$.
By  Lemma \ref{fact},  we have $a_*\delta=c^*\delta'$. 
This proves (ET3). 
Axiom \text{(ET3)$^{op}$} is proved dually.

$(\mathrm{ET4})$ Consider $\delta=[X]\in \mathbb E(D,A)$ and $\delta'=[X']\in\mathbb E(F,B)$ which are realized respectively by $A\xrightarrow{\overline{f}}B\xrightarrow{\overline{\jmath}}D $ and $B\xrightarrow{\overline{f'}}C\xrightarrow{\overline{\jmath'}}F$. 

Since a composition of inflations is an inflation, the object $f'f:A\rightarrow C$ is an inflation and there exists a conflation $X''$ such that $[X'']$ is realized by $A\xrightarrow{\overline{f'f}}C\xrightarrow{\overline{h}}E$.
The commutative diagram 
\[
\begin{tikzcd}
A\ar[r,"\overline{f}"]\ar[d,equal]&B\ar[d,"\overline{f'}"]\\
A\ar[r,"\overline{f'f}"swap]&C
\end{tikzcd}
\]
in $\D(\A)$ can be lifted to a morphism in $H^0(\Mor(\A))$ from $f:A\rightarrow B$ to $f'f:A\rightarrow C$. 
This morphism induces a morphism from $X$ to $X''$ which restricts to a homotopy bicartesian square $Y_1$ whose image in $\Fun(\mathrm{Sq},\D(\A))$ is isomorphic to
\[
\begin{tikzcd}
B\ar[r,"\overline{\jmath}"]\ar[d,"\overline{f'}"swap]&D\ar[d,"\overline{d}"]\\
C\ar[r,"\overline{h}"swap]&E
\end{tikzcd}\;.
\]
Let $\overline{\jmath}_* [X']=[X''']\in \mathbb E(F,B)$. 
We have a morphism $X'\rightarrow X'''$ which, by restriction to $f':B\rightarrow C$, yields a homotopy bicartesian square $Y_2$. 
Both $Y_1$ and $Y_2$ are homotopy pushouts of $S:$
\[
\begin{tikzcd}
B\ar[r,"j"]\ar[d,"f'"swap]&D\\
C&
\end{tikzcd}.
\] 
So there exists an isomorphism $Y_1\rightarrow Y_2$ in $\D(\mathrm{Sq})$ which is compatible with the isomorphisms $R(Y_i)\rightarrow S$ where $R:\D(\mathrm{Sq})\rightarrow \D(\mathrm{Sp})$ is the restriction functor.
So $[X''']$ can be realized by $D\xrightarrow{\overline{d}}E\xrightarrow{\overline{e}}F$ such that $\overline{\jmath'}=\overline{e}\overline{h}$. This shows (ET4). Axiom \text(ET4$^{op}$) can be shown dually.
\end{proof}
\begin{definition}\label{def:algebraicextriangulated}
An extriangulated category $\C$ is {\em algebraic}, if it is equivalent, as an extriangulated category, to $(H^0(\A), \mathbb E,\mathfrak s)$ for an exact dg category $\A$.
\end{definition}
By Remark~\ref{truncationextriangulated}, the underlying dg category of an exact dg category, which enhances an extriangulated category, could always be taken to be connective.
\subsection{Subcategories, closed subbifunctors and exact dg substructures}
In this subsection, for an exact dg category $(\A,\mathcal S)$, we demonstrate the existence of a canonical bijection between the lattice of exact substructures of $(\A,\mathcal S)$ and the lattice of closed subbifunctors of $\mathbb E$, where $(H^0(\A),\mathbb E,\mathfrak s)$ is the associated extriangulated category of $(\A,\mathcal S)$, cf.~Theorem~\ref{bijectionstructures}.
By leveraging Ogawa's notion of defect~\cite[Definition 2.4]{Ogawa21} and Iyama--Nakaoka--Palu's notion of almost split extension~\cite[Definition 2.1]{IyamaNakaokaPalu24}, for certain class of exact dg categories, we establish a bijection between the class of exact substructures and the Boolean lattice of subsets of a certain set of objects, cf.~Corollary~\ref{cor:KrullSchmidt}.

Let $(\C,\mathbb E,\mathfrak s)$ be an extriangulated category. 
\begin{definition}[\cite{HerschendLiuNakaoka21}, Lemma 3.15]Let $\mathbb F\subseteq \mathbb E$ be an additive subbifunctor. The following are equivalent.
\begin{itemize}
\item[(1)]$\mathbb F$ is closed on the right, i.e.~for any $\mathfrak s|_{\mathbb F}$-conflation $A\xrightarrow{f} B\xrightarrow{j} C$, the sequence
\[
\mathbb F(-,C)\xRightarrow{f_{*}} \mathbb F(-,B)\xRightarrow{j_{*}} \mathbb F(-,A)
\]
is exact.
\item[(2)]$\mathbb F$ is closed on the left, i.e.~for any $\mathfrak s|_{\mathbb F}$-conflation $A\xrightarrow{f} B\xrightarrow{j} C$, the sequence
\[
\mathbb F(A,-)\xRightarrow{j^{*}} \mathbb F(B,-)\xRightarrow{f^{*}} \mathbb F(C,-)
\]
is exact.

\end{itemize}
Thus, we simply say $\mathbb F\subseteq \mathbb E$ is {\em closed}, if either of the conditions is satisfied. 
\end{definition}
\begin{proposition}[\cite{HerschendLiuNakaoka21}, Proposition 3.16]\label{closedsubbifunctor}
For any additive subbifunctor $\mathbb F\subseteq\mathbb E$, the following are equivalent.
\begin{itemize}
\item[(1)]$(\C,\mathbb F,\mathfrak s|_{\mathbb F})$ is extriangulated.
\item[(2)]$\mathfrak s|_{\mathbb F}$-inflations are closed under composition.
\item[(3)]$\mathfrak s|_{\mathbb F}$-deflations are closed under composition.
\item[(4)]$\mathbb F$ is closed.
\end{itemize}
\end{proposition}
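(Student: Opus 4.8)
The plan is to reduce the fourfold equivalence to a comparison of the extriangulated axioms for the ambient category $(\C,\mathbb E,\mathfrak s)$ and for the restricted data $(\C,\mathbb F,\mathfrak s|_{\mathbb F})$. First I would record that, since $\mathbb F$ is an additive subbifunctor of $\mathbb E$ and $\mathfrak s|_{\mathbb F}$ is literally the restriction of $\mathfrak s$, every $\mathfrak s|_{\mathbb F}$-conflation is already an $\mathfrak s$-conflation with the same realisation. Consequently (ET1) and (ET2) hold for free, and (ET3) together with (ET3)$^{op}$ are inherited verbatim: the required morphism of conflations is produced by (ET3) in $\mathbb E$, and the relation $\overline a_*\delta=\overline c^*\delta'$ it yields already holds in the subgroup $\mathbb F$, because $\mathbb F$ is stable under the functorial maps $\overline a_*$ and $\overline c^*$. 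Thus the entire content of statement (1) is concentrated in the two octahedral axioms (ET4) and (ET4)$^{op}$ for $\mathbb F$.

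Next I would analyse (ET4) for $\mathbb F$. Given two composable $\mathbb F$-inflations realised by $\mathbb F$-conflations $A\xrightarrow{f}B\xrightarrow{f'}D$ and $B\xrightarrow{g}C\xrightarrow{g'}F$, apply (ET4) in $(\C,\mathbb E,\mathfrak s)$. This produces an object $E$, a conflation $A\xrightarrow{gf}C\to E$ realising some $\xi\in\mathbb E(E,A)$, and a conflation $D\xrightarrow{d}E\xrightarrow{e}F$ realising $f'_*\delta'$, together with the standard compatibilities $d^*\xi=\delta$ and $f_*\xi=e^*\delta'$. The conflation $D\to E\to F$ lies in $\mathbb F$ automatically, since $f'_*\delta'\in\mathbb F$ because $\mathbb F$ is a subbifunctor; the only remaining requirement for (ET4) to hold in $\mathbb F$ is that $\xi\in\mathbb F(E,A)$, i.e.~that the composite $gf$ be an $\mathfrak s|_{\mathbb F}$-inflation. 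Hence (ET4) for $\mathbb F$ is equivalent to statement (2), and dually (ET4)$^{op}$ for $\mathbb F$ is equivalent to statement (3). Combined with the first paragraph and the elementary fact that inflations and deflations are closed under composition in any extriangulated category (a consequence of (ET4) and (ET4)$^{op}$), this already yields $(1)\Leftrightarrow (2)\wedge(3)$ and, in particular, $(1)\Rightarrow(2)$ and $(1)\Rightarrow(3)$.

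It then remains to prove that each composition condition is equivalent to closedness, i.e.~$(2)\Leftrightarrow(4)$ and $(3)\Leftrightarrow(4)$; by the equivalence of closed-on-the-left and closed-on-the-right (Lemma~3.15 of \cite{HerschendLiuNakaoka21}) it suffices to treat one side, the other following by duality. For $(4)\Rightarrow(2)$ I would feed the relations $d^*\xi=\delta\in\mathbb F$ and $f_*\xi=e^*\delta'\in\mathbb F$ into the six-term exact sequences attached to the $\mathbb F$-conflations $A\to B\to D$ and $D\to E\to F$; closedness is precisely the condition guaranteeing that an extension $\xi$ whose relevant images lie in $\mathbb F$ must itself lie in $\mathbb F$, whence $gf$ is an $\mathfrak s|_{\mathbb F}$-inflation. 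Conversely, for $(2)\Rightarrow(4)$ I would run the same long exact sequences, using composition-closure to control the image of the connecting homomorphism and thereby verify the exactness of the restricted functors for every $\mathbb F$-conflation. This is the extriangulated counterpart of the classical Dr\"axler--Reiten--Smal\o--Solberg criterion for exact categories.

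The main obstacle is exactly this last equivalence $(2)\Leftrightarrow(4)$ (and its dual): the formal reductions of the first two paragraphs are painless, but translating the ``composition of inflations'' condition into the homological exactness of the restricted bifunctor $\mathbb F$ requires a careful diagram chase with the connecting maps and the octahedral identities, keeping track of which pulled-back and pushed-forward classes are forced into $\mathbb F$. Since this is precisely Proposition~3.16 of Herschend--Liu--Nakaoka \cite{HerschendLiuNakaoka21}, in the end I would simply invoke their result, having observed that our $(\C,\mathbb E,\mathfrak s)=(H^0(\A),\mathbb E,\mathfrak s)$ of Theorem~\ref{thm:extriangulatedstructure} meets their hypotheses.
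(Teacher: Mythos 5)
This proposition is not proved in the paper at all: it is imported verbatim from Herschend--Liu--Nakaoka \cite{HerschendLiuNakaoka21}, Proposition 3.16, with the citation serving as the proof, and your proposal ultimately does exactly the same thing, invoking that reference after observing that $(H^0(\A),\mathbb E,\mathfrak s)$ satisfies its hypotheses. Your preliminary sketch (inheriting (ET1)--(ET3), reducing (1) to (ET4) and (ET4)$^{op}$, and relating composition-closure to closedness of $\mathbb F$ via the long exact sequences) is a reasonable outline of the argument in loc.~cit., so the proposal is consistent with the paper's treatment and there is nothing further to compare.
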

\begin{definition}[\cite{Ogawa21}, Definition 2.4]\label{defect}
Let $(\C,\mathbb E,\mathfrak s)$ be a small extriangulated category. 
Let $\delta\in\mathbb E(C,A)$ be an $\mathbb E$-extension. 
Take a realization $A\xrightarrow{x}B\xrightarrow{y}C$ of $\delta$ 
and define $\tilde{\delta}$ to be the cokernel of $\C(?,y):\C(?,B)\rightarrow \C(?,C)$ in $\Mod \C$. 
The functor $\tilde{\delta}$ is called the {\em defect} of $\delta$, 
or the defect of an $\mathfrak s$-conflation $A\xrightarrow{x}B\xrightarrow{y}C$. 
We denote by $\Def \mathbb E$ the subcategory of $\Mod \C$ consisting of $\C$-modules which are isomorphic to defects of some $\mathfrak s$-conflations.
\end{definition}
\begin{remark}\label{rmk:defectses}
Suppose we have an $\mathbb E$-extension $\delta\in\mathbb E(C,A)$ and a morphism $c:C'\rightarrow C$ in $\C$.
By \cite[Proposition 1.20]{LiuNakaoka19}, we have the following diagram
\begin{equation}\label{dia:defectses}
\begin{tikzcd}
A\ar[r,"u"]\ar[d,equal]&E\ar[d,"g"]\ar[r,"v"]&C'\ar[d,"c"]\ar[r,dashed,"c^*(\delta)=\theta"]&\,\\
A\ar[d,"u"swap]\ar[r,"x"]&B\ar[d,"\begin{bmatrix}1\\0\end{bmatrix}"]\ar[r,"y"]\ar[d,""]&C\ar[d,equal]\ar[r,dashed,"\delta"]&\,\\
E\ar[r,"\begin{bmatrix}g\\v\end{bmatrix}"swap]&B\oplus C'\ar[r,"{[}y{,}\,-c{]}"swap]&C\ar[r,dashed,"u_*(\delta)=\mu"swap]&\,
\end{tikzcd}.
\end{equation}
A diagram chasing shows that there is a short exact sequence of defects
\[
0\rightarrow \tilde{\theta}\rightarrow \tilde{\delta}\rightarrow \tilde{\mu}\rightarrow 0
\]
\end{remark}
\begin{theorem}[\cite{Enomoto21}, Theorem B]\label{thm:closedsubbifunctorserresubcategory}
Let $(\C,\mathbb E,\mathfrak s)$ be a small extriangulated category. 
Then the map $\mathbb F\mapsto \Def\mathbb F$ defines an isomorphism between the following posets, where the poset structures are given by inclusion.\begin{itemize}
\item[(1)]The poset of closed subbifunctors of $\mathbb E$.
\item[(2)]The poset of Serre subcategories of $\Def\mathbb E$.
\end{itemize}
\end{theorem}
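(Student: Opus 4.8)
The plan is to exhibit an explicit inverse to $\mathbb F\mapsto\Def\mathbb F$ and to check that both assignments are order-preserving. Write $\Mod\C$ for the category of additive functors $\C^{op}\to\Ab$, in which the representables $\C(-,X)$ are projective, so that $\Def\mathbb E\subseteq\Mod\C$ consists of the finitely presented functors of the form $\widetilde\delta=\mathrm{coker}(\C(-,y))$ for a realization $A\xrightarrow{x}B\xrightarrow{y}C$ of some $\delta\in\mathbb E(C,A)$. Before anything else I would record three elementary properties of the defect: (i) $\widetilde\delta=0$ if and only if $\delta=0$, since $\widetilde\delta=0$ forces $\Id_C$ to factor through $y$, so that $y$ is a split epimorphism; (ii) additivity, $\widetilde{\delta\oplus\delta'}\cong\widetilde\delta\oplus\widetilde{\delta'}$; and (iii) the two fundamental short exact sequences of defects, namely the one of Remark~\ref{rmk:defectses}, $0\to\widetilde{c^{*}\delta}\to\widetilde\delta\to\widetilde{u_{*}\delta}\to 0$ attached to a morphism $c:C'\to C$, together with its dual attached to a morphism $a:A\to A'$. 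Thus pulling an extension back produces a subobject of its defect and pushing it out produces a quotient. I would also need the structural input, which I expect to be a genuine prerequisite, that $\Def\mathbb E$ is an abelian subcategory of $\Mod\C$ in which these sequences are exact, so that the notion of Serre subcategory is meaningful; this should follow from the fact that every subobject and every quotient of a defect in $\Mod\C$ is again a defect, realized as $\widetilde{c^{*}\delta}$ and $\widetilde{a_{*}\delta}$ respectively.

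Next I would check the map is well defined, i.e. that $\Def\mathbb F$ is a Serre subcategory of $\Def\mathbb E$ whenever $\mathbb F\subseteq\mathbb E$ is closed. Closure under subobjects and quotients is immediate from (iii): a subobject of $\widetilde\delta$ with $\delta\in\mathbb F$ has the form $\widetilde{c^{*}\delta}$, and $c^{*}\delta\in\mathbb F$ because $\mathbb F$ is a subbifunctor, so its defect lies in $\Def\mathbb F$; dually for quotients. Closure under extensions is where the hypothesis that $\mathbb F$ is \emph{closed} is really used, through the equivalent formulation of Proposition~\ref{closedsubbifunctor} that $\mathfrak s|_{\mathbb F}$-inflations, equivalently deflations, are closed under composition: given a short exact sequence $0\to\widetilde{\delta_1}\to G\to\widetilde{\delta_2}\to 0$ in $\Def\mathbb E$ with $\delta_1,\delta_2\in\mathbb F$, I would assemble an $\mathbb F$-conflation whose defect is $G$ by composing the two deflations realizing $\delta_1$ and $\delta_2$ and reading off the middle defect from the resulting long exact sequence.

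For the inverse, given a Serre subcategory $\mathcal D\subseteq\Def\mathbb E$ set $\mathbb F_{\mathcal D}(C,A)=\{\delta\in\mathbb E(C,A):\widetilde\delta\in\mathcal D\}$. That $\mathbb F_{\mathcal D}$ is an additive subbifunctor follows from (ii) and (iii): functoriality sends $\delta$ to the quotient $\widetilde{a_{*}\delta}$ and the subobject $\widetilde{c^{*}\delta}$ of $\widetilde\delta$, both in $\mathcal D$ since $\mathcal D$ is closed under subquotients, while the Baer sum is a subquotient of $\widetilde{\delta\oplus\delta'}=\widetilde\delta\oplus\widetilde{\delta'}$. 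That $\mathbb F_{\mathcal D}$ is closed again follows from Proposition~\ref{closedsubbifunctor} together with (iii), since composability of $\mathfrak s|_{\mathbb F_{\mathcal D}}$-deflations translates into the relevant defects fitting into an extension inside the Serre subcategory $\mathcal D$.

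Finally I would verify the two composites are the identity. The equality $\Def\mathbb F_{\mathcal D}=\mathcal D$ is immediate: any object of $\mathcal D\subseteq\Def\mathbb E$ is some $\widetilde\delta$, whence $\delta\in\mathbb F_{\mathcal D}$ by definition, and the reverse inclusion is again just the definition of $\mathbb F_{\mathcal D}$. Both assignments visibly preserve inclusions, so it remains to prove $\mathbb F_{\Def\mathbb F}=\mathbb F$, i.e. that $\widetilde\delta\in\Def\mathbb F$ forces $\delta\in\mathbb F$. This is the main obstacle: a priori $\widetilde\delta\cong\widetilde{\delta'}$ for some $\delta'\in\mathbb F$ only identifies the two defects, not the extensions, because the defect forgets the end term $A$. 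I would handle it by lifting an isomorphism $\widetilde\delta\cong\widetilde{\delta'}$ to a comparison of the projective presentations $\C(-,y)$ and $\C(-,y')$, producing morphisms between the realizing conflations that exhibit $\delta$ as obtained from $\delta'$ by a pullback followed by a pushout, and conversely; since $\mathbb F$ is a subbifunctor stable under $a_{*}$ and $c^{*}$, membership is preserved along this zigzag, giving $\delta\in\mathbb F$. Combined with property (i), which pins down the split case as the zero object, this closes the argument.
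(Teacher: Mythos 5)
The first thing to note is that the paper contains no proof of this statement: Theorem~\ref{thm:closedsubbifunctorserresubcategory} is imported from \cite{Enomoto21} (Theorem~B there) and is used purely as a black box, as input to Theorem~\ref{bijectionstructures} and Corollary~\ref{cor:KrullSchmidt}. So your proposal can only be measured against Enomoto's argument, and in its architecture it does reconstruct the essential ingredients of that argument: the calculus of defects (the exact sequence of Remark~\ref{rmk:defectses} together with its dual $0\to\tilde{y'^{*}\delta}\to\tilde{\delta}\to\tilde{a_{*}\delta}\to 0$, where $y'$ is the deflation realizing $a_{*}\delta$), the explicit inverse $\mathcal D\mapsto\mathbb F_{\mathcal D}$, and above all the detection lemma, which you rightly single out as the main obstacle and resolve by the correct mechanism: an isomorphism $\tilde{\delta}\cong\tilde{\delta'}$ lifts through the projective presentations to morphisms $c\colon C\to C'$ and $d\colon C'\to C$, axiom $\mathrm{(ET3)^{op}}$ then yields $a_{*}\delta=c^{*}\delta'$ and $a'_{*}\delta'=d^{*}\delta$, and since $(dc)^{*}\delta=\delta$ one gets $\delta=c^{*}d^{*}\delta=a'_{*}c^{*}\delta'\in\mathbb F$. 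This zigzag is exactly what makes the correspondence work.

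Two steps are nevertheless genuine gaps as written. First, your structural claim that \emph{every} subobject and \emph{every} quotient of a defect in $\Mod\C$ is again a defect is false: defects are finitely generated functors (quotients of representables), whereas a subfunctor of a finitely generated functor need not be finitely generated when $\C$ is large (take for $\C$ the finitely presented modules over a coherent non-noetherian ring), and the quotient by such a subfunctor is then not a defect. What is true, and what you actually need both to give $\Def\mathbb E$ the abelian structure without which ``Serre subcategory of $\Def\mathbb E$'' has no meaning and to run your closure checks, is the statement for \emph{morphisms between defects}: the image of any map $\tilde{\epsilon}\to\tilde{\delta}$ equals $\tilde{c^{*}\delta}$ for some $c$ (lift along the projective cover $\C(-,C)\twoheadrightarrow\tilde{\delta}$), kernels of epimorphisms between defects are defects (the dual sequence above), and cokernels of monomorphisms are defects (Remark~\ref{rmk:defectses}). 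This is precisely Enomoto's Theorem~A; it requires proof and cannot be waved through via the blanket claim. Second, there is a circular dependency in your ordering: closure of $\Def\mathbb F$ under extensions cannot be done by ``composing the two deflations realizing $\delta_1$ and $\delta_2$'', because in an extension $0\to\tilde{\delta_1}\to\tilde{\gamma}\to\tilde{\delta_2}\to 0$ the outer terms are only \emph{isomorphic} to the canonical defects $\tilde{y'^{*}\gamma}$ and $\tilde{a_{*}\gamma}$ attached to $\tilde{\gamma}$; to conclude that $y'^{*}\gamma$ and $a_{*}\gamma$ themselves lie in $\mathbb F$ (after which closedness of $\mathbb F$ under composition of deflations does finish the job) you already need the detection lemma, which in your write-up appears only at the very end. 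The cure is simply to prove detection first, as it is independent of the closure checks; with that reordering and with the structural claim replaced by its correct ``morphisms between defects'' version, your outline assembles into a complete proof.
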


\begin{theorem}\label{bijectionstructures}
Let $(\A,\mathcal S)$ be an exact dg category and $(H^0(\A),\mathbb E,\mathfrak s)$ the associated algebraic extriangulated category. The following posets, where the poset structures are given by inclusion, are isomorphic.
\begin{itemize}
\item[(1)]The poset of exact substructures of $(\A,\mathcal S)$.
\item[(2)]The poset of closed subbifunctors of $\mathbb E$.
\item[(3)]The poset of Serre subcategories of $\Def \mathbb E$.
\end{itemize}
\end{theorem}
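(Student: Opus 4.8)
The plan is to reduce the three-way comparison to the two-way equivalence already furnished by Theorem~\ref{thm:closedsubbifunctorserresubcategory}, applied to the algebraic extriangulated category $(H^0(\A),\mathbb E,\mathfrak s)$: this gives the isomorphism between the posets (2) and (3). Hence the real content is to produce a poset isomorphism between (1) and (2), and then compose. Throughout I will use the dictionary supplied by the realization $\mathfrak s$, namely that for a conflation $X\colon A\to B\xrightarrow{\overline{\jmath}}C$ one has $\mathfrak s([X])=[A\xrightarrow{\overline f}B\xrightarrow{\overline\jmath}C]$, so that the dg notion of deflation (a morphism in $H^0(\Mor(\A))$ admitting a homotopy kernel that is a conflation, a property of $\overline\jmath$ by the remark following Definition~\ref{exactdgstructure}) agrees with the extriangulated notion of $\mathfrak s$-deflation, and dually for inflations.

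First I would build the map from (1) to (2). Given an exact substructure $\mathcal S'\subseteq\mathcal S$, define $\mathbb E_{\mathcal S'}(C,A)\subseteq\mathbb E(C,A)$ to be the set of equivalence classes of those conflations already lying in $\mathcal S'$. Since $\mathcal S'$ itself satisfies $\Ex2$ and $\Ex2^{op}$, the pushforward $\overline a_*$ and pullback $\overline c^{*}$ (constructed via Proposition~\ref{cons} and its dual) preserve membership in $\mathcal S'$, so $\mathbb E_{\mathcal S'}$ is a subbifunctor; combined with Proposition~\ref{property}~c) for $\mathcal S'$ (direct sums of $\mathcal S'$-conflations are $\mathcal S'$-conflations) and Lemma~\ref{sum}, this shows $\mathbb E_{\mathcal S'}$ is an additive subbifunctor, each $\mathbb E_{\mathcal S'}(C,A)$ being a subgroup. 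Via the dictionary above, Axiom $\Ex1$ for $\mathcal S'$ says precisely that the $\mathfrak s|_{\mathbb E_{\mathcal S'}}$-deflations are closed under composition, so by Proposition~\ref{closedsubbifunctor} the subbifunctor $\mathbb E_{\mathcal S'}$ is closed.

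Conversely, for the map from (2) to (1), given a closed subbifunctor $\mathbb F\subseteq\mathbb E$ I would set $\mathcal S_{\mathbb F}$ to be the class of all conflations $X$ of $\mathcal S$ with $[X]\in\mathbb F$. As $[X]$ depends only on the equivalence class and being a conflation is invariant under isomorphism in $\mathcal H_{3t}(\A)$, the class $\mathcal S_{\mathbb F}$ is stable under isomorphism. Axiom $\Ex0$ is immediate; $\Ex1$ and $\Ex1^{op}$ follow from Proposition~\ref{closedsubbifunctor} through the same dictionary. For $\Ex2$, given an $\mathbb F$-deflation $p\colon B\to C$ (part of a conflation of class $\delta\in\mathbb F(C,A)$) and a map $c\colon C'\to C$, the homotopy pullback exists by $\Ex2$ for the ambient $\mathcal S$, and the resulting conflation realizes $c^{*}\delta$, which lies in $\mathbb F(C',A)$ because $\mathbb F$ is a subbifunctor; hence the pulled-back deflation is again an $\mathbb F$-deflation, and $\Ex2^{op}$ is dual.

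It then remains to check that these constructions are mutually inverse and inclusion-preserving. The equality $\mathbb E_{\mathcal S_{\mathbb F}}=\mathbb F$ is immediate, since every $\delta\in\mathbb F(C,A)\subseteq\mathbb E(C,A)$ is realized by some $\mathcal S$-conflation, which by definition lies in $\mathcal S_{\mathbb F}$. For $\mathcal S_{\mathbb E_{\mathcal S'}}=\mathcal S'$ one uses that a conflation $X$ with $[X]\in\mathbb E_{\mathcal S'}$ is equivalent, hence isomorphic by Corollary~\ref{middleterm}, to a conflation in $\mathcal S'$, and $\mathcal S'$ is stable under isomorphism. Both assignments visibly respect inclusion, so they constitute a poset isomorphism between (1) and (2); composing with Theorem~\ref{thm:closedsubbifunctorserresubcategory} yields the full statement. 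The step I expect to require the most care is making the dictionary ``$\mathcal S'$-(de)flations $=$ $\mathfrak s|_{\mathbb E_{\mathcal S'}}$-(de)flations'' fully precise, and in particular verifying in $\Ex2$ that the homotopy pullback coming from the ambient structure genuinely realizes the pulled-back extension class $c^{*}\delta$, so that the subbifunctor property of $\mathbb F$ can be invoked to produce the new $\mathbb F$-deflation.
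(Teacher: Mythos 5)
Your proposal is correct and follows essentially the same route as the paper: the isomorphism between (2) and (3) is delegated to Theorem~\ref{thm:closedsubbifunctorserresubcategory}, and the correspondence between (1) and (2) is given by the same pair of mutually inverse maps ($\mathcal S'\mapsto$ the subbifunctor of classes of $\mathcal S'$-conflations, $\mathbb F\mapsto \mathcal S_{\mathbb F}$), with closedness matched to Axiom Ex1 through Proposition~\ref{closedsubbifunctor} and the pullback/pushout axioms checked exactly as in the paper. The one place where you are too quick is the isomorphism-stability of $\mathcal S_{\mathbb F}$: this is not automatic from ``being a conflation is isomorphism-invariant,'' since an isomorphism in $\mathcal H_{3t}(\A)$ need not restrict to identities on the ends; the paper deduces it from Lemma~\ref{fact}, which factors such an isomorphism so that $[X']=(\overline c^{-1})^*\overline a_*[X]$, and then invokes the subbifunctor property of $\mathbb F$ --- the same tool you already use in your verification of Ex2.
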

\begin{proof}
The isomorphism between items (2) and (3) is given by Theorem~\ref{thm:closedsubbifunctorserresubcategory}.
We show the isomorphism between items (1) and (2).

Let $\mathbb F\subset \mathbb E$ be a closed subbifunctor. 
Let $\mathcal S_{\mathbb F}\subseteq\mathcal S$ be the class of conflations $X$
\[
\begin{tikzcd}
A\ar[r,"f"]\ar[rr,bend right=8ex,"h"swap]&B\ar[r,"j"]&C
\end{tikzcd}
\]  
whose equivalence class $[X]\in \mathbb F(C,A)\subseteq\mathbb E(C,A)$.
We first show that the class $\mathcal S_{\mathbb F}$ is closed under isomorphisms.
Let $X$ be a conflation in $\mathcal S_{\mathbb F}$ and $\alpha:X\rightarrow X'$ an isomorphism in $\mathcal H_{3t}(\A)$.
Then $X'$ is a conflation in $\mathcal S$.
By Lemma \ref{fact}, the isomorphism $\alpha$ factorises as the composition of isomorphisms $X\rightarrow \tilde{X}$ and $\tilde{X}\rightarrow X'$ and $[\tilde{X}]=\overline{a}_*[X]=\overline{c}^*[X']$ for some isomorphisms $\overline a$ and $\overline c$ in $H^0(\A)$. 
So we have $[X']=(\overline{c}^{-1})^*\overline{a}_*[X]$ and $X'$ also belongs to $\mathcal S_{\mathbb F}$.
Since $\mathbb F$ is an additive subbifunctor, Axiom $\Ex0$ for $\mathcal S_{\mathbb F}$ is satisfied.
By Proposition \ref{closedsubbifunctor}, $\mathfrak s|_{\mathbb F}$-deflations are closed under compositions. 
Since we have showed $\mathcal S_{\mathbb F}$ is closed under isomorphisms, deflations are closed under compositions.
Thus Axiom ${\Ex1}$ is satisfied.
Suppose we are in the context of Axiom $\Ex2$. 
So we have a cospan
\[
\begin{tikzcd}
&C'\ar[d,"c"]\\
B\ar[r,"p"swap,two heads]&C
\end{tikzcd}
\]
where $p:B\rightarrow C$ is a deflation in $\mathcal S_{\mathbb F}$.
Put $[X']=\overline{c}^*[X]$. 
The morphism from $X'$ to $X$ then restricts to the homotopy pullback of the above cospan.
Since $X'$ also belongs to $\mathcal S_{\mathbb F}$, the homotopy pullback of $p$ is also a deflation in $\mathcal S_{\mathbb F}$.
Thus Axiom ${\Ex2}$ is satisfied.
Dually, one shows Axiom ${\Ex2^{op}}$ for $\mathcal S_{\mathbb F}$.

Let $\mathcal S'\subseteq \mathcal S$ be an exact dg substructure. 
Let $(H^0(\A),\mathbb F_{\mathcal S'},\mathfrak s')$ be the corresponding extriangulated structure.
The inclusion $\mathcal S'\subseteq \mathcal S$ induces a natural transformation of additive bifunctors $\mathbb F_{\mathcal S'}\rightarrow \mathbb E$ which is compatible with the realizations $\mathfrak s$ and $\mathfrak s'$.
By Proposition \ref{split}, the natural transformation $\mathbb F_{\mathcal S'}\rightarrow \mathbb E$ is an inclusion and thus defines a closed subbifunctor of $\mathbb E$ by Proposition \ref{closedsubbifunctor}.

It is direct to check that the maps $\mathbb F\mapsto \mathcal S_{\mathbb F}$ and $\mathcal S'\mapsto \mathbb F_{\mathcal S'}$ are inverse to each other.
\end{proof}
\begin{corollary}\label{cor:exactsubstructure}
Let $(\C,\mathbb E,\mathfrak s)$ be an algebraic extriangulated category.
Let $\mathbb F\subset\mathbb E$ be a closed subbifunctor.
Then the extriangulated category $(\C,\mathbb F,\mathfrak s|_{\mathbb F})$ is still an algebraic extriangulated category.
\end{corollary}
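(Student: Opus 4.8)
The plan is to reduce the statement to Theorem~\ref{bijectionstructures} via the definition of algebraicity. Since $(\C,\mathbb E,\mathfrak s)$ is algebraic, by Definition~\ref{def:algebraicextriangulated} there is an exact dg category $(\A,\mathcal S)$ together with an equivalence of extriangulated categories $\Phi\colon (\C,\mathbb E,\mathfrak s)\iso (H^0(\A),\mathbb E_{\mathcal S},\mathfrak s_{\mathcal S})$, where $(H^0(\A),\mathbb E_{\mathcal S},\mathfrak s_{\mathcal S})$ denotes the canonical extriangulated structure of $(\A,\mathcal S)$ constructed in Subsection~\ref{canonicalstructure}. The first step is to transport the closed subbifunctor $\mathbb F$ along $\Phi$: since $\Phi$ is an equivalence of extriangulated categories, the natural isomorphism $\mathbb E\iso\mathbb E_{\mathcal S}\circ(\Phi^{op}\times\Phi)$ carries $\mathbb F$ to an additive subbifunctor $\mathbb F'\subseteq \mathbb E_{\mathcal S}$, and the defining exactness conditions of a closed subbifunctor are preserved, so $\mathbb F'$ is again closed. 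Because algebraicity is an invariant of the extriangulated equivalence class, it suffices to show that $(H^0(\A),\mathbb F',\mathfrak s_{\mathcal S}|_{\mathbb F'})$ is algebraic.

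Next I would apply the bijection of Theorem~\ref{bijectionstructures} to $(\A,\mathcal S)$. The closed subbifunctor $\mathbb F'\subseteq\mathbb E_{\mathcal S}$ corresponds, under the inverse map $\mathbb F'\mapsto\mathcal S_{\mathbb F'}$ described in the proof of that theorem, to an exact substructure $\mathcal S_{\mathbb F'}\subseteq\mathcal S$; thus $(\A,\mathcal S_{\mathbb F'})$ is itself an exact dg category. By Subsection~\ref{canonicalstructure} it has a canonical extriangulated structure $(H^0(\A),\mathbb E_{\mathcal S_{\mathbb F'}},\mathfrak s_{\mathcal S_{\mathbb F'}})$, which is algebraic by definition. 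It remains to identify this structure with $(H^0(\A),\mathbb F',\mathfrak s_{\mathcal S}|_{\mathbb F'})$.

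For the identification, the key point is that the two maps $\mathbb F'\mapsto\mathcal S_{\mathbb F'}$ and $\mathcal S'\mapsto\mathbb F_{\mathcal S'}$ of Theorem~\ref{bijectionstructures} are mutually inverse, so that $\mathbb E_{\mathcal S_{\mathbb F'}}=\mathbb F_{\mathcal S_{\mathbb F'}}=\mathbb F'$ as subbifunctors of $\mathbb E_{\mathcal S}$. On the level of realizations, the conflations of $\mathcal S_{\mathbb F'}$ form a subclass of those of $\mathcal S$, and $\mathfrak s_{\mathcal S_{\mathbb F'}}$ sends an equivalence class $[X]$ to $[A\xrightarrow{\overline f}B\xrightarrow{\overline\jmath}C]$ by the very same recipe used to define $\mathfrak s_{\mathcal S}$; hence $\mathfrak s_{\mathcal S_{\mathbb F'}}=\mathfrak s_{\mathcal S}|_{\mathbb F'}$. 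Therefore $(H^0(\A),\mathbb F',\mathfrak s_{\mathcal S}|_{\mathbb F'})$ is the canonical extriangulated structure of the exact dg category $(\A,\mathcal S_{\mathbb F'})$, so it is algebraic, and transporting back along $\Phi$ shows that $(\C,\mathbb F,\mathfrak s|_{\mathbb F})$ is algebraic as well.

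The main obstacle, which is bookkeeping rather than a genuine difficulty, is to verify cleanly that an extriangulated equivalence transports closed subbifunctors to closed subbifunctors and intertwines the restricted realizations; once this compatibility is in place, the corollary is a formal consequence of Theorem~\ref{bijectionstructures}.
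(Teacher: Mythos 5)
Your proposal is correct and is exactly the argument the paper intends: Corollary~\ref{cor:exactsubstructure} is stated without proof as an immediate consequence of Theorem~\ref{bijectionstructures}, and your route (transport $\mathbb F$ along the equivalence, pass to the exact substructure $\mathcal S_{\mathbb F'}$, and identify its canonical extriangulated structure with $(H^0(\A),\mathbb F',\mathfrak s_{\mathcal S}|_{\mathbb F'})$) is precisely that consequence spelled out. The bookkeeping you flag—that extriangulated equivalences preserve closed subbifunctors and restricted realizations—is indeed the only thing to check, and it holds for the reasons you give.
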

\begin{remark}
We have an injection 
\[
\{\text{Exact dg structures on $\A$}\}\to \{\text{Extriangulated structures on $H^0(\A)$}\}.
\]
This map is in general not a surjection. 
For example, when $\A$ is concentrated in degree $0$, 
then the exact dg structures on $\A$ are exactly the Quillen exact structures on $\A$. 
But $\A$ may have nontrivial extriangulated structures, e.g.~if $\A$ is an algebraic triangulated category.
\end{remark}
\begin{definition}[\cite{IyamaNakaokaPalu24}, Definitions 2.1, 2.7]
Let $(\C,\mathbb E,\mathfrak s)$ be an extriangulated category. 
A non-split (i.e.~non zero) $\mathbb E$-extension $\delta\in\mathbb E(C,A)$ is said to be {\em almost split} if it satisfies the following conditions.
\begin{itemize}
\item[(AS1)] $a_*(\delta)=0$ for any non-section $a\in\C(A,A')$.
\item[(AS2)] $c^{*}(\delta)=0$ for any non-retraction $c\in\C(C',C)$.
\end{itemize}
A sequence of morphisms $A\xrightarrow{x}B\xrightarrow{y}C$ in $\C$ is called an {\em almost split sequence} if it realizes some almost split extension $\delta\in \mathbb E(C,A)$.
\end{definition}

A non-zero object $A\in\C$ is {\em endo-local} if $\End_{\C}(A)$ is local.
For any almost split $\mathbb E$-extension $\delta\in\mathbb E(C,A)$, we have that $A$ and $C$ are both endo-local, cf.~\cite[Proposition 2.5]{IyamaNakaokaPalu24}.
Almost split extensions are unique in the following sense.
\begin{proposition}[\cite{IyamaNakaokaPalu24}, Proposition 2.6]
Let $A\in\C$ be any object. If there are non-split extensions $\delta\in\mathbb E(C,A)$ and $\delta'\in\mathbb E(C',A)$ satisfying (AS2), then there is an isomorphism $c\in\C(C',C)$ such that $c^{*}(\delta)=\delta'$.
Dually, if non-split extensions $\rho\in\mathbb E(C,A)$ and $\rho'\in\mathbb E(C,A')$ satisfy (AS1) for some $A$, $A'$, $C\in \C$, then there is an isomorphism $a\in\C(A,A')$ such that $a_{*}(\rho)=\rho'$.
\end{proposition}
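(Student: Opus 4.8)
The plan is to prove the first assertion; the second then follows by the duality $\mathbb E_{\C}(C,A)=\mathbb E_{\C^{op}}(A,C)$, under which (AS2) turns into (AS1) and pullbacks turn into pushouts. Fix realizations $A\xrightarrow{x}B\xrightarrow{y}C$ of $\delta$ and $A\xrightarrow{x'}B'\xrightarrow{y'}C'$ of $\delta'$; since both extensions are non-split, neither $y$ nor $y'$ is a retraction. I would first record that $\End_{\C}(C)$ and $\End_{\C}(C')$ are local. Indeed, for $f\in\End_{\C}(C)$ additivity of $\mathbb E(-,A)$ gives $\delta=f^{*}\delta+(1_{C}-f)^{*}\delta$, so by (AS2) at least one of $f$ and $1_{C}-f$ is a retraction; feeding the idempotent $sr$ arising from a non-invertible retraction $r$ (with $rs=1_{C}$) back into this dichotomy shows that $C$ has no non-trivial idempotents, whence $\End_{\C}(C)$ is local. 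This is the input recorded in the cited Proposition~2.5.

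The core step is to produce comparison morphisms in both directions. Using the long exact sequence attached to the realization of $\delta$,
\[
\C(C',B)\xrightarrow{y_{*}}\C(C',C)\xrightarrow{\delta_{\#}}\mathbb E(C',A)\xrightarrow{x_{*}}\mathbb E(C',B),
\]
with $\delta_{\#}(c)=c^{*}\delta$, the equation $c^{*}\delta=\delta'$ is solvable for some $c\colon C'\to C$ exactly when $x_{*}\delta'=0$, i.e.~when $x$ factors through $x'$. I would extract this factorization from the almost split structure carried by the two extensions sharing the end $A$, and, running the symmetric argument, obtain $c'\colon C\to C'$ with $(c')^{*}\delta'=\delta$.

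It then remains to promote $c$ to an isomorphism. From $c^{*}\delta=\delta'$ and $(c')^{*}\delta'=\delta$ one gets $(cc')^{*}\delta=\delta\neq0$ and $(c'c)^{*}\delta'=\delta'\neq0$; the contrapositive of (AS2) forces $cc'\in\End_{\C}(C)$ and $c'c\in\End_{\C}(C')$ to be retractions, hence one-sided invertible endomorphisms of endo-local objects, hence invertible. Two-sided invertibility of $cc'$ and $c'c$ makes $c$ an isomorphism with $c^{*}\delta=\delta'$, as required. I expect the genuine obstacle to be the core step: extracting the factorization of $x$ through $x'$ (equivalently $x_{*}\delta'=0$) is the only place where the hypothesis must be used in an essential, non-formal way, and the delicate point is to see that the data available on the \emph{fixed} end $A$ really supplies the needed lifting; once comparison maps exist in both directions, the passage to an isomorphism is a soft consequence of the local endomorphism rings established at the outset.
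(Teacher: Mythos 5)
The paper offers no argument for this statement---it is quoted verbatim from Iyama--Nakaoka--Palu---so your proposal has to stand entirely on its own. Two of its three components do stand: the endo-locality argument (the dichotomy $\delta=f^{*}\delta+(1_{C}-f)^{*}\delta$ together with the idempotent trick) is correct, and so is the final promotion step (comparison morphisms in both directions give $(cc')^{*}\delta=\delta\neq 0$ and $(c'c)^{*}\delta'=\delta'\neq 0$, whence $cc'$ and $c'c$ are retractions and then automorphisms of endo-local objects). Your long-exact-sequence bookkeeping is also right: $c$ with $c^{*}\delta=\delta'$ exists if and only if $x_{*}\delta'=0$, if and only if $x$ factors through $x'$.

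The gap is exactly the step you flag as the ``genuine obstacle,'' and it is fatal as framed: the factorization of $x$ through $x'$ cannot be extracted from (AS2) at all, because (AS2) only constrains pullbacks along morphisms \emph{into} $C$ resp.\ $C'$ and gives no handle whatsoever on morphisms \emph{out of} the shared object $A$. In fact, with (AS2) alone on both extensions the conclusion is false. Over an Artin algebra pick non-isomorphic indecomposable non-injective modules $M\not\cong N$ with almost split sequences $\delta_M\in\Ext^1(\tau^{-1}M,M)$ and $\delta_N\in\Ext^1(\tau^{-1}N,N)$; then $(\delta_M,0)\in\Ext^1(\tau^{-1}M,\,M\oplus N)$ and $(0,\delta_N)\in\Ext^1(\tau^{-1}N,\,M\oplus N)$ are non-split, satisfy (AS2) (pullback along a non-retraction acts componentwise and kills both coordinates), and share the source $M\oplus N$, yet $\tau^{-1}M\not\cong\tau^{-1}N$, so no isomorphism can exist. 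Consequently any correct proof must use (AS1) for one of the two extensions, i.e.\ the full almost-split hypothesis that the cited source works with: (AS1) for $\delta'$ applied to the non-section $x\colon A\to B$ gives $x_{*}\delta'=0$, hence $x=gx'$ for some $g\colon B'\to B$; then axiom (ET3) applied to the square $(1_A,g)$ produces $c\colon C'\to C$ with $c^{*}\delta=\delta'$; finally $c^{*}\delta=\delta'\neq 0$ and (AS2) for $\delta$ make $c$ a retraction, say $ct=1_C$, and then $t^{*}\delta'=\delta\neq 0$ and (AS2) for $\delta'$ make $t$ a retraction, so $c$ is invertible---this is your promotion argument, which in this form does not even require constructing $c'$ separately. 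In short: the ``almost split structure at $A$'' your plan appeals to is precisely (AS1), which is not among the hypotheses you allowed yourself, and without it the plan cannot be completed.
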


\begin{definition}[\cite{IyamaNakaokaPalu24}, Definition 3.1]
We say that $\C$ has {\em right almost split extensions} if for any endo-local non-projective object $A\in\C$, there exists an almost split extension $\delta\in\mathbb E(A,B)$ for some $B\in\C$.
Dually, we say that $\C$ has {\em left almost split extensions} if if for any endo-local non-injective object $B\in\C$, there exists an almost split extension $\delta\in\mathbb E(A,B)$ for some $A\in\C$.
We say that $\C$ has {\em almost split extensions} if it has right and left almost split extensions.

\end{definition}
Let $k$ be a field.  Suppose $\C$ is Krull--Schmidt.
Let $A\xrightarrow{x}B\xrightarrow{y}C$ be a conflation realizing an almost split $\mathbb E$-extension $\delta\in\mathbb E(C,A)$.
Then its defect $\tilde{\delta}$, i.e.~the cokernel of $\C(?,y):\C(?,B)\rightarrow \C(?,C)$ in $\Mod\C$, is the simple module at $C$.
So we have $\tilde{\delta}(C)=\End_{\C}(C)/\rad\End_{\C}(C)$, and $\tilde{\delta}(D)=0$ for any indecomposable $D\in\C$ that is not isomorphic to $C$.
Conversely, if a simple module $S_{C}$ lies in $\Def \mathbb E$ for an indecomposable object $C\in\C$, then, by definition, there is a defect $\delta\in \mathbb E(C,A)$ such that $\tilde{\delta}$ is isomorphic to $S_C$. 
It is clear that $\delta$ satisfies (AS2).
By \cite[Lemma 2.15]{IyamaNakaokaPalu24}, we may assume that $A$ is indecomposable, and then $\delta$ is an almost split extension, whose defect is necessarily isomorphic to $S_C$.
The following corollary generalizes an analogous result in exact categories, cf.~\cite[Theorem 2.26]{FangGorsky22}.
\begin{corollary}\label{cor:KrullSchmidt}
Let $k$ be a field. 
Let $(\A,\mathcal S)$ be an exact dg category such that the following conditions hold
\begin{itemize}
\item[1)] The $k$-category $H^0(\A)$ is Krull--Schmidt.
\item[2)] The extriangulated category $(H^0(\A),\mathbb E,\mathfrak s)$ is {\em admissible}, i.e.~every object in $\Def \mathbb E$ has finite length (cf.~\cite{Enomoto18}).
\end{itemize}
Then the lattice of exact substructures of $(\A,\mathcal S)$ is isomorphic to the Boolean lattice of the subsets of the set of isomorphism classes of indecomposable non-projective objects in $(H^0(\A),\mathbb E,\mathfrak s)$.
\end{corollary}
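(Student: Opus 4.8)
The plan is to reduce the statement to a classification of Serre subcategories of the defect category $\Def\mathbb{E}$ and then to use that admissibility makes $\Def\mathbb{E}$ a length abelian category. First I would invoke Theorem~\ref{bijectionstructures}: the poset of exact substructures of $(\A,\mathcal S)$ is isomorphic to the poset of Serre subcategories of $\Def\mathbb E$ (this is the composite of the isomorphisms between items (1), (2) and (3) there). Thus it suffices to prove that, under hypotheses 1) and 2), the lattice of Serre subcategories of $\Def\mathbb E$ is isomorphic, as a poset under inclusion, to the Boolean lattice of subsets of the set of iso-classes of indecomposable non-projective objects of $H^0(\A)$.

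Next I would record the abelian structure. The category $\Def\mathbb E\subseteq \Mod H^0(\A)$ of defects (Definition~\ref{defect}) is abelian, cf.~\cite{Ogawa21}, and by admissibility (hypothesis 2) every one of its objects has finite length; hence $\Def\mathbb E$ is a length abelian category. Since $H^0(\A)$ is small and Krull--Schmidt, it has only a set of iso-classes of simple objects. For any length abelian category the assignment sending a Serre subcategory $\mathcal T$ to the set of simple objects it contains is an inclusion-preserving bijection onto the subsets of the set $\Sigma$ of all iso-classes of simples, with inverse sending a subset to the full subcategory of objects all of whose composition factors lie in it. This is the standard step and I would only indicate it, the point being that it already exhibits the Serre lattice as the Boolean lattice of subsets of $\Sigma$.

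The heart of the argument is to identify $\Sigma$ with the set of iso-classes of indecomposable non-projective objects of $H^0(\A)$, which is exactly what the discussion preceding the corollary is designed for. In one direction, the defect of an almost split extension ending at an indecomposable object $C$ is the simple module $S_C$, and conversely every simple object of $\Def\mathbb E$ is isomorphic to $S_C$ for a unique indecomposable $C$ (replacing the outer term by an indecomposable one via \cite[Lemma 2.15]{IyamaNakaokaPalu24}); any such $C$ is non-projective, as it carries a non-split extension. The reverse inclusion is where hypothesis 2) does its essential work: combined with the Krull--Schmidt property over the field $k$, admissibility guarantees that $(H^0(\A),\mathbb E,\mathfrak s)$ has almost split extensions in the sense of \cite{IyamaNakaokaPalu24}, so that each indecomposable non-projective $C$ is the end of an almost split extension whose defect is $S_C\in\Def\mathbb E$. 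Composing the three bijections --- exact substructures with Serre subcategories, Serre subcategories with subsets of $\Sigma$, and $\Sigma$ with indecomposable non-projectives --- yields the asserted lattice isomorphism. The main obstacle is precisely this last, existence half: producing an almost split extension ending at an \emph{arbitrary} indecomposable non-projective object. The forward direction is formal from the definition of the defect, but the converse genuinely needs admissibility to ensure the relevant simple functors are actual defects, and I would isolate it as an application of the existence theorem of Iyama--Nakaoka--Palu, checking that our hypotheses ($k$ a field, $H^0(\A)$ Krull--Schmidt, finite-length defects) match theirs.
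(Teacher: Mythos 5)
Your overall architecture is the same as the paper's: reduce via Theorem~\ref{bijectionstructures} to classifying Serre subcategories of $\Def\mathbb E$, use admissibility to see that these are the subsets of the simple objects, and identify the simples with the indecomposable non-projective objects through almost split extensions. The gap lies exactly in the step you yourself single out as the main obstacle: the existence, for \emph{every} indecomposable non-projective $C$, of an almost split extension ending at $C$. You dispose of it by appealing to ``the existence theorem of Iyama--Nakaoka--Palu,'' but no result of \cite{IyamaNakaokaPalu24} applies under hypotheses 1) and 2). Their existence statements are of two kinds: an equivalence (almost split extensions exist if and only if an Auslander--Reiten--Serre duality exists, under Ext-finiteness/Hom-finiteness type hypotheses), and transfer results (existence passes to relative structures, ideal quotients, and extension-closed subcategories of categories already having almost split extensions). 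Here no ARS duality is given, no ambient category with almost split extensions is given, and admissibility does not even imply Ext-finiteness or Hom-finiteness, so the hypotheses do not match. Producing almost split extensions from admissibility is precisely the new content of this corollary (its exact-category ancestor is \cite[Theorem 2.26]{FangGorsky22}); it cannot be outsourced to a citation.

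The paper closes this gap by a direct argument your proposal is missing: given an indecomposable non-projective $C$, choose a non-split $\delta\in\mathbb E(C,A)$ with $A$ indecomposable whose defect $\tilde{\delta}$ has \emph{minimal length} --- this is where admissibility enters. For any non-retraction $c:C'\rightarrow C$, Remark~\ref{rmk:defectses} gives a short exact sequence $0\rightarrow\tilde{\theta}\rightarrow\tilde{\delta}\rightarrow\tilde{\mu}\rightarrow 0$ with $\theta=c^*(\delta)$ and $\mu=u_*(\delta)$; since $C$ is indecomposable, $\delta\neq 0$ and $c$ is not a retraction, $\mu$ is nonzero, and pushing it to a suitable indecomposable summand of its first term produces a nonzero extension whose defect is a quotient of $\tilde{\delta}$, so minimality forces $\tilde{\delta}\rightarrow\tilde{\mu}$ to be an isomorphism, hence $\tilde{\theta}=0$, i.e.\ $c^*(\delta)=0$. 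This verifies (AS2), and only then does one invoke \cite[Lemma 2.15]{IyamaNakaokaPalu24} --- a refinement statement, not an existence theorem --- to conclude that $\delta$ is almost split with defect $S_C$. A secondary issue: you assert that $\Def\mathbb E$ is an abelian category (attributed to \cite{Ogawa21}) whose simple objects are the simple modules; the paper instead identifies $\Def\mathbb E$ concretely with the finite-length modules in $\Mod(\add\M)$, using closure of defects under extensions together with the existence step above, and this identification is what legitimizes counting Serre subcategories by sets of simples.
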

\begin{proof}
Let $\M$ be a set of representatives for each isomorphism class of non-projective indecomposable objects in $\C=H^0(\A)$. 
Let us show that for each $C\in \M$, there exists an almost split extension with ending term $C$.
Indeed, since $C$ is non-projective and $\C$ is Krull--Schmidt, there exists a non-zero extension $\delta\in\mathbb E(C,A)$ with $A$ indecomposable.
By item 2), each defect has finite length. Let us choose $\delta$ such that the length of $\tilde{\delta}$ is minimal among such extensions.
Let $c:C'\rightarrow C$ be a non-retraction morphism in $\C$.
Let us consider the diagram~(\ref{dia:defectses}) in Remark~\ref{rmk:defectses}. 
So there is a surjection from $\tilde{\delta}$ to $\tilde{\mu}$ where $\mu=u_*(\delta)$ for some morphism $u:A\rightarrow E$.
Since $C$ is indecomposable and $\delta$ is non-zero and $c$ is a non-retraction, the $\mathbb E$-extension $\mu$ is nonzero. 
We decompose $E$ into indecomposables $E=\oplus_{i} E_i$.
Then for some $p_i:E\rightarrow E_i$, we have $\mu_i=(p_i)_*(\mu)\neq 0$.
There is a surjection $\tilde{\mu}\rightarrow \tilde{\mu_i}$. So the composition $\tilde{\delta}\twoheadrightarrow \tilde{\mu}\twoheadrightarrow\tilde{\mu_i}$ is an isomorphism by the minimality of the length of $\tilde{\delta}$.
Then $\tilde{\delta}$ is isomorphic to $\tilde{\mu}$ and hence $\tilde{\theta}=0$. 
It follows that $c^*(\theta)=0$ and hence we have that $\tilde{\delta}$ is isomorphic to $S_C$ and that $\delta$ is an almost split extension.

We choose an almost split extension $\delta_C$ for each $C\in\M$ and put $\mathcal R=\{\delta_C,C\in\M\}$.
Since any defect takes zero value at each projective object in $\C$, 
the category $\Def\mathbb E$ can be identified with a full subcategory of $\Mod (\add \M)$.  
The defects $\tilde{\delta_{C}}$ are the simple modules in $\Mod (\add \M)$ and hence we have that $\Def \mathbb E$ is the full subcategory of $\Mod (\add\M)$ consisting of objects which have finite lengths. 
Recall that in a small finite length abelian category, the lattice of Serre subcategories is isomorphic to the lattice of the subsets of the set of isomorphism classes of simple objects.
Therefore, by Theorem~\ref{bijectionstructures}, the lattice of exact substructures of $(\A,\mathcal S)$ is isomorphic to the Boolean lattice of the subsets of the set of isomorphism classes of indecomposable non-projective objects in $\C$.
\end{proof}

\section{Example}\label{sec:example}
In this section, we illustrate our theory of exact dg categories using a typical class of examples: dg categories of two-term complexes of finitely generated projective modules over finite-dimensional algebras.
We demonstrate, by providing explicit examples, that there exist short exact sequences 
which do not belong to the exact structure inherited from the ambient pretriangulated dg categories. 
Furthermore, we offer a concrete example of a (hereditary!) algebra where the lattice of exact structures is given explicitly.
\subsection{Homotopy category of two-term complexes}
\label{subsec:2-term}
Throughout this section, let $k$ be a field and $A$ a finite-dimensional $k$-algebra. 
Let $\mathcal H^{[-1,0]}(\proj A)$ be the full subcategory of $\T=\mathcal H^b(\proj A)$ consisting of two-term complexes $P^{-1} \to P^0$ of finitely generated projective $A$-modules, where $P^{i}$ is concentrated in degree $i$. 
Let $\A=\C^{b}_{dg}(\proj A)$ be the canonical enhancement of
${\mathcal H}^b(\proj A)$ and $\A'$ its full dg subcategory on the objects $P^{-1} \to P^0$.
Then $\A'$ is stable under extensions in the pretriangulated dg category $\A$ and thus inherits a canonical exact dg structure.
For an $A$-module $M$, we denote by $M^*$ its $A$-dual.

\begin{proposition}\label{prop:twoterm}
There exists a homotopy short exact sequence in $\A'$ which is not homotopy short exact in 
$\A$ if and only if there exists an $A$-module $M$ satisfying the following properties
\begin{itemize}
\item[(1)] $\pd_A M= 1$, and
\item[(2)] $\pd_{A^{op}} M^*=1$, and
\item[(3)] $M$ is reflexive, i.e.~the canonical map $M\rightarrow M^{**}$ is an isomorphism.
\end{itemize}
\end{proposition}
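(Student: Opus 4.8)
The plan is to carry out the whole analysis inside the pretriangulated dg category $\A=\C^b_{dg}(\proj A)$, whose homotopy category is $\per A=\mathcal H^b(\proj A)$, and to measure the failure of a homotopy short exact sequence of two-term complexes to be a genuine triangle by a single ``defect'' object of $\per A$. Given a $3$-term h-complex $A_0\xrightarrow{f}A_1\xrightarrow{j}A_2$ in $\A'$ with homotopy $h$, I would form the cone $C=\Cone(f)$ together with the canonical comparison $g\colon C\to A_2$ determined by the pair $(j,h)$, and set $Z=\Cone(g)$. Since $\A$ is pretriangulated (Example~\ref{exm:pretriangulated}), the sequence is homotopy short exact in $\A$ exactly when it is a triangle, i.e.\ when $Z=0$; whereas by Lemma~\ref{lem:3termhcomplex} together with Remark~\ref{subcategory}, homotopy short exactness \emph{in $\A'$} says precisely that $Z$ becomes invisible to the two-term complexes after applying $\tau_{\le 0}$, namely the two comparison maps $\tau_{\le 0}\RHom(A',Z)$ and $\tau_{\le 0}\RHom(Z,A')$ are isomorphisms for every $A'\in\A'$. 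Thus a homotopy short exact sequence in $\A'$ that is not one in $\A$ exists if and only if there is a \emph{nonzero} $Z\in\per A$ which is two-term-invisible from both sides in this truncated sense and is realizable as such a comparison cone.

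The heart of the forward direction is then the analysis of $Z$. Because all three terms are two-term complexes (cohomology concentrated in $[-1,0]$), the cone $Z$ lies in a bounded cohomological range; the left-hand invisibility forces its cohomology into non-negative degrees, and combined with this range $Z$ collapses to a single module $M$ placed in one degree. Running the identical argument through the exact duality $\RHom(-,A)\colon \per A\to(\per A^{op})^{op}$, which interchanges $A$ with $A^{op}$ and two-term complexes with two-term complexes, the right-hand invisibility for $Z$ becomes the left-hand invisibility of $Z^\ast=\RHom(Z,A)$ over $A^{op}$, so $Z^\ast$ is likewise a single module, namely $M^\ast$. Since the relevant presentation of $M$ (resp.\ of $M^\ast$) fits into a two-term complex of projectives one gets $\pd_A M\le 1$ (resp.\ $\pd_{A^{op}}M^\ast\le 1$); that $M$ and $M^\ast$ are nonzero and non-projective—otherwise $Z$ would itself lie in $\A'$ and the sequence would be a triangle—upgrades these to $\pd_A M=1$ and $\pd_{A^{op}}M^\ast=1$. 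Finally $Z\cong Z^{\ast\ast}$ for perfect complexes, together with $Z=M$ and $Z^\ast=M^\ast$, translates into $M\cong M^{\ast\ast}$, i.e.\ reflexivity; this yields $(1)$, $(2)$, $(3)$.

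For the converse I would reverse this construction: starting from $M$ satisfying $(1)$--$(3)$, take the length-one projective resolution $0\to P^{-1}\to P^0\to M\to 0$ and the length-one projective resolution of $M^\ast$ over $A^{op}$, dualize the latter back to $A$, and splice the two resulting objects of $\A'$ into a $3$-term h-complex whose comparison cone is $M$, using reflexivity to glue the two presentations. Lemma~\ref{lem:3termhcomplex} then certifies homotopy short exactness in $\A'$, while $Z=M\neq 0$ shows it is not a triangle, hence not homotopy short exact in $\A$. The main obstacle I expect is exactly the middle paragraph: the precise truncation-and-shift bookkeeping needed to show that two-sided two-term invisibility forces $Z$ to be an honest reflexive module with both $M$ and $M^\ast$ of projective dimension \emph{exactly} one—matching the $\tau_{\le 0}$-windows on the covariant and contravariant sides with the vanishing of the correct $\Ext$-groups under $\RHom(-,A)$—and, dually, verifying that the spliced complex in the converse genuinely lands in $\A'$ and has the asserted cone.
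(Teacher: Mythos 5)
Your reduction is sound up to a point: working in the pretriangulated $\A$, the sequence is homotopy short exact in $\A$ iff the total defect $Z=\Cone\bigl(\Cone(f)\to A_2\bigr)$ vanishes in $\per A$, and homotopy \emph{left} exactness in $\A'$ does force $Z$ to be quasi-isomorphic to a module placed in degree $0$ (one has $\Sigma^{-1}Z\cong\Cone(A_0\to\Sigma^{-1}\Cone(j))\cong\tau_{\geq 1}\Sigma^{-1}\Cone(j)$). The gap is exactly in your middle paragraph, and it is not a bookkeeping issue but a wrong identification: the module $M$ of the proposition is \emph{not} the defect $N:=H^0(Z)$, and in fact $N$ \emph{never} satisfies (1)--(3). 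First, $Z$ is a complex of projectives concentrated in degrees $[-3,0]$ (it is $\Sigma^2A_0\oplus\Sigma A_1\oplus A_2$ as a graded object), not a two-term complex, so all one gets is $\pd_A N\leq 3$. Second, and decisively: under the duality $\Sigma\RHom_A(-,A)\colon\A'(A)^{op}\iso\A'(A^{op})$, homotopy right exactness of the sequence is homotopy left exactness of the dual $3$-term h-complex over $A^{op}$, whose total defect is $\Sigma^{3}\RHom_A(Z,A)$; so right-hand invisibility forces $\RHom_A(Z,A)$ to be concentrated in degree $3$, \emph{not} degree $0$. Hence $\Hom_A(N,A)=\Ext^1_A(N,A)=\Ext^2_A(N,A)=0$, so for $N\neq 0$ one gets $N^{*}=0$ (so $N$ is never reflexive) and $\pd_AN=3$ exactly (over a finite-dimensional algebra a nonzero module of finite projective dimension $n$ has $\Ext^n_A(-,A)\neq 0$). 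Thus your claims ``$\pd_A M\le 1$'', ``$Z^{*}=M^{*}$'' and the reflexivity argument via $Z\cong Z^{**}$ fail for \emph{every} relevant instance; the derived double dual cannot be transported to the module level because the two duals sit in the wrong degrees.

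The module the paper extracts is the \emph{kernel}, not the cokernel: with $V=\Sigma^{-1}\Cone(j)$ it takes $M=H^0(V)=\ker\bigl(A_1^0\oplus A_2^{-1}\to A_2^0\bigr)$, i.e.\ the degree-zero cohomology of the homotopy kernel, whereas your $N=H^0(Z)$ is the corresponding cokernel $H^1(V)$. Left exactness then yields the genuine two-term presentation $0\to A_0^{-1}\to A_0^{0}\oplus A_1^{-1}\to M\to 0$, whence $\pd_AM\leq 1$; applying $\Hom_A(-,A)$ to this data gives the presentation of $M^{*}$ and reflexivity, and projectivity of $M^{*}$ is exactly what would force the sequence to be short exact in $\A$. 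For your converse, the spliced complex (which, made explicit, is the paper's $(P_1\to P_0)\to(0\to Q_0^{*})\to(0\to Q_1^{*})$) does work, but its comparison cone is not $M$: it is $\Ext^1_{A^{op}}(M^{*},A)\cong\Tr_{A^{op}}(M^{*})$, a module of projective dimension $3$ with vanishing $A$-dual; it is nonzero precisely because $M^{*}$ is non-projective by (2), and \emph{that} --- not ``$Z=M\neq 0$'' --- is the correct reason the sequence fails to be short exact in $\A$.
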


\begin{proof}
Suppose we are given such an $A$-module $M$. 
We consider a minimal projective presentation of $M$
\[
0\rightarrow P_1\rightarrow P_0\rightarrow M\rightarrow 0.
\]
By taking the $A$-dual, we obtain a left exact sequence of left $A$-modules
\[
0\rightarrow M^*\rightarrow P_0^*\rightarrow P_1^*.
\]
Put $N=M^*$. We then take a minimal projective presentation of $N$
\[
0\rightarrow Q_1\rightarrow Q_0\rightarrow N\rightarrow 0.
\]
Again, by taking the $A$-dual, we arrive at a left exact sequence of right $A$-modules
\[
0\rightarrow M\rightarrow Q_0^*\rightarrow Q_1^*,
\]
since $M$ is assumed to be reflexive. 
The composition $P_0\rightarrow M\rightarrow Q_0^{*}$ then yields the following $3$-term homotopy complex
\[
\begin{tikzcd}
(P_1\rightarrow P_0)\ar[rr,"0"{red,swap},red,bend right=8ex]\ar[r]&(0\rightarrow Q_0^*)\ar[r] &(0\rightarrow Q_1^*)
\end{tikzcd}
\]
which is homotopy short exact in $\A'$, but not in $\A$.

Suppose we have a $3$-term homotopy complex in $\A'$ 
\begin{equation}
\begin{tikzcd}\label{dia:twoterm}
P^{\bullet}\ar[r,"f"]\ar[rr,bend right=8ex,"h"{red,swap},red]&Q^{\bullet}\ar[r,"g"] &R^{\bullet}
\end{tikzcd}.
\end{equation}
We form the following diagram in $\A$
\[
\begin{tikzcd}
P^{\bullet}\ar[r,"f"]\ar[d,"v"swap]&Q^{\bullet}\ar[r,"\begin{bmatrix}0\\1\end{bmatrix}"]\ar[d,equal]&U=\Cone(f)\ar[r,"{[}1{,}0{]}"]\ar[d,"u"swap]\ar[rd,"s"red,red]&\Sigma P^{\bullet}\ar[d]\\
V=\Sigma^{-1}\Cone(g)\ar[r,"{[}-1{,}0{]}"swap]&Q^{\bullet}\ar[r,"g"swap]&R^{\bullet}\ar[r,"\begin{bmatrix}0\\1\end{bmatrix}"swap]\ar[d]&\Sigma V\ar[d]\\
&&N\ar[r,equal]&N
\end{tikzcd}
\]
where $u={[}-h{,}g{]}$, $v=\begin{bmatrix}-f\\-h\end{bmatrix}$ and $s=\begin{bmatrix}0&0\\-1&0\end{bmatrix}$.
By definition, the $3$-term homotopy complex (\ref{dia:twoterm}) is homotopy short exact if and only if, for any $S^{\bullet}\in \A'$, the maps $u$ and $v$ induce canonical quasi-isomorphisms of complexes
\begin{equation}\label{seq:1}
\tau_{\leq 0}\Hom_{\A}(S^{\bullet}, P^{\bullet})\iso \tau_{\leq 0}\Hom_{\A}(S^{\bullet}, V),
\end{equation}
and 
\begin{equation}\label{seq:2}
\tau_{\leq 0}\Hom_{\A}(R^{\bullet},S^{\bullet})\iso \tau_{\leq 0}\Hom_{\A}(U, S^{\bullet}).
\end{equation}
We observe that the formula (\ref{seq:1}) holds for any $S^{\bullet}\in \A'$ if and only if it holds for $S^{\bullet}=A$ if and only if $P^{\bullet}\iso \tau_{\leq 0}V$. 
Similarly, the formula (\ref{seq:2}) holds for any $S^{\bullet}\in \A'$ if and only if it holds for $S^{\bullet}=\Sigma A$ if and only if $\Hom(R^{\bullet}, A)\iso \tau_{\leq 0} \Hom(U,A)$. 
Put $M=\ker(Q^{0}\oplus R^{-1}\xrightarrow{[-g^{0}, -d]} R^{0})$. 
Then the 3-term homotopy complex (\ref{dia:twoterm}) is homotopy left exact if and only if the following is a projective presentation of $M$
\[
\begin{tikzcd}
0\ar[r]&P^{-1}\ar[r,"{[}d{,} f^{-1}{]^{\intercal}}"]&P^0\oplus Q^{-1}\ar[r]& M\ar[r]&0.
\end{tikzcd}
\]
In other words, the morphism $g:Q^{\bullet}\rightarrow R^{\bullet}$ admits a homotopy kernel if and only if $\pd_{A}M\leq 1$.

Now, we suppose that the $3$-term homotopy complex (\ref{dia:twoterm}) is homotopy short exact in $\A'$.
Then, by the observation made earlier, we have $\pd_{A}(M)\leq 1$.
By taking the $A$-dual, we see that $M^{*}\iso \ker ((P^0)^{*}\oplus (Q^{-1})^{*}\rightarrow (P^{-1})^{*})$ and the following is a projective presentation of $M^{*}$
\begin{equation}
\begin{tikzcd}\label{seq:M*}
0\ar[r]&(R^{0})^{*}\ar[r]&(R^{-1})^{*}\oplus (Q^{0})^{*}\ar[r]& M^{*}\ar[r]&0.
\end{tikzcd}
\end{equation}
Hence, we also have $\pd_{A^{op}}(M^{*})\leq 1$.
By taking the $A$-dual of the sequence (\ref{seq:M*}) and by the definition of $M$, we see that the canonical morphism $M\rightarrow M^{**}$ is an isomorphism. 
Therefore, the $A$-module $M$ is reflexive. 
If the left $A$-module $M^{*}$ is projective, then the sequence (\ref{seq:M*}) splits and hence the morphism $Q^{0}\oplus R^{-1}\xrightarrow{[-g^{0}, -d]} R^{0}$ is a surjection.
It follows that $V$ is quasi-isomorphic to $\tau_{\leq 0}V$ and the $3$-term homotopy complex (\ref{dia:twoterm}) is homotopy short exact in $\A$.

\end{proof}
We give examples of algebras $A$ admitting such modules.
Note that such an algebra can neither be hereditary, nor local nor self-injective.
Consider the algebras given by the quivers with relations
\[
\begin{tikzcd}
 &2\ar[rd,""{name=b,swap,near start},""{name=b',swap, near end},"b",""{name=b'',swap}]&\\
1\ar[ru,""{name= a,swap,near end},"a"]&&3,\ar[ll,""{name=c, swap,near start},"c"]\arrow[from = a, to = b,bend right=20, dashed, no head]\ar[from=b'',to=c,bend right=20,dashed,no head]
\end{tikzcd}
\begin{tikzcd}
&&5&\\
&&4\ar[u,""{name=4}]&\\
1&3\ar[l]&2\ar[u,""{name=3,near end},""{name=2,swap,near start}]\ar[l]&6.\ar[l,""{near end,swap,name=1}]\ar[r,from=1,to=2,dashed,no head,bend right=12ex]\ar[r,from=3,to=4,dashed,no head,bend left=12ex]
\end{tikzcd}
\]
\[
\begin{tikzcd}
1\ar[r,""{name=1,swap, near end},"a"]&2\ar[r,""{name=2,swap,near start},""{name=3,swap, near end},"b"]&3\ar[r,""{name=4,swap,near start},"c"]&4\ar[r,bend right=12ex, from =1,to=2,dashed,no head]\ar[r,bend right=12ex,from=3,to=4,dashed,no head],
\end{tikzcd}
\begin{tikzcd}
&&4\ar[d,""{name=5,swap,near end}]&\\
1&3\ar[l,""{name=1,near start}]&2\ar[l,""{name=2,near end},""{name=3,near start},""{name=6,near start, swap}]&5,\ar[l,""{name=4,near end}]\ar[r,from=1,to=2,dashed,no head,bend right=8ex]\ar[r,from =3,to=4,dashed,no head,bend right=8ex]\ar[r,from=5,to=6,dashed,no head,bend right =8ex]
\end{tikzcd}
\begin{tikzcd}
2\ar[r,""{name=2,near start,swap},""{name=3,near end, swap}]&3\ar[d,""{name=4,near start, swap}]\\
1\ar[u,""{name=1,swap,near end}]&4.\ar[l]\ar[r,from=1,to=2,dashed,no head,bend right=8ex]\ar[r,from=3,to=4,no head,dashed, bend right=8ex]
\end{tikzcd}
\]
Then the simple modules $S_2$ at the vertices 2 satisfy the above properties.

\begin{example}
Let $A$ be the path $k$-algebra $k(\overrightarrow{A_2})$ of the quiver $A_2$. 
As the algebra $A$ is hereditary, the inherited exact structure on $\A'$ is its greatest exact dg structure, as can be seen by Proposition~\ref{prop:twoterm}. 
Let $(\C=H^0(\A'),\mathbb E,\mathfrak s)$ be the corresponding extriangulated category.
We denote by $S_2$ the two-term complex $P_1\rightarrow P_2$ where $P_i$ is the indecomposable projective $A$-module corresponding to the vertex $i$. 
Then the AR-quiver of $\C$ is as follows
\[
\begin{tikzcd}
&P_2\ar[rd]&&\Sigma P_1\ar[rd]\ar[ll,dashed,no head]&\\
P_1\ar[ru]&&S_2\ar[ru]\ar[ll,dashed,no head]&&\Sigma P_2.\ar[ll,dashed,no head]
\end{tikzcd}
\]

We take the following AR-sequences:
\[
\begin{tikzcd}
\alpha: P_2\ar[r,tail]&S_2\ar[r,two heads]&\Sigma P_1,
\end{tikzcd}
\]
\[
\begin{tikzcd}
\beta: P_1\ar[r,tail]&P_2\ar[r,two heads]&S_2,
\end{tikzcd}
\]
\[
\begin{tikzcd}
\gamma:S_2\ar[r,tail]&\Sigma P_1\ar[r,two heads]&\Sigma P_2.
\end{tikzcd}
\]
By Corollary~\ref{cor:KrullSchmidt}, the lattice of exact structures on $\A'$ is isomorphic to the following Boolean lattice:
\[
\begin{tikzcd}
&\{\alpha,\beta,\gamma\}&\\
\{\alpha,\beta\}\ar[ru]&&\{\beta,\gamma\}\ar[lu]\\
&\{\alpha,\gamma\}\ar[uu,dashed]&\\
\{\alpha\}\ar[uu]\ar[ru,dashed]&\{\beta\}\ar[luu]\ar[ruu]&\{\gamma\}.\ar[uu]\ar[lu,dashed]\\
&\emptyset\ar[u]\ar[lu]\ar[ru]&
\end{tikzcd}
\]
\end{example}

\section{Stable dg categories}\label{sec:stable}
In this section, we focus on a specific class of additive dg categories, the class of {\em stable dg categories}.
The property of being stable is invariant under isomorphisms in the localization $\Hqe_{\leq 0}$ of $\dgcat$ at the connective quasi-equivalences, as defined in Definition~\ref{def:connective quasi-equivalence}.
Typical examples include (connective covers of) pretriangulated dg categories.
For a Jacobi-finite quiver with potential $(Q,W)$, we observe that $\F_{dg}$ is stable, where $\F_{dg}\subseteq \per_{dg}(\Gamma_{Q,W})$ is the canonical dg enhancement of the {\em fundamental domain} \cite[2.2]{Amiot09} $\F\subseteq \per(\Gamma_{Q,W})$.
This provides a class of non-connective dg categories that are stable but not pretriangulated.
\subsection{Stable dg categories}
We have the following key notion of this section. It is a dg analog of Lurie's notion of stable $\infty$-category \cite[Definition 1.1.1.9]{LurieHA}.
\begin{definition}\label{def:stable}
Let $\A$ be an additive dg category. 
It is {\em  stable} if the following conditions are satisfied:
\begin{itemize}
\item[(a)] the dg category $\A$ admits homotopy kernels and homotopy cokernels;
\item[(b)] a  3-term homotopy complex in $\A$
is homotopy left exact if and only if it is homotopy right exact.
\end{itemize} 
\end{definition}
It is straightforward to check that the property of being stable is invariant under connective quasi-equivalences, cf.~Definition~\ref{def:connective quasi-equivalence}.
\begin{example}
By Example~\ref{exm:pretriangulated}, a pretriangulated dg category $\A$, and hence its connective cover $\tau_{\leq 0}\A$, is stable.
\end{example}
\begin{remark}
The notion of stable dg category is more flexible than the notion of pretriangulated dg category, in the sense that it is preserved under connective quasi-equivalences, which is not the case at all for the notion of pretriangulated dg category. In a subsequent paper, we will show that they are essentially the same notion, in the sense that each stable dg category is isomorphic to a pretriangulated dg category in $\Hqe_{\leq 0}$.
\end{remark}
Let $\A$ be a stable dg category. It is clear that the following $3$-term homotopy complex in $\A$
\[
\begin{tikzcd}
0\ar[r,"\Id"]\ar[rr,bend right=8ex,"0"swap]&0\ar[r,"\Id"]&0
\end{tikzcd}
\]
is homotopy short exact.  Since the dg category $\A$ admits homotopy pullbacks which are homotopy bicartesian, each morphism in $Z^0(\A)$ is a deflation. Dually, each morphism in $Z^0(\A)$ is an inflation. Thus, it is straightforward to verify the following proposition.
\begin{proposition}\label{prop:stablestructure}
Let $\A$ be a stable dg category and $\mathcal S$ the class of all homotopy short exact sequences in $\A$. Then $(\A,\mathcal S)$ is an exact dg category.
\end{proposition}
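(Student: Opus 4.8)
The plan is to verify the four axioms of Definition~\ref{exactdgstructure} one by one; that $\mathcal S$ is closed under isomorphism and consists of homotopy short exact sequences is immediate from its definition as the class of \emph{all} such sequences. The single fact that drives everything is the observation recorded just before the statement: in a stable dg category every morphism of $Z^0(\A)$ is at once a deflation and an inflation. For a morphism $j\colon B\rightarrow C$ this follows because condition (a) of Definition~\ref{def:stable} furnishes a homotopy kernel, that is, a homotopy left exact sequence $A\xrightarrow{f}B\xrightarrow{j}C$; by condition (b) the same sequence is homotopy right exact, hence homotopy short exact, hence a conflation, so that $j$ is a deflation. The dual argument with homotopy cokernels shows that every morphism is an inflation. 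With this in hand, axioms Ex0 and Ex1 are formal: $\Id_{0}$ is a morphism of $Z^0(\A)$ and therefore a deflation, and a composite of two deflations is again a morphism of $Z^0(\A)$, hence a deflation.

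The substance of the proof is axiom Ex2, with $\Ex2^{\mathrm{op}}$ being entirely dual. Given a deflation $p\colon B\rightarrow C$ and an arbitrary $c\colon C'\rightarrow C$ in $Z^0(\A)$ I must produce a homotopy pullback of the cospan whose remaining edge $p'$ is again a deflation. First I would reduce to the case that $Z^0(\A)$ is additive: since $H^0(\A)$ is additive, the additive closure $\A'$ of $\A$ in $\pretr(\A)$ is quasi-equivalent to $\A$ with $Z^0(\A')$ additive (Remark~\ref{truncationexactdgstructure}~c)), stability is preserved under this quasi-equivalence, and by Remark~\ref{truncationexactdgstructure}~b) the induced equivalence $\mathcal H_{3t}(\A)\iso\mathcal H_{3t}(\A')$ identifies the two classes of all homotopy short exact sequences, so it suffices to treat $\A'$. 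Once $Z^0(\A)$ is additive, a homotopy pullback of $B\xrightarrow{p}C\xleftarrow{c}C'$ is, by Lemma~\ref{lem:3termhcomplex} and the characterisation of homotopy pullback squares recalled thereafter, the same datum as a homotopy kernel of $[p,-c]\colon B\oplus C'\rightarrow C$, and such a homotopy kernel exists by condition (a). The associated 3-term complex is homotopy left exact by construction and, by condition (b), also homotopy right exact, so the resulting square is homotopy bicartesian. Its edge $p'$ is a morphism of $Z^0(\A)$, hence a deflation by the opening observation, which completes Ex2.

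The main obstacle, such as it is, lies entirely in this last step: guaranteeing that homotopy pullbacks of arbitrary cospans exist and are homotopy bicartesian. Both features come precisely from the two defining clauses of stability --- existence of homotopy kernels (which, after passing to an additive model, realises the pullback as a homotopy kernel of $[p,-c]$) together with the coincidence of homotopy left and right exactness (which promotes cartesian to bicartesian). Everything else is bookkeeping once we know that in a stable dg category every morphism is simultaneously an inflation and a deflation.
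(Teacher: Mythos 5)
Your proposal is correct and follows essentially the same route as the paper: the paper's entire argument consists of the observation (recorded just before the statement) that stability makes every morphism of $Z^0(\A)$ simultaneously an inflation and a deflation, after which the verification of the axioms of Definition~\ref{exactdgstructure} is declared straightforward. Your write-up simply makes that "straightforward" part explicit --- in particular the reduction to the case where $Z^0(\A)$ is additive and the identification of the homotopy pullback of $B\xrightarrow{p}C\xleftarrow{c}C'$ with the homotopy kernel of $[p,-c]\colon B\oplus C'\rightarrow C$ --- which is exactly the intended argument.
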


\begin{theorem}\label{thm:stabletriangulated}
Let $\A$ be a small exact dg category.
Then the extriangulated structure on $H^0(\A)$ by Theorem~\ref{thm:extriangulatedstructure} is triangulated if and only if $\A$ is a stable dg category with the canonical exact structure.
\end{theorem}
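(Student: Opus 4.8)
The plan is to prove both implications through a single pivotal reformulation: the theorem is equivalent to the statement that \emph{every morphism in $Z^0(\A)$ is simultaneously an inflation and a deflation} for the structure $\mathcal S$. I will establish the chain
\[
H^0(\A)\text{ triangulated}\ \Longleftrightarrow\ \text{every morphism of }Z^0(\A)\text{ is an inflation and a deflation}\ \Longleftrightarrow\ \A\text{ stable and }\mathcal S\text{ canonical}.
\]

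First I would treat the second equivalence, which is essentially formal. If every morphism of $Z^0(\A)$ is an inflation and a deflation, then in particular every morphism admits a homotopy kernel and a homotopy cokernel, so condition (a) of Definition~\ref{def:stable} holds. For condition (b), let $X\colon A\xrightarrow{f}B\xrightarrow{g}C$ be homotopy left exact; by definition $X$ is then a homotopy kernel of $g$ (Definition~\ref{def:homotopykernel}). Since $g$ is a deflation it admits a homotopy kernel which is a conflation, and homotopy kernels are unique up to a unique isomorphism by Proposition~\ref{pullbackunique}; hence $X$ is isomorphic to a conflation and is therefore homotopy short exact, in particular homotopy right exact. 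The dual argument gives the reverse implication, so (b) holds, and the same reasoning shows that every homotopy short exact sequence lies in $\mathcal S$, so $\mathcal S$ is the canonical (maximal) structure. The converse direction — that a stable $\A$ equipped with the canonical structure has all morphisms inflations and deflations — is exactly the observation recorded just before Proposition~\ref{prop:stablestructure}.

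Next I would treat the first equivalence, where the triangulated hypothesis enters. If $(H^0(\A),\mathbb E,\mathfrak s)$ is triangulated, its $\mathfrak s$-triangles are precisely the distinguished triangles, so any morphism $\overline g\colon B\to C$ completes to a triangle $A\to B\xrightarrow{\overline g}C\to\Sigma A$; the underlying conflation exhibits a representative $g$ as a deflation, and since being a deflation is invariant under isomorphism in $\Mor(H^0(\A))$, $g$ itself is a deflation. Dually every morphism is an inflation. For the converse, assuming every morphism is an inflation and a deflation, I would construct the triangulated structure: applying the hypothesis to $A\to 0$ and $0\to A$ produces conflations $A\to 0\to \Sigma A$ and $\Omega A\to 0\to A$, defining endofunctors $\Sigma,\Omega$ of $H^0(\A)$ (their functoriality and well-definedness coming from Proposition~\ref{pullbackunique} and its dual, together with Propositions~\ref{push} and~\ref{cons}). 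Because these conflations are homotopy short exact, $A$ is the homotopy kernel of $0\to\Sigma A$ and the homotopy cokernel of $\Omega A\to 0$, whence $\Omega\Sigma\cong\Id\cong\Sigma\Omega$ and $\Sigma$ is an autoequivalence. The long exact sequences attached to $A\to 0\to\Sigma A$, whose middle term vanishes, then yield a natural isomorphism $\mathbb E(C,A)\cong H^0(\A)(C,\Sigma A)$, identifying $\mathbb E$ with $\Hom(-,\Sigma-)$. Declaring the $\mathfrak s$-triangles to be the distinguished triangles, axioms (TR1)--(TR3) and rotation follow from (ET3) and its dual in Theorem~\ref{thm:extriangulatedstructure}, and the octahedral axiom from (ET4); alternatively one invokes the characterization of triangulated extriangulated categories of Nakaoka--Palu~\cite{NakaokaPalu19} directly from the two families of conflations $A\to 0\to\Sigma A$ and $\Omega A\to 0\to A$.

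The main obstacle is the converse half of this first equivalence: checking that the candidate data genuinely form a triangulated category. The delicate points are that $\Sigma$ is a bona fide \emph{autoequivalence} (rather than merely an endofunctor) with the isomorphism $\mathbb E(C,A)\cong H^0(\A)(C,\Sigma A)$ natural in \emph{both} variables, and that the octahedral axiom holds; both are where the homotopy-pushout universal properties of Propositions~\ref{push} and~\ref{cons} and axiom (ET4) of Theorem~\ref{thm:extriangulatedstructure} do the real work. Citing the Nakaoka--Palu characterization short-circuits this verification, reducing the converse to exhibiting the two explicit families of conflations already produced, which is immediate once every morphism is known to be an inflation and a deflation.
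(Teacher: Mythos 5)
Your proposal is correct and, in its completed form, takes essentially the same route as the paper: both reduce the theorem to the statement that every morphism of $Z^0(\A)$ is an inflation and a deflation (for one direction via the observation preceding Proposition~\ref{prop:stablestructure}, for the other via uniqueness of homotopy kernels/cokernels), and both conclude by invoking Nakaoka--Palu's characterization of triangulated categories as Frobenius extriangulated categories whose projective-injective objects vanish, applied to the conflations $A\to 0\to \Sigma A$ and $\Omega A\to 0\to A$. The explicit construction of $\Sigma$, $\Omega$ and of the isomorphism $\mathbb E(C,A)\cong H^0(\A)(C,\Sigma A)$ that you sketch is precisely the verification this citation short-circuits, so the ``alternative'' you mention at the end is in fact the proof the paper gives.
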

\begin{proof}
According to \cite[Corollary 7.6]{NakaokaPalu19}, an extriangulated category is tringulated if and only if it is a Frobenius extriangulated category with the full subcategory of projective-injectives being zero.

Assume the dg category $\A$ is stable with the canonical exact structure. 
Let $A$ be an object in $H^0(\A)$. By the definition of stability, it admits conflations
\[
\begin{tikzcd}
A'\ar[r]&0\ar[r]&A,
\end{tikzcd}
\] 
and 
\[
\begin{tikzcd}
A\ar[r]&0\ar[r]&A''.
\end{tikzcd}
\]
Hence, the extriangulated category $(H^0(\A),\mathbb E,\mathfrak s)$ is Frobenius with the full subcategory of projective-injectives being zero. 
Therefore, it is a triangulated category.

Conversely, let us assume that the extriangulated category $(H^0(\A),\mathbb E,\mathfrak s)$ is Frobenius with the full subcategory of projective-injectives being zero. 
So for any object $A\in \A$, the morphism $A\rightarrow 0$ is an inflation.
By the dual of Claim 1 in the proof of Proposition~\ref{property}, we deduce that any morphism $f:A\rightarrow B$ is an inflation. 
In particular, the dg category $\A$ admits homotopy cokernels which are homotopy short exact.
Dually, it admits homotopy kernels which are homotopy short exact.
It follows that a $3$-term homotopy complex in $\A$ is homotopy left exact if and only if it is homotopy right exact.

\end{proof}
\subsection{Amiot's construction of fundamental domain}
The following example is taken from~\cite{Amiot09}. 
It serves as an illustration of non-connective dg categories that are stable but not pretriangulated. 
Recall that a dg functor $F:\F_{dg}\rightarrow \C_{dg}$ between pretriangulated dg categories necessarily induces a triangle functor $H^0(F):H^0(\F_{dg})\rightarrow H^0(\C_{dg})$. 
It also demonstrates that  a dg functor  between stable dg categories does not necessarily induce a triangle functor. 

Let $k$ be a field. Recall that a dg $k$-algebra $A$ is {\em homologically smooth} if the object $A$, viewed as an $A^{e}$-module, is perfect. 
It is {\em connective} if for each $p>0$, the space $H^p(A)$ vanishes. It is {\em bimodule $3$-CY} if there is an isomorphism in $\D(A^{e})$
\[
R\Hom_{A^{e}}(A,A^{e})\iso \Sigma^{-3}A.
\]
Let $A$ be a homologically smooth, connective, bimodule $3$-CY dg $k$-algebra, such that $H^0(A)$ is finite-dimensional.
Note that the opposite dg algebra $A^{op}$ also satisfies these properties.
For example, one could take the Ginzburg dg algebra $\Gamma_{Q,W}$ associated with a Jacobi-finite quiver with potential $(Q,W)$.
Let $\per(A)$ be the perfect derived category of $A$, which is the thick subcategory of the derived category $\D=\D(A)$ generated by $A$.
A dg $A\mbox{-}$module is {\em perfectly valued} if its total cohomology is finite-dimensional and we denote by $\pvd(A)$ the triangulated category of perfectly valued dg $A\mbox{-}$modules.
By~\cite[Lemma 4.1]{Keller08d}, $\pvd(A)$ is a full subcategory of $\per(A)$ and we have a bifunctorial isomorphism 
\[
D\Hom_{\D}(L,M)\iso \Hom_{\D}(M,\Sigma^{3}L)
\]
for all objects $L$ in $\pvd(A)$ and all objects $M$ in $\D(A)$.
Recall the canonical t-structure $(\D^{\leq 0},\D^{\geq 0})$ on $\D$, cf.~Lemma~\ref{lemma:tstructure}, where $\D^{\leq 0}$ (resp.~$\D^{\geq 0}$) is the full subcategory consisting of dg $A$-modules whose cohomology is concentrated in non-positive (resp.~non-negative) degrees.
We denote $\tau_{\leq 0}$ as the associated truncation functor. Note that on a dg $A$-module $M$, the underlying complex of the truncation $\tau_{\leq 0} M$ is obtained by applying the truncation $\tau_{\leq 0}$ to the underlying complex of $M$, while preserving the induced $A$-action. 
The {\em fundamental domain $\F=\F({A})$} \cite[2.2]{Amiot09} is defined as
\[
\F=\D^{\leq 0}\cap {^{\perp}\D^{\leq -2}}\cap \per(A),
\]
where $^{\perp}(-)$ denotes the Hom orthogonal.
Put $\P=\add(A)$.
By~\cite[Lemma 2.10]{Amiot09}, it can also be described as $\F=\P\ast \Sigma \P$.
Let $\C$ be the Verdier quotient $\per(A)/\pvd(A)$. 
By \cite[Proposition 2.9]{Amiot09}, the projection functor $\pi:\per(A)\rightarrow \C$ induces a $k$-linear equivalence 
\[
\pi: \F\iso \C.
\]
We add the subscript $dg$ to the aforementioned categories to denote their dg enhancements.
In particular, $\C_{dg}$ is the dg quotient $\per_{dg}(A)/\pvd_{dg}(A)$.
Since $\F_{dg}\subset \per_{dg}(A)$ is not stable under shifts, it is not a pretriangulated dg category.
The following proposition provides a dg enhancement of the above $k$-linear equivalence. 
In particular,  it implies that the dg category $\F_{dg}$ is stable.
\begin{proposition}
The dg quotient functor $\per_{dg}(\A)\rightarrow \C_{dg}$ induces a connective quasi-equivalence between $\F_{dg}$ and $\C_{dg}$.
\end{proposition}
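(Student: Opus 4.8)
The plan is to show directly that the functor $\Phi\colon\F_{dg}\to\C_{dg}$ induced by the dg quotient functor is a connective quasi-equivalence, that is, that $\tau_{\leq 0}\Phi$ is a quasi-equivalence. This amounts to two things: that $H^0(\Phi)\colon\F\to\C$ is an equivalence of categories, and that $\tau_{\leq 0}\Phi$ is quasi-fully faithful. The first is exactly Amiot's $k$-linear equivalence $\pi\colon\F\iso\C$ from \cite{Amiot09}, together with the observation that $H^0(\tau_{\leq 0}(-))=H^0(-)$. So the whole content lies in the quasi-full-faithfulness.

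Since $\F_{dg}$ is a full dg subcategory of $\per_{dg}(A)$, we have $\Hom_{\F_{dg}}(X,Y)=\Hom_{\per_{dg}(A)}(X,Y)$, whose $n$-th cohomology is $\Hom_{\D(A)}(X,\Sigma^{n}Y)$; likewise $H^{n}\Hom_{\C_{dg}}(X,Y)=\Hom_{\C}(X,\Sigma^{n}Y)$, because $\C_{dg}$ is pretriangulated with $H^0(\C_{dg})=\C$. As $\tau_{\leq 0}$ of a morphism of complexes is a quasi-isomorphism precisely when it is an isomorphism on $H^n$ for all $n\leq 0$, I would reduce the claim to: for all $X,Y\in\F$ and all $n\leq 0$, the canonical map
\[
\Hom_{\D(A)}(X,\Sigma^{n}Y)\longrightarrow \Hom_{\C}(X,\Sigma^{n}Y)
\]
is an isomorphism. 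The case $n=0$ is the full faithfulness of $\pi$, so the new input is the range $n<0$.

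To treat all $n\leq 0$ at once, I would compute the right-hand side as the Verdier-localization colimit $\Hom_{\C}(X,\Sigma^{n}Y)=\colim_{s}\Hom_{\D(A)}(X',\Sigma^{n}Y)$ over morphisms $s\colon X'\to X$ in $\per(A)$ with cone $N_s\in\pvd(A)$. The first key step is a cofinality statement: the $s$ with $N_s\in\pvd(A)\cap\D(A)^{\leq0}$ are cofinal. This is where connectivity enters through the $t$-structure of Lemma~\ref{lemma:tstructure}: given $s$ with cone $N$, the composite $X\to N\to\tau_{\geq1}N$ vanishes since $X\in\D(A)^{\leq0}$ and $\tau_{\geq1}N\in\D(A)^{\geq1}$, so $X\to N$ factors through $\tau_{\leq0}N$, and an octahedron on $X\to\tau_{\leq0}N\to N$ yields a refinement of $s$ whose cone $\tau_{\leq0}N$ lies in $\pvd(A)\cap\D(A)^{\leq0}$; as $\D(A)^{\leq0}$ is closed under extensions, these cones stay in $\D(A)^{\leq0}$ under further refinement, giving a cofinal (filtered) subsystem. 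The second key step is that along this subsystem the transition maps are isomorphisms when $n\leq0$: the long exact sequence attached to $X'\to X\to N\to$ shows $s^{*}$ is an isomorphism as soon as $\Hom_{\D(A)}(N,\Sigma^{m}Y)=0$ for $m=n,n+1$, and the bimodule $3$-Calabi--Yau duality gives $D\Hom_{\D(A)}(N,\Sigma^{m}Y)\cong\Hom_{\D(A)}(Y,\Sigma^{3-m}N)$, which vanishes because $N\in\D(A)^{\leq0}$ and $m\leq1$ force $\Sigma^{3-m}N\in\D(A)^{\leq-2}$ while $Y\in\F\subseteq{}^{\perp}\D(A)^{\leq-2}$. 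Hence the colimit collapses to the term $s=\mathrm{id}$, yielding the isomorphism for $n\leq0$; note that for $m=n+1\geq2$ this Calabi--Yau bound fails, which is exactly why $\pi$ need not be a triangle functor.

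I expect the main obstacle to be the cofinality step: making the octahedral refinement rigorous (checking that the refined cones can indeed be taken in $\pvd(A)\cap\D(A)^{\leq0}$ and that the resulting subsystem is genuinely cofinal and filtered), together with the identification of $H^{n}\Hom_{\C_{dg}}(X,Y)$ with the Verdier colimit. The $3$-Calabi--Yau vanishing, by contrast, is a clean formal consequence of the defining orthogonality of $\F$ and should require no further work.
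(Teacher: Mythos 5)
Your proposal is correct and takes essentially the same route as the paper: both reduce the statement to showing that $\Hom_{\per(A)}(X,\Sigma^{n}Y)\iso\Hom_{\C}(X,\Sigma^{n}Y)$ for $X,Y\in\F$ and $n\leq 0$, both use $X\in\D^{\leq 0}$ to replace the relevant $\pvd(A)$-cones by their truncations in $\pvd(A)^{\leq 0}$, and both obtain the needed vanishing from the bimodule $3$-Calabi--Yau duality against $Y\in{^{\perp}\D^{\leq -2}}$. The only difference is packaging: the paper runs the roof calculus by hand, proving injectivity (maps factoring through $\pvd(A)^{\leq 0}$ vanish, which is your $m=n$ vanishing) and surjectivity (roofs straighten, which is your $m=n+1$ vanishing) separately, whereas you assemble the same two vanishing statements into a cofinality-plus-long-exact-sequence argument for the filtered colimit computing Hom's in the Verdier quotient.
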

\begin{proof}
It suffices to show that for objects $X$ and $Y$ in $\F$ and $i\leq 0$, the quotient functor $\pi:\per(A)\rightarrow \C$ induces a canonical isomorphism
\begin{equation}\label{map:connectivequasiequivalence}
\Hom_{\per(A)}(X,\Sigma^{i}Y)\iso \Hom_{\C}(X,\Sigma^i Y).
\end{equation}

We begin by showing that the map (\ref{map:connectivequasiequivalence}) is an injection.
Suppose $f:X\rightarrow \Sigma^i Y$ is a morphism in $\per(A)$ that factors through an object $M$ in $\pvd(A)$. Since $X\in \D^{\leq 0}$, we may assume that $M\in \pvd(A)^{\leq 0}$. Then, we have 
\[
D\Hom_{\per(A)}(M,\Sigma^iY)\iso \Hom_{\per(A)}(\Sigma^i Y,\Sigma^3 M)=0
\]
since $Y\in {^{\perp}\D^{\leq -2}}$. Consequently, the morphism $f$ is also zero and the map (\ref{map:connectivequasiequivalence}) is an injection.
Now, let us demonstrate the surjectivity of the map (\ref{map:connectivequasiequivalence}).
Consider a roof $b/s$
\[
\begin{tikzcd}
&N\ar[rd,"b"]\ar[ld,Rightarrow,"s"swap]&\\
X&&\Sigma^i Y
\end{tikzcd}
\]
where $\Cone(s)\in \pvd(A)$.
Since $X\in \per(A)^{\leq 0}$,we may assume that $\Cone(s)\in \pvd(A)^{\leq 0}$.
Then we have 
\[
D\Hom_{\per(A)}(\Sigma^{-1}\Cone(s),\Sigma^iY)\iso \Hom_{\per(A)}(\Sigma^i Y,\Sigma^2 \Cone(s))=0
\]
since $Y\in {^{\perp}\D^{\leq -2}}$. Thus, the morphism $b$ factors through the morphism $s$ and this shows the surjectivity of the map (\ref{map:connectivequasiequivalence}).
\end{proof}
Our next aim is to describe the homotopy short exact sequences in $\F_{dg}$.
For this, we use the quasi-equivalence $\Sigma \RHom_{A}(-,A):\F_{dg}^{op}\iso \F_{dg}(A^{op})$, and it suffices to describe homotopy kernels in $\F_{dg}$.
By~\cite[Lemma 2.5]{Amiot09}, the space $H^p(A)$ is finite-dimensional for each $p$.
Consequently, the triangulated category $\per(A)$ is Hom-finite, and the t-structure $(\D^{\leq 0},\D^{\geq 0})$ restricts to a t-structure $(\per(A)^{\leq 0}, \per(A)^{\geq 0})$ on $\per(A)$.
Now, let us elucidate the homotopy kernels of morphisms in $\F_{dg}(A)$.
Let $g:M\rightarrow N$ be a morphism in $Z^0\F_{dg}(A)$.
Define $L=\Sigma^{-1}\Cone(g)$ as its homotopy kernel in $\per_{dg}(A)$.
So we have the following homotopy short exact sequence in $\per_{dg}(A)$
\[
\begin{tikzcd}
L\ar[r,"f'"]\ar[rr,"h'"{swap},bend right=8ex]& M\ar[r,"g"]&N.
\end{tikzcd}
\]
\begin{lemma} 
The truncation $\tau_{\leq 0}L$ lies in $\F$ and the inclusion $i:\tau_{\leq 0}L\rightarrow L$ yields a homotopy short exact sequence in $\F_{dg}(A)$
\begin{equation}
\begin{tikzcd}\label{dia:homotopykernelamiot}
\tau_{\leq 0}L\ar[r,"f"]\ar[rr,"h"{swap},bend right=8ex]& M\ar[r,"g"]&N,
\end{tikzcd}
\end{equation}
where $f=f'i$ and $h=h'i$.
\end{lemma}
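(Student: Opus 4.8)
The plan is to establish the two assertions separately: first that $\tau_{\leq 0}L$ is an object of $\F$, and then that the $3$-term h-complex (\ref{dia:homotopykernelamiot}) is homotopy short exact. The guiding idea for the second part is to compare (\ref{dia:homotopykernelamiot}) with the genuine homotopy short exact sequence $L\xrightarrow{f'}M\xrightarrow{g}N$ in the pretriangulated dg category $\per_{dg}(A)$ and to measure the effect of the truncation inclusion $i\colon\tau_{\leq 0}L\to L$ by means of the canonical t-structure, cf.~Lemma~\ref{lemma:tstructure}.

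For the membership, I would first note that $L=\Sigma^{-1}\Cone(g)$ lies in $\per(A)$ and that, since $M,N\in\F\subseteq\D^{\leq 0}$, the cone $\Cone(g)$ lies in $\D^{\leq 0}$, whence $L\in\D^{\leq 1}$. Because the canonical t-structure restricts to $\per(A)$, the truncation $\tau_{\leq 0}L$ again lies in $\per(A)\cap\D^{\leq 0}$. It then remains to check $\tau_{\leq 0}L\in{}^{\perp}\D^{\leq -2}$. Applying $\Hom_{\D}(-,Y)$ with $Y\in\D^{\leq -2}$ to the defining triangle $L\to M\to N\to\Sigma L$ and using $M,N\in{}^{\perp}\D^{\leq -2}$ shows $L\in{}^{\perp}\D^{\leq -2}$; applying the same functor to the truncation triangle $\tau_{\leq 0}L\to L\to\tau_{\geq 1}L\to\Sigma\tau_{\leq 0}L$ and using that $\Hom_{\D}(X,Y)=0$ whenever $X\in\D^{\geq 1}$ and $Y\in\D^{\leq 0}$ transfers this to $\tau_{\leq 0}L$. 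Hence $\tau_{\leq 0}L\in\F$.

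For homotopy short exactness I would invoke Lemma~\ref{lem:3termhcomplex}: it suffices to check, for every $S\in\F$, that the canonical maps $\tau_{\leq 0}(\alpha)$ and $\tau_{\leq 0}(\beta)$ are quasi-isomorphisms. Since $\F_{dg}(A)$ is a full dg subcategory of $\per_{dg}(A)$ and $L$ is the genuine homotopy kernel of $g$ there, the analogous comparison maps built from $f'$ and $h'$ are honest isomorphisms of Hom-complexes; because $f=f'i$ and $h=h'i$, the maps $\alpha$ and $\beta$ factor through these via the maps induced by $i$. Thus the whole question reduces to showing that $i$ induces quasi-isomorphisms after $\tau_{\leq 0}$,
\[
\tau_{\leq 0}\RHom(S,\tau_{\leq 0}L)\iso \tau_{\leq 0}\RHom(S,L)
\quad\text{and}\quad
\tau_{\leq 0}\RHom(L,S)\iso \tau_{\leq 0}\RHom(\tau_{\leq 0}L,S).
\]
Both cones are governed by $\tau_{\geq 1}L$, which (as $L\in\D^{\leq 1}$) is concentrated in degree $1$ and, by Hom-finiteness of $\per(A)$, has finite-dimensional total cohomology, so that $\tau_{\geq 1}L\in\pvd(A)$.

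The first quasi-isomorphism is the easy half: its obstruction is $\Hom_{\D}(S,\Sigma^{m}\tau_{\geq 1}L)$ for $m\leq 0$, which vanishes since $S\in\D^{\leq 0}$ and $\Sigma^{m}\tau_{\geq 1}L\in\D^{\geq 1}$. The second quasi-isomorphism is the main obstacle, and it is here that the Calabi--Yau geometry enters: its obstruction is $\Hom_{\D}(\tau_{\geq 1}L,\Sigma^{p}S)$ for $p\leq 1$. For $p=1$ this vanishes by the t-structure, but for $p\leq 0$ I would use the bimodule $3$-Calabi--Yau duality
\[
D\Hom_{\D}(\tau_{\geq 1}L,\Sigma^{p}S)\iso \Hom_{\D}(S,\Sigma^{3-p}\tau_{\geq 1}L)
\]
of \cite[Lemma 4.1]{Keller08d}, which applies because $\tau_{\geq 1}L\in\pvd(A)$; the target is concentrated in degree $p-2\leq -2$, hence lies in $\D^{\leq -2}$ and is annihilated by $S\in{}^{\perp}\D^{\leq -2}$. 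Combining the two vanishings with the factorisations shows that $\tau_{\leq 0}(\alpha)$ and $\tau_{\leq 0}(\beta)$ are quasi-isomorphisms, so that (\ref{dia:homotopykernelamiot}) is homotopy short exact in $\F_{dg}(A)$. I expect the bookkeeping of the two orthogonality conditions defining $\F$ — one handled purely by the t-structure (left exactness) and the other genuinely requiring the $3$-Calabi--Yau duality and the perfect-valuedness of $\tau_{\geq 1}L$ (right exactness) — to be the delicate point of the argument.
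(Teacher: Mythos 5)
Your second half (homotopy short exactness) is essentially the paper's own argument: reduce everything to the truncation triangle $\tau_{\leq 0}L\to L\to\tau_{\geq 1}L$, note $\tau_{\geq 1}L\in\pvd(A)$, kill the left-exactness obstructions $\Hom_{\D}(S,\Sigma^{m}\tau_{\geq 1}L)$ by the t-structure, and kill the right-exactness obstructions $\Hom_{\D}(\tau_{\geq 1}L,\Sigma^{p}S)$ by the $3$-Calabi--Yau duality together with $S\in{}^{\perp}\D^{\leq -2}$. That part is correct (one quibble: the obstruction range there is $p\leq 0$, not $p\leq 1$, and your claim that the $p=1$ case ``vanishes by the t-structure'' is again wrong-direction orthogonality; luckily that case is never needed).

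The genuine gap is in the membership claim $\tau_{\leq 0}L\in{}^{\perp}\D^{\leq -2}$. Your transfer step invokes ``$\Hom_{\D}(X,Y)=0$ whenever $X\in\D^{\geq 1}$ and $Y\in\D^{\leq 0}$.'' This is the \emph{wrong} direction of t-structure orthogonality: the axioms give $\Hom_{\D}(\D^{\leq 0},\D^{\geq 1})=0$, not the reverse, and the reverse is false precisely in the Calabi--Yau setting at hand. Concretely, for the Ginzburg algebra $A=k[t]$ with $|t|=-2$ (the paper's own example), one has
\[
\Hom_{\D}(\Sigma^{-1}k,\Sigma^{2}k)\iso\Ext^{3}_{A}(k,k)\iso D\End_{\D}(k)\neq 0,
\]
with $\Sigma^{-1}k\in\D^{\geq 1}$ and $\Sigma^{2}k\in\D^{\leq -2}\subseteq\D^{\leq 0}$; this is exactly the shape of Hom group (source concentrated in degree $1$, like $\tau_{\geq 1}L$; target in $\D^{\leq -2}$) that your argument needs to vanish. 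The vanishing $\Hom_{\D}(\Sigma^{-1}\tau_{\geq 1}L,Y)=0$ for $Y\in\D^{\leq -2}$ is in fact true, but it cannot come from the t-structure: as in the paper, one must use that $\tau_{\geq 1}L\in\pvd(A)$ and apply the $3$-CY duality,
\[
D\Hom_{\D}(\Sigma^{-1}\tau_{\geq 1}L,Y)\iso\Hom_{\D}(Y,\Sigma^{2}\tau_{\geq 1}L)=0,
\]
the last vanishing now being the \emph{correct} direction of orthogonality since $Y\in\D^{\leq -2}$ and $\Sigma^{2}\tau_{\geq 1}L\in\D^{\geq -1}$. Ironically, this is exactly the mechanism you deploy correctly for right exactness; your closing remark that only one of the two orthogonality conditions ``genuinely requires'' the Calabi--Yau duality is what led you astray: both the membership of $\tau_{\leq 0}L$ in ${}^{\perp}\D^{\leq -2}$ and the right-exactness vanishing require it.
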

\begin{proof}
It is evident that $L$, as an extension of $\Sigma^{-1}N$ and $M$, belongs to ${^{\perp}\D^{\leq -2}}$. 
Furthermore, it is clear that $L\in \D^{\leq 1}$, indicating that $\tau_{\geq 1}L$ lies in the shift of the heart of t-structure on $\per(A)$. Specifically, we have $\tau_{\geq 1}L\in \pvd(A)$.
Thus, for each $M\in {\D^{\leq -2}}$, we have
\[
D\Hom_{\D}(\Sigma^{-1}\tau_{\geq 1}L, M)\iso \Hom_{\D}(M,\Sigma^{2}\tau_{\geq 1}L)=0.
\]
This implies that $\tau_{\leq 0}L$, as an extension of $\Sigma^{-1}\tau_{\geq 1}L$ and $L$, belongs to ${^{\perp}\D^{\leq -2}}$.
Therefore, $\tau_{\leq 0}L$ lies in $\F$.

Now, let us verify that the homotopy 3-term complex (\ref{dia:homotopykernelamiot}) is homotopy short exact in $\F_{dg}(A)$. 
Indeed, let $K\in \F$ and $i\leq 0$.  
Then $\Hom(K,\Sigma^i\tau_{\geq 1}L)=0$ and 
\[
D\Hom_{\per(A)}(\tau_{\geq 1}L,\Sigma^{i}K)\iso \Hom_{\per(A)}(\Sigma^iK, \Sigma^3\tau_{\geq 1}L)=0.
\]
Thus, the canonical map $\tau_{\leq 0}L\rightarrow L$ induces an isomorphism
\[
\Hom_{\per(A)}(K,\Sigma^i \tau_{\leq 0}L)\iso \Hom_{\per(A)}(K,\Sigma^i L).
\]
The canonical morphism $\Cone(f)\rightarrow N$ induces an isomorphism
\[
\Hom_{\per(A)}(N,\Sigma^i K)\iso \Hom_{\per(A)}(\Cone(f), \Sigma^i K).
\]
This confirms that (\ref{dia:homotopykernelamiot}) is homotopy short exact.
\end{proof}
\begin{example}
\rm{
In the special case where $(Q,W)$ is given by the quiver $A_1$ with a trivial potential $W=0$,
 the associated Ginzburg dg algebra $\Gamma_{Q,W}$ is the polynomial algebra $k[t]$, where $t$ has degree $-2$.
Consequently, the associated fundamental domain $\F$ is equivalent, as a $k$-category, to the category $\Vect^{\mathbb Z/2\mathbb Z}$ of finite-dimensional super vector spaces $V=V_0\oplus V_1$.
 The category $\Vect^{\mathbb Z/2\mathbb Z}$ admits a canonical dg enhancement $\Vect^{\mathbb Z/2\mathbb Z}_{dg}$: for any two objects $V$ and $V'$ in $\Vect^{\mathbb Z/2\mathbb Z}$, the Hom complex $\Hom(V,V')$ is defined such that 
 \[
 \Hom^{2n}(V,V')=\Hom_{k}(V_1,V_1')\oplus \Hom_{k}(V_2,V_2'),
 \]
 \[
 \Hom^{2n-1}(V,V')=\Hom_{k}(V_1,V_2')\oplus \Hom_{k}(V_2,V_1'),
 \]
  for $n\leq 0$, and 
  \[
  \Hom^1(V,V')=\Hom_{k}(V_1,V_2'), \quad
  \Hom^{\geq 2}(V,V')=0,
  \]
  and the differential is trivial.
  By the results presented in this subsection, we obtain that the dg category $\Vect^{\mathbb Z/2\mathbb Z}_{dg}$ is stable. Its homotopy kernels and homotopy cokernels are given as follows. Let $f:V\rightarrow V'$ be a morphism of super vector spaces. 
 Its homotopy kernel is 
 \[
 \begin{tikzcd}
\ker(f)\oplus \Sigma \cok(f)\ar[r,"{[}\mathrm{inc}{,}0{]}"] \ar[rr,"0"swap, bend right=6ex]&V\ar[r,"f"]&V'
 \end{tikzcd},
 \]
 where $\mathrm{inc}:\ker(f)\rightarrow V$ is the canonical inclusion, and $\Sigma$ is the shift functor defined by sending $V=V_0\oplus V_1$ to $\Sigma V=V_1\oplus V_0$. 
 The homotopy cokernel of $f$ is given by 
 \[
 \begin{tikzcd}
 V\ar[r,"f"]\ar[rr,"0"swap,bend right=6ex]&V'\ar[r,"{[}\pr{,} 0{]^{\intercal}}"]&\cok(f)\oplus \Sigma \ker(f),
 \end{tikzcd}
 \]
 where $\pr:V'\rightarrow \cok(f)$ is the canonical projection.
 }
\end{example}

\section*{Acknowledgement}
This paper is part of the author's Ph.D thesis~\cite{Chen23}.
While writing this paper, the author was financially supported by Xiaomi Youth Scholar.
He is greatly indebted to his supervisor Bernhard Keller and cosupervisor Xiao-Wu Chen for their constant support and encouragement without which this paper would not exist.
He is grateful to Mikhail Gorsky, Gustavo Jasso, Julian K\"ulshammer, Yann Palu, Bertrand To\"en and Yilin Wu for discussions and useful comments. 

	\def\cprime{$'$} \def\cprime{$'$}
	\providecommand{\bysame}{\leavevmode\hbox to3em{\hrulefill}\thinspace}
	\providecommand{\MR}{\relax\ifhmode\unskip\space\fi MR }
	\providecommand{\MRhref}[2]{%
		\href{http://www.ams.org/mathscinet-getitem?mr=#1}{#2}
	}
	\providecommand{\href}[2]{#2}
%


	\bibliographystyle{amsplain}
	\bibliography{stanKeller}

\end{document}